\pgfplotsset{compat=1.15}
\DeclareMathOperator{\Hom}{Hom}
\DeclareMathOperator{\Cont}{Cont}
\DeclareMathOperator{\Sh}{Sh}
\DeclareMathOperator{\Ext}{Ext}
\DeclareMathOperator{\Gal}{Gal}
\newcommand{\N}{\ensuremath{\mathbbm{N}}}
\newcommand{\Q}{\ensuremath{\mathbbm{Q}}}
\newcommand{\R}{\ensuremath{\mathbbm{R}}}
\newcommand{\Z}{\ensuremath{\mathbbm{Z}}}
\newcommand{\CC}{\ensuremath{\mathscr{C}}}
\newcommand{\NN}{\ensuremath{\mathcal{N}}}
\newcommand{\DDD}{\ensuremath{\mathbf{D}}}
\renewcommand{\SS}{\ensuremath{\mathcal{S}}}
\newcommand{\SSS}{\ensuremath{\mathbb{S}}}
\newcommand{\FF}{\ensuremath{\mathcal{F}}}
\newcommand{\GG}{\ensuremath{\mathcal{G}}}
\renewcommand{\H}{\mathrm{H}}
\renewcommand{\O}{\ensuremath{\mathcal{O}}}
\newcommand{\UU}{\ensuremath{\mathcal{U}}}
\newcommand{\HH}{\ensuremath{\mathbbm{H}}}
\newcommand{\GGamma}{\ensuremath{\mathbb{\Gamma}}}
\newcommand{\LH}{\ensuremath{\mathcal{LH}}}
\newcommand{\Set}{\mathbf{Set}}
\newcommand{\Grp}{\mathbf{Grp}}
\newcommand{\Ab}{\mathbf{Ab}}
\newcommand{\Top}{\mathbf{Top}}
\newcommand{\Cond}{\mathbf{Cond}}
\newcommand{\LCA}{\mathbf{LCA}}
\newcommand{\FLCA}{\mathbf{FLCA}}
\newcommand{\Solid}{\mathbf{Solid}}
\newtheorem*{rep@theorem}{\rep@title}
\newcommand{\newreptheorem}[2]{%
\newenvironment{rep#1}[1]{%
 \def\rep@title{#2 \ref{##1}}%
 \begin{rep@theorem}}%
 {\end{rep@theorem}}}
\theoremstyle{plain}
\newtheorem{thm}{Theorem}[section]
\newtheorem{cor}[thm]{Corollary}
\newtheorem{lem}[thm]{Lemma}
\newtheorem{prop}[thm]{Proposition}
\newtheorem*{thm*}{Theorem}
\newtheorem*{lem*}{Lemma}
\newtheorem*{prop*}{Proposition}
\newtheorem{thmx}{Theorem}
\theoremstyle{definition}
\newtheorem{defn}[thm]{Definition}
\newtheorem{ex}[thm]{Example}
\newtheorem{rmk}[thm]{Remark}
\newtheorem{ntt}[thm]{Notation}
\newtheorem{cns}[thm]{Construction}
 \crefname{cor}{Corollary}{Corollaries}
\crefname{thm}{Theorem}{Theorems}
\crefname{prop}{Proposition}{Propositions}
\crefname{lem}{Lemma}{Lemmas}
\crefname{rmk}{Remark}{Remarks}
\crefname{conv}{Convention}{Conventions}
\crefname{cns}{Construction}{Constructions}
\crefname{ntt}{Notation}{Notation}
\crefname{ex}{Example}{Examples}
\crefname{defn}{Definition}{Definitions}
\crefname{claim}{Claim}{Claims}
\crefname{section}{Section}{Sections}
\newcommand{\pro}[2] {\underset{\leftarrow \, #1}{\lim} \, #2}
\renewcommand{\left}[2] {\prescript{}{#1}{#2}}
\tikzset{
  altstackar/.style={decorate, decoration={show path construction,
    lineto code={
      \path (\tikzinputsegmentfirst); \pgfgetlastxy{\xstart}{\ystart}
      \path (\tikzinputsegmentlast); \pgfgetlastxy{\xend}{\yend}
      \path ($(0,0)!1.5pt!(\ystart-\yend,\xend-\xstart)$); \pgfgetlastxy{\xperp}{\yperp}
      \foreach \n[evaluate=\n as \k using .5*#1-\n+.5] in {1,...,#1}{
        \ifodd\n{\draw[->, shorten <=2pt, shift={($\k*(\xperp,\yperp)$)}](\xstart,\ystart)--(\xend,\yend);}
        \else{\draw[<-, shorten >=2pt, shift={($\k*(\xperp,\yperp)$)}](\xstart,\ystart)--(\xend,\yend);}\fi
      }
    }
  }}, altstackar/.default={1}
}
\title{Duality for condensed cohomology of the Weil group of a \texorpdfstring{$p$}{p}-adic field}
\author{Marco Artusa}
\date{}
\begin{document}
\maketitle 

\begin{abstract}
We use the theory of Condensed Mathematics to build a condensed cohomology theory for the Weil group of a $p$-adic field. The cohomology groups are proved to be locally compact abelian groups of finite ranks in some special cases. This allows us to enlarge the local Tate Duality to a more general category of non-necessarily discrete coefficients, where it takes the form of a Pontryagin duality between locally compact abelian groups.  
\end{abstract}

\tableofcontents

\newpage

\section{Introduction}
Let $F$ be a finite extension of $\Q_p$ and let $\overline{F}$ be an algebraic closure. Let $G_F$ be the absolute Galois group of $F$. In \cite{TateDuality}, Tate proved that for all finite continuous $G_F$-module $A$ and for all $q$, one has a perfect cup product pairing \begin{equation}\label{tatepairing}
\H^q(G_F,A)\otimes \H^{2-q}(G_F,\Hom(A,\overline{F}^{\times}))\rightarrow \H^2(G_F,\overline{F}^{\times})=\Q/\Z.
\end{equation} However, if $A$ is finitely generated, the result only holds up to profinite completion, which loses important information. There are two promising approaches to overcome this shortcoming of the local Tate duality.

Firstly, Lichtenbaum's influential paper \cite{Licht} suggests that one should replace the Galois group $G_F$ with the Weil group $W_F$. In \cite{Karpuk}, Karpuk studied the cohomology of the latter with discrete coefficients. Secondly, as pointed out by Geisser and Morin in \cite{GeisMor}, one should put a topology on both coefficients and cohomology groups. We try to follow both intuitions simultaneously. 

In this paper we construct topological cohomology groups $\HH^q(B_{\hat{W}_F},-)$ which coincide with Galois cohomology groups $\H^q(G_F,-)$ for finite coefficients and with Weil cohomology groups $\H^q(W_F,-)$ for discrete coefficients. Moreover, we extend the local Tate duality to more general non-discrete coefficients, making it a Pontryagin duality between locally compact abelian groups. In order to do this, we use the theory of Condensed Mathematics \cite{LCM}, which makes it possible to do algebra with topological abelian groups.  A more precise way of saying this is that Condensed Mathematics provides a topos\footnote{We are ignoring set-theoretic issues here. In order to be more precise, we fix $\kappa$ an uncountable strong limit cardinal. Then the topos $\CC$ is the topos of $\kappa$-condensed sets defined in \cite{LCM}.} $\CC$ of condensed sets which contains compactly generated topological spaces as a full subcategory stable by all limits. 

In particular, we consider the prodiscrete topological group $W_F$ as a pro-condensed group, say $\hat{W}_F$. We define its classifying topos and we call it $B_{\hat{W}_F}$. The abelian category $\Ab(B_{\hat{W}_F})$ contains nice enough topological abelian groups with a continuous action of the prodiscrete topological group $W_F$. For all $M\in \Ab(B_{\hat{W}_F})$, or more generally for all objects of the bounded derived $\infty$-category $\DDD^b(B_{\hat{W}_F})$, and for all $q$, we define a condensed abelian group $\HH^q(B_{\hat{W}_F},M)$. Since the category of condensed abelian groups contains the quasi-abelian category $\LCA$ of locally compact abelian groups as a full subcategory, the objects $\HH^q(B_{\hat{W}_F},M)$ may naturally carry a locally compact topology. We define a dualising complex $\R/\Z(1)\in \DDD^b(B_{\hat{W}_F})$. For all $M\in \DDD^b(B_{\hat{W}_F})$, we define a dual module $M^D\coloneqq R\underline{\Hom}(M,\R/\Z(1))\in \DDD^b(B_{\hat{W}_F})$. Our main result can be stated as follows. \begin{repthm}{thm:dualitythm}
Let $M$ be a locally compact abelian group of finite ranks with a continuous action of a finite quotient $G$ of $G_F$. Suppose that $\underline{\Hom}_{\LCA}(\R/\Z,M)$ and $\underline{\Ext}_{\LCA}(\R/\Z,M)$ are finitely generated discrete abelian groups. Then we have a perfect cup-product pairing \begin{equation}\label{introduction:pairing}
\HH^q(B_{\hat{W}_F},M)\otimes \HH^{2-q}(B_{\hat{W}_F},M^D)\rightarrow \HH^2(B_{\hat{W}_F},\R/\Z(1))=\R/\Z
\end{equation} of locally compact abelian groups of finite ranks.
\end{repthm}
We will state and prove this result more generally, for all the objects of a full stable $\infty$-subcategory of $\DDD^b(\FLCA)$, which we denote by $\DDD^{perf}_{\R,\Z}$. 
If $M$ is finite, then we have $M^D=\Hom(M,\overline{F}^{\times})$ (see \cref{dualizingobjectfinite}) and \eqref{introduction:pairing} coincides with the Tate pairing \eqref{tatepairing}. Indeed, the local Tate duality is a key ingredient to prove our result.

\subsection{Outline of this article}
In \cref{section1} we study the cohomology of condensed groups and, whenever the condensed group is represented by a topological group, its relations with other cohomology theories (continuous cohomology, discrete group cohomology). Part of this section is inspired by \cite{Flach}, where Flach uses Grothendieck's ``gros topos" instead of condensed sets. The cohomology of condensed groups is also treated in Emma Brink's Master Thesis. For every condensed group $G$, we define its classifying topos $B_G$ and the condensed cohomology of $G$, i.e.\ functors $\HH^q(B_G,-):\Ab(B_G)\to \Ab(\CC)$ for all $q\in \N$. We apply this to topological groups acting continuously on topological abelian groups, and we obtain a Hochschild-Serre spectral sequence. 
\begin{repprop}{hstopgrps}
Suppose that \[
1\rightarrow H \overset{i}{\rightarrow} G \overset{p}{\rightarrow} Q\rightarrow  1
\] is an exact sequence of topological groups, i.e.\ $Q$ is homeomorphic via $p$ to the coset space $G/H$, and $H$ is homeomorphic to the kernel of $p$ with its subspace topology. Suppose that $\underline{G}\to\underline{Q}$ is an epimorphism in $\CC$. Let $A$ be a topological $G$-module. Then the condensed abelian group $\HH^q(B_{\underline{H}},\underline{A})$ carries a $\underline{Q}$-action for all $q$, and there is an Hochschild-Serre spectral sequence \[
E_2^{p,q}=\HH^p(B_{\underline{Q}},\HH^q(B_{\underline{H}},\underline{A}))\Rightarrow \HH^{p+q}(B_{\underline{G}},\underline{A}).
\]
\end{repprop}
We develop a more general theory for pro-condensed groups, for which \cref{hstopgrps} generalises to a Hochschild-Serre spectral sequence for condensed cohomology of pro-groups (see \cref{prop:proHS}). We also recover a continuity result for the cohomology of strict pro-condensed groups, which generalises the analogous property satisfied by continuous cohomology of profinite groups with discrete coefficients.
\begin{repprop}{prop:cont}
Let $\hat{G}$ be a strict pro-condensed group. Suppose that $G_i$ is compact Hausdorff for all $i$. Let $(A_i,\alpha_{ji})_{i,j\in I}$ be a compatible system of abelian group objects of $(B_{G_i},f_{ij})_{i,j\in I}$ (see \cref{constructionlimittopos}). We set \[
A_{\infty}\coloneqq \underset{\substack{\rightarrow\\ i \in I}}{\lim} \,\pi_i^*A_i\in B_{\hat{G}}.
\] Then the canonical morphism
\[
\underset{\substack{\rightarrow\\ i \in I}}{\lim}\,\HH^q(B_{G_i},A_i)\longrightarrow\HH^q(B_{\hat{G}},A_{\infty})
\] is an isomorphism for any integer $q$.
\end{repprop}

\vspace{0.5 em}
In \cref{section2} we apply the constructions of \cref{section1} to the Weil group of a $p$-adic field. Let $F$ be a $p$-adic field with residue field $k$. Let $W_F$, $I$ and $W_k$ be the Weil group of $F$, the inertia subgroup and the Weil group of $k$ respectively. We have an exact sequence of topological groups $1\to I\to W_F\to W_k\to 1$ which relates $\HH^q(B_I,-),\HH^q(B_{W_F},-)$ and $\HH^q(B_{W_k},-)$ according to \cref{hstopgrps}. Let $L$ be the completion of the maximal unramified extension of $F$, and let $\overline{L}$ be an algebraic closure of $L$ containing an algebraic closure $\overline{F}$ of $F$. Putting the canonical topology on $\overline{L}$, we can see $\overline{L}^{\times}$ as an abelian object of $B_I$. Since $L$ is a C1 field (\cite[\S 3.3, c)]{serreGC}), one would expect $\HH^q(B_I,\overline{L}^{\times})$ to vanish for all $q\ge 1$. However, we prove the following result.
\begin{repprop}{h90failure}
The abelian group $\HH^1(B_I,\overline{L}^{\times})(*)$ is not torsion.
\end{repprop}
In order to overcome this problem, we consider the profinite group $I$ and the prodiscrete group $W_F$ as pro-condensed groups. We obtain classifying topoi $B_{\hat{I}}$ and $B_{\hat{W}_F}$. In this way, the groups $\HH^q(B_{\hat{I}},\overline{L}^{\times})$ and $\HH^q(B_{\hat{W}_F},\overline{L}^{\times})$ behave as in the discrete case (see \cref{vanishing,lcohom}). We define a dualising complex $\R/\Z(1)\in \DDD^b(B_{\hat{W}_F})$ as the fiber of the valuation morphism $\overline{L}^{\times}\to \R$, and we compute $\HH^q(B_{\hat{W}_F},\R/\Z(1))$. 
\begin{repprop}{cohomologyrz1}
The cohomology of $\hat{W}_F$ with coefficients in $\R/\Z(1)$ is given by \[
\HH^q(B_{\hat{W}_F},\R/\Z(1))=\begin{array}{ll}
\underline{\O_F^{\times}} & q=0\\
 \R/\Z & q=1,2\\ 
 0 & q\ge 3,
 \end{array}\] where $\O_F^{\times}$ denotes the units of the ring of integers of $F$, which is a topological abelian group. \end{repprop} For all $M\in \DDD^b(B_{\hat{W}_F})$, we define a dual complex $M^D\coloneqq R\underline{\Hom}(M,\R/\Z(1))\in \DDD^b(B_{\hat{W}_F})$. We also determine the condensed structure on $\HH^q(B_{\hat{W}_F},M)$ and $\HH^q(B_{\hat{W}_F},M^D)$ in some special cases. In particular, we study the case where $M$ is a finitely generated abelian group with a continuous action of a finite quotient of $G_F$ (see \cref{thm:structurem,thm:structuremd}), or a finite-dimensional real vector space with a continuous action of $W_F/U$, where $U$ is an open normal subgroup of $I$ (see \cref{rmk:structurervs}).
\vspace{0.5 em}
 
 Finally, in \cref{section3} we suppose that $M$ is a locally compact abelian group of finite ranks with a continuous action of a finite quotient $G$ of $G_F$. We suppose that $\underline{\Hom}_{\LCA}(\R/\Z,M)$ and $\underline{\Ext}_{\LCA}(\R/\Z,M)$ are finitely generated abelian groups. Note that this is the case, for example, if $M$ is a finitely generated abelian group with the discrete topology or a finite-dimensional real vector space with its Euclidean topology. In this context we prove \cref{thm:dualitythm}.
 
\subsection{Relation to previous work}
As we already remarked, in \cite{TateDuality} Tate proves that for any finite continuous $G_F$-module $M$, the cup-product pairing \eqref{tatepairing} is perfect. This means that the induced maps \[ \psi^q(M):\H^q(G_F,\Hom(M,\overline{F}^{\times}))\rightarrow \H^{2-q}(G_F,M)^*\] and \[
\psi^q(M^D):\H^q(G_F,M)\rightarrow \H^{2-q}(G_F,\Hom(M,\overline{F}^{\times}))^*
\] are isomorphisms for all $q$, where $(-)^*$ denotes the Pontryagin dual. In the attempt of generalising this result to finitely generated continuous $G_F$-modules, Milne proves the following (see \cite[Theorem 2.1]{ADT})
\begin{thmx}\label{thm:milne}
Let $M$ be a finitely generated $G_F$-module, and consider the map \[
\psi^q(M^D): \H^q(G_F,\Hom(M,\overline{F}^{\times}))\rightarrow \H^{2-q}(G_F,M)^*.
\] Then $\psi^q(M)$ is an isomorphism for all $q\ge 1$, and $\psi^0(M^D)$ defines an isomorphism (of profinite groups) \[
\H^0(G_F,\Hom(M,\overline{F}^{\times}))^{\wedge}\rightarrow \H^2(G_F,M)^*.
\]
\end{thmx}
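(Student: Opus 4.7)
The plan is to deduce the theorem from Tate's original local duality (for finite coefficients) together with local class field theory, via a dévissage on the structure of $M$ as a finitely generated $\Z[G]$-module, where $G$ is the finite quotient of $G_F$ through which the action on $M$ factors. First, the short exact sequence $0\to M_{\mathrm{tors}}\to M\to M_{\mathrm{free}}\to 0$ induces maps between the corresponding long exact sequences of Galois cohomology for $M$ and for its dual $M^D=\Hom(M,\overline{F}^{\times})$; by the five-lemma, once the statement is proved for finite modules and for $\Z$-free modules separately, the general case follows. The finite case is precisely Tate's theorem \eqref{tatepairing} recalled in the introduction (no profinite completion is needed there since $\H^0$ is already finite).

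For $M_{\mathrm{free}}$, I would present it as a quotient of an induced module: a surjection $\Z[G]^n\twoheadrightarrow M_{\mathrm{free}}$ has kernel $N$ that is again finitely generated and torsion-free, so iterating the five-lemma reduces the problem to $M=\Z[G]$. Shapiro's lemma then identifies $\H^q(G_F,\Z[G])$ with $\H^q(G_K,\Z)$ and $\H^q(G_F,\Hom(\Z[G],\overline{F}^{\times}))$ with $\H^q(G_K,\overline{F}^{\times})$, where $K$ is the fixed field of the kernel of $G_F\to G$. Thus it suffices to prove the theorem for $M=\Z$ over an arbitrary finite extension of $\Q_p$.

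For $M=\Z$ with trivial action, direct computation gives $\H^0(G_F,\Z)=\Z$, $\H^1(G_F,\Z)=\Hom_{\mathrm{cont}}(G_F,\Z)=0$, and $\H^2(G_F,\Z)\cong\H^1(G_F,\Q/\Z)=\Hom_{\mathrm{cont}}(G_F,\Q/\Z)$ via the sequence $0\to\Z\to\Q\to\Q/\Z\to 0$ together with the vanishing of $\H^{\ge 1}(G_F,\Q)$. On the dual side, Hilbert's Theorem 90 and the local invariant map give $\H^0(G_F,\overline{F}^{\times})=F^{\times}$, $\H^1=0$, and $\H^2=\Q/\Z$. For $q\ge 1$ the maps $\psi^q(\Z)$ are either zero on both sides or the invariant isomorphism, hence bijective. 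In degree zero, $\psi^0(\Z^D):F^{\times}\to G_F^{\mathrm{ab}}$ is precisely the Artin reciprocity morphism, which is injective with dense image and becomes an isomorphism exactly after profinite completion, accounting for the $(-)^{\wedge}$ in the statement.

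The main obstacle will be reconciling the five-lemma reduction with the presence of profinite completion in the $q=0$ statement, since $(-)^{\wedge}$ is not exact on general abelian groups. One has to either perform the dévissage only for $q\ge 1$ and manage the degree-zero term by a separate, finer argument, or verify that the obstructions to exactness appearing along the way are uniquely divisible and hence killed by profinite completion. It is precisely this technical awkwardness that the present paper circumvents by replacing $G_F$ with $W_F$ and passing to the condensed setting, where no completion artefact is needed.
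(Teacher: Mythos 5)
This statement is quoted background: the paper gives no proof of it and simply cites Milne \cite[Theorem 2.1]{ADT}, so your proposal has to stand on its own, and as written it has two genuine gaps. First, the reduction of the torsion-free case does not work: from $0\to N\to \Z[G]^n\to M_{\mathrm{free}}\to 0$ the five lemma gives you nothing unless you already know the theorem for \emph{two} of the three terms, and $N$ is again an arbitrary finitely generated torsion-free $G$-module, so ``iterating'' produces an infinite regress with no decreasing invariant; you never reach $M=\Z[G]$. A correct reduction needs a different mechanism, e.g.\ dimension shifting along $0\to M\to \mathrm{Ind}_{L/F}(M)\to M'\to 0$ (Shapiro applied to the middle term) combined with the fact that everything lives in degrees $0,1,2$, or — this is what Milne actually does, and what the present paper does for its condensed analogue in \cref{thm:dualityfgcoefficients} via \cref{conservativitycompletion} — a reduction to the finite case through the sequences $0\to M\xrightarrow{\;m\;}M\to M/m\to 0$ followed by a limit/completion argument.

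Second, the degree-zero assertion, which is the only place the profinite completion enters and which for general $M$ carries the content of the reciprocity law, is not proved: you flag the non-exactness of $(-)^{\wedge}$ as ``the main obstacle'' and leave both suggested remedies unexecuted. Note moreover that the fallback ``do the dévissage only for $q\ge 1$'' does not work as stated, because when you splice the long exact sequences for $0\to M_{\mathrm{tors}}\to M\to M_{\mathrm{free}}\to 0$ and its $\Hom(-,\overline{F}^{\times})$-dual, the uncompleted degree-zero groups (e.g.\ $F^{\times}$-type terms, which are neither profinite nor torsion) sit inside the five-lemma window needed to compare the $\H^1$'s, exactly where $\psi^0$ fails to be an isomorphism before completion; controlling this requires the $\varprojlim^1$/divisibility analysis you postpone. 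So what your argument actually establishes is the case $M=\Z$ (correct, via class field theory) and the finite case (by quoting Tate); the passage to general finitely generated $M$ — both for $q\ge 1$ and for $q=0$ — is not complete.
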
 
Hence $\psi^0(M)$ is an isomorphism only up to profinite completion. However, the profinite completion loses a lot of information on the abelian groups. For example, let $M=\Z$ with the trivial action of $\Z$. Then we have \[
\H^0(G_F,\Hom(M,\overline{F}^{\times}))=F^{\times}, \qquad \H^2(G_F,M)^*=G_F^{ab},
\] and $\psi^0(M)$ induces an isomorphism of abelian groups \begin{equation}\label{reciprocity}(F^{\times})^{\wedge}\overset{\sim}{\rightarrow}G_F^{ab},\end{equation} which is the reciprocity isomorphism of Local Class Field Theory. We have $F^{\times}\cong \O_F^{\times}\oplus \Z\cong \Z_p^n\oplus H\oplus \Z$, for some finite abelian group $H$ and some $n\in\N$. Taking the profinite completion, we get \[(F^{\times})^{\wedge}\cong \Z_p^{n+1}\oplus H\oplus \prod_{l\neq p}\Z_l.\] Hence the information coming from $\O_F^{\times}$ and $\Z$ is mixed. In order to resolve this problem, we should replace the Galois group $G_F$ with the Weil group $W_F$, as suggested by Lichtenbaum (see \cite{Licht}).

In \cite{Karpuk}, Karpuk follows this intuition and studies the cohomology of $W_F$ with discrete coefficients. The role of $\overline{F}^{\times}$ is taken by $\overline{L}^{\times}$. The cohomology of $W_F$ with coefficients in $\overline{L}^{\times}$ is given by \[
\H^q(W_F,\overline{L}^{\times})=\begin{array}{ll}
F^{\times} & q=0\\
\Z & q=1 \\
0 & q\ge 2.
\end{array}
\] If $M$ is a finitely generated continuous $W_F$-module, we define $M^D\coloneqq \Hom(M,\overline{L}^{\times})$ and we obtain a cup-product pairing in $\DDD(\Ab)$ \begin{equation}\label{karpukpairing}
R\Gamma(W_F,M)\otimes^L R\Gamma(W_F,M^D)\rightarrow \tau^{\ge 1}R\Gamma(W_F,\overline{L}^{\times})=\Z[-1].
\end{equation}
Karpuk proves the following (see \cite[Theorem 3.3.1]{Karpuk}) \begin{thmx}\label{thm:karpuk}
Suppose that $M$ is a finitely generated continuous $W_F$-module. Then the map \[
\psi(M):R\Gamma(W_F,M^D)\rightarrow R\Hom(R\Gamma(W_F,M),\Z[-1])
\] induced by \eqref{karpukpairing} is a equivalence in $\DDD(\Ab)$. In particular, there are, for all $q$, short exact sequences \begin{equation}\label{karpukses}
0\rightarrow \Ext(\H^{2-q}(W_F,M),\Z)\rightarrow \H^q(W_F,M^D)\rightarrow \Hom(\H^{1-q}(W_F,M^D),\Z)\rightarrow 0.
\end{equation}\end{thmx}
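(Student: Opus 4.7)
The plan is to prove the theorem by dévissage on $M$. Both functors $M \mapsto R\Gamma(W_F, M^D)$ and $M \mapsto R\Hom(R\Gamma(W_F, M), \Z[-1])$ are exact in $M$ viewed as an object of the bounded derived category of finitely generated continuous $W_F$-modules, and $\psi(M)$ is natural, so any short exact sequence of such modules yields compatible distinguished triangles and the five lemma reduces the problem to basic cases. Applying this to the canonical sequence $0 \to M_{\mathrm{tors}} \to M \to M/M_{\mathrm{tors}} \to 0$ splits the proof into (A) $M$ finite and (B) $M$ torsion-free of finite rank.

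For case (A), since $M$ is finite its action factors through a finite Galois quotient and continuity gives $R\Gamma(W_F, M) \simeq R\Gamma(G_F, M)$; moreover, because $\mu_\infty \subset \overline{F}$ and $\overline{L}^\times/\mu_\infty$ is uniquely divisible, $M^D = \Hom(M, \overline{L}^\times) = \Hom(M, \overline{F}^\times)$. Since $R\Gamma(W_F, M)$ is then a complex of finite abelian groups, the sequence $0 \to \Z \to \Q \to \Q/\Z \to 0$ yields $R\Hom(R\Gamma(W_F, M), \Z[-1]) \simeq R\Hom(R\Gamma(W_F, M), \Q/\Z[-2])$, i.e.\ Pontryagin duality placed in the correct degree. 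Matching Karpuk's cup product \eqref{karpukpairing}, which lands in $\tau^{\geq 1} R\Gamma(W_F, \overline{L}^\times) = \Z[-1]$, with Tate's pairing \eqref{tatepairing}, which lands in $\H^2(G_F, \overline{F}^\times) = \Q/\Z$, via the boundary associated to $1 \to \mu_\infty \to \overline{L}^\times \to \overline{L}^\times/\mu_\infty \to 0$ identifies the two pairings, and the result follows from Tate duality. No profinite completion intervenes because $M$ is already finite.

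For case (B), the action on a torsion-free module has open kernel, so it factors through a discrete quotient; passing to an open subgroup of finite index where $M$ becomes trivial (corresponding to a finite extension $F'/F$) and applying Shapiro's lemma reduces to the universal case $M = \Z$ with trivial action. Here one computes both sides explicitly via the Hochschild--Serre spectral sequence attached to $1 \to I \to W_F \to \Z \to 1$: the inertia cohomology $R\Gamma(I, \Z)$ is controlled by local class field theory on the inertia ($I^{\mathrm{ab}} \cong \O_F^\times$), and the Frobenius part by the fiber sequence for the $\Z$-quotient. One then checks that in degree $0$ the dual satisfies $\Hom(\H^1(W_F, \Z), \Z) \oplus \Ext^1(\H^2(W_F, \Z), \Z) \cong \Z \oplus \O_F^\times = F^\times$, which matches $\H^0(W_F, \overline{L}^\times)$, and analogously in other degrees.

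The main obstacle I expect is precisely case (B) with $M = \Z$: the equivalence $\psi(\Z)$ is essentially a reformulation of local class field theory in derived-categorical language, so it is not a bootstrap from something simpler but a repackaging of the reciprocity isomorphism \eqref{reciprocity}. Verifying that the degree-zero map induced by the cup product is exactly the reciprocity map, rather than its negative or some twist, demands careful bookkeeping of signs and of the boundary maps both in the Hochschild--Serre spectral sequence and in the truncation defining Karpuk's pairing.
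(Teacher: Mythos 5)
This statement is not proved in the paper at all: it is quoted as Theorem~3.3.1 of Karpuk's thesis, and the only place the paper touches its content is \cref{condensedweiltate}, where the finite-coefficient case is reduced (as in your case (A)) to the comparison of $W_F$- and $G_F$-cohomology for torsion modules plus Tate--Milne duality. Your case (A) is therefore sound in outline: exactness of $\Hom(-,\overline{L}^{\times})$ (divisibility of $\overline{L}^{\times}$), the identification $M^D=\Hom(M,\overline{F}^{\times})$ for finite $M$, and the Bockstein comparison of the $\Z[-1]$-valued pairing with the $\Q/\Z$-valued Tate pairing are all correct ingredients.

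The genuine gap is in case (B). You claim that a torsion-free finitely generated continuous $W_F$-module becomes trivial on an open subgroup of finite index, so that Shapiro's lemma reduces everything to $M=\Z$ with trivial action. Continuity only forces an open subgroup of the \emph{inertia} group to act trivially; the quotient through which the action factors is an extension of $\Z$ by a finite group, and the Frobenius may act on $M$ with infinite order (take $M=\Z^2$ with Frobenius acting by an infinite-order unipotent matrix). For such $M$ there is no finite-index subgroup of $W_F$ acting trivially, so your dévissage never reaches these modules. This is precisely the extra generality of Karpuk's theorem that the present paper emphasizes in its introduction (``the hypothesis on the action of $W_F$ is less restrictive in that case''), in contrast with $G_F$-modules, where finite image is automatic. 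To cover the general case one must argue differently, e.g.\ via the Hochschild--Serre sequence for $1\to I\to W_F\to \Z\to 1$ together with a duality statement for modules with an arbitrary (possibly infinite-order) Frobenius twist over the $\Z$-quotient, in the spirit of \cref{wtfiniteduality}, rather than by reduction to the trivial action. A secondary, lesser issue: even when the image is finite, ``Shapiro's lemma reduces to $M=\Z$'' is too quick, since $M$ itself is not induced; one needs the standard embedding of $M$ into an induced module and a dimension-shifting argument, together with compatibility of the cup product with induction, as in the proof of \cite[Theorem 2.1]{ADT}.
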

Hence we now have a duality for finitely generated modules which holds in all degrees, without needing the profinite completion. However, since $\Ext(\H^{2-q}(W_F,M),\Z)$ does not vanish in general, \eqref{karpukses} can't express this duality as a duality between the cohomology groups. 

Another aspect to consider is that topology is not taken into account neither by Milne nor by Karpuk. This is the reason why both in Theorem \ref{thm:milne} and in Theorem \ref{thm:karpuk} we don't have perfect pairings, but we only have results about the map $\psi(M)$. We solve this problem by putting a topology both on coefficients and on cohomology groups, as suggested by Geisser and Morin in \cite{GeisMor}. The role played by the discrete $G_F$-module $\overline{F}^{\times}$ for Milne and by the discrete $W_F$-module $\overline{L}^{\times}$ for Karpuk is now played by the complex of condensed $\hat{W}_F$-modules \[
\R/\Z(1)\coloneqq [\overline{L}^{\times}\overset{val}{\rightarrow } \R].
\]
If $M$ is a locally compact abelian groups of finite ranks with a structure of a $\hat{W}_F$-module, we define $M^D\coloneqq R\underline{\Hom}(M,\R/\Z(1))$. In \cref{thm:dualitythm} we show that if $M$ has an action of a finite quotient $G$ of $G_F$ and if $R\underline{\Hom}_{\LCA}(\R/\Z,M)\in \DDD^{perf}(\Z)$, the cup product-pairing \[
\HH^q(B_{\hat{W}_F},M)\otimes \HH^{2-q}(B_{\hat{W}_F},M^D)\rightarrow \HH^2(B_{\hat{W}_F},\R/\Z(1))=\R/\Z
 \] is a perfect pairing of locally compact abelian groups of finite ranks. 

This theorem enlarges the result of Tate to more general non-necessarily discrete coefficients. The ``enlarged" local Tate duality takes the form of a Pontryagin duality between locally compact abelian groups, and in this sense it is richer than Theorems \ref{thm:milne} and \ref{thm:karpuk}. The proof relies on the topological structure of the cohomology groups. In order to determine it, we must suppose that the action of the Weil group is induced by the action of a finite quotient $G$ of $G_F$. Hence, even if Theorem \ref{thm:karpuk} only considers discrete finitely generated coefficients, the hypothesis on the action of $W_F$ is less restrictive in that case. However, if we consider finitely generated abelian groups with a continuous action of $G_F$, as in Theorem \ref{thm:milne}, this ``finitary" property on the action is always satisified.

Finally, as a particular case of \cref{thm:dualitythm}, if $M=\R/\Z$ and $q=1$, the perfect cup-product pairing yields the isomorphism of topological abelian groups \[
F^{\times}\overset{\sim}{\rightarrow} W_F^{ab},
\] which is the reciprocity morphism of Local Class Field Theory ``à la Weil" (compare it with \eqref{reciprocity}), and does not need profinite completion.

\subsection{Set-theoretical conventions and notation}
We say that a category $C$ is small if both $\mathrm{Ob}(C)$ and $\mathrm{Mor}(C)$ are sets. We say that a category is essentially small if it is equivalent to a small category. Let $\kappa$ be an uncountable strong limit cardinal such that $\kappa>\aleph_1$. For example, let $\kappa_0\coloneqq \aleph_1$ and for all $n\in \N_{\ge 1}$, let $\kappa_n\coloneqq 2^{\kappa_{n-1}}$. We set $\kappa\coloneqq \mathrm{sup}_n \kappa_n$. We say that a set $S$ is $\kappa$-small if $|S|<\kappa$.

We denote by $\Top$ the category of topological spaces, and by $\Top^c$ (resp.\ $\Top^{ed}$) the full subcategory of $\Top$ of $\kappa$-small compact Hausdorff topological spaces (resp.\ $\kappa$-small extremally disconnected topological spaces). We observe that $\Top^c$ and $\Top^{ed}$ are essentially small categories. We denote by $\Top^{cg}$ the category of $\kappa$-compactly generated topological spaces, i.e.\ the smallest full subcategory of $\Top$ containing $\Top^c$ and closed under small colimits. 
We denote by $\CC$ the category $\Cond_{\kappa}(\Set)$ of $\kappa$-condensed sets, as defined in \cite{LCM}. Unless stated otherwise, compact Hausdorff (resp.\ extremally disconnected, resp.\ compactly generated, resp.\ condensed) means $\kappa$-small compact Hausdorff (resp.\ $\kappa$-small extremally disconnected, resp.\ $\kappa$-compactly generated, resp.\ $\kappa$-condensed).

We denote by $\LCA$ the quasi-abelian category of locally compact abelian groups, and by $\LCA_{\kappa}$ the quasi-abelian full subcategory of $\kappa$-small objects. We denote by $\FLCA$ the quasi-abelian full subcategory of $\LCA$ of locally compact abelian groups of finite ranks (see \cite{Hoff}). We observe that we have $\FLCA\subset \LCA_{\kappa}\subset \LCA$. The categories $\FLCA$ and $\LCA_{\kappa}$ are essentially small. For a locally compact abelian group $A$, we denote by $A^{\vee}$ its Pontryagin dual, i.e.\ the locally compact abelian group $\Hom_{\LCA}(A,\R/\Z)$ with the compact-open topology. The Pontryagin duality induces an equivalence of categories $\LCA^{op}\cong \LCA$, resp.\ $\LCA_{\kappa}^{op}\cong \LCA_{\kappa}$, resp.\ $\FLCA^{op}\cong \FLCA$. 

Another way of dealing with set-theoretical issues is to follow the conventions of Barwick and Haine (see \cite{BH}). In particular, one can assume the existence of universes. We let $\kappa_0$ be the smallest strong inaccessible cardinal which is uncountable, and $\kappa_1$ the smallest strong inaccessible cardinal with $\kappa_0<\kappa_1$. Then we define the universe $U(\kappa_0)$ (resp.\ $U(\kappa_1)$) as the set of all sets with rank strictly less than $\kappa_0$ (resp.\ $\kappa_1$). The set $U(\kappa_0)$ has rank and cardinality $\kappa_0$, and hence we have $U(\kappa_0)\in U(\kappa_1)$. A mathematical object is $\kappa_0$-small (resp.\ $\kappa_1$-small) if it is equivalent to an object which is in $U(\kappa_0)$ (resp.\ $U(\kappa_1)$). If the readers find this approach more convenient, they can replace $\kappa$-small by $\kappa_0$-small and small by $\kappa_1$-small in the previous discussion. In this case, the categories which in the previous discussion are essentially small, actually become $\kappa_1$-small.

In this article, we use the theory of $\infty$-categories, developed in \cite{HTT}, \cite{HA}, \cite{SAG}. If $\mathcal{A}$ is an abelian category, we denote by $\DDD(\mathcal{A})$ (resp.\ $\DDD^b(\mathcal{A})$, resp.\ $\DDD^+(\mathcal{A})$, resp.\ $\DDD^-(\mathcal{A})$) its derived $\infty$-category (resp.\ bounded derived $\infty$-category, resp.\ bounded-below derived $\infty$-category, resp.\ bounded-above derived $\infty$-category), whose homotopy category is the classical derived category $D(\mathcal{A})$ (resp.\ the bounded derived category $D^b(\mathcal{A})$, resp.\ the bounded-below derived category $D^+(\mathcal{A})$, resp.\ the bounded-above derived category $D^-(\mathcal{A})$).

Finally, we make use of topos theory, which main reference is \cite{SGA4}. In particular, if $T$ is a topos and $X$ is an object of $T$, we denote by $T/X$ the induced topos \cite[IV, \S 5.1]{SGA4}. We call the canonical morphism of topoi $j_X: T/X\rightarrow T$ \cite[IV, \S 5.2]{SGA4} localisation morphism.

Any other unconventional notation will be made clear when it occurs.

\subsection{Acknowledgements}
I would like to deeply thank my advisor Baptiste Morin for suggesting this topic and for his constant support. He introduced me to the world of Condensed Mathematics and to its possible applications to the cohomology of the Weil group. I am thankful to Adrien Morin for the different discussions we had about this subject. I am also grateful to Emma Brink for the useful exchanges about condensed group cohomology, and for her comments on a preliminary version of this article. I also thank Matthias Flach for general discussions about the Weil group and topological group cohomology and for his feedback on this preprint.

\section{Cohomology of condensed groups}\label{section1}
\subsection{Topoi over condensed sets}\label{stronglycompact}
In the following we adapt Morin's definition \cite[Section 8.1]{Mor1} of strongly compact topoi to topoi over condensed sets. The cohomology of a topos is replaced by an enriched cohomology over condensed sets. 
\begin{rmk}\label{rmk:edtopspaces}
We recall some properties of extremally disconnected topological spaces, and their role among condensed sets. \begin{enumerate}[1)]
\item{\cite[Proposition 2.7]{LCM}} A condensed set is a functor $X:(\Top^{ed})^{op}\to \Set$ such that $X(\emptyset)=*$ and $X(S_1\sqcup S_2)=X(S_1)\times X(S_2)$. 
\item For all extremally disconnected $S$, the functor $\Gamma(S,-):\Ab(\CC)\to \Ab, \, A\mapsto A(S)$ commutes with all limits and colimits. This is showed in \cite[Proof of Theorem 2.2]{LCM}.
\item  A morphism of condensed sets $X\to Y$ is an isomorphism if and only if $X(S)\to Y(S)$ is an isomorphism for all $S$ extremally disconnected. This follows from 1) and 2).
\end{enumerate}
\end{rmk}
\begin{defn}
Let $f_T:T\to\CC$ be a topos over $\CC$. We define $\HH^q(T\overset{f_T}{\to}\CC,-)$ as the $q$th derived functor $R^qf_{T*}:\Ab(T)\to \Ab(\CC)$.
\end{defn}
\begin{ntt}Whenever it does not create confusion, we denote $\HH^q(T\overset{f_T}{\to}\CC,-)$ just by $\HH^q(T,-)$. 
\end{ntt}
If we consider the unique morphism of topoi $\CC\to\Set$, whose direct image is the underlying set functor $-(*):\CC\to \Set$, the composition \[
-(*)\circ f_{T*}:T\rightarrow \Set
\] is the global section functor. Since $-(*):\Ab(\CC)\to\Ab$ is exact (see \cref{rmk:edtopspaces}, 2)) the associated Leray spectral sequence degenerates, giving us \[
\HH^q(T,-)(*)=\H^q(T,-).
\]
\begin{rmk}\label{rmk:edsections:enriched}
Let $A\in \Ab(T)$, and let $S$ be an extremally disconnected topological space. Then we have \[ \HH^q(T,A)(S)=\H^q(T/f_T^*S,A_{|f_T^*S}).\] This follows from \cref{rmk:edtopspaces}, 1), since we have\[\H^q(T/f_T^*(S_1\sqcup S_2),A_{|f_T^*(S_1\sqcup S_2)})\cong \H^q(T/f_T^*S_1,A_{|f_T^*S_1})\times \H^q(T/f_T^*S_2,A_{|f_T^*S_2}). \]\end{rmk}
\begin{ntt}\label{underlinedcohomology}
Let $X\in\CC$. Let $f_X:\CC/X\rightarrow \CC$ morphism of topoi. For all $M\in \Ab(\CC)$, we set \[
\H^q(\CC;X,M)\coloneqq \H^q(\CC/X,M_{|X}), \qquad \HH^q(\CC;X,M)\coloneqq \HH^q(\CC/X,M_{|X}).
\] Let $S$ be an extremally disconnected topological space. By \cref{rmk:edsections:enriched} we have \[
\HH^q(\CC;X,M)(S)=\H^q(\CC;X\times S,M).
\]
\end{ntt}
\begin{defn}\begin{enumerate}[i)]
\item A morphism of topoi $f:T_1\to T_2$ is \emph{strongly compact} if for all $q\in\N$ the functor $R^qf_{*}$ commutes with filtered colimits of abelian sheaves. 
\item (\cite[Definition 8.1]{Mor1}) A topos $T$ is strongly compact if the unique morphism of topoi $T\to \Set$ is strongly compact.
\item Let $f_T:T\to \CC$ be a topos over $\CC$. We say that $T$ is \emph{strongly compact over} $\CC$ if $f_T$ is strongly compact. \end{enumerate}\end{defn}
\begin{rmk}\label{rmk:localizediso}
Let $T$ be a topos, $\mathcal{U}\to e_{T}$ a covering of the terminal object and $A\to B$ a morphism in $\Ab(T)$. If $A_{|\mathcal{U}}\to B_{|\mathcal{U}}$ is an isomorphism, then so is $A\to B$. More generally, let $\FF\to \GG$ be a morphism in $\DDD(T)$. If $\FF_{|\mathcal{U}}\to \GG_{|\mathcal{U}}$ is an equivalence, then so is $\FF\to \GG$.
\end{rmk}
\begin{rmk}\label{topoipb}
For every morphism of topoi $f:T_1\to T_2$ and for every object $X$ of $T_2$, the commutative diagram \[
\begin{tikzcd}
T_1/f^*X\arrow[r,"f_{/X}"]\arrow[d,"j_{f*X}"]& T_2/X\arrow[d,"j_X"]\\
T_1\arrow[r,"f"] & T_2
\end{tikzcd}
\] is a pullback. Here $j_X$ and $j_{f^*X}$ are localisation morphisms \cite[IV, 5.2]{SGA4}. Moreover, one has \[
j_X^*\circ R^qf_* = R^q(f_{/X,*})\circ j_{f^*X}^*
\] for all $q$.
\end{rmk}
\begin{lem}\label{sclocalisation}
Let $f:T_1\rightarrow T_2$ be a morphism of topoi. Let $X$ be an object of $T_2$ such that $X\to e_{T_2}$ is a covering of the terminal object. If $f_{/X}$ is strongly compact, then so is $f$.
\end{lem}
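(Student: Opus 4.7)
The plan is to combine the base-change identity from Remark \ref{topoipb} with the descent principle of Remark \ref{rmk:localizediso}: since $X \to e_{T_2}$ is a covering, a morphism in $\Ab(T_2)$ is an isomorphism as soon as its restriction along $j_X^*$ is. So it suffices, for a filtered system $(A_i)_{i \in I}$ in $\Ab(T_1)$ and a fixed $q \in \N$, to show that the canonical comparison map
\[ \varphi : \mathrm{colim}_i\, R^q f_*(A_i) \longrightarrow R^q f_*\bigl(\mathrm{colim}_i A_i\bigr) \]
becomes an isomorphism after applying $j_X^*$.

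Next I would compute $j_X^*\varphi$ explicitly. Applying $j_X^*$ to the target and invoking the base-change identity $j_X^*\circ R^q f_* = R^q f_{/X,*}\circ j_{f^*X}^*$ rewrites it as $R^q f_{/X,*}\bigl(\mathrm{colim}_i\, j_{f^*X}^* A_i\bigr)$, using that the left adjoint $j_{f^*X}^*$ commutes with colimits. The strong compactness of $f_{/X}$ now allows me to pull the filtered colimit through $R^q f_{/X,*}$, and a further application of base change together with the colimit-preservation of the left adjoint $j_X^*$ identifies the result with $j_X^*\bigl(\mathrm{colim}_i\, R^q f_*(A_i)\bigr)$. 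Tracking naturality shows that this chain of identifications agrees with $j_X^*\varphi$, so $j_X^*\varphi$ is an isomorphism, and hence so is $\varphi$.

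I do not expect a substantive obstacle here: the genuine content has been packaged into Remarks \ref{topoipb} and \ref{rmk:localizediso}. The only care required is to verify that each identification in the chain is natural in $i$ and compatible with the comparison map $\varphi$, which follows directly from the functoriality of the base-change natural transformation and of the inverse-image functors $j_X^*$ and $j_{f^*X}^*$.
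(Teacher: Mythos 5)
Your argument is correct and is precisely the paper's proof, which simply says to combine \cref{rmk:localizediso} and \cref{topoipb}: you check the comparison map after restriction along the covering $X\to e_{T_2}$, use the base-change identity $j_X^*\circ R^qf_*=R^qf_{/X,*}\circ j_{f^*X}^*$ together with the fact that the inverse-image functors commute with colimits, and invoke the strong compactness of $f_{/X}$. The added naturality check you mention is routine and matches the intended reasoning.
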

\begin{proof}
Combine \cref{rmk:localizediso,topoipb}.
\end{proof}
\begin{lem}\label{compositionscmorphisms}
Strongly compact morphisms of topoi are stable by composition.
\end{lem}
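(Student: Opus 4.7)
The plan is to reduce the composition result to the Grothendieck (Leray) spectral sequence associated to $g\circ f$. Let $f:T_1\to T_2$ and $g:T_2\to T_3$ be strongly compact, and let $(A_i)_{i\in I}$ be a filtered system in $\Ab(T_1)$ with colimit $A=\varinjlim_i A_i$. I want to show that the canonical map $\varinjlim_i R^n(g\circ f)_*A_i\to R^n(g\circ f)_*A$ is an isomorphism for every $n\geq 0$.

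First, I will observe that $f_*$ sends injectives to injectives. This holds because $f^*$, as the inverse image part of a morphism of topoi, is exact and is left adjoint to $f_*$. Consequently, for every $B\in \Ab(T_1)$ there is a functorial first-quadrant Grothendieck spectral sequence
\[
E_2^{p,q}(B)=R^pg_*R^qf_*B\Longrightarrow R^{p+q}(g\circ f)_*B,
\]
and naturality in $B$ produces compatible morphisms of spectral sequences $E_r^{p,q}(A_i)\to E_r^{p,q}(A)$ whose colimit over $i$ receives a canonical map from $\varinjlim_i E_r^{p,q}(A_i)$.

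Second, I will rewrite the $E_2$-page of $A$ using the strong compactness hypotheses. Strong compactness of $f$ gives $R^qf_*A\cong \varinjlim_i R^qf_*A_i$, and then strong compactness of $g$, applied to the filtered system $(R^qf_*A_i)_i$ of abelian sheaves on $T_2$, yields
\[
E_2^{p,q}(A)=R^pg_*R^qf_*A\;\cong\;\varinjlim_i R^pg_*R^qf_*A_i\;=\;\varinjlim_i E_2^{p,q}(A_i).
\]
Since filtered colimits in $\Ab(T_3)$ are exact, they commute with taking kernels, images, and subquotients, and therefore propagate through the differentials of the spectral sequence. Hence the isomorphism on $E_2$-pages persists on every $E_r$-page and, because these are first-quadrant spectral sequences, the induced map on abutments is also an isomorphism. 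This gives the desired $R^n(g\circ f)_*A\cong \varinjlim_i R^n(g\circ f)_*A_i$.

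There is no substantive obstacle: the argument is essentially the standard compatibility between the Grothendieck spectral sequence and filtered colimits. The only point requiring a little care is the passage from ``isomorphism on every $E_r$-page'' to ``isomorphism on the abutment'', which is automatic because the spectral sequence is first-quadrant and hence degenerates in finitely many steps at each bidegree, so filtered colimits commute with the filtration on $R^n(g\circ f)_*$ as well.
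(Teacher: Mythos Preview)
Your proof is correct and follows essentially the same approach as the paper: both use the Grothendieck spectral sequence $E_2^{p,q}=R^pg_*R^qf_*A\Rightarrow R^{p+q}(g\circ f)_*A$ together with the strong compactness of $f$ and $g$ to show that $R^n(g\circ f)_*$ commutes with filtered colimits. Your version is simply more explicit about why the isomorphism on $E_2$-pages propagates to the abutment.
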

\begin{proof}
Let $f:T_1\to T_2$ and $g:T_2\to T_3$ be strongly compact morphisms of topoi. Let $h\coloneqq g\circ f:T_1\to T_3$. For all $A\in \Ab(T_1)$, we have a spectral sequence \[E_2^{p,q}=R^pg_*(R^qf_*A)\implies R^{p+q}h_* A.\] Since $R^nf_*$ and $R^ng_*$ commute with filtered colimits for all $n$, then so does $R^nh_*$.
\end{proof}
\begin{rmk} The topos $\CC$ of condensed sets is strongly compact. Indeed, the functor $-(*):\Ab(\CC)\to\Ab$ is exact and commutes with filtered colimits (see \cref{rmk:edtopspaces}, 2)). Consequently, if a topos is strongly compact over $\CC$, then it is strongly compact. 
\end{rmk}

A cofiltered limit of strongly compact topoi along strongly compact transition maps is strongly compact \cite[VI, Corollary 8.7.7]{SGA4}. In fact the same can be proved for strongly compact topoi over $\CC$. 
\begin{cns}\label{constructionlimittopos} Let $(T_i,f_{ji})_{i,j\in I}$ be a filtered projective system of topoi over $\CC$, where the maps $f_{ji}:T_j\to T_i$ are the transition maps. Then we set \[
T_{\infty}\coloneqq \underset{\substack{\leftarrow\\ i}}{\lim}\, T_i,
\] where the cofiltered limit of topoi is computed in the 2-category of topoi. For all $i$, we have a morphism $\pi_i:T_{\infty}\to T_i$. The topos $T_{\infty}$ is automatically a topos over $\CC$, and $\pi_i$ is a morphism of topoi over $\CC$ for all $i$.

Let $A_i\in \Ab(T_i)$ for all $i\in I$. Let $(\alpha_{ij}:f_{ji}^*A_i\to A_j)_{i,j\in I}$ be a family of morphisms such that \[\alpha_{ik}=\alpha_{jk}\circ f_{kj}^*(\alpha_{ij}):f_{ki}^*A_i=f_{kj}^*f_{ji}^*A_i\rightarrow f_{kj}^*A_j\rightarrow A_k.
\] 
The morphisms $(\pi_j^*(\alpha_{ij}):\pi_j^*f_{ji}^*A_i=\pi_i^*A_i\rightarrow \pi_j^*A_j)_{i,j\in I}$ yield a filtered inductive system of abelian objects $(\pi_i^*A_i)_{i\in I}$ in $T_{\infty}$. 
Then we set \[
A_{\infty}\coloneqq \underset{\substack{\rightarrow\\ i\in I}}{\lim}\, \pi_i^*A_i.
\] \end{cns}

\begin{lem}\label{lem:2}
Let $(T_i,f_{ji})_{i,j\in I}$ and $T_{\infty}$ be defined as in \cref{constructionlimittopos}. Suppose that $T_i$ is strongly compact over $\CC$ for all $i$, and that the transition maps $f_{ji}:T_j\to T_i$ are strongly compact. Then $T_{\infty}$ is strongly compact over $\CC$.
\end{lem}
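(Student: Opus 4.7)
The plan is to reduce the statement, via evaluation at extremally disconnected points, to the classical fact \cite[VI, Corollary 8.7.7]{SGA4} that a cofiltered limit of strongly compact topoi over $\Set$ with strongly compact transition maps is again strongly compact. By \cref{rmk:edtopspaces}~(3) a morphism in $\Ab(\CC)$ is an isomorphism as soon as it induces one on sections over every extremally disconnected $S$, and by \cref{rmk:edsections:enriched} those sections compute $\H^q(T_{\infty}/f_{\infty}^*S,\,-|_{f_{\infty}^*S})$. Since restriction along a localisation preserves filtered colimits, showing that $R^qf_{\infty,*}$ commutes with filtered colimits amounts to showing that for every extremally disconnected $S$ the topos $T_{\infty}/f_{\infty}^*S$ is strongly compact over $\Set$.

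The next step is to identify $T_{\infty}/f_{\infty}^*S\simeq \underset{\leftarrow\, i}{\lim}\, T_i/f_i^*S$ as a cofiltered limit of topoi over $\Set$, the transition maps being $f_{ji,/S}:T_j/f_j^*S\to T_i/f_i^*S$. This follows from the universal property of the limit topos together with the equality $f_{\infty}^*S=\pi_i^*f_i^*S$ and the fact, contained in \cref{topoipb}, that slicing commutes with the base change of topoi.

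To apply the SGA4 result I need each $T_i/f_i^*S\to\Set$ and each transition map to be strongly compact. For this I would first establish the following localisation principle: if $g:T\to T'$ is a strongly compact morphism of topoi and $X\in T'$, then $g_{/X}:T/g^*X\to T'/X$ is strongly compact. Indeed, every $B\in \Ab(T/g^*X)$ can be written $B=j_{g^*X}^*(j_{g^*X,!}B)$, and $j_{g^*X,!}$, being a left adjoint, commutes with filtered colimits; combining this with the formula $j_X^*\circ R^qg_*=R^qg_{/X,*}\circ j_{g^*X}^*$ of \cref{topoipb} and the exactness of $j_X^*$, a direct computation shows that $R^qg_{/X,*}$ commutes with filtered colimits. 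Applying this principle to $f_i$ (strongly compact over $\CC$) and to $f_{ji}$ (strongly compact by hypothesis) yields strong compactness of $f_{i,/S}:T_i/f_i^*S\to \CC/S$ and of $f_{ji,/S}$. Composing $f_{i,/S}$ with the strongly compact morphism $\CC/S\to \Set$ via \cref{compositionscmorphisms} gives strong compactness of $T_i/f_i^*S\to \Set$, and \cite[VI, Corollary 8.7.7]{SGA4} closes the argument.

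I expect the main subtle point to be the localisation principle above: transferring strong compactness from $g$ to $g_{/X}$ rests on the exactness of $j_!$, the identity $j^*j_!=\id$ on abelian sheaves, and the commutation formula of \cref{topoipb}. Once it is in hand, the rest of the proof is a formal assembly of the statements already recorded in the paper.
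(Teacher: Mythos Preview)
Your route is far more circuitous than the paper's two-line proof, which simply invokes \cite[VI, Corollary 8.7.6]{SGA4} to conclude that each projection $\pi_i:T_\infty\to T_i$ is strongly compact, and then factors $T_\infty\to\CC$ as $T_\infty\xrightarrow{\pi_i}T_i\xrightarrow{f_{T_i}}\CC$, finishing by \cref{compositionscmorphisms}.

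More importantly, the localisation principle you correctly isolate as the subtle point rests on a false identity. You write $B=j_{g^*X}^*(j_{g^*X,!}B)$; this is the assertion that the unit of the adjunction $j_{g^*X,!}\dashv j_{g^*X}^*$ is an isomorphism, i.e.\ that the abelian $j_{g^*X,!}$ is fully faithful. That holds only when $g^*X$ is subterminal. Already for the localisation of $\Set$ at a two-element set $X$ one has $\Ab(\Set/X)\cong\Ab\times\Ab$ with $j_{X,!}(A,B)=A\oplus B$, and then $j_X^*j_{X,!}(A,B)=(A\oplus B,A\oplus B)\not\cong(A,B)$. In your application $X=S$ is an extremally disconnected profinite set, which is not subterminal in $\CC$ as soon as $|S|>1$. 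The commutation formula of \cref{topoipb} therefore only shows that $R^qg_{/X,*}$ commutes with filtered colimits on the essential image of $j_{g^*X}^*$, a proper subcategory of $\Ab(T/g^*X)$; this is not enough to feed into \cite[VI, Corollary 8.7.7]{SGA4}, which needs genuine strong compactness of the slices and of the transition maps. Whether the localisation principle itself holds would require a different argument; the paper's approach via $\pi_i$ bypasses the question entirely.
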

\begin{proof}
By \cite[VI, Corollary 8.7.6]{SGA4}, the morphism $\pi_i:T_{\infty}\to T_i$ is strongly compact for all $i$. Since the morphism $T_{\infty}\to \CC$ is the composition of two strongly compact morphisms $\pi_i:T_{\infty}\to T_i$ and $f_{T_i}:T_i\to \CC$, the result follows from \cref{compositionscmorphisms}.
 \end{proof}

\begin{lem}\label{lem:3}
Let $(T_i,f_{ij})_{i,j\in I}$, $(A_i,\alpha_{ij})_{i,j\in I}$, $T_{\infty}$ and $A_{\infty}$ be defined as in \cref{constructionlimittopos}. Suppose that $T_i$ is strongly compact over $\CC$ for all $i$, and that the transition maps $f_{ji}:T_j\to T_i$ are strongly compact. Then the canonical morphism \[
\lim_{\substack{\rightarrow \\ i\in I}} \, \HH^q(T_i,A_i)\rightarrow \HH^q(T_{\infty},A_{\infty})
\] is an isomorphism for any integer $q$.
\end{lem}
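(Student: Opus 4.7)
The plan is to reduce this enriched continuity statement to the classical continuity of cohomology under cofiltered limits of topoi (in the style of \cite[VI, Corollary 8.7.7]{SGA4}) by evaluating the morphism on sections over extremally disconnected sets.

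First, by \cref{rmk:edtopspaces}, 3), it suffices to check that the morphism induces an isomorphism on $S$-sections for every extremally disconnected topological space $S$. Since filtered colimits in $\Ab(\CC)$ commute with $\Gamma(S,-)$ by \cref{rmk:edtopspaces}, 2), and \cref{rmk:edsections:enriched} identifies $\HH^q(T_i,A_i)(S)$ with $\H^q(T_i/f_{T_i}^*S,\, A_i|_{f_{T_i}^*S})$, the problem reduces to proving
\[
\lim_{\substack{\rightarrow \\ i\in I}} \H^q(T_i/f_{T_i}^*S,\, A_i|_{f_{T_i}^*S}) \xrightarrow{\sim} \H^q(T_\infty/f_{T_\infty}^*S,\, A_\infty|_{f_{T_\infty}^*S})
\]
for every integer $q$ and every extremally disconnected $S$.

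The second step is to verify that the localized system satisfies the hypotheses of the classical continuity theorem. Using \cref{topoipb} and the identities $f_{ji}^*f_{T_i}^*S = f_{T_j}^*S$ and $\pi_i^*f_{T_i}^*S = f_{T_\infty}^*S$, the localized topoi form a cofiltered system with transition maps $f_{ji/f_{T_i}^*S}$ whose limit is $T_\infty/f_{T_\infty}^*S$ (localization corresponds to base change along $j_S:\CC/S\to\CC$, which commutes with cofiltered limits of topoi). The sheaves $A_i|_{f_{T_i}^*S}$ form a compatible system in the sense of \cref{constructionlimittopos} with colimit $A_\infty|_{f_{T_\infty}^*S}$, since restriction preserves filtered colimits. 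Each $T_i/f_{T_i}^*S$ is strongly compact over $\Set$: the morphism to $\CC/S$ is the base change of the strongly compact $f_{T_i}:T_i\to\CC$, and $\CC/S\to\Set$ is strongly compact because $-(S)$ is exact on $\Ab(\CC)$ by \cref{rmk:edtopspaces}, 2); the result then follows by \cref{compositionscmorphisms}. Strong compactness of the transition maps $f_{ji/f_{T_i}^*S}$ follows by the same base-change argument, completing the reduction.

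The main obstacle I foresee is the justification that strong compactness is stable under base change along a localization morphism, which is needed both for the strong compactness of the localized topoi over $\Set$ and for that of the transition maps. The commutation $j_X^*R^qf_* = R^qf_{/X,*}j_{f^*X}^*$ from \cref{topoipb} directly shows that $R^qf_{/X,*}j_{f^*X}^*$ commutes with filtered colimits when $f$ does, but extracting the same property for $R^qf_{/X,*}$ alone requires a descent argument of the same nature as the base-change stability of coherent morphisms in \cite[VI]{SGA4}.
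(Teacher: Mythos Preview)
Your reduction to $S$-sections is correct and the identification of the localized system is fine, but the approach then diverges from the paper's and runs into the obstacle you yourself flag. The paper does \emph{not} localize at extremally disconnected $S$ at all; instead it works directly with the $\CC$-valued pushforwards. Concretely, the paper first invokes \cref{lem:2} to know that $T_\infty$ is strongly compact over $\CC$, so that $\HH^q(T_\infty,-)=R^qf_{T_\infty,*}$ commutes with filtered colimits and one may pull the colimit out of $A_\infty=\varinjlim_i\pi_i^*A_i$. It then applies \cite[VI, Corollary~8.7.5]{SGA4} --- which is already a relative statement about higher direct images to an arbitrary base --- to identify each $\HH^q(T_\infty,\pi_i^*A_i)$ with $\varinjlim_{j\to i}\HH^q(T_j,f_{ji}^*A_i)$, and concludes by a cofinality argument as in \cite[Lemma~8.2]{Mor1}.

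Your route, by contrast, commits you to checking that the localized topoi $T_i/f_{T_i}^*S$ and the localized transition maps are strongly compact over $\Set$, in order to feed them back into the classical form of the SGA4 continuity theorem. As you note, the identity $j_X^*R^qf_*=R^qf_{/X,*}j_{f^*X}^*$ only controls $R^qf_{/X,*}$ on sheaves of the form $j_{f^*X}^*A$; promoting this to commutation with filtered colimits of \emph{arbitrary} abelian sheaves on the slice is a genuine extra step that you have not carried out. The paper's argument sidesteps this entirely because the SGA4 results it cites hold over any base topos, so there is no need to descend to $\Set$ and then re-establish strong compactness after localization. If you wanted to salvage your approach without proving the base-change lemma, you could observe that all the sheaves appearing in your localized system are restrictions $A_i|_{f_{T_i}^*S}=j^*A_i$ and trace through the base-change identity directly --- but unwinding that is essentially the paper's computation carried out pointwise, so nothing is gained.
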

\begin{proof}
By \cref{lem:2} and \cite[VI, Corollary 8.7.5]{SGA4} one has \[
\HH^n(T_{\infty},A_{\infty})\cong \underset{\substack{\rightarrow\\ i}}{\lim}\, \HH^q(T_{\infty},\pi_i^*A_i)\cong \underset{\substack{\rightarrow\\ i}}{\lim}\,(\underset{\substack{\rightarrow\\ j\to i}}{\lim}\, \HH^q(T_j,f_{ji}^*A_i)).
\]  We conclude as in the proof of \cite[Lemma 8.2]{Mor1}.
\end{proof}

\subsection{The classifying topos of a condensed group}\label{bg}
\begin{defn} Let $G$ be a condensed group. We define its classifying topos $B_G$ as the category of objects $X$ of $\CC$ with a $G$-action $G\times X\to X$. Morphisms in $B_G$ are those morphisms in $\CC$ which are $G$-equivariant.\end{defn} 
\begin{rmk}The fact that $B_G$ is a topos follows from \cite[IV, \S 2.4]{SGA4}. \end{rmk}
\begin{prop}\label{prop:bgreplete}
Let $G$ be a condensed group. Then its classifying topos $B_G$ is replete.
\end{prop}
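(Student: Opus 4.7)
The plan is to deduce the repleteness of $B_G$ from that of the ambient condensed topos $\CC$, by exploiting the forgetful functor $U\colon B_G\to \CC$. Recall that a topos $T$ is replete if every tower $\cdots\to X_2\to X_1\to X_0$ of epimorphisms has projection $\lim_n X_n\to X_0$ still an epimorphism. That $\CC$ is itself replete can be checked directly: for extremally disconnected $S$, the functor $X\mapsto X(S)$ on $\CC$ is exact and commutes with all limits (\cref{rmk:edtopspaces}), so an epimorphism $X\to Y$ in $\CC$ is in particular surjective on $S$-sections; given a tower of epis and any $s_0\in X_0(S)$, one lifts inductively to obtain a compatible family in $\lim_n X_n(S)=(\lim_n X_n)(S)$, which exhibits the projection as an epi.

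Next I would collect the formal properties of $U$. Since $B_G$ consists of condensed sets equipped with a $G$-action, $U$ admits a left adjoint $F\colon X\mapsto G\times X$ (the free $G$-object) and a right adjoint $\underline{\Hom}(G,-)$ with translation $G$-action (the coinduction); moreover $U$ is faithful, because two $G$-equivariant maps agreeing as morphisms in $\CC$ are equal. From the existence of the left adjoint one deduces that $U$ preserves all limits, so limits in $B_G$ are computed in $\CC$ with the induced $G$-action. From the right adjoint one deduces that $U$ preserves all colimits, and in particular preserves epimorphisms. Faithfulness of $U$ together with this preservation further implies that $U$ reflects epimorphisms: if $Uf$ is epi in $\CC$ and $g,h\colon B\rightrightarrows C$ in $B_G$ satisfy $gf=hf$, then $Ug\circ Uf=Uh\circ Uf$ forces $Ug=Uh$, hence $g=h$ by faithfulness.

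Combining these facts yields the result: given a tower $\cdots\to X_2\to X_1\to X_0$ of epimorphisms in $B_G$, applying $U$ produces a tower of epis in $\CC$; since $U$ preserves limits, $U(\lim_n X_n)=\lim_n U(X_n)$; by repleteness of $\CC$ the projection $\lim_n U(X_n)\to U(X_0)$ is an epi in $\CC$; and by the reflection property just established, $\lim_n X_n\to X_0$ is an epi in $B_G$. The only mildly delicate point is to exhibit the coinduction right adjoint of $U$ in the condensed setting, but this is immediate because $\CC$ is cartesian closed and $G$ is an object of $\CC$. Thus the main work reduces to verifying the formal properties of $U$, each of which is short, and the substantive input is really just the repleteness of $\CC$.
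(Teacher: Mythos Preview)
Your argument is correct and essentially coincides with the paper's proof: under the identification $B_G/EG\cong\CC$ from \cite[Lemma~7,~i)]{Flach}, the pullback $j_{EG}^*$ of the localisation morphism is exactly your forgetful functor $U$, and the two key properties the paper invokes (reflecting epimorphisms, commuting with projective limits) are precisely what you establish via faithfulness and the two adjoints. The only difference is packaging: the paper cites general facts about localisation morphisms and \cite[Proposition~3.2.3]{proetale} for repleteness of $\CC$, whereas you unpack both points by hand.
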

\begin{proof}
One has a localization morphism $j_{EG}:B_{G}/EG\rightarrow B_G$. By \cite[Lemma 7, i)]{Flach} we have $B_{G}/EG=\CC$, which is replete by \cite[Proposition 3.2.3]{proetale}. Since $j_{EG}^*$ reflects epimorphisms and commutes with projective limits, the result follows.
\end{proof}
The construction of the classifying topos is functorial. For every morphism of condensed groups $g:G_1\to G_2$ we get a morphism of topoi \[
Bg:B_{G_1}\rightarrow B_{G_2}.
\] The pullback functor $Bg^*:B_{G_2}\to B_{G_1}$ sends $X\in B_{G_2}$ to itself with the action of $G_1$ induced by $g$. Moreover, if $g:G_1\to G_2$ and $g':G_2\to G_3$, then $B(g'\circ g)=Bg'\circ Bg$.
 \begin{ntt}We denote $Bg$ simply by $g$. \end{ntt}
 Let $G$ be a condensed group. The unique map $G\to \{*\}$ induces $f_G:B_G\to B_{\{*\}}=\CC$. We obtain functors \[
\HH^q(B_G,-)\coloneqq R^qf_{G*}(-):\Ab(B_G)\rightarrow \Ab(\CC),
\] as in \cref{stronglycompact}.
\begin{lem}\label{cartanleray}
Let $S$ be an extremally disconnected topological space and let $A\in \Ab(B_G)$. There is a Cartan-Leray spectral sequence \[\label{eqn:cartanleray}
   E_2^{p,q}= \H^p(\H^q(\CC;G^{\bullet}\times S, A))\implies \HH^{p+q}(B_G,A)(S),
\] which is functorial in $S$.
\end{lem}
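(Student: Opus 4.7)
The plan is to use the covering $EG \to e_{B_G}$, where $EG$ denotes $G$ with the left regular $G$-action, and to descend the \v{C}ech-to-derived spectral sequence associated to its \v{C}ech nerve from $\Ab(B_G)$ down to $\Ab(\CC)$ via the derived functor of $f_{G*}$; then evaluate at $S$.

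More concretely, I would choose an injective resolution $A \to I^\bullet$ in $\Ab(B_G)$ and consider the first-quadrant bicomplex in $\Ab(\CC)$ with $(p,q)$-term $f_{G*}((I^q)_{|EG^{p+1}})$, where the horizontal differential is the alternating sum of the face maps of the \v{C}ech nerve and the vertical differential comes from $I^\bullet$. Because $EG \to e_{B_G}$ is an epimorphism in $B_G$, its \v{C}ech nerve is a hypercover; the spectral sequence of this bicomplex filtered by columns therefore converges to $\HH^{p+q}(B_G, A)$, while the one filtered by rows gives
\[
E_1^{p,q} = \HH^q(B_G; EG^{p+1}, A) \Longrightarrow \HH^{p+q}(B_G, A)
\]
in $\Ab(\CC)$.

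I would then identify each $E_1$-term via the $G$-equivariant shear isomorphism
\[
EG^{p+1} \xrightarrow{\sim} EG \times G^p, \qquad (g_0, \dots, g_p) \mapsto (g_0,\, g_0^{-1}g_1,\, \dots,\, g_0^{-1}g_p),
\]
in which $G^p$ carries the trivial action. Combined with the equivalence $B_G/EG \simeq \CC$ already used in \cref{prop:bgreplete}, this yields $B_G/EG^{p+1} \simeq \CC/G^p$. Using \cref{rmk:edsections:enriched} (applied to $B_G \to \CC$) together with \cref{underlinedcohomology}, it follows that
\[
\HH^q(B_G; EG^{p+1}, A)(S) = \H^q(\CC; G^p \times S, A).
\]

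Finally, I would evaluate the spectral sequence at $S$. By \cref{rmk:edtopspaces}, 2), the functor $-(S): \Ab(\CC) \to \Ab$ is exact and hence preserves spectral sequences. Substituting the identification of the previous step yields
\[
E_2^{p,q} = \H^p\bigl(\H^q(\CC; G^\bullet \times S, A)\bigr) \Longrightarrow \HH^{p+q}(B_G, A)(S),
\]
with functoriality in $S$ automatic from the naturality of each step. The main technical point, and hence the main obstacle, is to verify that the horizontal differential on the $E_1$-page, which a priori is induced by the face maps of the \v{C}ech nerve of $EG$, coincides after the shear identification with the alternating sums of the natural maps between the cohomology groups $\H^q(\CC; G^p \times S, A)$ coming from the simplicial structure of $G^\bullet$; this is a standard but somewhat intricate combinatorial check, entirely analogous to the classical case of discrete group cohomology.
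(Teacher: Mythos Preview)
Your proof is correct and essentially matches the paper's: both use Cartan--Leray for the cover $EG\to e_{B_G}$ together with the shear identification $EG^{p+1}\cong EG\times G^p$ (trivial action on $G^p$) and the equivalence $B_G/EG\simeq\CC$. The only difference is organizational---the paper localizes at $f_G^*S$ from the outset, obtaining the ordinary Cartan--Leray spectral sequence in $\Ab$ for the cover $EG\times f_G^*S\to f_G^*S$ and then identifying the terms via \cite[Lemma~7(i)]{Flach}, whereas you build the spectral sequence in $\Ab(\CC)$ first and then evaluate at $S$; one notational slip to fix is that your bicomplex term should be the pushforward along $B_G/EG^{p+1}\to\CC$, since $(I^q)_{|EG^{p+1}}$ lives in the slice and $f_{G*}$ does not apply to it directly.
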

\begin{proof}
The morphism $G\times S \rightarrow S$ is an epimorphism in $\CC$. Consequently, $EG\times f_G^*S\to f_G^*S$ is an epimorphism in $B_G(\CC)$. We get a simplicial object \[\begin{tikzcd}
    \dotsb\arrow[r, altstackar=7] & S_2\arrow[r, altstackar=5] & S_1\arrow[r, altstackar=3] & S_0 
    \end{tikzcd}\]
 where $S_0=EG\times f_G^*S$ with projection on the second component towards $f_G^*S$, and \[
S_n=S_0\times_{f_G^*S}\times \dots \times_{f_G^*S} S_0,
\] the fiber product of $n+1$ copies of $S_0$, which is isomorphic over $f_G^*S$ to $EG\times f_G^*( G^n \times S)$. We have the Cartan-Leray spectral sequence \[
E_2^{p,q}=\H^p(\H^q(B_G;EG\times f_G^*(G^{\bullet}\times S), A))\implies \H^{p+q}(B_G;f_G^*S,A)
\] By \cite[Lemma 7, i)]{Flach}, with $X=G^n\times S$, we have
\[
\begin{split}
\H^q(B_G;EG\times f_G^*(G^{n}\times S), A)&=\H^q(B_G/EG\times f_G^*(G^n\times S), A|_{EG\times f_G^*(G^n\times S)})\\ &\cong \H^q(\CC/G^{n}\times S, A|_{G^n\times S})=\H^q(\CC;G^n\times S, A).
\end{split}
\] The result follows from \cref{rmk:edsections:enriched}.
\end{proof}
\begin{cor}\label{cartanleray2}
Let $G$ be a condensed group and let $A\in \Ab(B_G)$. There is a Cartan-Leray spectral sequence \[
E_2^{p,q}=\H^p(\HH^q(\CC;G^{\bullet},A))\implies \HH^{p+q}(B_G,A),
\] where $\HH^q(\CC;G^{\bullet},A)$ is defined in \cref{underlinedcohomology}.
\end{cor}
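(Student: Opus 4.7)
The plan is to rerun the Čech cover argument of \cref{cartanleray}, but without first localising at $f_G^*S$, so that the resulting spectral sequence is directly a spectral sequence in $\Ab(\CC)$ with abutment $\HH^{p+q}(B_G,A)$. Concretely, I apply $Rf_{G*}$ to the Čech nerve of the cover $EG\to e_{B_G}$ in $B_G$; under the standard $(x_0,\ldots,x_n)\mapsto(x_0,x_0^{-1}x_1,\ldots,x_0^{-1}x_n)$ change of variables, the $n$-th level of this nerve is isomorphic, as a $G$-object, to $EG\times f_G^*G^n$.

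The main step is to identify the $E_1$-page: for every $X\in\CC$, I need
\[
R^qf_{G*}\bigl(A|_{EG\times f_G^*X}\bigr)\;\cong\;\HH^q(\CC;X,A).
\]
This is an enriched form of the identification already used in \cref{cartanleray}. I would establish it by combining \cite[Lemma 7, i)]{Flach} (which supplies the equivalence $B_G/EG\simeq\CC$ over $\CC$) with the pullback square of \cref{topoipb}: together these identify the localised topos $B_G/(EG\times f_G^*X)$ with $\CC/X$ compatibly with the structural morphisms to $\CC$, so that the composite $B_G/(EG\times f_G^*X)\to B_G\to\CC$ is identified with the localisation $j_X\colon\CC/X\to\CC$. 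Specialising to $X=G^n$ makes the Čech-to-derived-functor spectral sequence of the cover $EG\to e_{B_G}$ take the form $E_1^{p,q}=\HH^q(\CC;G^p,A)\Rightarrow\HH^{p+q}(B_G,A)$, whose $E_2$-page is precisely $\H^p(\HH^q(\CC;G^\bullet,A))$.

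As a sanity check (and an alternative route), one can derive the corollary from \cref{cartanleray} itself. The pointwise spectral sequence there is natural in the extremally disconnected $S$; by \cref{rmk:edtopspaces}, 2), the evaluation $-(S)\colon\Ab(\CC)\to\Ab$ is exact and hence commutes with cohomology of the cosimplicial object $\HH^q(\CC;G^\bullet,A)$, so the $E_2$-pages match termwise; by \cref{rmk:edtopspaces}, 3), evaluation on extremally disconnected sets is conservative, so the natural system of differentials and filtrations glues to a spectral sequence in $\Ab(\CC)$. I expect the only delicate point in either approach to be bookkeeping around naturality---ensuring, in the primary route, that Flach's equivalence and the base-change square are genuinely over $\CC$, or, in the alternative route, that the filtration on the abutment given by \cref{cartanleray} is functorial in $S$. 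Neither presents a serious obstacle.
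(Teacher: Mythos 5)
Your proposal is correct, and your ``sanity check'' route is in fact exactly the paper's own proof: the paper deduces the corollary from \cref{cartanleray} by evaluating on extremally disconnected $S$, using exactness of $-(S)$ and \cref{rmk:edtopspaces}, 2) and 3), together with functoriality in $S$. Your primary route is a mild but genuine variant: instead of gluing pointwise spectral sequences, you build the spectral sequence directly in $\Ab(\CC)$ by applying $Rf_{G*}$ to the \v{C}ech nerve of the cover $EG\to e_{B_G}$ (a relative Cartan--Leray/descent spectral sequence), identifying the $E_1$-terms via $B_G/(EG\times f_G^*X)\simeq \CC/X$ over $\CC$. The two points you flag as ``bookkeeping'' are indeed the only ones needing care, and both are fine: Flach's equivalence $B_G/EG\simeq\CC$ is over $\CC$ (the composite $\CC\simeq B_G/EG\to B_G\to\CC$ is induced by $\{*\}\to G\to\{*\}$, hence the identity; this is used implicitly in the proof of \cref{prop:proHS}), so together with \cref{topoipb} the composite $B_G/(EG\times f_G^*X)\to\CC$ is $j_X$ and the $E_1$-term is $R^qj_{G^p,*}(A_{|G^p})=\HH^q(\CC;G^p,A)$; and the relative \v{C}ech spectral sequence itself is standard (for an injective $I\in\Ab(B_G)$ the augmented \v{C}ech complex $I\to j_{EG^{\bullet+1},*}(I_{|EG^{\bullet+1}})$ is an exact complex of injectives, hence split, so it stays exact after $f_{G*}$, and the usual double-complex argument applies). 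What the relative route buys is avoiding any discussion of assembling pointwise spectral sequences into one in $\Ab(\CC)$; what the paper's route buys is that, having already proved \cref{cartanleray} functorially in $S$, no new construction is needed.
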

\begin{proof}
By \cref{rmk:edtopspaces}, 2) and 3), it is enough to check that for all $S$ we have a spectral sequence \[
E_2^{p,q}=\H^p(\HH^q(\CC;G^{\bullet},A))(S)\implies \HH^{p+q}(B_G,A)(S)
\] which is functorial in $S$. This is \cref{cartanleray}.
\end{proof}
\begin{defn}\label{condensedgroupcohomology}
Let $G$ be a topological group, and let $A$ be a topological abelian group with a continuous action of $G$. We define the condensed cohomology of $G$ with coefficients in $A$ as $\HH^q(B_{\underline{G}},\underline{A})$.
\end{defn}
\begin{ntt}
If $X$ is a compact Hausdorff topological space/group/abelian group, we denote the condensed set/group/abelian group $\underline{X}$ just by $X$.
\end{ntt}
\begin{prop}\label{bgstronglycompact}
If $G$ is a compact Hausdorff topological group, $B_G$ is strongly compact over $\CC$.
\end{prop}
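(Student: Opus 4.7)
The plan is to use the Cartan--Leray spectral sequence of \cref{cartanleray2} to reduce to a cohomological finiteness statement about compact Hausdorff condensed sets. For a filtered diagram $(A_i)_{i\in I}$ in $\Ab(B_G)$, functoriality of that spectral sequence yields
\[
E_2^{p,q}(A_i) = H^p\bigl(\HH^q(\CC;G^\bullet,A_i)\bigr) \Longrightarrow \HH^{p+q}(B_G,A_i).
\]
This is a first-quadrant spectral sequence, so each bidegree stabilises after finitely many pages and each total-degree abutment carries a finite filtration. Since filtered colimits of abelian groups are exact, they commute with kernels, cokernels and finite extensions; hence proving that $\HH^q(B_G,-)$ commutes with filtered colimits reduces to proving the same for each $E_2^{p,q}$. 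Because filtered colimits also commute with the cohomology of a cosimplicial abelian group, it is enough to show that $\HH^q(\CC;G^n,-)\colon \Ab(\CC)\to \Ab(\CC)$ commutes with filtered colimits for every $n\geq 0$.

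By parts 2) and 3) of \cref{rmk:edtopspaces}, a map of condensed abelian groups is an isomorphism if and only if its evaluation on each extremally disconnected $S$ is, and the functor $A\mapsto A(S)$ is exact and commutes with all colimits. Using \cref{rmk:edsections:enriched}, the previous step therefore reduces to showing that
\[
A \;\longmapsto\; H^q\bigl(\CC/(G^n\times S),\,A_{|G^n\times S}\bigr)
\]
commutes with filtered colimits for every $n$ and every extremally disconnected $S$. Since $G$ (hence $G^n\times S$) is compact Hausdorff, the whole argument comes down to the following key assertion: for any compact Hausdorff object $X$ of $\CC$, the functor $H^q(\CC/X,-)\colon \Ab(\CC)\to \Ab$ commutes with filtered colimits.

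This last statement is the main obstacle. The strategy I would follow is to pick an extremally disconnected surjection $T\twoheadrightarrow X$, which exists for every compact Hausdorff $X$ (for instance $T=\beta(X^\delta)$, the Stone--\v{C}ech compactification of the underlying set of $X$ with the discrete topology), and to iterate the construction in the standard way to obtain an extremally disconnected hypercover $T_\bullet\to X$. The associated descent spectral sequence then computes $H^\ast(\CC/X,A_{|X})$ out of the sections $A(T_n)$, each of which commutes with filtered colimits by part 2) of \cref{rmk:edtopspaces}; one concludes by the same first-quadrant convergence argument used in the first paragraph. Equivalently, one may invoke the general fact (implicit in \cite{LCM}) that compact Hausdorff objects of $\CC$ are quasi-compact and quasi-separated, so that their cohomology with coefficients in condensed abelian sheaves commutes with filtered colimits.
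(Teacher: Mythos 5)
Your argument is correct and follows essentially the same route as the paper: reduce via the Cartan--Leray spectral sequence of \cref{cartanleray2} and exactness of filtered colimits in $\Ab(\CC)$ to the strong compactness of $\CC/(G^n\times S)$ over $\CC$, i.e.\ to the fact that cohomology of compact Hausdorff condensed sets commutes with filtered colimits. The only difference is that the paper simply cites \cite[Proposition 4.12]{LCM} for this last input, whereas you sketch its standard proof via an extremally disconnected hypercover (using that higher cohomology vanishes on extremally disconnected sets), which is fine.
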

\begin{proof}
By \cref{cartanleray2} we have a spectral sequence \[
E_2^{p,q}=\H^p(\HH^q(\CC;G^{\bullet},-)) \implies \HH^{p+q}(B_G,-).
\] Since filtered colimits are exact in $\Ab(\CC)$ (\cite[Theorem 2.2]{LCM}), it is enough to show that the topos $\CC/G^n$ is strongly compact over $\CC$ for all $n$. Since $G^n$ is compact Hausdorff for all $n$, this follows from \cite[Proposition 4.12]{LCM}. \end{proof}
\begin{prop}[Hochschild-Serre spectral sequence for topological groups]\label{hstopgrps}
Suppose that \[
1\rightarrow H \overset{i}{\rightarrow} G \overset{p}{\rightarrow} Q\rightarrow  1
\] is an exact sequence of topological groups, i.e.\ $Q$ is homeomorphic via $p$ to the coset space $G/H$, and $H$ is homeomorphic to the kernel of $p$ with its subspace topology. Suppose that $\underline{G}\to\underline{Q}$ is an epimorphism in $\CC$. Let $A$ be a topological $G$-module. Then the condensed abelian group $\HH^q(B_{\underline{H}},i^*\underline{A})$ carries a $\underline{Q}$-action for all $q$, and there is an Hochschild-Serre spectral sequence \[
E_2^{p,q}=\HH^p(B_{\underline{Q}},\HH^q(B_{\underline{H}},i^*\underline{A}))\Rightarrow \HH^{p+q}(B_{\underline{G}},\underline{A}).
\]
\end{prop}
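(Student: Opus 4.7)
The plan is to set this up as a Leray (Grothendieck) spectral sequence for the composition of morphisms of topoi
\[
B_{\underline{G}} \xrightarrow{\bar{p}} B_{\underline{Q}} \xrightarrow{f_{\underline{Q}}} \CC,
\]
where $\bar{p}$ is the morphism induced by the condensed group homomorphism $\underline{G}\to \underline{Q}$. Since $f_{\underline{Q}}\circ\bar{p} = f_{\underline{G}}$, standard homological algebra on the sheaf topoi over $\CC$ yields a spectral sequence
\[
E_2^{p,q} = \HH^p(B_{\underline{Q}}, R^q\bar{p}_*\underline{A}) \Longrightarrow \HH^{p+q}(B_{\underline{G}}, \underline{A}).
\]
The entire content of the proposition is then to identify $R^q\bar{p}_*\underline{A}$, as a $\underline{Q}$-equivariant condensed abelian group, with $\HH^q(B_{\underline{H}}, i^*\underline{A})$ endowed with its natural $\underline{Q}$-action.

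For this identification I would use the localization argument from \cref{topoipb}. Localizing $B_{\underline{Q}}$ along $EQ$ gives $B_{\underline{Q}}/EQ \simeq \CC$ by \cite[Lemma 7, i)]{Flach}, and forming the topos-theoretic pullback we obtain a commutative square
\[
\begin{tikzcd}
B_{\underline{G}}/\bar{p}^*EQ \arrow[r] \arrow[d, "\bar{p}_{/EQ}"'] & B_{\underline{G}} \arrow[d, "\bar{p}"] \\
\CC \arrow[r, "j_{EQ}"'] & B_{\underline{Q}}
\end{tikzcd}
\]
with $j_{EQ}^* \circ R^q\bar{p}_* = R^q(\bar{p}_{/EQ})_* \circ j_{\bar{p}^*EQ}^*$. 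The key geometric input is an equivalence of topoi $B_{\underline{G}}/\bar{p}^*EQ \simeq B_{\underline{H}}$ under which $\bar{p}_{/EQ}$ becomes $f_{\underline{H}}:B_{\underline{H}}\to \CC$ and the restriction $j_{\bar{p}^*EQ}^*\underline{A}$ becomes $i^*\underline{A}$. Granting this, we get
\[
j_{EQ}^*\bigl(R^q\bar{p}_*\underline{A}\bigr) \;\cong\; \HH^q(B_{\underline{H}}, i^*\underline{A}),
\]
which, because $EQ\to e_{B_{\underline{Q}}}$ is a covering and $j_{EQ}^*$ is conservative in the sense of \cref{rmk:localizediso}, determines $R^q\bar{p}_*\underline{A}$ uniquely as a $\underline{Q}$-condensed abelian group and produces the claimed $\underline{Q}$-action.

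The main obstacle is therefore establishing the equivalence $B_{\underline{G}}/\bar{p}^*EQ \simeq B_{\underline{H}}$. This is the topos-theoretic avatar of the familiar equivalence $G\text{-}\Set/(G/H)\simeq H\text{-}\Set$, but in the condensed world it requires the hypothesis that $\underline{G}\to\underline{Q}$ is an epimorphism in $\CC$: under this hypothesis $\bar{p}^*EQ$, namely $\underline{Q}$ viewed as a $\underline{G}$-object, is a $\underline{G}$-torsor under the subgroup $\underline{H}$, and pulling back along the section $\ast\to \underline{Q}$ picking out the identity implements the equivalence. The functor $B_{\underline{H}}\to B_{\underline{G}}/\bar{p}^*EQ$ is given by $Y\mapsto \underline{G}\times_{\underline{H}}Y \to \underline{Q}$, and its inverse takes $X\to \underline{Q}$ to the fiber over the identity, which carries an induced $\underline{H}$-action because $H$ is the topological kernel and the map $\underline{G}\to\underline{Q}$ is an epimorphism so that $\underline{G}\times_{\underline{Q}}\underline{G}\cong \underline{G}\times \underline{H}$. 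Once this is set up, verifying that under this identification the $\underline{G}$-module structure pulls back to the $\underline{H}$-module structure $i^*\underline{A}$ is straightforward, and the spectral sequence follows.
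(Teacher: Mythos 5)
Your proposal is correct and follows essentially the same route as the paper: the paper deduces this proposition as the constant-pro-group case of \cref{prop:proHS}, whose proof is exactly your argument — the Leray spectral sequence for $f_{\underline{Q}}\circ\bar{p}=f_{\underline{G}}$ combined with the localization square at $EQ$ and the equivalence $B_{\underline{H}}\simeq B_{\underline{G}}/\bar{p}^*EQ$ (which the paper obtains from \cite[IV, \S 5.8]{SGA4}, using the epimorphism hypothesis to know $\underline{Q}\cong\underline{G}/\underline{H}$ as condensed groups, rather than by your direct torsor construction). The only cosmetic difference is that you argue directly for condensed groups instead of specializing the pro-condensed statement.
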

\begin{proof}
This is a special case of \cref{prop:proHS}.
\end{proof}

\subsubsection{Comparison with continuous cohomology}\label{comparisoncontcohom}
 In \cref{cartanleray2} we can recover ``continuous cochains".
\begin{prop}\label{compactopen}
Let $G$ and $A$ be as in \cref{condensedgroupcohomology}. Suppose that $G^n$ is compactly generated for all $n$. Then for all $n\in\N$ the condensed abelian group $\HH^0(\CC;\underline{G^n},\underline{A})$ is represented by $\Cont(G^n,A)$ with the compact-open topology.
\end{prop}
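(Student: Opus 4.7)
The plan is to reduce everything to a statement about extremally disconnected test objects and then verify a naturality-compatible bijection $S$-sectionwise. By \cref{underlinedcohomology} combined with \cref{rmk:edsections:enriched}, for every extremally disconnected $S$ we have
\[
\HH^0(\CC;\underline{G^n},\underline{A})(S) \;=\; \H^0(\CC;\underline{G^n}\times S,\underline{A}) \;=\; \Hom_{\CC}(\underline{G^n}\times S,\underline{A}).
\]
On the other hand, $\underline{\Cont(G^n,A)_{co}}(S)=\Cont(S,\Cont(G^n,A)_{co})$ by definition of the condensed set attached to a topological space on extremally disconnected inputs. By \cref{rmk:edtopspaces}, 3), it suffices to construct a natural bijection between these two sets and check it assembles into a morphism of condensed sets.

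First I would observe that since $S$ is compact Hausdorff (hence locally compact Hausdorff) and $G^n$ is compactly generated, the topological product $G^n\times S$ is again compactly generated, and the product $\underline{G^n}\times S$ in $\CC$ agrees with $\underline{G^n\times S}$. The fully faithful embedding $\Top^{cg}\hookrightarrow \CC$ (coming from the very definition of $\kappa$-compactly generated spaces as those generated by compact Hausdorff test objects) then identifies
\[
\Hom_{\CC}(\underline{G^n}\times S,\underline{A}) \;=\; \Cont(G^n\times S, A).
\]
I would then apply the exponential law: to $f\colon G^n\times S\to A$ associate its transpose $\tilde{f}(s)=f(-,s)\colon G^n\to A$. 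Continuity of $\tilde{f}\colon S\to \Cont(G^n,A)_{co}$ follows from the standard argument using compactness of $K\subseteq G^n$ and an open cover coming from continuity of $f$; this yields a natural bijection with $\Cont(S,\Cont(G^n,A)_{co})$, and naturality in $S$ promotes it to an isomorphism of condensed sets.

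The main obstacle is the inverse direction of the exponential law, since $G^n$ is only assumed compactly generated and not locally compact Hausdorff, so the evaluation map $\Cont(G^n,A)_{co}\times G^n\to A$ need not be globally continuous. I would handle this by restricting to each compact $K\subseteq G^n$: using that $K$ is compact Hausdorff (hence normal), the evaluation $\Cont(G^n,A)_{co}\times K\to A$ is continuous, so given a continuous $\tilde{f}\colon S\to \Cont(G^n,A)_{co}$ the corresponding map $(g,s)\mapsto \tilde{f}(s)(g)$ is continuous on $K\times S$ for every compact $K\subseteq G^n$. Since $G^n$ is compactly generated and $S$ is compact Hausdorff, $G^n\times S$ is compactly generated with compacts of the form $K\times S$, so continuity on all such $K\times S$ implies continuity on $G^n\times S$. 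This closes the loop and completes the identification.
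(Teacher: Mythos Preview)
Your proof is correct and follows essentially the same approach as the paper: reduce to extremally disconnected $S$, identify $\HH^0(\CC;\underline{G^n},\underline{A})(S)$ with $\Hom_{\CC}(\underline{G^n\times S},\underline{A})\cong\Cont(G^n\times S,A)$ via full faithfulness (the paper cites \cite[Proposition 1.7]{LCM} for this), apply the exponential law, and conclude by \cref{rmk:edtopspaces}, 3). The only difference is that you spell out the exponential law carefully (handling the subtlety that $G^n$ need not be locally compact by testing on compacts $K\subseteq G^n$ and using that $G^n\times S$ is the colimit of the $K\times S$), whereas the paper simply asserts $\Cont(G^n\times S,A)\cong\Cont(S,\Cont(G^n,A))$ without comment.
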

\begin{proof}
For all $S$ extremally disconnected we have
\[
\begin{split}
\HH^0(\CC;\underline{G^n},\underline{A})(S)&=\Hom_{\CC}(\underline{G^n\times S},\underline{A})\cong\Cont(G^n\times S,A)\\&\cong\Cont(S,\Cont(G^n,A))\cong\underline{\Cont(G^n,A)}(S),\end{split}
\] where $\Cont(G^n,A)$ is given the compact-open topology. The second isomorphism is \cite[Proposition 1.7]{LCM}. The result follows from \cref{rmk:edtopspaces}, 3). \end{proof}

\begin{cor}\label{lowercohomologygroups}
Let $G$ and $A$ be as in \cref{condensedgroupcohomology}. Suppose that $G^n$ is compactly generated for all $n$. Then the following holds. \begin{enumerate}[1)]
\item The condensed abelian group $\HH^0(B_{\underline{G}},\underline{A})$ is represented by $A^G$, with the subspace topology induced by $A$.
\item Suppose that $A$ has the trivial action. Then $\HH^1(B_{\underline{G}},\underline{A})$ is represented by $\Hom^{cont}(G,A)$, with the subspace topology induced by the compact-open topology on $\Cont(G,A)$.
\item \label{vanishingcond} Suppose that $\HH^q(\CC;\underline{G^n},\underline{A})=0$ for all $q>0$ and for all $n$. Then $\HH^q(B_{\underline{G}},\underline{A})$ is computed by the complex \[
\underline{A}\rightarrow \underline{\Cont(G,A)}\rightarrow \underline{\Cont(G^2,A)}\rightarrow \dots,
\] where the mapping spaces are given the compact-open topology. In particular, the underlying abelian group $\H^q(B_{\underline{G}},\underline{A})=\HH^q(B_{\underline{G}},\underline{A})(*)$ coincides with continuous group cohomology.
\end{enumerate}
\end{cor}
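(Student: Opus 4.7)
The common tool is the Cartan--Leray spectral sequence of \cref{cartanleray2},
\[
E_2^{p,q}=\H^p\bigl(\HH^q(\CC;\underline{G}^{\bullet},\underline{A})\bigr)\Rightarrow \HH^{p+q}(B_{\underline{G}},\underline{A}),
\]
whose row $q=0$ is, by \cref{compactopen}, the cohomology of the cosimplicial condensed abelian group $\underline{\Cont(G^{\bullet},A)}$ equipped with the standard inhomogeneous continuous-cochain differentials. The three parts are extracted from this picture, using at several points the fact that if $f\colon A\to B$ is a continuous morphism of topological abelian groups then the kernel of $\underline{f}$ in condensed abelian groups is $\underline{\ker f}$ with the subspace topology.

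For (1) I would bypass the spectral sequence entirely and compute $f_{\underline{G}\,*}$ directly. Since $f_{\underline{G}}^{*}S$ is $S$ endowed with the trivial $\underline{G}$-action, for $S$ extremally disconnected
\[
\HH^0(B_{\underline{G}},\underline{A})(S)=\Hom_{B_{\underline{G}}}(f_{\underline{G}}^{*}S,\underline{A})=\Cont(S,A^{G}),
\]
because a continuous $G$-equivariant map from a space with trivial action must factor through $A^{G}\subseteq A$, which carries the subspace topology. This identifies $\HH^0(B_{\underline{G}},\underline{A})=\underline{A^{G}}$.

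For (3), the hypothesis kills every row $q>0$ of the $E_2$-page, so the spectral sequence degenerates and $\HH^p(B_{\underline{G}},\underline{A})=\H^p\bigl(\underline{\Cont(G^{\bullet},A)}\bigr)$. Evaluating at the point $*$, using the exactness of $-(*)$ from \cref{rmk:edtopspaces} 2) together with $\underline{\Cont(G^n,A)}(*)=\Cont(G^n,A)$, then recovers the classical continuous cochain complex.

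For (2), I would read $\HH^1$ off the low-degree edge of the same spectral sequence. With trivial action, the differential $d^{0}\colon \underline{A}\to \underline{\Cont(G,A)}$ sending $a\mapsto(g\mapsto g\cdot a-a)$ vanishes, so
\[
E_2^{1,0}=\ker\!\bigl(d^{1}\colon \underline{\Cont(G,A)}\to \underline{\Cont(G^{2},A)}\bigr),
\]
which by the standard formula for $d^1$ is the condensed subgroup $\underline{\Hom^{\mathrm{cont}}(G,A)}\subseteq\underline{\Cont(G,A)}$ carrying the induced subspace topology. Since $E_2^{1,0}$ sits on the bottom row, it is a permanent cycle and injects into $\HH^{1}(B_{\underline{G}},\underline{A})$ via the edge map. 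The main obstacle is showing that the other graded piece $E_{\infty}^{0,1}$ vanishes, so that this injection becomes an equality; I would handle it either by invoking the vanishing hypothesis of (3) in the settings where the corollary is applied, or by a direct torsor argument exploiting the localisation $B_{\underline{G}}/EG\simeq \CC$ to show that any $\underline{A}$-torsor on $B_{\underline{G}}$ with trivial action on $A$ trivialises after pullback to $\CC$ and is therefore determined by a continuous homomorphism $G\to A$.
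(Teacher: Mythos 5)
Parts (1) and (3) of your proposal are fine: (3) is exactly the paper's argument (kill the rows $q>0$, then evaluate at $*$ using exactness of $-(*)$), and for (1) your direct computation $\HH^0(B_{\underline{G}},\underline{A})(S)=\Hom_{B_{\underline{G}}}(f_{\underline{G}}^*S,\underline{A})=\Cont(S,A^G)$ is a correct, slightly more elementary substitute for the paper's identification of $\HH^0$ with $\ker(\underline{d^0})$ via \cref{cartanleray2,compactopen}.

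The genuine gap is in (2): you reduce the statement to $E_{\infty}^{0,1}=0$ but never prove it, and neither fallback you offer works under the stated hypotheses. Invoking the vanishing hypothesis of (3) proves a different statement (part (2) assumes only trivial action and compact generation, and its whole point is that no such vanishing is required), while the torsor argument is an unproven sketch whose key claim is essentially the vanishing in question, and it would in any case have to be run over every extremally disconnected $S$ to yield the condensed statement rather than just its underlying abelian group. What you missed is that $E_2^{0,1}$ vanishes for free, because of how the \v{C}ech nerve of $EG\times f_{\underline{G}}^*S\to f_{\underline{G}}^*S$ is indexed: the $p=0$ column of the spectral sequence of \cref{cartanleray2} is $\HH^q(\CC;\underline{G^0},\underline{A})=\HH^q(\CC;*,\underline{A})$, which is zero for all $q>0$, since by \cref{underlinedcohomology} its value on an extremally disconnected $S$ is $\H^q(\CC;S,\underline{A})\cong \Ext^q_{\Ab(\CC)}(\Z[S],\underline{A})=0$, the functor $\Hom_{\Ab(\CC)}(\Z[S],-)$ being exact for such $S$ (\cref{rmk:edtopspaces}, 2), as used in the proof of \cref{prop:discretegroupsections}). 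Hence $E_2^{0,1}$, a subobject of $\HH^1(\CC;*,\underline{A})$, is zero; the low-degree exact sequence then gives $\HH^1(B_{\underline{G}},\underline{A})=\H^1$ of the bottom row, and with trivial action this is $\ker(\underline{d^1})=\underline{\Hom^{cont}(G,A)}$ with the subspace topology, which is exactly how the paper concludes. So the missing step is easy to fill, but as written your proof of (2) is incomplete.
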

\begin{proof}
Let $d^0:A\to \Cont(G,A)$ and $d^1:\Cont(G,A)\to\Cont(G^2,A)$ be the continuous morphisms of topological abelian groups defined by \[
d^0(a)\coloneqq(g\mapsto ga-a), \qquad d^1(f)\coloneqq ((g_1,g_2)\mapsto g_1f(g_2)+f(g_1)-f(g_1g_2)).
\] By \cref{cartanleray2,compactopen} we have \[
\HH^0(B_{\underline{G}},\underline{A})=\H^0(\underline{A}\overset{\underline{d^0}}{\longrightarrow} \HH^0(\CC;\underline{G},\underline{A})\rightarrow\dots)=\mathrm{ker}(\underline{d^0}).
\] This proves 1). 

By \cref{cartanleray2} we have an exact sequence of condensed abelian groups \[
0\rightarrow \H^1(\HH^0(\CC;\underline{G}^{\bullet}))\rightarrow \HH^1(B_{\underline{G}},\underline{A})\rightarrow \H^0(\HH^1(\CC;\underline{G}^{\bullet},\underline{A})).
\] Since $\HH^1(\CC;*,\underline{A})=0$, we have $\H^0(\HH^1(\CC;\underline{G}^{\bullet},\underline{A}))=0$. By \cref{compactopen}, we get \[
\HH^1(B_{\underline{G}},\underline{A})=\H^1(\underline{A}\overset{\underline{d^0}}{\longrightarrow} \underline{\Cont(G,A)}\overset{\underline{d^1}}{\longrightarrow} \underline{\Cont(G^2,A)}).
\] Since the action on $A$ is trivial, $d^0$ is the zero morphism and $\mathrm{ker}(d^1)=\Hom^{cont}(G,A)$. This proves 2).

For 3), we just combine \cref{cartanleray2} with \cref{compactopen}.
\end{proof}
\begin{ex}\label{finitegrouptop}
If $G$ is a finite group, the condition in 3) is satisfied for any condensed abelian group $A$. In particular, the following holds:\begin{enumerate}[(i)]
\item if $A$ is represented by a discrete abelian group, then so is $\HH^q(B_G,A)$. Indeed, the functor $\Ab\to \Ab(\CC)$ is exact, hence the quotient of a discrete condensed abelian group by a discrete subgroup is discrete as well.
\item if $A$ is represented by a compact Hausdorff topological abelian group, then so is $\HH^q(B_G,A)$. Indeed, all the terms of the complex are compact Hausdorff, and the morphisms are closed. In addition to this, the functor $\Top\to\CC$ sends surjections between compact Hausdorff topological spaces to epimorphisms.
\end{enumerate}
\end{ex}
\begin{ex}
If $G$ is a profinite group, the condition in 3) is satisfied for any discrete abelian group $A$, and more generally if $A$ is a topological abelian group such that $\underline{A}$ is solid (\cite[Theorem 5.4, Corollary 6.1]{LCM}).
\end{ex}
\begin{ex}\label{ex:vectorspaces}
If $G$ is compact Hausdorff, the condition in 3) is satisfied for any Banach real vector space (\cite[Theorem 3.3]{LCM}).
\end{ex}

\begin{prop}\label{banachcoefficients}
Let $V$ be a Banach real vector space with a continuous action of a compact Hausdorff topological group $G$. Then we have \[
\HH^q(B_G,\underline{V})=0\qquad \forall q>0.
\]
\end{prop}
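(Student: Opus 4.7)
The plan is to apply Corollary \ref{lowercohomologygroups}(3) to compute $\HH^\bullet(B_G,\underline V)$ as the cohomology of a concrete complex of condensed abelian groups, and then to show vanishing in positive degrees by exhibiting $\underline V$ as a $G$-equivariant direct summand of an acyclic coinduced module via Haar integration.

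First, since each $G^n$ is compact Hausdorff (hence compactly generated) and $V$ is a Banach space, Example \ref{ex:vectorspaces} gives $\HH^q(\CC;\underline{G^n},\underline V)=0$ for every $q\ge 1$ and every $n$. By Corollary \ref{lowercohomologygroups}(3), $\HH^\bullet(B_G,\underline V)$ is therefore the cohomology of the complex
\[
\underline V\to\underline{\Cont(G,V)}\to\underline{\Cont(G^2,V)}\to\cdots
\]
of condensed abelian groups, with compact-open topologies and the standard non-homogeneous bar differentials.

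Next, I would write $V_0$ for $V$ endowed with the trivial $G$-action, and equip $\Cont(G,V_0)$ with the right-translation action $(h\cdot f)(g)\coloneqq f(gh)$. Using the unique Haar probability measure $\mu$ on $G$ (bi-invariant, since $G$ is compact) and the Bochner integral (which converges for any continuous $V$-valued function on a compact space, as $V$ is Banach), one defines continuous linear maps
\[
\iota\colon V\to\Cont(G,V_0),\ \ \iota(v)(g)=g\cdot v,\qquad \rho\colon\Cont(G,V_0)\to V,\ \ \rho(f)=\int_G g^{-1}f(g)\,d\mu(g).
\]
A direct calculation using bi-invariance of $\mu$ shows that both maps are $G$-equivariant and satisfy $\rho\circ\iota=\id_V$; continuity for the compact-open topology is immediate from the uniform boundedness of the $G$-action on $V$, which holds because $G$ is compact. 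Hence $\underline V$ is a $G$-equivariant direct summand of $\underline{\Cont(G,V_0)}$ inside $\Ab(B_G)$.

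It then remains to show that $\underline{\Cont(G,V_0)}$ is acyclic in positive degrees. Its continuous-cochain complex, identified with $\underline{\Cont(G^{\bullet+1},V_0)}$ via the exponential law for compact Hausdorff factors, admits a contracting homotopy in positive degrees given by evaluation of the first coordinate at the identity,
\[
s^n\colon \Cont(G^{n+1},V_0)\to\Cont(G^n,V_0),\qquad s^n(F)(g_0,\dots,g_{n-1})=F(e,g_0,\dots,g_{n-1}),
\]
as a direct simplicial calculation shows. Since each $s^n$ is continuous for the compact-open topology, it lifts to a contracting homotopy in $\Ab(B_G)$, yielding $\HH^q(B_G,\underline{\Cont(G,V_0)})=0$ for $q\ge1$, and hence the same vanishing for the direct summand $\underline V$. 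The main subtlety is the classical verification of $G$-equivariance of $\rho$ (hinging on bi-invariance of $\mu$) and of the contracting-homotopy identity for $s^n$; the specifically condensed input is only the observation that all the auxiliary maps (action, integration, evaluation) are continuous for the compact-open topology and therefore define morphisms of condensed abelian groups.
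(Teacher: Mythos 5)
Your proof is correct, but it takes a genuinely different route from the paper. The paper also starts from \cref{ex:vectorspaces} and \cref{lowercohomologygroups}, 3), but then simply quotes the classical vanishing theorem for continuous cohomology of compact groups with Banach coefficients (\cite[IX, Prop.\ 1.12]{Borel}), which gives exactness of the continuous-cochain complex only on the underlying vector spaces; the condensed statement is then recovered through \cref{frechetcomplex}, i.e.\ the Baire-category/open-mapping fact that a complex of Banach spaces exact on underlying vector spaces is exact as a complex of condensed $\R$-modules. You instead inline the classical averaging argument and keep track of continuity throughout: the Haar-integration splitting $\rho\circ\iota=\id$ exhibits $\underline{V}$ as a retract of the coinduced module $\underline{\Cont(G,V_0)}$ in $\Ab(B_{\underline{G}})$, and the contracting homotopy $s^n$ (your formula is correct for the convention in which the coinduction variable is the first coordinate and the action is right translation) consists of continuous maps, so the acyclicity is obtained directly at the level of complexes of condensed abelian groups and \cref{frechetcomplex} is never needed. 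The trade-off is clear: the paper's proof is a two-line citation but requires the open-mapping-type upgrade from underlying exactness to condensed exactness, while yours is self-contained and purely internal to $\Ab(\CC)$, at the cost of redoing the Borel--Wallach computation. Two small points you should make explicit: the identification of $\HH^{\bullet}(B_G,\underline{\Cont(G,V_0)})$ with its continuous-cochain complex again uses \cref{ex:vectorspaces} applied to the Banach space $\Cont(G,V_0)$ (immediate, since $G$ is compact), and the uniform boundedness of the operators $g\in G$ is Banach--Steinhaus applied to the (compact, hence bounded) orbit maps, the action being automatically $\R$-linear because continuous additive maps of real topological vector spaces are linear.
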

\begin{proof}
We just combine \cref{ex:vectorspaces} and \cite[IX, Prop 1.12]{Borel}. The exact sequence of Banach spaces is also exact as sequence of condensed abelian groups by the following lemma. \end{proof}

\begin{lem}\label{frechetcomplex}
A complex of Frechet (resp.\ Banach) spaces is exact as a complex of condensed $\R$-modules if and only if it is exact on the underlying vector spaces.
\end{lem}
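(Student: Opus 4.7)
The plan is to treat the two directions separately. The forward direction is immediate: evaluation at the one-point extremally disconnected set recovers the underlying abelian group, and the functor $(-)(*)\colon \Ab(\CC)\to \Ab$ is exact by \cref{rmk:edtopspaces}, 2). Hence exactness as condensed $\R$-modules forces exactness on underlying vector spaces.

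For the converse, I would first reduce to the case of strict short exact sequences. If a complex $\cdots\to V^{n-1}\xrightarrow{d^{n-1}} V^n\xrightarrow{d^n} V^{n+1}\to\cdots$ of Fréchet spaces is algebraically exact, then $\ker(d^n)$ is closed in $V^n$ and thus Fréchet, while the corestricted map $V^{n-1}\to \ker(d^n)$ is a continuous linear surjection between Fréchet spaces. By the open mapping theorem for Fréchet spaces it is automatically strict, so the complex decomposes into strict short exact sequences $0\to \ker(d^n)\to V^n\to \ker(d^{n+1})\to 0$. It therefore suffices to prove that any strict short exact sequence $0\to V'\to V\xrightarrow{\pi} V''\to 0$ of Fréchet spaces is exact in $\Ab(\CC)$.

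Injectivity of $\underline{V'}\to \underline{V}$ on sections is clear, and a continuous map $S\to V$ factors algebraically through $V'=\ker(\pi)$ if and only if it factors topologically (since $V'$ carries the subspace topology from $V$), so $\underline{V'}$ is identified with the kernel of $\underline{\pi}$. The nontrivial point is surjectivity of $\underline{\pi}$ in $\Ab(\CC)$, which by \cref{rmk:edtopspaces}, 3) can be detected by showing that for every extremally disconnected $S$ the map $\Cont(S,V)\to \Cont(S,V'')$ is surjective. For this I would invoke the Bartle--Graves continuous selection theorem: the strict surjection $\pi\colon V\to V''$ admits a continuous (in general nonlinear) set-theoretic section $\sigma\colon V''\to V$, so any continuous map $f\colon S\to V''$ lifts to $\sigma\circ f\colon S\to V$. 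The main obstacle is precisely this surjectivity step, which relies on Bartle--Graves in its Fréchet version (obtainable from Michael's continuous selection theorem); everything else is a formal consequence of the open mapping theorem and the definition of the condensed structure on a Fréchet space.
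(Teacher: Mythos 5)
Your argument is correct, but it takes a different route from the paper, whose entire proof is a citation: the statement is deduced from a Baire category lemma in the condensed literature (\cite[Lemma 11.2]{LCG}). Your forward direction (exactness of $(-)(*)$) is exactly what one expects, and your reduction of the converse to strict short exact sequences via the open mapping theorem, together with the identification $\underline{\ker(d^n)}=\ker(\underline{d^n})$, is sound; note only that what you need is that surjectivity on $S$-valued points for all extremally disconnected $S$ forces the image subsheaf to coincide with the target (an application of \cref{rmk:edtopspaces}, 3) to the inclusion of the image), since item 3) itself speaks of isomorphisms rather than epimorphisms. Where you genuinely diverge is the surjectivity step: you produce a global continuous (nonlinear) section of the strict quotient $\pi\colon V\to V''$ via Bartle--Graves, i.e.\ Michael's convex-valued selection theorem applied to the lower semicontinuous map $v''\mapsto \pi^{-1}(v'')$ (lower semicontinuity coming from openness of $\pi$); the Fréchet version of Michael's theorem does hold, so this is legitimate, but you should cite it explicitly since the classical Bartle--Graves statement is for Banach spaces. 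The standard condensed argument behind the paper's citation instead lifts compact subsets of $V''$ to compact subsets of $V$ by a direct Baire category argument and then uses projectivity of extremally disconnected sets to split $S\times_{V''}K\to S$. Your route buys a stronger conclusion — surjectivity of $\Cont(S,V)\to\Cont(S,V'')$ for \emph{every} topological space $S$, with no appeal to extremal disconnectedness — at the cost of a heavier selection-theoretic input; the paper's route is shorter and stays within tools already standard in the condensed setting (ultimately both rest on Baire category, which also underlies your open mapping and selection theorems).
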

\begin{proof}
This is a consequence of the Baire category theorem, see \cite[Lemma 11.2]{LCG}. 
\end{proof}

\subsubsection{Discrete coefficients}\label{discretecoefficients}
\begin{rmk}\label{respectscoproducts}
The functor $\underline{(-)}:\Top\rightarrow \CC$ respects coproducts. Indeed, let $\{X_i\}_{i\in I}$ be a family of topological spaces. For all $i$, the inclusion $X_i\hookrightarrow \sqcup_{i\in I} X_i$ is a closed and open immersion. Hence, if $S$ a profinite set and $g$ is a continuous map $S\to \sqcup_{i\in I} X_i$, there exists a partition $S=S_1\sqcup \dots \sqcup S_n$, with $S_i$ profinite, such that $g_{|S_j}$ factors through some $X_{i_j}$.
\end{rmk}
We call $f$ the unique morphism of topoi $\CC\to \Set$. In particular, we have $f_*=-(*)$.
\begin{rmk}
Let $X$ be a set, and let $X^{\delta}$ be the topological space defined by $X$ with the discrete topology. By \cref{respectscoproducts}, $f^*X$ is naturally isomorphic to $\underline{X^{\delta}}$ in $\CC$. \end{rmk}
\begin{defn}
A condensed set $T$ is \emph{discrete} if the natural map $\underline{T(*)^{\delta}}\to T$, or equivalently $f^*f_*T\to T$, is an isomorphism.
\end{defn}
\begin{lem}\label{discretecriterion}
A condensed set $T$ is discrete if and only if for every extremally disconnected $S=\pro{i}{S_i}$\[
\lim_{\substack{\rightarrow\\ i}} T(S_i)\cong T(S).
\]
\end{lem}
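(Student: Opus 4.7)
The plan is to prove both implications by unfolding the definition of discreteness, which amounts to the assertion that the counit $f^*f_*T \to T$ is an isomorphism, i.e.\ $T \simeq \underline{X^\delta}$ where $X := T(*)$.

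For the forward direction, assume $T$ is discrete, so $T(S) = \Cont(S, X^\delta)$ for every extremally disconnected $S$. Given a cofiltered presentation $S = \pro{i}{S_i}$ in compact Hausdorff spaces, I will invoke the classical fact that a continuous map from a cofiltered limit of compact Hausdorff spaces to a discrete target factors through one of the stages: compactness of $S$ forces the image in $X^\delta$ to be finite, and the associated finite clopen partition of $S$ is, by the universal property of cofiltered limits of compact Hausdorff spaces, pulled back from some $S_i$. This yields $\ind{i}{\Cont(S_i,X^\delta)} \cong \Cont(S, X^\delta) = T(S)$, and one checks this isomorphism agrees with the canonical comparison map.

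For the reverse direction, assume the colimit property holds. Every extremally disconnected $S$ is in particular profinite, hence expressible as $S = \pro{i}{S_i}$ with the $S_i$ finite sets. For a finite set $S_i = \sqcup_{s \in S_i} *$, \cref{rmk:edtopspaces} 1) gives $T(S_i) = T(*)^{S_i}$, i.e.\ the set of functions $S_i \to X$. Passing to the filtered colimit, $\ind{i}{T(*)^{S_i}}$ is naturally identified with the set of locally constant (equivalently, continuous) functions $S \to X^\delta$, which is $\Cont(S,X^\delta) = \underline{X^\delta}(S) = (f^*f_*T)(S)$. The hypothesis then provides the identification $(f^*f_*T)(S) \cong T(S)$, and by \cref{rmk:edtopspaces} 3) this suffices to conclude that $f^*f_*T \to T$ is an isomorphism in $\CC$.

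The main obstacle is verifying that the comparison map constructed via the filtered colimit coincides with the adjunction counit, rather than being merely an abstract bijection of sets. This is a bookkeeping step which requires making explicit, for each finite quotient $S \to S_i$, the map $T(*)^{S_i} = T(S_i) \to T(S)$ and checking compatibility with the description of sections of $f^*f_*T$ on extremally disconnected spaces.
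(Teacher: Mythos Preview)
Your proof is correct and follows essentially the same approach as the paper. The paper organizes it slightly more economically by establishing the unconditional identity $\ind{i}{T(S_i)} \cong \underline{T(*)^{\delta}}(S)$ once (via $T(S_i)=\prod_{s\in S_i}T(*)=\Cont(S_i,T(*)^{\delta})$ for finite $S_i$, then passing to the colimit), after which both implications follow immediately from \cref{rmk:edtopspaces}, 3); you arrive at the same computation but carry it out separately in each direction.
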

\begin{proof}
By \cref{rmk:edtopspaces}, 3), $T$ is discrete if and only if the morphism $\underline{T(*)^{\delta}}(S)\to T(S)$ is an isomorphism for all $S$ extremally disconnected.
 For all $S=\pro{i}{S_i}$ extremally disconnected we have \[
\begin{split} \lim_{\substack{\rightarrow\\i}} T(S_i)&=\lim_{\substack{\rightarrow\\i}} \prod_{s\in S_i} T(*)=\lim_{\substack{\rightarrow\\i}} \mathrm{Maps}(S_i,T(*))\\&=\lim_{\substack{\rightarrow\\i}} \Cont(S_i,T(*)^{\delta})=\Cont(S,T(*)^{\delta})\\&=\underline{T(*)^{\delta}}(S), \end{split}
\] where the second-to-last equality is a computation of $0$th cohomology with discrete coefficients (\cite[Chapter X, Theorem 3.1]{foundations}). The result follows.
\end{proof}
\begin{prop}\label{discretemonoepi} Let $t:T'\to T$ be a morphism of condensed sets.
\begin{enumerate}[i)]
\item Suppose that $t$ is a monomorphism.  If $T$ is discrete, then so is $T'$. 
\item Suppose that $t$ is an epimorphism. If $T'$ is discrete, then so is $T$.
\end{enumerate}
\end{prop}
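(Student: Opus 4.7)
The plan is to prove part (i) directly from the disjoint-union sheaf axiom on sections over extremally disconnected sets, and to deduce part (ii) from part (i) using the fact that the Grothendieck topos $\CC$ has effective epimorphisms. The main obstacle is the finite-partition argument in part (i); once that is done, part (ii) is essentially formal.

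For part (i), since $T$ is discrete I identify $T=f^*X$ with $X=T(*)$, and set $X':=T'(*)\subseteq T(*)=X$, the inclusion coming from applying $(-)(*)$ to the monomorphism $t$. I claim $T'=f^*X'$ as subobjects of $f^*X$. To check this on sections, fix an extremally disconnected $S$. An element $y\in T(S)=\Cont(S,X^{\delta})$ is a locally constant map $S\to X$; by compactness of $S$, there is a finite clopen decomposition $S=S_1\sqcup\cdots\sqcup S_n$ on whose pieces $y$ is constant with values $x_i\in X$. Using the disjoint-union sheaf axiom from \cref{rmk:edtopspaces}, 1), one has $T'(S)=\prod_i T'(S_i)\hookrightarrow \prod_i T(S_i)=T(S)$, so $y\in T'(S)$ if and only if each $y|_{S_i}\in T'(S_i)$. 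Restricting $y|_{S_i}$ further along any point inclusion $\{*\}\hookrightarrow S_i$ (permissible, since singletons are extremally disconnected) gives $x_i\in T'(*)=X'$; conversely, for $x_i\in X'$, the pullback of $x_i\in T'(*)$ along $S_i\to *$ gives an element of $T'(S_i)$ which equals $y|_{S_i}$ when viewed in $T(S_i)$. This yields $T'(S)=\Cont(S,X'^{\delta})=f^*X'(S)$, so $T'$ is discrete.

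For part (ii), since $\CC$ is a Grothendieck topos, every epimorphism is effective, so $T=\mathrm{coeq}_{\CC}(R\rightrightarrows T')$ where $R:=T'\times_T T'$ is the kernel pair of $t$. Writing $T'=f^*X$, the functor $f^*$ preserves finite products (being left exact as part of a geometric morphism), so $T'\times T'=f^*(X\times X)$ is discrete. Then $R$ is a subobject of a discrete condensed set, hence discrete by part (i): $R=f^*Y$ for some $Y\subseteq X\times X$. Since $f^*$ is a left adjoint it preserves all colimits, and therefore $T=f^*\bigl(\mathrm{coeq}_{\Set}(Y\rightrightarrows X)\bigr)$, which is discrete.
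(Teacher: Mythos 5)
Your proof is correct, but it follows a genuinely different route from the paper's. The paper handles both parts uniformly through the continuity criterion of \cref{discretecriterion}: writing an extremally disconnected $S$ as a cofiltered limit of finite sets $S_j$, it compares the maps $\lim_{\rightarrow j} T'(S_j)\to T'(S)$ and $\lim_{\rightarrow j} T(S_j)\to T(S)$; for (i) it descends a section of $T'$ from $S$ to some finite level $S_j$ using the sheaf (equalizer) condition along the surjection $S\to S_j$, and for (ii) it is a short injectivity/surjectivity chase. Your part (i) instead identifies $T'$ directly with the discrete condensed set on $T'(*)$ by splitting a locally constant section into its finitely many clopen level sets; this is in effect a self-contained re-proof of the relevant instance of the criterion, and it is fine (you correctly use that a monomorphism of condensed sets is injective on sections, and that clopen subsets of extremally disconnected spaces are extremally disconnected). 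Your part (ii) is the real departure: instead of chasing sections you use that every epimorphism in the Grothendieck topos $\CC$ is effective, that $f^*$ is left exact and colimit-preserving, and then reduce to part (i) applied to the kernel pair sitting inside the discrete object $T'\times T'$. This makes the categorical mechanism transparent (the discrete objects are the essential image of $f^*$, closed under finite limits, colimits, and, by (i), subobjects), at the cost of invoking topos-theoretic input the paper's elementary argument avoids. Two small points you should make explicit: the kernel-pair projections $f^*Y\rightrightarrows f^*X$ are of the form $f^*$ of maps $Y\rightrightarrows X$, which follows from the adjunction $\Hom_{\CC}(f^*Y,f^*X)\cong \Hom_{\Set}(Y,f_*f^*X)=\Hom_{\Set}(Y,X)$ (full faithfulness of $f^*$), and an object isomorphic to some $f^*Z$ is indeed discrete because the unit $\mathrm{id}\to f_*f^*$ is an isomorphism, hence so is the counit $f^*f_*T\to T$ on such objects; both are one-line facts, so these are presentational gaps rather than mathematical ones.
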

\begin{proof}
We first prove i). Let $S=\lim_j S_j$ be an extremally disconnected topological space. We have the commutative diagram \begin{equation}\label{diagdiag}
\begin{tikzcd}
\underset{\rightarrow\, j}{\lim}\, T'(S_j)\ar[r]\ar[d] & \underset{\rightarrow\, j}{\lim}\, T(S_j)\arrow[d]\\
T'(S)\arrow[r]&T(S),
\end{tikzcd}
\end{equation} where the map on the right is an isomorphism, and all maps are injective. If the map on the left is surjective, we conclude by \cref{discretecriterion}. Take $a\in T'(S)$. By the diagram above $a\in T(S_i)$ for some $i$. We need to show that $a\in T'(S_i)$. This follows from the sheaf condition on $T'$, namely $T'(S_i)$ is the equaliser of $T'(p_1),T'(p_2):T'(S)\to T'(S\times_{S_i} S)$. 

We prove ii). Let $S=\lim_j S_j$ be an extremally disconnected topological space. The diagram \eqref{diagdiag} is such the map on the left is an isomorphism, and all maps are surjective. Moreover, the map \[\underset{\substack{\rightarrow\\ j}}{\lim}\, T(S_j)\rightarrow T(S)\] is injective, being a filtered colimit of injective maps. We conclude by \cref{discretecriterion}.
\end{proof}
\begin{prop}\label{discreteextensions}
Let $0\rightarrow A'\rightarrow A\rightarrow A''\rightarrow 0$ be an exact sequence of condensed abelian groups. Then $A'$ and $A''$ are both discrete if and only if $A$ is discrete.
\end{prop}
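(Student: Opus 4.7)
My plan is to treat the two implications separately. The ``only if'' direction is an immediate consequence of \cref{discretemonoepi}, while the ``if'' direction reduces to a five-lemma argument applied to the endofunctor $f^{*}f_{*}$.

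Suppose first that $A$ is discrete. Since $A'\hookrightarrow A$ is a monomorphism of underlying condensed sets, \cref{discretemonoepi} i) forces $A'$ to be discrete. Similarly, $A\twoheadrightarrow A''$ is an epimorphism of underlying condensed sets (epimorphisms in $\Ab(\CC)$ coincide with those in $\CC$, as is standard for the abelian category of an additive site), so \cref{discretemonoepi} ii) gives that $A''$ is discrete.

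For the converse, assume $A'$ and $A''$ are discrete. I would apply the endofunctor $f^{*}f_{*}$ to the short exact sequence. The key input is that $f^{*}f_{*}$ is exact: indeed $f_{*}=-(*)$ is exact by \cref{rmk:edtopspaces}, 2), and $f^{*}$ is exact as the inverse image of a geometric morphism (it preserves finite limits as a topos inverse image, and all colimits as a left adjoint). The counit $f^{*}f_{*}\to\mathrm{id}$ thus yields a commutative diagram with exact rows
\[
\begin{tikzcd}
0 \arrow[r] & f^{*}f_{*}A' \arrow[r] \arrow[d] & f^{*}f_{*}A \arrow[r] \arrow[d] & f^{*}f_{*}A'' \arrow[r] \arrow[d] & 0 \\
0 \arrow[r] & A' \arrow[r] & A \arrow[r] & A'' \arrow[r] & 0,
\end{tikzcd}
\]
whose outer vertical arrows are isomorphisms by hypothesis. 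The five lemma then forces the middle arrow to be an isomorphism as well, i.e.\ $A$ is discrete.

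The only non-formal point is the exactness of $f^{*}f_{*}$, which is already recorded in the excerpt; the rest is a five-lemma chase. An equivalent alternative would be to work sections-wise using \cref{discretecriterion}: for each extremally disconnected $S=\lim_{i} S_{i}$, the exactness of $\Gamma(S,-)$ on $\Ab(\CC)$ together with the exactness of filtered colimits in $\Ab(\CC)$ yield the two exact rows of the analogous diagram on sections, and the same five-lemma argument concludes.
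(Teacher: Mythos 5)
Your proof is correct. The ``only if'' half is exactly the paper's: both directions of \cref{discretemonoepi} applied to the underlying condensed sets (your parenthetical that monos/epis in $\Ab(\CC)$ are monos/epis in $\CC$ is the same implicit step the paper takes). For the converse your main argument differs from the paper's: you work globally with the counit $f^{*}f_{*}\to\mathrm{id}$, using exactness of $f_{*}=-(*)$ (\cref{rmk:edtopspaces}, 2)) and of $f^{*}:\Ab\to\Ab(\CC)$ (recorded earlier, e.g.\ in \cref{finitegrouptop}, (i)), and then the short five lemma in the abelian category $\Ab(\CC)$; this is clean, avoids any sectionwise computation, and makes transparent that the essential image of $f^{*}$ is closed under extensions whenever $f^{*}f_{*}$ is exact and the counit detects membership. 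The paper instead argues sectionwise: for $S=\lim_{j}S_{j}$ extremally disconnected it applies the Snake Lemma to the comparison map $\lim_{j}(-)(S_{j})\to(-)(S)$, invoking the colimit criterion of \cref{discretecriterion} (with exactness of evaluation at extremally disconnected sets and of filtered colimits in $\Ab$ supplying the exact rows) --- which is precisely the ``equivalent alternative'' you sketch at the end. The two arguments are morally the same five-lemma/snake-lemma chase, packaged either through the abstract characterisation $f^{*}f_{*}A\cong A$ or through its concrete sectionwise reformulation; no gap in either.
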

\begin{proof}
If $A$ is discrete, then so are $A'$ and $A''$ by \cref{discretemonoepi}. Conversely, let $S=\lim_j S_j$ be an extremally disconnected topological space. The result follows from \cref{discretecriterion} and the Snake Lemma applied to \[\begin{tikzcd}
0\arrow[r] & \underset{\substack{\rightarrow\\j}}{\lim} \, A'(S_j)\arrow[r]\arrow[d] & \underset{\substack{\rightarrow\\j}}{\lim}\, A(S_j)\arrow[r]\arrow[d] & \underset{\substack{\rightarrow\\j}}{\lim}\, A''(S_j)\arrow[r]\arrow[d]&0 \\
0\arrow[r]& A'(S)\arrow[r]& A(S)\arrow[r] & A''(S)\arrow[r] & 0.
\end{tikzcd}\] 
\end{proof}
\begin{cor}\label{discretespectralsequence}
Let $E_2^{p,q}\implies E^{p+q}$ be a spectral sequence in $\Ab(\CC)$, with $E_2^{p,q}=0$ for $p<0$ or $q<0$. Suppose that $E_2^{p,q}$ is discrete for all $p,q$. Then $E^n$ is discrete for all $n$.
\end{cor}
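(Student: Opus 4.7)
The plan is to propagate discreteness from the $E_2$-page through the later pages, to the $E_\infty$-page, and finally through the filtration on the abutment, using \cref{discretemonoepi,discreteextensions} at each step.

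First, I would prove by induction on $r\ge 2$ that every term $E_r^{p,q}$ is discrete. The base case $r=2$ is the hypothesis. For the inductive step, $E_{r+1}^{p,q}$ is by definition the cohomology at $E_r^{p,q}$ of the complex
\[
E_r^{p-r,q+r-1}\xrightarrow{d_r} E_r^{p,q}\xrightarrow{d_r} E_r^{p+r,q-r+1},
\]
so it is a subquotient of the discrete condensed abelian group $E_r^{p,q}$. By \cref{discretemonoepi}, both $\ker(d_r)\subseteq E_r^{p,q}$ and the quotient $E_{r+1}^{p,q}=\ker(d_r)/\im(d_r)$ are discrete.

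Next, since $E_2^{p,q}=0$ for $p<0$ or $q<0$, for each fixed $(p,q)$ there exists an integer $r_0(p,q)$ such that for all $r\ge r_0$ both target and source of the differentials at position $(p,q)$ vanish; hence $E_\infty^{p,q}=E_{r_0}^{p,q}$ is discrete. Moreover the abutment $E^n$ carries a finite decreasing filtration $E^n=F^0E^n\supseteq F^1E^n\supseteq\dots\supseteq F^{n+1}E^n=0$ with successive quotients $F^pE^n/F^{p+1}E^n\cong E_\infty^{p,n-p}$; the filtration is finite precisely because the spectral sequence lies in the first quadrant. Applying \cref{discreteextensions} inductively to the short exact sequences
\[
0\to F^{p+1}E^n\to F^pE^n\to E_\infty^{p,n-p}\to 0,
\]
starting from $F^nE^n=E_\infty^{n,0}$ and going down in $p$, shows that each $F^pE^n$ is discrete, and in particular $E^n=F^0E^n$ is discrete.

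The only step that requires any care is ensuring the filtration is finite, which is why the first-quadrant hypothesis $E_2^{p,q}=0$ for $p<0$ or $q<0$ is needed; once this is in hand, everything reduces to the two stability properties of discrete condensed abelian groups under subquotients and finite extensions already established.
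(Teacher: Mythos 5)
Your proof is correct and follows essentially the same route as the paper's: the paper compresses the argument into the observations that discrete condensed abelian groups are stable under extensions (handling the finite filtration on the abutment) and under taking cohomology of complexes with discrete terms (handling the passage between pages), which is exactly what you spell out via \cref{discretemonoepi,discreteextensions}.
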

\begin{proof}
By \cref{discreteextensions}, the subcategory $\Ab\subset \Cond(\Ab)$ is stable by extensions. Hence it is enough to show that for every complex of condensed abelian groups $C^{-1}\overset{d^{-1}}{\rightarrow}C^0\overset{d^0}{\rightarrow} C^1$ such that all the terms are discrete, the cohomology group $\mathrm{ker}(d^0)/\mathrm{im}(d^{-1})$ is discrete as well. This follows from \cref{discreteextensions}.
\end{proof}
 \begin{prop}\label{prop:discretecohom}
Let $G$ be a compact Hausdorff topological group acting continuously on a discrete topological abelian group $A$. Then $
\HH^q(B_{G},\underline{A})$ is discrete for all $q$.
\end{prop}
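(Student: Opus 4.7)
The plan is to invoke the Cartan-Leray spectral sequence \cref{cartanleray2},
$$E_2^{p,q} = \H^p(\HH^q(\CC; G^{\bullet}, \underline{A})) \Rightarrow \HH^{p+q}(B_G, \underline{A}),$$
and reduce the problem via \cref{discretespectralsequence}. Because discreteness is stable under sub-objects, quotients, and extensions in $\Ab(\CC)$ by \cref{discretemonoepi,discreteextensions}, discreteness of the $E_2$-page is implied by discreteness of each term $\HH^q(\CC; G^n, \underline{A})$ of the defining complex.

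Fix $n$ and set $X := G^n$, which is compact Hausdorff. I would verify the criterion of \cref{discretecriterion}: for any extremally disconnected $S = \pro{i}{S_i}$ with each $S_i$ finite, the $S_i$-sections split because $X \times S_i \cong \sqcup_{s \in S_i} X$ in $\CC$ (products distribute over coproducts in a topos), so cohomology decomposes as a finite product and
$$\ind{i}{\HH^q(\CC; X, \underline{A})(S_i)} = \ind{i}{\mathrm{Maps}(S_i, \H^q(\CC; X, \underline{A}))} = \underline{\H^q(\CC; X, \underline{A})^{\delta}}(S).$$
By \cref{discretecriterion,underlinedcohomology}, it thus remains to identify this colimit with $\HH^q(\CC; X, \underline{A})(S) = \H^q(\CC; X \times S, \underline{A})$.

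The crux — and the step I expect to be the hardest — is precisely this last identification, i.e.\ the continuity
$$\H^q(\CC; X \times S, \underline{A}) \;\cong\; \ind{i}{\H^q(\CC; X \times S_i, \underline{A})}$$
along the cofiltered limit $X \times S = \pro{i}{(X \times S_i)}$ of compact Hausdorff spaces with surjective transition maps. This is the condensed counterpart of Grothendieck's classical theorem on the continuity of sheaf cohomology with discrete coefficients on compact Hausdorff spaces. I would establish it by identifying $\HH^q(\CC; Y, \underline{A})$ with the classical sheaf cohomology $\H^q_{\mathrm{sheaf}}(Y, A)$ for $Y$ compact Hausdorff and $A$ discrete, and then invoking Grothendieck's continuity. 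This gives the discreteness of $\HH^q(\CC; G^n, \underline{A})$ for every $n, q$, and hence, via the reductions above, of $\HH^q(B_G, \underline{A})$.
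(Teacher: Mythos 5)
Your proposal is correct and follows essentially the same route as the paper: reduce via the Cartan--Leray spectral sequence of \cref{cartanleray2} together with \cref{discretespectralsequence,discretemonoepi,discreteextensions,discretecriterion}, and then settle the crux by the continuity isomorphism $\H^q(\CC;G^n\times S,\underline{A})\cong \varinjlim_i \H^q(\CC;G^n\times S_i,\underline{A})$, which the paper obtains exactly as you propose, by identifying condensed cohomology of compact Hausdorff spaces with classical sheaf cohomology (\cite[Theorem 3.2]{LCM}) and invoking the classical continuity theorem for discrete coefficients (\cite[Chapter X, Theorem 3.1]{foundations}). The only cosmetic difference is that the paper applies the discreteness criterion directly to the $E_2$-terms $\H^p(\H^q(\CC;G^{\bullet}\times S,\underline{A}))$ using exactness of filtered colimits, whereas you first record discreteness of each cochain term and then propagate it through kernels and cokernels; both are valid.
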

\begin{proof}
By \cref{cartanleray2,discretespectralsequence}, it is enough to show that $\H^p(\HH^q(G^{\bullet},A))$ is discrete for all $p,q$. Let $S=\lim_i{S_i}$ be an extremally disconnected topological space.
By \cite[Chapter X, Theorem 3.1]{foundations} and \cite[Theorem 3.2]{LCM}, we have \[
\H^q(\CC;G^n\times S,\underline{A})\cong \lim_{\substack{\rightarrow \\ i}} \H^q(\CC;G^n\times S_i,\underline{A}),
\] functorially in $n$. Thus we have \[
\H^p(\H^q(\CC;G^{\bullet}\times S,\underline{A}))\cong \H^p(\lim_{\substack{\rightarrow \\ i}} \H^q(\CC;G^{\bullet}\times S_i,\underline{A})).
\] Since filtered colimits are exact in $\Ab$, we have \[
\lim_{\substack{\rightarrow \\ i }} \H^p(\H^q(\CC;G^{\bullet}\times S_i,\underline{A}))\cong \H^p(\lim_{\substack{\rightarrow\\ i}} \H^q(\CC;G^{\bullet}\times S_i,\underline{A}))\cong \H^p(\H^q(\CC;G^{\bullet}\times S,\underline{A})).
\] The result follows from \cref{discretecriterion}.
\end{proof}
\begin{rmk}
For profinite groups, the same result is shown in \cite[Lemma 2.5]{SolidGC}.
\end{rmk}

\subsubsection{Cohomology of discrete groups}\label{cohomologydiscretegroup}
\begin{prop}\label{prop:discretegroupsections}
Let $G$ be a discrete group and let $A$ be a condensed $\underline{G}$-module. Then, for all $q$ and for all $S$ extremally disconnected, we have \[
\HH^q(B_{\underline{G}},A)(S)\cong \H^q(B_G(\Set),A(S)),
\] where the right-hand side is the classical group cohomology of $G$ with coefficients in $A(S)$.
\end{prop}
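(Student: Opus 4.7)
The plan is to apply the Cartan-Leray spectral sequence of \cref{cartanleray} evaluated at $S$,
\[
E_2^{p,q} = \H^p(\H^q(\CC;\underline{G}^{\bullet}\times S, A)) \implies \HH^{p+q}(B_{\underline{G}},A)(S),
\]
and to show that it is concentrated in the row $q=0$, where it reduces to the standard bar complex computing the classical group cohomology of $G$ with coefficients in $A(S)$.

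First I would use that $G$ is discrete: by \cref{respectscoproducts} this gives $\underline{G}^n \cong \coprod_{g \in G^n} *$, and hence $\underline{G}^n \times S \cong \coprod_{g \in G^n} S$ in $\CC$. Next, for $S$ extremally disconnected, applying \cref{rmk:edsections:enriched} to the identity morphism of topoi $\id: \CC \to \CC$ (whose direct image is exact, so its higher derived functors vanish) yields $\H^q(\CC;S,A) = \HH^q(\CC,A)(S) = 0$ for $q \geq 1$, while $\H^0(\CC;S,A) = A(S)$. Since the localized topos decomposes as $\CC/\coprod_{g} S \simeq \prod_g \CC/S$ and products are exact in $\Ab$, sheaf cohomology turns these coproducts into products, so
\[
\H^q(\CC;\underline{G}^n\times S, A) \cong \prod_{g \in G^n}\H^q(\CC;S,A),
\]
which vanishes for $q \geq 1$ and equals $\mathrm{Maps}(G^n, A(S))$ for $q=0$.

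The spectral sequence therefore degenerates onto the row $q=0$ and yields
\[
\HH^q(B_{\underline{G}},A)(S) \cong \H^q(\mathrm{Maps}(G^{\bullet}, A(S))).
\]
The last step is to identify the induced differentials on $\mathrm{Maps}(G^{\bullet}, A(S))$ with the standard bar differentials of the complex computing $\H^q(B_G(\Set), A(S))$, where the $G$-action on $A(S)$ is induced by the canonical embedding $G = \underline{G}(*) \to \underline{G}(S)$ (constant functions) together with the $\underline{G}$-module structure on $A$. The hard part will be this bookkeeping---unwinding the face maps of the simplicial resolution $E\underline{G} \times f_{\underline{G}}^*(\underline{G}^\bullet \times S)$ from the proof of \cref{cartanleray} and matching them with the face maps of the bar resolution---but it is essentially formal.
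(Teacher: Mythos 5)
Your argument is correct, but it follows a genuinely different route from the paper. The paper's proof is a base-change/Leray argument: using \cref{topoipb} one forms the pullback square of topoi with $f\colon B_{\underline{G}}/S\to B_G(\Set)$ and the localisation $p\colon\Set=B_G(\Set)/EG\to B_G(\Set)$, computes $p^*Rf_*(A_{|S})\cong R\Gamma(\CC/S,A_{|S})\cong\Hom_{\Ab(\CC)}(\Z[S],A)\cong A(S)$ (exactness of $\Hom_{\Ab(\CC)}(\Z[S],-)$ for $S$ extremally disconnected, \cref{rmk:edtopspaces}, 2)), concludes that $Rf_*(A_{|S})$ is concentrated in degree $0$ with value the $G$-module $A(S)$, and then the Leray spectral sequence for $f$ degenerates to give the claim directly --- no bar-complex bookkeeping is needed, and the $G$-module structure on $A(S)$ comes out automatically as $f_*(A_{|S})$. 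You instead run the Cartan--Leray spectral sequence of \cref{cartanleray}, use discreteness of $G$ and \cref{respectscoproducts} to write $\underline{G}^n\times S\cong\coprod_{g\in G^n}S$, kill the higher rows by the vanishing of $\H^{q}(\CC;S,A)$ for $q\ge 1$ on extremally disconnected $S$, and reduce to identifying $\mathrm{Maps}(G^\bullet,A(S))$ with the inhomogeneous cochain complex; that last identification is indeed the same formal matching of face maps already used implicitly in \cref{lowercohomologygroups} and \cref{finitegrouptop}, so deferring it is acceptable, though it is precisely the step the paper's proof avoids. Your route stays entirely within the \v{C}ech machinery of \cref{bg} and makes the comparison with the cochain computations explicit; the paper's route is shorter, avoids infinite coproducts and the differential bookkeeping, and is the one that identifies the coefficient module conceptually. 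Both are valid proofs of the statement.
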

\begin{proof}
By \cref{topoipb}, the commutative diagram of topoi \[
\begin{tikzcd}
\CC/S \ar[r]\ar[d]&\Set\ar[d,"p"]\\
B_{\underline{G}}/S\ar[r,"f"]&B_G(\Set)
\end{tikzcd}
\] is a pull-back square and $p$ is a localisation morphism. Indeed, we have $\Set=B_G(\Set)/EG$ and $\CC/S=(B_{\underline{G}}/S)/f^*EG$. Thus we get \[\begin{split}
p^*Rf_*(A_{|S})&\cong R\Gamma(\CC/S,A_{|S})\\
&\cong R\Hom_{\Ab(\CC)}(\Z[S],A)\\
&\cong \Hom_{\Ab(\CC)}(\Z[S],A)\\
&\cong A(S),
\end{split}
\] where we use the fact that $\Hom_{\Ab(\CC)}(\Z[S],-)$ is exact if $S$ extremally disconnected \cref{rmk:edtopspaces}, 2). Hence $Rf_*(A_{|S})$ is concentrated in degree $0$ and we have
\[
\HH^q(B_{\underline{G}},A)(S)=R^q\Gamma(B_{\underline{G}}/S,A_{|S})\cong R^q\Gamma(B_G(\Set),f_*(A_{|S}))\cong\H^q(B_G(\Set),A(S)).
\]
\end{proof}

\begin{ex}[Cohomology of $\Z$]\label{cohomologyz}
Let $G=\Z=<\Phi>$ with the discrete topology. Then $
\Z[\Z]\overset{\cdot(1-\Phi)}{\longrightarrow}\Z[\Z]\overset{ev}{\longrightarrow} \Z$ is a projective resolution of $\Z$. Let $A\in \Ab(B_{\Z})$. For all extremally disconnected $S$, let $\varphi_S:A(S)\to A(S)$ be the automorphism of $A(S)$ induced by the action of $\Phi$. By \cref{prop:discretegroupsections}, we have \[
\HH^0(B_{\Z},A)(S)\cong \mathrm{ker}(1-\varphi_S), \qquad \HH^1(B_{\Z},A)(S)=\mathrm{coker}(1-\varphi_S), \qquad \HH^q(B_{\Z},A)=0, \, \forall q\ge 2.
\] Let $\varphi:A\to A$ be the automorphism of $A$ defined by $\varphi(S)\coloneqq \varphi_S$.  Then we have \[
\HH^0(B_{\Z},A)=\mathrm{ker}(1-\varphi)\eqqcolon A^{\Z}, \qquad \HH^0(B_{\Z},A)=\mathrm{coker}(1-\varphi)\eqqcolon A_{\Z}, \qquad \HH^q(B_{\Z},A)=0\,\forall q\ge 2.
\]
If $A$ is a topological abelian group with a continuous action of $\Z$, the morphism $\varphi$ is represented by the automorphism of topological abelian groups $A\to A$, mapping $a$ to $\Phi\cdot a$. 
Consequently, $\underline{A}^{\Z}$ is represented by the topological abelian group $A^{\Z}$ (in agreement with \cref{lowercohomologygroups}, 1)). Moreover, if $A$ is discrete, $\underline{A}_{\Z}$ is represented by the discrete abelian group $A/(1-\varphi)A$. If $A$ is compact Hausdorff, the condensed abelian group $\underline{A}_{\Z}$ is represented by the compact Hausdorff abelian group $A/(1-\varphi)A$ with the quotient topology. Indeed, the morphism $1-\varphi$ is closed.
\end{ex}

\subsection{The classifying topos of a pro-condensed group}\label{progroups}
We would like to generalize a well known fact from the cohomology of profinite groups with discrete coefficients. Let $(G_i)_{i\in I}$ be a projective system of topological groups and $(A_i\in \Ab(B_{\underline{G_i}}))_{i\in I}$ a compatible system. Let $\pi_i:G\to G_i$ be the projections. We set $G\coloneqq \underset{\substack{\leftarrow \\ i}}{\lim}\, G_i$ and  $A_{\infty} \coloneqq \underset{\substack{\rightarrow\\i}}{\lim} \, \pi_i^* A_i$. We would like the isomorphism \[\HH^q(B_{\underline{G}},A_{\infty})\cong\lim_{\substack{\rightarrow\\ i}}\, \HH^q(B_{\underline{G_i}},A_i)\] to hold. Unfortunately, this is not always the case even if $G_i$ is finite for all $i$, see \cref{h90failure,vanishing}. In order to recover this property, we introduce pro-condensed groups and their cohomology.

We are adapting \cite[Section 2.6]{Mor2} to the condensed setting.
A pro-object of a category $C$ is a functor $\hat{X}:I\to C$, where $I$ is a cofiltered category. 
\begin{defn}
A pro-condensed group $\hat{G}$ is a pro-object in the category $\Grp(\CC)$ of condensed groups. A pro-condensed group $\hat{G}$ is \emph{strict} if all transition maps $G_i\to G_j$ are epimorphisms of condensed groups.
\end{defn}
Let $\hat{G}:I\to \Grp(\CC)$ be a pro-condensed group. For every $i\in I$ we have a condensed group $G_i$ and a classifying topos $B_{G_i}$, which is a topos over $\CC$. We get $(B_{G_i},f_{ji})_{i,j\in I}$, a projective system of topoi over $\CC$.
\begin{defn}
The classifying topos of a pro-condensed group $\hat{G}:I\to \Grp(\CC)$ is defined as \[
B_{\hat{G}}\coloneqq \underset{\substack{\leftarrow \\ i\in I}}{\lim} \, B_{G_i},
\] where the limit is taken in the 2-category of topoi.
\end{defn}
For all $i\in I$, we call $\pi_i:B_{\hat{G}}\to B_{G_i}$ the projection morphism, and $f_{\hat{G}}:B_{\hat{G}}\to \CC$ the structure morphism.
\begin{defn}
The  $q$th group cohomology of the pro-condensed group $\hat{G}:I\to\Grp(\CC)$ is \[
 \HH^q(B_{\hat{G}},-)\coloneqq R^qf_{\hat{G},*}:\Ab(B_{\hat{G}})\rightarrow \Ab(\CC).
\] \end{defn}
\begin{rmk} Let $\hat{G}$ be a constant pro-condensed group with value $G\in\Grp(\CC)$. Then $B_{\hat{G}}$ is equivalent to $B_G$ via any projection $\pi_i$, and $\HH^q(B_{\hat{G}},-)$ coincides with $\HH^q(B_G,-)$ for all $q$. Hence the condensed cohomology of pro-condensed groups generalises the cohomology of condensed groups.
\end{rmk}
We now prove the existence of a Hochschild-Serre spectral sequence. 

\begin{cns}\label{cns:hs} Let $\hat{H},\hat{G}:J\to\Grp(\CC)$ be two pro-condensed groups, and $\hat{Q}:J\to \Grp(\CC)$ a constant pro-condensed group with value $Q\in\Grp(\CC)$.
Let
\[
1\rightarrow H_j\overset{i_j}{\longrightarrow}G_j\overset{p_j}{\longrightarrow}Q\rightarrow 1
\] be an exact sequence of condensed groups for all $j$, inducing morphisms of pro-condensed groups $\hat{H}\to \hat{G}$ and $\hat{G}\to Q$. By \cite[IV, \S 5.8]{SGA4}, we obtain an equivalence of topoi $B_{H_j}\overset{\sim}{\longrightarrow}B_{G_j}/p_j^*EQ$ for all $j$. Moreover, we have morphisms of topoi $\hat{i}:B_{\hat{H}}\to B_{\hat{G}}$ and $\hat{p}:B_{\hat{G}}\to B_Q$. \end{cns}
\begin{lem}\label{lem:hs}
Let $\hat{H},\hat{G},\hat{Q}$ be as in \cref{cns:hs}. There exists an equivalence of topoi $B_{\hat{H}}\overset{\sim}{\longrightarrow} B_{\hat{G}}/\hat{p}^*EQ$ induced by $B_{H_j}\overset{\sim}{\longrightarrow}B_{G_j}/p_j^*EQ$.
\end{lem}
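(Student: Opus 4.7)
The plan is to reduce the statement to a general property of cofiltered limits of topoi, namely that localisation commutes with such limits whenever the localising object is pulled back from each stage of the system.

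First I would record that the objects $p_j^{*}EQ \in B_{G_j}$ assemble into a compatible system with respect to the transition maps $f_{j'j}\colon B_{G_{j'}}\to B_{G_j}$ of the pro-condensed group $\hat{G}$. This is because all the $p_j\colon B_{G_j}\to B_Q$ factor through the \emph{constant} projection in the pro-system, and so for every morphism $j'\to j$ in $J$ we have $p_{j'}=p_j\circ f_{j'j}$; consequently $f_{j'j}^{*}(p_j^{*}EQ)\cong p_{j'}^{*}EQ$ canonically. It follows from the definition of $\hat{p}\colon B_{\hat{G}}\to B_Q$ as the map induced by the compatible family $(p_j)_j$ that $\hat{p}^{*}EQ\cong \pi_j^{*}(p_j^{*}EQ)$ for every $j\in J$.

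Next I would invoke the fact (see \cite[VI, \S 8]{SGA4}) that for a cofiltered system of topoi $(T_j)_{j\in J}$ with limit $T_{\infty}=\lim_j T_j$ and projections $\pi_j\colon T_\infty\to T_j$, and for any compatible family of objects $X_j\in T_j$ (in the sense that $f_{j'j}^{*}X_j\cong X_{j'}$), the natural morphism
\[
T_{\infty}/\pi_j^{*}X_j\longrightarrow \lim_{j\in J} (T_j/X_j)
\]
is an equivalence of topoi, the limit on the right being computed using the localised transition maps. Applied to $T_j=B_{G_j}$ and $X_j=p_j^{*}EQ$, and using the identification of $\hat{p}^{*}EQ$ from the previous step, this yields
\[
B_{\hat{G}}/\hat{p}^{*}EQ \;\simeq\; \lim_{j\in J}\bigl(B_{G_j}/p_j^{*}EQ\bigr).
\]

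Finally, the pointwise equivalences $B_{H_j}\xrightarrow{\sim} B_{G_j}/p_j^{*}EQ$ coming from \cite[IV, \S 5.8]{SGA4} are compatible with transitions, because they are induced by the inclusions $i_j\colon H_j\hookrightarrow G_j$ together with the canonical identification of $p_j^{*}EQ$ with the quotient $G_j/H_j$; these squares commute by naturality. Taking cofiltered limits therefore gives $\lim_j (B_{G_j}/p_j^{*}EQ)\simeq \lim_j B_{H_j}=B_{\hat{H}}$, and composing with the previous display yields the desired equivalence. The main obstacle is making sure that the equivalences $B_{H_j}\simeq B_{G_j}/p_j^{*}EQ$ fit into a morphism of pro-topoi, i.e.\ that the squares
\[
\begin{tikzcd}
B_{H_{j'}}\ar[r,"\sim"]\ar[d] & B_{G_{j'}}/p_{j'}^{*}EQ\ar[d]\\
B_{H_j}\ar[r,"\sim"] & B_{G_j}/p_j^{*}EQ
\end{tikzcd}
\]
commute up to coherent natural isomorphism; this is a routine but slightly tedious 2-categorical verification, whose cleanest form is to invoke the universal property characterising $B_{G_j}/p_j^{*}EQ$ as the classifying topos of $H_j$ and then appeal to the functoriality of $B_{(-)}$ applied to the inclusions $i_j$.
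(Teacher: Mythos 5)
Your proof is correct and follows essentially the same route as the paper: the paper identifies $B_{\hat{G}}/\hat{p}^*EQ\cong B_{\hat{G}}\times_{B_Q}B_Q/EQ$ via \cref{topoipb} and then uses that cofiltered limits of topoi commute with fiber products, which is exactly the content of the general fact you invoke (that localisation at a compatible family of objects commutes with cofiltered limits), so your "SGA4 VI, \S 8" citation is better replaced by this two-step derivation. Your explicit checks of the compatibility of the objects $p_j^*EQ$ and of the equivalences $B_{H_j}\simeq B_{G_j}/p_j^*EQ$ with the transition maps are correct and merely make precise what the paper leaves implicit.
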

\begin{proof} By \cref{topoipb}, we have \[
B_{\hat{G}}/\hat{p}^*EQ\cong B_{\hat{G}}\times_{B_Q} B_Q/EQ.
\] In the 2-category of topoi, cofiltered limits commute with fiber products. Thus we have \[ B_{\hat{G}}\times_{B_Q} B_Q/EQ=\underset{\substack{\leftarrow\\j}}{\lim}\, (B_{G_j}\times_{B_Q} B_Q/EQ)\cong \underset{\substack{\leftarrow\\ j}}{\lim} \, B_{G_j}/p_j^*EQ\cong\underset{\substack{\leftarrow\\ j}}{\lim}\, B_{H_j}\cong B_{\hat{H}}. \]
\end{proof}
\begin{prop}[Hochschild-Serre spectral sequence]\label{prop:proHS}
Let $\hat{H},\hat{G},\hat{Q}$ be as in \cref{cns:hs}. Let $A\in\Ab(B_{\hat{G}})$. The condensed abelian group $
\HH^q(B_{\hat{H}}, \hat{i}^* A)$ carries a $Q$-action for all $q$. There is a Hochschild-Serre spectral sequence \[
\HH^p(B_Q,\HH^q(B_{\hat{H}},\hat{i}^*A))\implies \HH^{p+q}(B_{\hat{G}},A).
\]
\end{prop}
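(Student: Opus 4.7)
The plan is to apply the Grothendieck spectral sequence to the factorisation of the structure morphism $f_{\hat{G}}\colon B_{\hat{G}}\to\CC$ as $f_Q\circ\hat{p}$, using the morphism $\hat{p}\colon B_{\hat{G}}\to B_Q$ from \cref{cns:hs}. Since $\hat{p}^*$ is exact (being the inverse image of a morphism of topoi), $\hat{p}_*$ preserves injectives, which immediately yields a convergent spectral sequence
\[
E_2^{p,q}=R^pf_{Q,*}(R^q\hat{p}_*A)\Longrightarrow R^{p+q}f_{\hat{G},*}A=\HH^{p+q}(B_{\hat{G}},A).
\]

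The heart of the argument is to identify $R^q\hat{p}_*A\in\Ab(B_Q)$ as a natural $Q$-equivariant lift of the condensed abelian group $\HH^q(B_{\hat{H}},\hat{i}^*A)$. I would apply \cref{topoipb} to the morphism $\hat{p}$ and to the object $EQ\in B_Q$, which gives
\[
j_{EQ}^*\circ R^q\hat{p}_*\;\cong\;R^q(\hat{p}_{/EQ})_*\circ j_{\hat{p}^*EQ}^*.
\]
Combining the equivalence $B_Q/EQ\simeq\CC$ (\cite[Lemma 7, i)]{Flach}) with the equivalence $B_{\hat{G}}/\hat{p}^*EQ\simeq B_{\hat{H}}$ from \cref{lem:hs}, the localised morphism $\hat{p}_{/EQ}$ is identified with the structure morphism $f_{\hat{H}}\colon B_{\hat{H}}\to\CC$, and the restriction $j_{\hat{p}^*EQ}^*$ with $\hat{i}^*$. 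Consequently
\[
j_{EQ}^*R^q\hat{p}_*A\;\cong\;R^qf_{\hat{H},*}(\hat{i}^*A)=\HH^q(B_{\hat{H}},\hat{i}^*A).
\]
Since $j_{EQ}^*\colon\Ab(B_Q)\to\Ab(\CC)$ is the forgetful functor, this exhibits $R^q\hat{p}_*A$ as the required $Q$-equivariant enhancement of $\HH^q(B_{\hat{H}},\hat{i}^*A)$ and rewrites the $E_2$-term as $\HH^p(B_Q,\HH^q(B_{\hat{H}},\hat{i}^*A))$.

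The main obstacle is the compatibility check in the previous step: one must verify that, under the canonical equivalences of topoi at play, the induced morphism $\hat{p}_{/EQ}$ actually corresponds to $f_{\hat{H}}$ and that $j_{\hat{p}^*EQ}^*$ corresponds to $\hat{i}^*$. This reduces to a functoriality check of the level-wise equivalences $B_{G_j}/p_j^*EQ\simeq B_{H_j}$ in $j$ and their compatibility with the cofiltered limits defining $B_{\hat{G}}$ and $B_{\hat{H}}$. Since all the ingredients (localisation morphisms, the classifying topoi, and the 2-categorical limits) arise from universal properties, I expect this compatibility to follow essentially formally from the construction of $\hat{i}$ and $\hat{p}$ in \cref{cns:hs}, together with the fact that cofiltered limits commute with fibre products of topoi, as already used in the proof of \cref{lem:hs}.
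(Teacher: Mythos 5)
Your proposal is correct and follows essentially the same route as the paper: factor $f_{\hat{G}}=f_Q\circ\hat{p}$, take the Grothendieck spectral sequence, and identify $R^q\hat{p}_*A$ after localisation at $EQ$ with $\HH^q(B_{\hat{H}},\hat{i}^*A)$ via \cref{lem:hs} and the identification $B_Q/EQ\simeq\CC$. The compatibility you flag as the main obstacle is exactly what the paper handles with its commutative diagram (the left vertical composite is $\hat{i}$, the right one is the point $e:\CC\to B_Q$, and the vertical maps are localisation morphisms), so your argument matches the paper's proof in substance.
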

\begin{proof}
By \cref{lem:hs} and \cite[Lemma 7]{Flach}, we have the commutative diagram of topoi \[ 
\begin{tikzcd}
B_{\hat{H}}\arrow[rr,"f_{\hat{H}}"]\arrow[d,"\sim"] && \CC\arrow[d, "\sim"]\\
B_{\hat{G}}/\hat{p}^*EQ\arrow[rr,"\hat{p}_{/EQ}"]\arrow[d,"j_{\hat{p}^*EQ}"] && B_Q/EQ\arrow[d,"j_{EQ}"]\\
B_{\hat{G}}\arrow[rr,"\hat{p}"]&& B_Q, \end{tikzcd}
\] where the outer square is a pullback. The composition of the two vertical maps on the left is $\hat{i}$. On the right, the composition gives the morphism $e:\CC\to B_{Q}$. The morphism $e$ is induced by the morphism of groups $\{*\}\to Q$, which sends $*$ to the identity of $Q$. Hence we have $f_{\hat{H},*}\circ \hat{i}^*=e^*\circ \hat{p}_*$. Moreover, since the vertical morphisms are localisation morphisms, we have \[
Rf_{\hat{H},*}\circ \hat{i}^*=e^*\circ R\hat{p}_*.
\]The spectral sequence computing the derived functor of $f_{\hat{G},*}=f_{Q,*}\circ \hat{p}_*$ concludes the proof.
\end{proof}

Finally, we recover the well-known continuity property of cohomology of pro-finite groups, extending it to strict pro-compact Hausdorff groups.
\begin{prop}[Continuity result]\label{prop:cont}
Let $\hat{G}$ be a strict pro-condensed group. Suppose that $G_i$ is compact Hausdorff for all $i$. Let $(A_i,\alpha_{ji})_{i,j\in I}$ be a compatible system of abelian group objects of $(B_{G_i},f_{ij})_{i,j\in I}$ (see \cref{constructionlimittopos}). We set \[
A_{\infty}\coloneqq \underset{\substack{\rightarrow\\ i \in I}}{\lim} \,\pi_i^*A_i\in B_{\hat{G}}.
\] Then the canonical morphism
\[
\underset{\substack{\rightarrow\\ i \in I}}{\lim}\,\HH^q(B_{G_i},A_i)\longrightarrow\HH^q(B_{\hat{G}},A_{\infty})
\] is an isomorphism for any integer $q$.
\end{prop}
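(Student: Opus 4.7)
The plan is to apply \cref{lem:3} directly to the projective system $(B_{G_i}, f_{ji})_{i,j\in I}$ of topoi over $\CC$. By construction of $B_{\hat{G}}$ as the cofiltered 2-limit of the $B_{G_i}$'s, this system satisfies $T_{\infty}=B_{\hat{G}}$ in the notation of \cref{constructionlimittopos}, and the $A_{\infty}$ of the statement coincides tautologically with that of \cref{constructionlimittopos}. Hence the conclusion is an immediate translation of \cref{lem:3}, provided we verify the two hypotheses: (i) each $B_{G_i}$ is strongly compact over $\CC$, and (ii) each transition morphism $f_{ji}\colon B_{G_j}\to B_{G_i}$ is strongly compact.

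Hypothesis (i) is free: it is exactly \cref{bgstronglycompact} applied to the compact Hausdorff group $G_i$. The work is concentrated in (ii). The plan is to apply \cref{sclocalisation} to the covering $EG_i\to e_{B_{G_i}}$ (which is a cover of the terminal object by Flach's \cite[Lemma 7, i)]{Flach}), so that it suffices to show the base-changed morphism
\[
f_{ji,/EG_i}\colon B_{G_j}/f_{ji}^*EG_i\longrightarrow B_{G_i}/EG_i=\CC
\]
is strongly compact.

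To identify the source, I would use strictness: the morphism $G_j\to G_i$ is an epimorphism of condensed groups, and since both groups are compact Hausdorff, this epimorphism is represented by a surjective continuous map (extremally disconnected spaces are projective in compact Hausdorff spaces via Gleason). Its kernel $K_{ji}$ is then a closed subgroup of $G_j$, hence itself a compact Hausdorff topological group. From the short exact sequence $1\to K_{ji}\to G_j\to G_i\to 1$, the special case of \cref{lem:hs} (equivalently \cite[IV, \S 5.8]{SGA4}, which is the starting point of the proof of \cref{lem:hs}) yields an equivalence of topoi
\[
B_{K_{ji}}\overset{\sim}{\longrightarrow}B_{G_j}/f_{ji}^*EG_i
\]
under which $f_{ji,/EG_i}$ is identified with the structure morphism $f_{K_{ji}}\colon B_{K_{ji}}\to\CC$. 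Since $K_{ji}$ is compact Hausdorff, \cref{bgstronglycompact} gives strong compactness of $f_{K_{ji}}$, and hence of $f_{ji,/EG_i}$. \cref{sclocalisation} then upgrades this to strong compactness of $f_{ji}$, establishing (ii) and completing the proof.

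The main obstacle, conceptually minor but technically the crux, is the identification of the localised morphism $f_{ji,/EG_i}$ with the structure morphism of $B_{K_{ji}}$: one must check that the equivalence $B_{G_j}/f_{ji}^*EG_i\cong B_{K_{ji}}$ fits into a commutative triangle over $\CC$, rather than merely existing as an abstract equivalence. This is a straightforward unwinding of the construction in \cite[IV, \S 5.8]{SGA4} via the interpretation of $EQ$ as a free $Q$-torsor, but it is the only step where one cannot simply cite a black box.
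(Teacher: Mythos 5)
Your proposal is correct and follows essentially the same route as the paper: reduce via \cref{lem:3}, use \cref{bgstronglycompact} for each $B_{G_i}$, and obtain strong compactness of the transition maps by localising at $EG_i$, identifying the localised morphism with $B_{K}\to\CC$ for the compact Hausdorff kernel $K$, and invoking \cref{sclocalisation}. The extra care you take in identifying $B_{G_j}/f_{ji}^*EG_i$ with $B_{K}$ over $\CC$ is exactly the step the paper states without elaboration, so nothing is missing.
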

\begin{proof}
By \cref{bgstronglycompact}, the topos $B_{G_i}$ is strongly compact for all $i$. Let $f_{ij}:B_{G_i}\to B_{G_j}$ be a transition map. Let $K$ be the kernel of $G_i\twoheadrightarrow G_j$, which is compact Hausdorff. The localisation of $f_{ij}$ at $EG_j$ is $B_K\rightarrow \CC$. This morphism is strongly compact by \cref{bgstronglycompact}. Hence, by \cref{sclocalisation}, so is $f_{ij}$. We conclude by \cref{lem:3}.
\end{proof}

\subsection{The category \texorpdfstring{$\DDD^+(B_{\hat{G}})$}{DBhatG}}\label{dbhatg}
Let $\hat{G}:I\rightarrow \Grp(\CC)$, $i\mapsto G_i$ be a strict pro-group. Suppose that all $G_i$ are compact Hausdorff. In this section, we give a description of objects in $\DDD^+(B_{\hat{G}})$ in terms of objects of $\DDD^+(B_{G_i})$ for all $i$ (see \cref{lem:colimitdescription}). Moreover, we obtain a formula which relates the internal $\Hom$ in $\DDD(B_{\hat{G}})$ with internal $\Hom$'s in $\DDD(B_{G_i})$ for all $i$ (see \cref{internalhomdescr}). To do this, we start by studying the defining site of the category $B_{\hat{G}}$. 

Let $G=\lim_i G_i$ be the topological group associated to $\hat{G}$. We denote by $G-\Top^c$ the category of compact Hausdorff topological spaces with a continuous action of $G$, with $G$-equivariant continuous maps as morphisms.
\begin{prop}\label{definingsite}
Let $\hat{G}-\Top^c$ be the full subcategory of $G-\Top^c$ of those compact Hausdorff topological spaces with a continuous action of $G$ which factors through $G_i$ for some $i$. Let $j_{cond}$ be the topology on $\hat{G}-\Top^c$ with finitely jointly surjective families of maps as covers. Then there is a natural morphism \[
B_{\hat{G}}\rightarrow \Sh(\hat{G}-\Top^c,j_c)
\] which is an equivalence of topoi.
\end{prop}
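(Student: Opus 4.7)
The plan is to realise the cofiltered $2$-limit $B_{\hat{G}}=\lim_{i} B_{G_i}$ as sheaves on an explicit site, by combining a site description of each individual $B_{G_i}$ with the SGA4 principle that a cofiltered limit of a system of topoi is equivalent to the topos of sheaves on the filtered $2$-colimit of the defining sites (\cite[VI, §8]{SGA4}). The asserted natural morphism is then the one produced by the universal property of the limit from the compatible geometric morphisms $\Sh(\hat{G}-\Top^c,j_c)\to B_{G_i}$ associated to the inclusions of sites $G_i-\Top^c\hookrightarrow \hat{G}-\Top^c$.

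The first step is to show that for each $i$ the assignment $X\mapsto \underline{X}$ identifies the category $G_i-\Top^c$, endowed with $j_c$, as a site of definition for $B_{G_i}$. Since $G_i$ is compact Hausdorff, a continuous action of the topological group $G_i$ on a compact Hausdorff space $X$ is the same datum as an action of the condensed group $G_i$ on $\underline{X}$, so this functor is fully faithful, continuous for $j_c$, and takes values in $B_{G_i}$. Every object of $B_{G_i}$ admits an epimorphism from a free $G_i$-object of the form $\underline{G_i}\times \underline{S}$, with $S$ a suitable extremally disconnected space obtained from any extremally disconnected cover in $\CC$, and such free objects lie in the essential image of our functor since $G_i$ is compact Hausdorff. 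The Comparison Lemma (\cite[III, Théorème 4.1]{SGA4}) then yields $B_{G_i}\simeq \Sh(G_i-\Top^c,j_c)$. The case $G_i=\{*\}$ specialises to the identification $\CC\simeq \Sh(\Top^c,j_c)$.

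The second step is to identify the $2$-colimit of the system of sites $(G_i-\Top^c)_i$ with $\hat{G}-\Top^c$. Each transition $G_i\twoheadrightarrow G_j$ in the strict pro-system yields, via pullback of actions, a fully faithful inclusion $G_j-\Top^c\hookrightarrow G_i-\Top^c$ which realises the geometric morphism $B_{G_i}\to B_{G_j}$ at the level of sites. The filtered union of these subcategories inside $G-\Top^c$ is by definition $\hat{G}-\Top^c$, and the topology of finite jointly surjective families is preserved along each transition functor and descends to the same topology $j_c$ on the colimit.

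The main technical point is to verify the hypotheses of the SGA4 limit-of-sites theorem, namely that the transition functors are morphisms of sites and that the topology on the colimit site coincides with $j_c$ on $\hat{G}-\Top^c$. The second point follows from the observation that any object of $\hat{G}-\Top^c$, as well as any finite jointly surjective cover of it, already lives at some finite level $i$ (this is essentially the definition of $\hat{G}-\Top^c$), so that covers in $\hat{G}-\Top^c$ are generated by covers pulled back from some $G_i-\Top^c$.
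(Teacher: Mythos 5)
Your proposal is correct and follows essentially the same route as the paper: a site of definition $(G_i-\Top^c,j_c)$ for each $B_{G_i}$, the SGA4 VI \S 8.2.3 description of the cofiltered limit topos as sheaves on the filtered $2$-colimit of these sites, and the explicit identification of that colimit (objects and finite jointly surjective covers all living at some finite level) with $(\hat{G}-\Top^c,j_{cond})$. The only difference is cosmetic: you justify the per-level site description via the Comparison Lemma and free objects $\underline{G_i}\times\underline{S}$, a point the paper simply asserts.
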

\begin{proof}
For all $i$, let $G_i-\Top^c$ be the category of topological spaces with a continuous action of $G_i$, with $G_i$-equivariant continuous morphisms. Let $j_i$ be the coarsest topology such that the forgetful functor $(G_i-\Top^c,j_i)\to (\Top^c,j_{cond})$ is continuous. We have a defining site $(G_i-\Top^c,j_i)$ for $B_{G_i}$. By \cite[VI, \S 8.2.3]{SGA4}, a site for $B_{\hat{G}}$ is given by $(\lim_{\rightarrow\, I}\, (G_i-\Top^c),j)$. Here $j$ is the coarsest topology such that the functor $(G_i-\Top^c,j_i)\rightarrow (\lim_{\rightarrow\, I} \,(G_i-\Top^c),j)$ is continuous for all $i$.

We can make the category $\lim_{\rightarrow\, I}\, (G_i-\Top^c)$ explicit as follows. An object of this category is a compact Hausdorff topological group with a continuous action of $G_i$ for some $i$. Let $X_1,X_2$ be two objects of this category with an action of $G_1$, $G_2$ respectively. Then there exists $k\in I$ such that $G_k$ surjects on both $G_1$ and $G_2$. Then a morphism $X_1\to X_2$ is a $G_k$-equivariant continuous map $X_1\to X_2$. Hence we have an equivalence of categories \[
\lim_{\rightarrow\, I}\, (G_i-\Top^c)\cong \hat{G}-\Top^c.
\] Under this identification,  $j_{cond}$ is the coarsest topology on $\hat{G}-\Top^c$ such that the functor \[
(G_i-\Top^c,j_i)\rightarrow (\hat{G}-\Top^c,j_{cond})
\] is continuous for all $i$. The result follows.
\end{proof} 
\begin{lem}\label{lem:sectioncheck}
Let $\alpha:\FF\to \GG$ be a morphism in $\DDD^+(B_{\hat{G}})$. Suppose that for all $i\in I$ the induced morphism $R\GGamma(B_{\hat{G}}/EG_i,\FF_{|EG_i})\rightarrow R\GGamma(B_{\hat{G}}/EG_i,\GG_{|EG_i})$ is an equivalence in $\DDD(\CC)$. Then $\alpha$ is an equivalence.
\end{lem}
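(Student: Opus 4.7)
The plan is to check that $\alpha$ is an equivalence by restricting it to a generating family of $B_{\hat{G}}$ built out of the objects $EG_i$ and the extremally disconnected condensed sets with trivial $G$-action, and then applying a standard recognition criterion. First, I unpack the hypothesis. By \cref{underlinedcohomology} and \cref{rmk:edsections:enriched}, for every $i\in I$ and every extremally disconnected $S$,
\[
R\GGamma(B_{\hat{G}}/EG_i,\FF_{|EG_i})(S) \simeq R\Gamma\bigl(B_{\hat{G}}/(EG_i\times f_{\hat{G}}^*S),\,\FF_{|EG_i\times f_{\hat{G}}^*S}\bigr),
\]
and likewise for $\GG$. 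A morphism in $\DDD(\CC)$ is an equivalence if and only if its evaluation at each extremally disconnected space is an equivalence in $\DDD(\Ab)$, by \cref{rmk:edtopspaces}, so the hypothesis is equivalent to the statement that for every $i\in I$ and every $S\in\Top^{ed}$ the induced map $R\Gamma(B_{\hat{G}}/(EG_i\times f_{\hat{G}}^*S),\alpha_{|EG_i\times f_{\hat{G}}^*S})$ is an equivalence in $\DDD(\Ab)$.

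Second, I claim that the collection $\mathcal{Y}\coloneqq\{EG_i\times f_{\hat{G}}^*S:i\in I,\,S\in\Top^{ed}\}$ is a generating family for $B_{\hat{G}}$. By \cref{definingsite} the site $\hat{G}-\Top^c$ already generates $B_{\hat{G}}$, so it suffices to cover every $Y\in\hat{G}-\Top^c$ by an element of $\mathcal{Y}$. Pick $i\in I$ such that the $G$-action on $Y$ factors through $G_i$. The formula $(g,y)\mapsto(g,g^{-1}y)$ defines a $G_i$-equivariant isomorphism $\phi\colon EG_i\times Y\xrightarrow{\sim}EG_i\times|Y|$ from the diagonal action on the source to the action concentrated on the first factor on the target. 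Choosing an extremally disconnected cover $S\twoheadrightarrow|Y|$ in $\CC$ (for instance the projective cover of the compact Hausdorff space $|Y|$), the composition
\[
EG_i\times f_{\hat{G}}^*S \longrightarrow EG_i\times f_{\hat{G}}^*|Y| \xrightarrow{\phi^{-1}} EG_i\times Y \xrightarrow{\mathrm{pr}_Y} Y
\]
is $G$-equivariant and surjective on underlying condensed sets, hence an epimorphism in $B_{\hat{G}}$.

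Finally, I invoke the general fact that a morphism $\alpha$ in $\DDD^+$ of a topos $T$ is an equivalence if and only if $R\Gamma(T/Y,\alpha_{|Y})$ is an equivalence in $\DDD(\Ab)$ for every $Y$ in a generating family: running the hypercohomology spectral sequence $E_2^{p,q}=H^p(T/Y,H^q(\mathrm{fib}(\alpha)))\Rightarrow H^{p+q}(R\Gamma(T/Y,(\mathrm{fib}(\alpha))_{|Y}))$ on the lowest non-vanishing cohomology sheaf of $\mathrm{fib}(\alpha)$ shows that vanishing of the right-hand side on a generating family forces vanishing of every cohomology sheaf of $\mathrm{fib}(\alpha)$. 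Combined with the previous two steps this yields that $\alpha$ is an equivalence. The principal technical point is the untwisting isomorphism $\phi$ in the second step: it moves the $G$-action entirely onto the $EG_i$-factor, reducing the construction of $G$-equivariant covers by elements of $\mathcal{Y}$ to the purely topological existence of extremally disconnected covers of compact Hausdorff spaces.
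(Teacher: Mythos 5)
Your argument is correct, and it shares the paper's key geometric input while reaching the conclusion by a different formal route. The untwisting isomorphism $(g,y)\mapsto(g,g^{-1}y)$ that you use to produce $G$-equivariant epimorphisms $EG_i\times f_{\hat G}^*S\twoheadrightarrow Y$ is exactly the mechanism the paper exploits, but the paper uses it differently: there, for each site object $X$ one takes the single cover $EG_i\times X^{tr}\to X$, computes $R\Gamma(B_{\hat G}/X,-)$ as the limit over its \v{C}ech nerve, identifies the nerve terms as $EG_i\times (G_i^{tr})^n\times X^{tr}$ so that the hypothesis applies termwise, and then concludes by the recognition criterion of \cite[Corollary 2.1.2.3]{SAG} applied over the defining site of \cref{definingsite}. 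You instead bypass the descent computation entirely: you observe that the objects $EG_i\times f_{\hat G}^*S$ already form a generating family of $B_{\hat G}$, and you conclude with the elementary detection principle that an object of $\DDD^+$ of a topos with vanishing derived sections over a generating family is zero (your lowest-nonvanishing-cohomology-sheaf argument, which is sound and genuinely uses boundedness below, just as the paper's proof does). What each approach buys: yours is more self-contained (no cosimplicial limit, no external $\infty$-categorical citation) and isolates the purely formal statement ``sections over generators are conservative on $\DDD^+$''; the paper's version checks the equivalence on \emph{every} object of the site and records along the way the descent identification $R\Gamma(B_{\hat G}/X,-)\simeq \lim R\Gamma(B_{\hat G}/EG_i\times(G_i^{tr})^\bullet\times X^{tr},-)$, which is slightly more information. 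Two small points you rely on implicitly, at the same level of rigor as the paper: the identification of site objects (and of $EG_i\times S^{tr}$ in particular) with objects of the topos via \cref{definingsite}, and the existence of $\kappa$-small extremally disconnected surjections onto $\kappa$-small compact Hausdorff spaces; neither is a gap, and your unpacking of the hypothesis via \cref{rmk:edsections:enriched} and \cref{topoipb}, together with detection of equivalences in $\DDD^+(\CC)$ on extremally disconnected spaces (\cref{rmk:edtopspaces}), is correct.
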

\begin{proof} 
Let $X$ be a compact Hausdorff topological space with an action of $G_i$, for some $i$. We show that the morphism $
R\Gamma(B_{\hat{G}}/X,\FF_{|X})\rightarrow R\Gamma(B_{\hat{G}}/X,\GG_{|X})$ is an equivalence in $\DDD^+(\Z)$. We observe that \[
EG_i\times X^{tr}\rightarrow X, \quad (g,x)\mapsto g\cdot x,
\] is a covering in $\hat{G}-\Top^c$, where $X^{tr}$ is $X$ with the trivial action of $G$. Thus we have \[
R\Gamma(B_{\hat{G}}/X,\FF_{|X})=\mathrm{colim}(R\Gamma(B_{\hat{G}}/(EG_i\times X^{tr})^{\bullet},\FF_{|(EG_i\times X^{tr})^{\bullet}}),
\] and similarly for $\GG$. For all $n$, we have an isomorphism $(EG_i\times X^{tr})^n\cong EG_i \times (G_i^{tr})^{n-1}\times X^{tr}$ over $X$. Thus it is enough to check that the morphism \[
R\Gamma(B_{\hat{G}}/EG_i\times (G_i^{tr})^n\times X^{tr},\FF_{|EG_i\times (G_i^{tr})^n\times X^{tr}})\rightarrow R\Gamma(B_{\hat{G}}/EG_i\times (G_i^{tr})^n\times X^{tr},\GG_{|EG_i\times (G_i^{tr})^n\times X^{tr}})
\] is an equivalence for all $n$ and all $i$. This follows from the hypothesis. Indeed we have \[
R\Gamma(B_{\hat{G}}/EG_i\times (G_i^{tr})^n\times X^{tr},\FF_{|EG_i\times (G_i^{tr})^n\times X^{tr}})=R\Gamma(G_i^n\times X,R\GGamma(B_{\hat{G}}/EG_i,\FF_{|EG_i})),
\] and similarly for $\GG$. 
Thus for all $X$ compact Hausdorff with an action of $G_i$ for some $i$, the morphism $R\Gamma(B_{\hat{G}}/X,\FF_{|X})\rightarrow R\Gamma(B_{\hat{G}}/X,\GG_{|X})$ is an equivalence in $\DDD^{+}(\Z)$. The result follows from \cite[Corollary 2.1.2.3.]{SAG} and \cref{definingsite}. 
\end{proof}
\begin{ntt} For all $i\in I$, let $\pi_i:\hat{G}\to G_i$ be the projection. For all $j\to i$, let $\rho_{i,j}$ be the morphism $G_j\twoheadrightarrow G_i$. We set $\hat{U}_i\coloneqq \mathrm{ker}(\pi_i)$, and $K_{i,j}\coloneqq \mathrm{ker}(\rho_{i,j})$. Thus we have \[B_{\hat{G}}/EG_i\cong B_{\hat{U}_i}=\lim_{\substack{\leftarrow\\ j\to i}}\, B_{K_{i,j}}.\]  We call $\pi_j^i$ the projections $B_{\hat{U}_i}\to B_{K_{i,j}}$.
\end{ntt}

\begin{prop}\label{lem:colimitdescription}
For all $\FF\in \DDD^+(B_{\hat{G}})$, the natural morphism $
\underset{\substack{\rightarrow \\ j}}{\lim}\, \pi_j^*R\pi_{j,*} \FF \rightarrow \FF$ is an equivalence.
\end{prop}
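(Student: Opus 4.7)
The plan is to invoke \cref{lem:sectioncheck} applied to the counit-induced morphism $\alpha_{\FF}:\lim_{\substack{\rightarrow\\ j}}\,\pi_j^*R\pi_{j,*}\FF\to\FF$, reducing the statement to the assertion that $R\GGamma(B_{\hat{G}}/EG_i,(\alpha_{\FF})_{|EG_i})$ is an equivalence in $\DDD(\CC)$ for each fixed $i\in I$. I identify $B_{\hat{G}}/EG_i\cong B_{\hat{U}_i}$ and write $\FF':=\FF_{|EG_i}$. The cofilteredness of $I$ shows that the indices $j$ admitting an arrow $j\to i$ form a cofinal subcategory, so the colimit can be restricted to such $j$. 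For such $j$, the equality $EG_i=\pi_j^*(\rho_{i,j}^*E_i)$ together with the pullback square of \cref{topoipb} and the identification $B_{G_j}/\rho_{i,j}^*E_i\cong B_{K_{i,j}}$ yield a canonical equivalence
\[
(\pi_j^*R\pi_{j,*}\FF)_{|EG_i}\;\simeq\;(\pi_j^i)^*R(\pi_j^i)_*(\FF'),
\]
where the localised morphism of $\pi_j$ is identified with $\pi_j^i:B_{\hat{U}_i}\to B_{K_{i,j}}$.

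Since $B_{\hat{U}_i}$ is strongly compact over $\CC$ by \cref{lem:2} combined with \cref{bgstronglycompact}, the functor $R\GGamma(B_{\hat{U}_i},-)$ commutes with filtered colimits. The task therefore becomes showing that
\[
\lim_{\substack{\rightarrow\\j\to i}}\,R\GGamma\bigl(B_{\hat{U}_i},(\pi_j^i)^*R(\pi_j^i)_*\FF'\bigr)\;\longrightarrow\;R\GGamma(B_{\hat{U}_i},\FF')
\]
(induced by the counits) is an equivalence. The continuity result \cref{lem:3}, applied to $B_{\hat{U}_i}=\lim_{\substack{\leftarrow\\j\to i}}\,B_{K_{i,j}}$ with the compatible system of coefficients $(R(\pi_j^i)_*\FF')_{j\to i}$, produces a natural equivalence
\[
\lim_{\substack{\rightarrow\\j\to i}}\,R\GGamma(B_{K_{i,j}},R(\pi_j^i)_*\FF')\;\xrightarrow{\sim}\;R\GGamma\Bigl(B_{\hat{U}_i},\lim_{\substack{\rightarrow\\j\to i}}\,(\pi_j^i)^*R(\pi_j^i)_*\FF'\Bigr).
\]
By composition of pushforwards each left-hand term equals $R\GGamma(B_{\hat{U}_i},\FF')$, so this colimit is constant on a filtered (hence connected) category and collapses to $R\GGamma(B_{\hat{U}_i},\FF')$.

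To conclude, I exploit the triangle identity for the adjunction $(\pi_j^i)^*\dashv R(\pi_j^i)_*$: the composite
\[
R(\pi_j^i)_*\FF'\;\xrightarrow{\eta}\;R(\pi_j^i)_*(\pi_j^i)^*R(\pi_j^i)_*\FF'\;\xrightarrow{R(\pi_j^i)_*\epsilon}\;R(\pi_j^i)_*\FF'
\]
is the identity, so applying $R\GGamma(B_{K_{i,j}},-)$ and passing to the colimit over $j\to i$ shows that the composite
\[
R\GGamma(B_{\hat{U}_i},\FF')\;\xrightarrow{\sim}\;\lim_{\substack{\rightarrow\\ j\to i}}\,R\GGamma(B_{\hat{U}_i},(\pi_j^i)^*R(\pi_j^i)_*\FF')\;\longrightarrow\;R\GGamma(B_{\hat{U}_i},\FF')
\]
of the continuity equivalence with the counit-induced map is the identity. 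Since the first arrow is an equivalence, so must be the second, yielding the desired equivalence.

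The main technical subtlety I expect is the extension of \cref{lem:3} from a single abelian sheaf to bounded-below complexes such as $R(\pi_j^i)_*\FF'$; this should be routine via truncation and the hypercohomology spectral sequence already underlying its proof, but needs to be made explicit. A secondary point is to justify the base-change identity $(\pi_j^*R\pi_{j,*}\FF)_{|EG_i}\simeq(\pi_j^i)^*R(\pi_j^i)_*\FF'$ in the derived setting; this follows from the derived analogue of the formula in \cref{topoipb}, applied to the pullback square obtained by localising $\pi_j$ at $\rho_{i,j}^*E_i$.
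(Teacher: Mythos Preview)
Your proof is correct and follows essentially the same route as the paper: reduce via \cref{lem:sectioncheck}, identify $B_{\hat{G}}/EG_i\cong B_{\hat{U}_i}$, restrict the colimit to $j\to i$, use continuity (\cref{prop:cont}/\cref{lem:3}) to pull the colimit out of $R\GGamma$, observe that each term is constant by composing derived pushforwards, and invoke the base change of \cref{topoipb}. Your explicit use of the triangle identity to check that the resulting map is the identity, and your flagging of the extension of the continuity result to bounded-below complexes, are in fact more careful than the paper, which applies \cref{prop:cont} directly to $(R\pi_{j,*}\FF)_{|EG_i}$ without comment.
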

\begin{proof}
By \cref{lem:sectioncheck}, it is enough to prove that the morphism \begin{equation}\label{eqn:lem:morph}
R\GGamma(B_{\hat{G}}/EG_i,(\lim_{\rightarrow\, j} \pi_j^*R\pi_{j,*}\FF)_{|EG_i})\rightarrow R\GGamma(B_{\hat{G}}/EG_i,\FF_{|EG_i})
\end{equation} is an equivalence for all $i$. This can be written firstly as \[
R\GGamma(B_{\hat{U}_i},\lim_{\substack{\rightarrow\\ j\twoheadrightarrow i}} \pi_j^{i,*}(R\pi_{j,*}\FF)_{|EG_i})\rightarrow R\GGamma(B_{\hat{U}_i},\FF_{|EG_i}),
\] 
and then, by \cref{prop:cont}, as \[
\lim_{\substack{\rightarrow\\ j\twoheadrightarrow i}} R\GGamma(B_{K_{i,j}},(R\pi_{j,*}\FF)_{|EG_i})\rightarrow R\GGamma(B_{\hat{U}_i},\FF_{|EG_i}).
\]
For all $j$, we have $R\GGamma(B_{K_{i,j}},(R\pi_{j,*}\FF)_{|EG_i})=(R\rho_{i,j,*}R\pi_{j,*}\FF)_{|EG_i}=(R\pi_{i,*}\FF)_{|EG_i}$. Hence the colimit is constant and \eqref{eqn:lem:morph} becomes \[
(R\pi_{i,*}\FF)_{|EG_i}\rightarrow R\GGamma(B_{\hat{U}_i},\FF_{|EG_i}).
\] This is an equivalence by \cref{topoipb}.
\end{proof}
\begin{cor}
Let $\FF\in\DDD^+(B_{\hat{G}})$ such that $R\pi_{i,*}\FF=0$ for all $i$. Then $\FF=0$.
\end{cor}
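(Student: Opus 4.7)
The plan is to apply \cref{lem:colimitdescription} directly. That proposition establishes that for any $\FF\in\DDD^+(B_{\hat{G}})$, the natural morphism
\[
\underset{\substack{\rightarrow\\ j}}{\lim}\, \pi_j^* R\pi_{j,*}\FF \longrightarrow \FF
\]
is an equivalence. This is essentially a completeness/generation statement: the topos $B_{\hat{G}}$ is reconstructed from its finite-level quotients $B_{G_j}$ via the pullback-pushforward adjunctions.

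Under our hypothesis $R\pi_{i,*}\FF=0$ for every $i\in I$, each term $\pi_j^*R\pi_{j,*}\FF$ in the diagram is zero in $\DDD^+(B_{\hat{G}})$. Since filtered colimits of zero objects are zero, the source of the above equivalence vanishes. Therefore $\FF\simeq 0$ in $\DDD^+(B_{\hat{G}})$.

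There is no real obstacle here: the entire content has been packaged into \cref{lem:colimitdescription}, so the corollary is essentially a one-line consequence. The only thing worth checking is that $\pi_j^*$ preserves the zero object, which is immediate as $\pi_j^*$ is an exact left adjoint.
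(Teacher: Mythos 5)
Your proof is correct and is exactly the intended argument: the paper states this corollary without proof immediately after \cref{lem:colimitdescription}, as the obvious consequence you describe (each $\pi_j^*R\pi_{j,*}\FF$ vanishes since $\pi_j^*$ is exact, so the colimit presenting $\FF$ vanishes).
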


When working with $B_{\hat{G}}$ instead of $B_G$ we lose an important property. Indeed, the morphism of topoi $\CC\to B_{\hat{G}}$ is not a localisation morphism. Thus, we can't check vanishing in $\CC$ apriori. However, thanks to \cref{lem:colimitdescription} we can reduce to prove vanishing in $B_{G_i}$, where we have localisation morphisms $\CC\to B_{G_i}$.
\begin{rmk}\label{rmk:morphismofinternalhom}
Let $f:\mathcal{T}_1\to \mathcal{T}_2$ be a morphism of topoi. For $i=1,2$, we denote by $R\underline{\Hom}_{\mathcal{T}_i}(-,-)$ the derived internal $\Hom$ in $\DDD(\mathcal{T}_i)$. For all $\FF,\GG\in \DDD(\mathcal{T}_2)$ we have a canonical morphism \begin{equation}\label{eqn:morphismofinternalhom}
f^*R\underline{\Hom}_{\mathcal{T}_2}(A,B)\rightarrow R\underline{\Hom}_{\mathcal{T}_1}(f^*A,f^*B),
\end{equation} which is functorial in $A$ and $B$. Moreover, if $f$ is a localisation morphism, then \eqref{eqn:morphismofinternalhom} is an isomorphism.
\end{rmk}

\begin{lem}\label{lem:internalhoms}
Let $\FF\in \DDD^+(B_{G_i})$ and let $\GG\in \DDD^+(B_{\hat{G}})$. Then the natural morphism \[
R\underline{\Hom}_{B_{G_i}}(\FF,R\pi_{i,*}\GG)\rightarrow R\pi_{i,*}R\underline{\Hom}_{B_{\hat{G}}}(\pi_i^*\FF,\GG)
\] is an isomorphism.
\end{lem}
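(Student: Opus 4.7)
The plan is to verify the claimed isomorphism by testing it on sections over a generating family of $B_{G_i}$ and reducing both sides, via adjunction and the compatibility of internal Hom with localisation, to the same $R\Hom$ computed in an overcategory of $B_{\hat{G}}$.

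First, I would work section-wise. For any $X \in B_{G_i}$ consider the localisation $j_X : B_{G_i}/X \to B_{G_i}$ and the compatible localisation $j_{\pi_i^*X} : B_{\hat{G}}/\pi_i^*X \to B_{\hat{G}}$. By \cref{topoipb} we have a pullback square with vertical maps the localisations and with $\pi_{i,/X} : B_{\hat{G}}/\pi_i^*X \to B_{G_i}/X$, so that
\[
j_X^* \circ R\pi_{i,*} \;=\; R\pi_{i,/X,*} \circ j_{\pi_i^*X}^*.
\]
Since localisation morphisms commute with derived internal Hom (\cref{rmk:morphismofinternalhom}), applying $R\Gamma(B_{G_i}/X, -)$ to the left hand side yields
\[
R\Hom_{B_{G_i}/X}\!\bigl(\FF_{|X},\, R\pi_{i,/X,*}(\GG_{|\pi_i^*X})\bigr),
\]
and by the adjunction $(\pi_{i,/X}^*, R\pi_{i,/X,*})$ this rewrites as $R\Hom_{B_{\hat{G}}/\pi_i^*X}\!\bigl((\pi_i^*\FF)_{|\pi_i^*X},\, \GG_{|\pi_i^*X}\bigr)$.

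Next, I would apply $R\Gamma(B_{G_i}/X, -)$ to the right hand side. Again by \cref{topoipb}, this equals $R\Gamma(B_{\hat{G}}/\pi_i^*X,\, j_{\pi_i^*X}^* R\underline{\Hom}_{B_{\hat{G}}}(\pi_i^*\FF, \GG))$, and using once more that internal Hom commutes with pullback along a localisation morphism (\cref{rmk:morphismofinternalhom}), this becomes
\[
R\Gamma\bigl(B_{\hat{G}}/\pi_i^*X,\, R\underline{\Hom}_{B_{\hat{G}}/\pi_i^*X}((\pi_i^*\FF)_{|\pi_i^*X}, \GG_{|\pi_i^*X})\bigr) \;=\; R\Hom_{B_{\hat{G}}/\pi_i^*X}\!\bigl((\pi_i^*\FF)_{|\pi_i^*X}, \GG_{|\pi_i^*X}\bigr),
\]
matching the expression obtained for the left hand side. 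Since $X$ ranges over a generating family (for instance the one coming from \cref{definingsite}), one concludes that the natural map is an equivalence in $\DDD^+(B_{G_i})$.

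The only bookkeeping subtlety is to check that the natural transformation one writes down in this section-wise picture genuinely comes from the canonical morphism in the statement, which follows by naturality of the adjunction $(\pi_i^*, R\pi_{i,*})$ and of the morphism \eqref{eqn:morphismofinternalhom}. I do not expect a real obstacle here: the argument is formal once one commits to testing on the generating family of representables in $\hat{G}\text{-}\Top^c$ and uses that each pullback square to a localisation is computed by \cref{topoipb}.
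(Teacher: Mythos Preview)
Your proof is correct and follows essentially the same strategy as the paper's: both reduce the claim, via \cref{topoipb} and the compatibility of internal $\Hom$ with localisation (\cref{rmk:morphismofinternalhom}), to an adjunction identity for $R\Hom$ in an overcategory. The only structural difference is that the paper first localises at the single cover $EG_i\to *$ (using \cref{rmk:localizediso}) to land in $\CC$ and then checks on extremally disconnected $S$, while you test directly on all $X$ in a generating site for $B_{G_i}$; the underlying content is identical.
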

\begin{proof}
Let us consider the pull-back diagram of topoi given by the localisation at $EG_i$ \[
\begin{tikzcd}
B_{\hat{U}_i}\ar[r,"u"]\ar[d,"\iota_i"]&\CC\ar[d,"e"]\\
B_{\hat{G}}\ar[r,"\pi_i"]& B_{G_i}.
\end{tikzcd}
\] 
By \cref{rmk:localizediso,rmk:morphismofinternalhom}, it is enough to show that the morphism\[
R\underline{\Hom}_{\CC}(e^*\FF,Ru_*\GG)\rightarrow Ru_*R\underline{\Hom}_{B_{\hat{U}_i}}(u^*e^*\FF,\GG)
\] is an equivalence in $\DDD(\CC)$. This can be checked on extremally disconnected topological spaces. Take $S$ extremally disconnected. Then, by \cref{rmk:morphismofinternalhom}, the morphism
 \[
 R\Gamma(\CC/S,R\underline{\Hom}_{\CC}(e^*\FF,Ru_*\GG)_{|S})\rightarrow R\Gamma(\CC/S,Ru_*R\underline{\Hom}_{B_{\hat{U}_i}}(u^*e^*\FF,\GG)_{|S})
 \] is \[
R\Hom_{\CC/S}(e_{/S}^*\FF_{|S},Ru_{/S,*}(\GG_{|S}))\rightarrow R\Hom_{B_{\hat{U}_i}/S}(u_{/S}^*e_{/S}^*\FF_{|S},\GG_{|S}).
\] This is an equivalence by adjunction. The result follows.
\end{proof}
\begin{cor}\label{internalhomdescr} Let $\FF,\GG\in \DDD^+(B_{\hat{G}})$. We have the following expression of $R\underline{\Hom}_{\hat{G}}(\FF,\GG)$. \begin{enumerate}[(i)]
\item If $\FF=\pi_i^* \FF'$, with $\FF'\in \DDD^+(B_{G_i})$, then we have \[
R\underline{\Hom}_{B_{\hat{G}}}(\FF,\GG)\cong \underset{\substack{\rightarrow\\ j\to i}}{\lim}\, \pi_j^*R\underline{\Hom}_{B_{G_j}}(\rho_{i,j}^*\FF',R\pi_{j,*}\GG).\] 
\item In general, we have \[
R\underline{\Hom}_{B_{\hat{G}}}(\FF,\GG)\cong R\underset{\substack{\leftarrow\\ i}}{\lim}\, \underset{\substack{\rightarrow\\ j\to i}}{\lim}\,\pi_j^*R\underline{\Hom}_{B_{G_j}}( \rho_{i,j}^*R\pi_{i,*}\FF,R\pi_{j,*}\GG).
\]
\end{enumerate}
\end{cor}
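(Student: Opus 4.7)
The plan is to derive both identifications by combining Proposition \ref{lem:colimitdescription} (colimit decomposition along the projections $\pi_j$) with Lemma \ref{lem:internalhoms} (the projection-formula-type compatibility between $R\underline{\Hom}$ and $R\pi_{j,*}$).

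For part (i), I would first apply Proposition \ref{lem:colimitdescription} to the object $R\underline{\Hom}_{B_{\hat{G}}}(\FF,\GG)\in\DDD^+(B_{\hat{G}})$ itself, obtaining
\[
R\underline{\Hom}_{B_{\hat{G}}}(\FF,\GG)\simeq \underset{\substack{\rightarrow\\ j}}{\lim}\,\pi_j^*R\pi_{j,*}R\underline{\Hom}_{B_{\hat{G}}}(\FF,\GG).
\]
Since $I$ is cofiltered, the slice $I/i$ is cofinal in $I$ (for any $j$ one can find $j'$ with maps $j'\to j$ and $j'\to i$), so I may restrict the colimit to indices $j\to i$. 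For any such $j$ the hypothesis $\FF=\pi_i^*\FF'$ reads $\FF=\pi_j^*(\rho_{i,j}^*\FF')$, and Lemma \ref{lem:internalhoms} applied to $\pi_j$ gives
\[
R\pi_{j,*}R\underline{\Hom}_{B_{\hat{G}}}(\pi_j^*\rho_{i,j}^*\FF',\GG)\simeq R\underline{\Hom}_{B_{G_j}}(\rho_{i,j}^*\FF',R\pi_{j,*}\GG).
\]
Substituting into the colimit yields the formula claimed in (i).

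For part (ii), I would use Proposition \ref{lem:colimitdescription} on $\FF$ itself to write $\FF\simeq \lim_{\rightarrow i}\,\pi_i^*R\pi_{i,*}\FF$. Since $R\underline{\Hom}_{B_{\hat{G}}}(-,\GG)$ sends colimits in the first variable to limits, this produces
\[
R\underline{\Hom}_{B_{\hat{G}}}(\FF,\GG)\simeq R\underset{\substack{\leftarrow\\ i}}{\lim}\,R\underline{\Hom}_{B_{\hat{G}}}(\pi_i^*R\pi_{i,*}\FF,\GG),
\]
and then applying part (i) to each term with $\FF'\coloneqq R\pi_{i,*}\FF\in \DDD^+(B_{G_i})$ delivers exactly the double (co)limit in the statement.

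The main subtlety I expect is the cofinality step in part (i): one must check that restricting Proposition \ref{lem:colimitdescription}'s colimit to the slice $\{j\to i\}$ is compatible with the transition maps induced on the internal Homs via Lemma \ref{lem:internalhoms}, so that the resulting filtered colimit genuinely agrees with the original one. Once this bookkeeping, together with the adjunction identity $\pi_j^*=\pi_j^*\rho_{i,j}^*\rho_{i,j\,*}\circ(\text{counit})$-type relation, is carefully written out, the rest reduces to direct substitution and poses no further difficulty.
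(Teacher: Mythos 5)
Your argument is correct and is essentially the paper's own proof: part (i) applies \cref{lem:colimitdescription} to $R\underline{\Hom}_{B_{\hat{G}}}(\FF,\GG)$ and then \cref{lem:internalhoms} to $R\pi_{j,*}R\underline{\Hom}_{B_{\hat{G}}}(\FF,\GG)$, and part (ii) decomposes $\FF$ via \cref{lem:colimitdescription}, turns the resulting colimit in the first variable into an $R\lim$ of internal Homs, and applies part (i) termwise with $\FF'=R\pi_{i,*}\FF$. The cofinality of the slice $\{j\to i\}$ that you flag is indeed the only bookkeeping point, and it is harmless since $I$ is cofiltered; the paper leaves it implicit.
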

\begin{proof}
For (i), we apply \cref{lem:colimitdescription} to $R\Hom_{B_{\hat{G}}}(\FF,\GG)$. We conclude by applying \cref{lem:internalhoms} to $R\pi_{j,*}R\Hom_{B_{\hat{G}}}(\FF,\GG)$.

For (ii), we apply \cref{lem:colimitdescription} firstly on $\FF$ and then on $R\underline{\Hom}_{B_{\hat{G}}}(\pi_i^*R\pi_{i,*}\FF,\GG)$ for all $i$. We conclude by applying \cref{lem:internalhoms} to $R\pi_{j,*}R\underline{\Hom}_{B_{\hat{G}}}(\pi_i^*R\pi_{i,*}\FF,\GG)$  for all $j$. \end{proof}

\section{Cohomology of \texorpdfstring{$\hat{W}_F$}{WF}}\label{section2}
\begin{defn}
Let $k$ be a finite field. The Weil group $W_k\subset G_k$ is defined by the pullback square of topological groups \[
\begin{tikzcd}
W_k\arrow[r,hook] \arrow[d,"\sim"] & G_k \arrow[d,"\sim"]\\
\Z \arrow[r,hook] & \hat{\Z}.
\end{tikzcd}
\]
\end{defn} The cohomology of the condensed group $W_k\cong\Z$ is explicited in \cref{cohomologyz}.

Let $F/\Q_p$ be a finite field extension, and $k\coloneqq \mathcal{O}_F/\mathfrak{m}_F$ the finite residue field. Let $F^{ur}$ be its maximal unramified extension, and let $L$ be the completion of $F^{ur}$. Let $\overline{F}$ be an algebraic closure of $F$ and let $\overline{L}$ be an algebraic closure of $L$ containing $\overline{F}$. Let $G_F\coloneqq \Gal(\overline{F}/F)$ and $G_k\coloneqq \Gal(F^{ur}/F)=\Gal(\overline{k}/k)$. Let $I\coloneqq \Gal(\overline{F}/F^{ur})$ be the inertia group.
We have an exact sequence of topological groups $1\rightarrow I\rightarrow G_F\rightarrow G_k\rightarrow 1$.
\begin{defn}
The \emph{Weil group} of the local field $F$ is the pullback of $W_k$ under $G_F\twoheadrightarrow G_k$ \[
W_F\coloneqq G_F\times_{G_k}W_k.
\] The pullback is taken in the category of topological groups.
\end{defn}
The subgroup $I\subset W_F$ is clopen, and we have an exact sequence of topological groups \[
1\rightarrow I\rightarrow W_F\rightarrow W_k\rightarrow 1.
\] The topological groups $I$ and $W_F$ are a profinite group and a prodiscrete group respectively. Indeed, if $\SS$ denotes the set of open normal subgroups of $I$, we have \begin{equation}\label{profinprodisc}
I=\underset{\substack{\leftarrow\\ U\in \SS}}{\lim}\, I/U, \qquad W_F=\underset{\substack{\leftarrow\\ U\in \SS}}{\lim}\, W_F/U,
\end{equation} where $I/U$ is finite and $W_F/U$ is discrete for all $U$. 

It follows from Krasner's Lemma applied to the extension $F^{ur}\subset L$ that $I=\Gal(\overline{L}/L)$. Hence we have a continuous action of the topological group $I$ on the discrete abelian group $\overline{L}^{\times}$. By Hilbert 90 we have \begin{equation}\label{discretehilbert90}
\H^1(B_I(\Set),\overline{L}^{\times})=0.
\end{equation} We would like a similar computation for the \emph{topological} abelian group $\overline{L}^{\times}$, where the topology is induced by the natural topology on $\overline{L}$. We try to recover this in $B_I$, the classifying topos of the condensed group $I$.
\begin{defn}\label{overlinel}
Let $K/L$ be a finite Galois extension of group $G$. Let us endow $K^{\times}$ with the topology induced by the inclusion $K^{\times}\subset K$. Let $\underline{K^{\times}}$ be the associated condensed $G$-module. We set \[
\overline{L}^{\times}\coloneqq \lim_{\substack{\rightarrow\\ K}} \underline{K^{\times}} \in \Ab(B_I),
\] where $\underline{K^{\times}}$ is seen as an object of $\Ab(B_I)$ by pullback along $B_{I}\to B_G$, and the colimit is computed in $B_I$.
\end{defn}
\begin{prop}\label{h90failure}
The abelian group $\HH^1(B_I,\overline{L}^{\times})(*)$ is not torsion. 
\end{prop}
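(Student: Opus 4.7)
The plan is to produce a non-torsion element of $\HH^1(B_I, \overline{L}^{\times})(\ast)$ by extracting it, via the edge map of the Cartan--Leray spectral sequence, from the $p$-adic cyclotomic character of $I$.

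First, I would apply the Cartan--Leray spectral sequence (\cref{cartanleray}) at $S = \ast$ to obtain a first-quadrant spectral sequence of abelian groups
\[
E_2^{p,q} = \H^p(\H^q(\CC; I^\bullet, \overline{L}^{\times})) \Longrightarrow \HH^{p+q}(B_I, \overline{L}^{\times})(\ast),
\]
whose five-term exact sequence yields an injection $E_2^{1,0} \hookrightarrow \HH^1(B_I, \overline{L}^{\times})(\ast)$. It therefore suffices to exhibit a non-torsion element of $E_2^{1,0}$. To identify the latter, observe that $\H^0(\CC; I^n, \overline{L}^{\times}) = \overline{L}^{\times}(I^n)$; because $I^n$ is quasi-compact and the forgetful functor $B_I \to \CC$ preserves filtered colimits, the defining colimit $\overline{L}^{\times} = \underset{\rightarrow\, K}{\lim}\, \underline{K^\times}$ can be computed in $\CC$ and commutes with sections on $I^n$, giving
\[
\overline{L}^{\times}(I^n) \;=\; \underset{\substack{\rightarrow\\ K}}{\lim}\, \Cont(I^n, K^\times).
\]
Exactness of filtered colimits in $\Ab$ then identifies
\[
E_2^{1,0} \;=\; \underset{\substack{\rightarrow\\ K}}{\lim}\, \H^1_{\mathrm{cont}}(I, K^\times),
\]
where $\H^1_{\mathrm{cont}}(I, K^\times)$ denotes the first cohomology of the complex $\Cont(I^\bullet, K^\times)$ of continuous cochains.

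Next I would construct the non-torsion class. Since the residue field of $L$ is algebraically closed while $\mu_p \not\subset L$, each extension $L(\mu_{p^n})/L$ is totally ramified of full degree $(p-1)p^{n-1}$, so the $p$-adic cyclotomic character defines a continuous surjection $\chi_p \colon I \twoheadrightarrow \Z_p^\times$. Composing with the inclusion $\Z_p^\times \hookrightarrow \O_L^\times \subset L^\times$, on which $I$ acts trivially, we obtain a class $[\chi_p] \in \H^1_{\mathrm{cont}}(I, L^\times)$, and I claim that its image in $E_2^{1,0}$ has infinite order. Indeed, if $n[\chi_p] = 0$ in $E_2^{1,0}$ for some $n \geq 1$, there would exist a finite Galois extension $K/L$ and $a \in K^\times$ with $\chi_p(g)^n = g(a)/a$ for all $g \in I$. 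Restricting to $I_K = \Gal(\overline{L}/K)$, which fixes $a$, forces $\chi_p^n|_{I_K} = 1$, so $\chi_p^n$ factors through the finite group $\Gal(K/L)$ and thus has finite image; but $\chi_p^n(I) = (\Z_p^\times)^n$ is an open subgroup of $\Z_p^\times$, and so infinite --- a contradiction.

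The main technical subtlety is the identification $\H^0(\CC; I^n, \overline{L}^\times) = \underset{\rightarrow\, K}{\lim}\, \Cont(I^n, K^\times)$: the filtered colimit defining $\overline{L}^\times$ is a priori taken in the topos $B_I$, and one must verify that it commutes with sections on the profinite (but not extremally disconnected) set $I^n$. This rests on the quasi-compactness of $I^n$ in $\CC$ together with the preservation of filtered colimits by the forgetful functor $B_I \to \CC$. Once this identification is in hand, the rest of the argument is a direct Hilbert-90/restriction-to-inertia computation.
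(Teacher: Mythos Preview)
Your argument is correct and follows the same broad strategy as the paper: produce a continuous homomorphism $I\to L^\times$ with infinite image whose class survives into $\HH^1(B_I,\overline{L}^\times)(\ast)$. The differences are in the packaging. The paper uses the short exact sequence $0\to\underline{L^\times}\to\overline{L}^\times\to\overline{L}^\times/\underline{L^\times}\to 0$ together with solidity of all three terms to identify $\HH^1(B_I,\underline{L^\times})(\ast)$ with $\H^1_{\mathrm{cont}}(I,L^\times)$, and then checks by explicit diagram chasing that the connecting map $\alpha$ lands in cocycles with finite image; the witness is the local class field theory map $I\twoheadrightarrow J\cong\O_F^\times\hookrightarrow L^\times$. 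Your route via the Cartan--Leray edge map and the quasi-compactness of $I^n$ sidesteps the solidity discussion, and your use of the cyclotomic character avoids invoking local class field theory, so the argument is marginally more elementary. One small inaccuracy: $\chi_p$ need not surject onto all of $\Z_p^\times$ when $\mu_p\subset F$ (and hence $\mu_p\subset L$); its image is however always an open subgroup of $\Z_p^\times$, so $\chi_p(I)^n$ is still infinite and your contradiction goes through unchanged.
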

\begin{proof}
Let us consider the exact sequence in $\Ab(B_I)$ \[
0\rightarrow \underline{L^{\times}} \rightarrow \overline{L}^{\times}\rightarrow \overline{L}^{\times}/\underline{L^{\times}}\rightarrow 0.
\] We get an exact sequence in cohomology \[
\dots\rightarrow\HH^0(B_I,\overline{L}^{\times}/\underline{L^{\times}})(*)\overset{\alpha}{\longrightarrow} \HH^1(B_I,\underline{L^{\times}})(*)\overset{\beta}{\longrightarrow} \HH^1(B_I,\overline{L}^{\times})(*)\rightarrow \dots.
\] 
Since $\underline{L^{\times}}$, $\overline{L}^{\times}$ and $\overline{L}^{\times}/\underline{L^{\times}}$ are solid, the spectral sequence given by \cref{cartanleray} degenerates. Hence \[
\HH^q(B_I,\underline{L^{\times}})(*)=H^q(\underline{L^{\times}}(*)\rightarrow \underline{L^{\times}}(I)\rightarrow\dots ),
\] and similarly for $\overline{L}^{\times}$ and $\overline{L}^{\times}/\underline{L^{\times}}$. We have the following morphisms of complexes \[
\begin{tikzcd}
0\ar[d]&0\ar[d]& \\
\underline{L^{\times}}(*)\ar[r]\ar[d]&\Cont(I,L^{\times})\ar[r]\ar[d]&\dots\\
\overline{L}^{\times}(*)\ar[r]\ar[d]&\Cont(I,\overline{L}^{\times})\ar[r]\ar[d]&\dots\\
\overline{L}^{\times}/\underline{L^{\times}}(*)\ar[r]\ar[d]&\overline{L}^{\times}/\underline{L^{\times}}(I)\ar[d]\ar[r]&\dots.\\
0& 0 & 
\end{tikzcd}
\] Since for all $n$ the topological group $I^n$ is profinite, we have \[\Ext^1_{\Ab(\CC)}(\Z[I^n],\underline{L^{\times}})=\Ext^1_{\Solid}(\Z[I^n]^{\blacksquare},\underline{L^{\times}})=0.\] Hence vertical sequences are exact. By diagram chasing, the morphism $\alpha$ is explicited as follows
\[
\alpha: \HH^0(B_I,\overline{L}^{\times}/\underline{L^{\times}})(*)\rightarrow \HH^1(B_I,\underline{L^{\times}})(*), \qquad \overline{x}\mapsto (i\mapsto x^{-1}i\cdot x),
\] where $x$ is a representative of $\overline{x}$ in $\overline{L}^{\times}(*)=\bigcup_{K}\underline{K^{\times}}(*)$. In particular, since $x\in K^{\times}$ for some $K$, its orbit under the action of $I$ is finite. Hence, the image of $\alpha$ lands in those continuous homomorphisms $I\to L^{\times}$ whose image is finite.
Local Class Field theory provides an isomorphism of topological groups (\cite[Corollary 9.16]{Har}) \[
J\cong \O_F^{\times},
\] where $J=I/\Gal(\overline{F}/F^{ab})$. Then we get a continuous group homomorphism $\varphi:I\twoheadrightarrow J \cong \O_F^{\times}\hookrightarrow L^{\times}$. For all $n$, the image of $\varphi^n$ is $(\O_F^{\times})^n$, an infinite subgroup. Hence $\varphi^n\notin \mathrm{im}(\alpha)=\mathrm{ker}(\beta)$ for all $n$. Thus $\beta(\varphi)$ is an element of $\HH^1(B_I,\overline{L}^{\times})(*)$ which is not torsion.
\end{proof}
Consequently, the condensed version of Hilbert 90 does not hold if we consider $I$ as a condensed group. Hence we consider the profinite topological group $I$ (resp.\ the prodiscrete topological group $W_F$) as a pro-condensed group, say $\hat{I}$ (resp.\ $\hat{W}_F$). We get an exact sequence of pro-condensed groups \[
 1\rightarrow \hat{I} \rightarrow \hat{W}_F \rightarrow \Z \rightarrow 1.
 \]
Following \cref{progroups}, we have classifying topoi $B_{\hat{I}}$ and $B_{\hat{W}_F}$. Let $M\in\Ab(B_{\hat{W}_F})$. By \cref{prop:proHS,cohomologyz}, we get an exact sequence \begin{equation}\label{eqn:degeneratehs}
 0\rightarrow\HH^1(B_{W_k},\HH^{q-1}(B_{\hat{I}},M))\rightarrow\HH^q(B_{\hat{W}_F},M)\rightarrow\HH^0(B_{W_k},\HH^q(B_{\hat{I}},M))\rightarrow 0
 \end{equation} for all $q$.
 
In the next section we see how to recover a topological version of \eqref{discretehilbert90} in this setting. 

\subsection{The complex \texorpdfstring{$R\GGamma(B_{\hat{I}},\overline{L}^{\times})$}{RGammaBIL}}
Replacing $I$ with $\hat{I}$ in \cref{overlinel}, we obtain an object \[\overline{L}^{\times}\coloneqq \underset{\substack{\rightarrow\\ K}}{\lim}\, \underline{K^{\times}} \in \Ab(B_{\hat{I}}).\] The goal of this section is to compute the complex $R\GGamma(B_{\hat{I}},\overline{L}^{\times})$. In particular, we show that it is concentrated in cohomological degree $0$.
 
If $K/L$ is a finite Galois extension, the ring $\O_K$ is a discrete valuation ring with residue field $\overline{k}$. The group of invertible elements of $\O_K$ is $\O_K^{\times}\coloneqq \{x\in \O_K \, | \, v_K(x)= 0\}$. For all $i\ge 1$, we set $\UU_K^i\coloneqq 1+\mathfrak{m}_K^i$.
Let us consider the filtration of the topological abelian group $K^{\times}$
\begin{equation}\label{kfiltration}
\dots\subset \UU_K^{i+1}\subset\UU_K^i\subset \dots \subset \UU_K^0\coloneqq \O_K^{\times}\subset \UU_K^{-1}\coloneqq K^{\times} .
\end{equation} This is a filtration by clopen subgroups of $K^{\times}$. Since $K^{\times}$ is complete with the topology induced by the valuation, we have \[
K^{\times}\cong \underset{\substack{\leftarrow \\ i\ge 0}}{\lim} \, K^{\times}/\UU_K^i
\] as topological abelian groups. Moreover, by \cite[IV.2, Proposition 6]{serreCL} we have the following associated graded topological abelian groups \[
gr^{-1}=\UU_K^{-1}/\UU_K^0\cong \Z, \qquad gr^0=\UU_K^0/\UU_K^{-1}\cong \overline{k}^{\times}, \qquad gr^i=\UU_K^i/\UU_K^{i-1}\cong \overline{k} \quad \forall i\ge 1,
\] where the last isomorphism is non-canonical. \begin{lem}\label{lem:derivedprodiscreteness}
The filtration \eqref{kfiltration} is a filtration by clopen subgroups, and we have \[
\underline{K^{\times}}\cong R\lim_{\substack{\leftarrow \\ i\ge 0}} \underline{K^{\times}/\UU_K^i} \in \DDD(B_G),
\] where the right hand side is a pro-discrete condensed abelian group with its obvious $G$-action.
\end{lem}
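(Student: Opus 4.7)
To prove the lemma, the first point is entirely standard: the valuation $v_K \colon K^\times \to \Z$ is continuous with discrete target, so $\O_K^\times = v_K^{-1}(0)$ is clopen; and for $i \ge 1$ the non-Archimedean ball $\mathfrak{m}_K^i = \{x \in \O_K : v_K(x) \ge i\}$ is clopen in $\O_K$, so $\UU_K^i = 1 + \mathfrak{m}_K^i$ is clopen in $K^\times$. The clopenness makes the quotient map $K^\times \twoheadrightarrow K^\times/\UU_K^i$ a topological quotient with discrete target; since any continuous map from an extremally disconnected $S$ to a discrete set is locally constant with finite image, it lifts to $K^\times$ through a locally constant section. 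This furnishes, for every $i \ge 0$, a short exact sequence of condensed abelian groups, compatible with the $G$-action,
\[
0 \to \underline{\UU_K^i} \to \underline{K^\times} \to \underline{K^\times/\UU_K^i} \to 0.
\]

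Applying $R\lim$ to this short exact sequence of towers yields a fibre sequence $R\lim_i \underline{\UU_K^i} \to \underline{K^\times} \to R\lim_i \underline{K^\times/\UU_K^i}$ in $\DDD(B_G)$, so the second assertion of the lemma reduces to showing $R\lim_i \underline{\UU_K^i} = 0$. Because the forgetful functor $\Ab(B_G) \to \Ab(\CC)$ is the inverse image of the localisation $\CC \simeq B_G/EG$, it is exact, conservative, and (having a further left adjoint) preserves all limits; it therefore commutes with $R\lim$, and the vanishing can be verified in $\Ab(\CC)$. For the countable tower $(\underline{\UU_K^i})_i$, the derived limit is the two-term fibre of $1 - \mathrm{shift} \colon \prod_i \underline{\UU_K^i} \to \prod_i \underline{\UU_K^i}$, so only $\H^0$ and $\H^1$ can appear: the former vanishes because $\bigcap_i \UU_K^i = \{1\}$ in the Hausdorff group $K^\times$, and the latter amounts to showing that $1 - \mathrm{shift}$ is an epimorphism in $\Ab(\CC)$.

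This last point is the substance of the argument. Given an extremally disconnected $S$ and a family $(g_j) \in \prod_j \Cont(S, \UU_K^j)$, I would set $f_i(s) := \prod_{j \ge i} g_j(s)$. For every $s \in S$ the partial products $f_i^{(n)}(s) := g_i(s)\cdots g_n(s)$ form a Cauchy sequence in the complete group $\UU_K^i$, because $g_j(s) \in \UU_K^j$ and $\{\UU_K^j\}_j$ is a neighborhood basis of the identity, so $f_i(s)$ is well-defined and $f_i/f_{i+1} = g_i$ holds tautologically. The \emph{main obstacle} is to upgrade this pointwise construction to a continuous map $S \to \UU_K^i$, equivalently, to a morphism of condensed sets. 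My plan is to observe that $f_i(s) \in f_i^{(n)}(s) \cdot \UU_K^{n+1}$ for every $n$, where $f_i^{(n)}$ is the \emph{continuous} finite partial product; then for any $s_0 \in S$ and any prescribed target neighborhood $f_i(s_0) \cdot \UU_K^m$, one chooses $n \ge m$ and uses continuity of $f_i^{(n)}$ to produce an open neighborhood of $s_0$ on which $f_i$ lands in $f_i(s_0) \cdot \UU_K^m$. This uniformity step is the only place where completeness of $K$ enters in a genuinely non-formal way; everything else is bookkeeping built on the clopen filtration.
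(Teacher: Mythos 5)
Your proof is correct, but it takes a genuinely different route from the paper. The paper first uses that $\underline{(-)}:\Top\to\CC$ preserves limits together with completeness of $K^{\times}$ to identify $\underline{K^{\times}}$ with the \emph{underived} limit $\lim_i \underline{K^{\times}/\UU_K^i}$, and then upgrades this to $R\lim$ purely formally: the transition maps of the quotient tower are epimorphisms of condensed (hence of $B_G$-) abelian groups, and $B_G$ is replete (\cref{prop:bgreplete}), so \cite[Proposition 3.1.10]{proetale} kills the higher derived limit. You instead work with the kernel tower $(\UU_K^i)_i$: you establish the levelwise $G$-equivariant short exact sequences $0\to\underline{\UU_K^i}\to\underline{K^{\times}}\to\underline{K^{\times}/\UU_K^i}\to 0$ (via lifting of locally constant maps out of compact $S$), reduce along the exact, conservative, limit-preserving forgetful functor $\Ab(B_G)\to\Ab(\CC)$, and prove $R\lim_i\underline{\UU_K^i}=0$ by hand, exhibiting an explicit preimage under $1-\mathrm{shift}$ on $S$-points through convergent infinite products, with the uniform estimate $f_i(s)\in f_i^{(n)}(s)\cdot\UU_K^{n+1}$ giving continuity. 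In effect you re-prove, in this concrete complete-filtration situation, the Mittag--Leffler-type vanishing that repleteness provides in general; your argument is self-contained and makes transparent exactly where completeness of $K^{\times}$ enters, at the cost of two background facts you assert rather than prove (that countable products are exact in $\Ab(\CC)$, so that $R\lim$ of a tower is the two-term fibre of $1-\mathrm{shift}$ on honest products, and that the forgetful functor commutes with $R\lim$) — both are standard and unproblematic, the first because epimorphisms and products in $\Ab(\CC)$ are detected and computed on extremally disconnected $S$, the second because the pullback along the localisation $\CC\simeq B_G/EG\to B_G$ is exact with exact left adjoint. The paper's argument is shorter and reuses machinery (repleteness) that serves elsewhere in the article; yours avoids citing \cite{proetale} altogether and would work verbatim in any setting where one only knows the filtration is by clopen subgroups of a complete Hausdorff group.
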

\begin{proof}
We already observed that the subgroup $\UU_K^i\subset K^{\times}$ is clopen for all $i$. Since the functor $\underline{(-)}:\Top \to \CC$ commutes with limits, the morphism \begin{equation}\label{ausiliario}
\underline{K^{\times}}\rightarrow \lim_{\substack{\leftarrow\\i\ge 0}} \underline{K^{\times}/\UU_K^i}
\end{equation} is an isomorphism in $\Ab(\CC)$. Moreover, for all $i$ the transition morphism $K^{\times}/\UU_K^{i+1}\twoheadrightarrow K^{\times}/\UU_K^i$ is $G$-equivariant. Hence \eqref{ausiliario} is an isomorphism in $\Ab(B_G)$ as well. For all $i$, the topological $G$-module $K^{\times}/\UU_K^i$ is discrete by the first part of the lemma. Moreover, all the transition morphisms are $G$-equivariant continuous surjections.
Thus, for all $i$ the morphism $\underline{K^{\times}/\UU_K^{i+1}}\twoheadrightarrow \underline{K^{\times}/\UU_K^i}$ is an epimorphism in $\CC$, and consequently in $B_G$. The result follows from \cite[Proposition 3.1.10]{proetale} and \cref{prop:bgreplete}. \end{proof}

\begin{lem}\label{lem:torsioncohomologygroups}
For all $S$ extremally disconnected and all $q\ge1$, the abelian group $\HH^q(B_G,\underline{K^{\times}})(S)$ is torsion. Hence $\HH^q(B_{\hat{I}},\overline{L}^{\times})(S)$ is torsion.
\end{lem}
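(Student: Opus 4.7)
The plan is to reduce both assertions to classical facts about finite group cohomology, using two tools from Section~1: the identification of condensed group cohomology of a discrete group with classical group cohomology on sections over extremally disconnected sets (\cref{prop:discretegroupsections}), and the continuity result for pro-condensed groups with compact Hausdorff factors (\cref{prop:cont}).

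For the first assertion, fix a finite Galois extension $K/L$ of group $G$. Since $G$ is finite, it is a discrete group, so I would apply \cref{prop:discretegroupsections} to get a natural isomorphism
\[
\HH^q(B_G,\underline{K^{\times}})(S)\;\cong\;\H^q(G,\underline{K^{\times}}(S))
\]
where the right-hand side is the classical group cohomology of the finite group $G$ with coefficients in the abelian group $\underline{K^{\times}}(S)=\Cont(S,K^{\times})$. The standard restriction-corestriction argument shows that $|G|$ annihilates $\H^q(G,-)$ for all $q\ge 1$, so the group is $|G|$-torsion.

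For the second assertion, I would view the profinite group $I=\Gal(\overline{L}/L)$ as the strict pro-condensed group $\hat{I}=(\Gal(K/L))_{K}$ indexed over finite Galois subextensions $K/L$ of $\overline{L}/L$; each $\Gal(K/L)$ is finite and hence compact Hausdorff. The compatible system $(\underline{K^{\times}})_K$, with transition maps induced by the $\Gal(K'/L)$-equivariant inclusions $K^{\times}\hookrightarrow K'^{\times}$ for $K\subset K'$, has colimit $\overline{L}^{\times}$ by construction, which matches the $A_\infty$ in \cref{prop:cont}. Applying that proposition yields an isomorphism
\[
\HH^q(B_{\hat{I}},\overline{L}^{\times})\;\cong\;\varinjlim_{K}\HH^q(B_{\Gal(K/L)},\underline{K^{\times}})
\]
in $\Ab(\CC)$. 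Evaluating at $S$ commutes with the filtered colimit by \cref{rmk:edtopspaces}~(2), hence expresses $\HH^q(B_{\hat{I}},\overline{L}^{\times})(S)$ as a filtered colimit of abelian groups, each of which is torsion by the first part. A filtered colimit of torsion abelian groups is torsion, so we are done.

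There is no real obstacle here: the clean separation afforded by passing from $B_I$ to $B_{\hat{I}}$ is exactly what allows us to invoke \cref{prop:cont}, and in contrast with \cref{h90failure} we need not compute the cohomology explicitly—only its torsion nature. The only points that require minor verification are that the indexing system for $\hat{I}$ is strict (clear, since the transitions $\Gal(K'/L)\twoheadrightarrow\Gal(K/L)$ are surjective) and that the colimit defining $\overline{L}^{\times}$ matches the formalism of \cref{prop:cont}.
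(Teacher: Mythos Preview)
Your proposal is correct and follows essentially the same approach as the paper: both use \cref{prop:discretegroupsections} to identify $\HH^q(B_G,\underline{K^{\times}})(S)$ with classical cohomology of the finite group $G$ (which is $|G|$-torsion in positive degrees), and then invoke \cref{prop:cont} together with exactness of $\Gamma(S,-)$ on filtered colimits to handle $\HH^q(B_{\hat{I}},\overline{L}^{\times})(S)$. The only cosmetic difference is the order of presentation; your write-up is in fact slightly more explicit about the strictness of the pro-system and the commutation of evaluation with colimits.
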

\begin{proof}
By \cref{prop:cont} we have \[R\GGamma(B_{\hat{I}},\overline{L}^{\times})\cong\lim_{\substack{\rightarrow\\ K}} R\GGamma(B_G,\overline{K^{\times}}).\] Hence it is enough to check that \[
(\tau^{\ge 1}R\GGamma(B_G,\underline{K^{\times}}))(S)\cong \tau^{\ge 1}(R\GGamma(B_G,\underline{K^{\times}})(S)) \]
 has torsion cohomology groups for any $K$ and any $S$ extremally disconnected. By \cref{prop:discretegroupsections} we have
\[
R\GGamma(B_G,\underline{K^{\times}})(S)\cong R\Gamma(B_G(\Set),\underline{K^{\times}}(S)).
\] The result follows since higher cohomology groups of the finite group $G$ are killed by the order of $G$.
\end{proof}
\begin{lem}\label{lem:equivalencemodm}
For any finite Galois $K/L$ of group $G$, the canonical map \[
K^{\times,\delta}\otimes^L \Z/m\Z \overset{\sim}{\longrightarrow} \underline{K^{\times}}\otimes^L \Z/m\Z
\] is an equivalence in $\DDD(B_G)$, where we consider $K^{\times,\delta}$ as a discrete abelian group with a continuous $G$-action.
\end{lem}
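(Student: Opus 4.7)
My approach is to reduce the desired equivalence to a comparison of the kernel and cokernel of multiplication by $m$ on $K^{\times,\delta}$ and on $\underline{K^\times}$ as objects of $\Ab(B_G)$. Since $\DDD(B_G)$ is a stable $\infty$-category, for any abelian object $A$ the complex $A \otimes^L \Z/m\Z$ is the cofiber of $m: A \to A$, with cohomology $A[m]$ in degree $-1$ and $A/mA$ in degree $0$. As the map $K^{\times,\delta} \to \underline{K^\times}$ is the identity on underlying $G$-modules, it suffices to verify that both induced maps $K^{\times,\delta}[m] \to \underline{K^\times}[m]$ and $K^{\times,\delta}/m K^{\times,\delta} \to \underline{K^\times}/m\underline{K^\times}$ are isomorphisms of condensed $G$-modules.

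For the $m$-torsion, both sides coincide with $\mu_m(K)$, the $m$-th roots of unity in $K$ viewed as a discrete $G$-module. Indeed, kernels in $\Ab(B_G)$ are computed in $\Ab(\CC)$ (the forgetful functor being exact, as it admits both adjoints) and then pointwise on extremally disconnected objects by \cref{rmk:edtopspaces}: $\underline{K^\times}[m](S) = \Cont(S, K^\times)[m] = \Cont(S, \mu_m(K))$, and this equals $\underline{\mu_m(K)^\delta}(S)$ since the finite set $\mu_m(K) \subset K^\times$ is discrete in its subspace topology.

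For the cokernels, the essential technical point is that the subgroup of $m$-th powers $(K^\times)^m \subset K^\times$ is open. Granted this openness, the quotient $K^\times/(K^\times)^m$ carries the discrete topology, and the resulting short exact sequence of topological groups $0 \to (K^\times)^m \to K^\times \to K^\times/(K^\times)^m \to 0$ (with $(K^\times)^m$ both closed and open) is exact in $\Ab(\CC)$, hence in $\Ab(B_G)$. Moreover, openness implies that $m: K^\times \to (K^\times)^m$ is a topological quotient map, so the categorical image of $m$ in $\Ab(\CC)$ is $\underline{(K^\times)^m}$, and one identifies the cokernel as $(K^\times/(K^\times)^m)^\delta = K^{\times,\delta}/m K^{\times,\delta}$ with its natural $G$-action.

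To establish the openness of $(K^\times)^m$, I show it contains $\UU_K^N$ for some $N$. Writing $m = p^a m'$ with $(m',p)=1$, Hensel's lemma applied to $X^{m'} - u$ (whose derivative $m'X^{m'-1}$ reduces to a unit modulo $\mathfrak{m}_K$) gives $(\UU_K^1)^{m'} = \UU_K^1$. For the $p^a$-th power, the $p$-adic logarithm is a topological group isomorphism $\UU_K^i \cong \mathfrak{m}_K^i$ for $i > e/(p-1)$, where $e$ is the absolute ramification index of $K$; under this isomorphism, $(\cdot)^{p^a}$ corresponds to multiplication by $p^a$, giving $(\UU_K^i)^{p^a} = \UU_K^{i + a v_K(p)}$. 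Combining, $(K^\times)^m$ contains $\UU_K^{N}$ for $N$ sufficiently large. The main obstacle is precisely this logarithmic computation when $p \mid m$; once it is granted, the remaining steps (finite torsion, five-lemma style chasing, and the identification of cokernels) are formal.
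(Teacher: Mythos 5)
Your proof is correct, but it is organized differently from the paper's. The paper performs a d\'evissage along the finite filtration $\UU_K^n\subset\dots\subset\UU_K^0\subset\underline{K^{\times}}$ (with $n>e/(p-1)$): the graded pieces are discrete, hence agree on the nose with their counterparts for $K^{\times,\delta}$, and the comparison is thereby reduced to the deep unit group $\UU_K^n$, where for $m$ prime to $p$ both sides of the comparison vanish (\cref{lem:mcoprime}) and for $m=p^{\nu}$ both sides are the discrete group $\UU_K^n/\UU_K^{n+\nu e}$ via Serre's isomorphism $(-)^{p^\nu}:\UU_K^n\overset{\sim}{\to}\UU_K^{n+\nu e}$. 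You instead compute the $m$-torsion and $m$-cotorsion of $K^{\times}$ directly: the kernel is the finite discrete group $\mu_m(K)$ on both sides, and the cokernel is the discrete group $K^{\times}/(K^{\times})^m$ once one knows $(K^{\times})^m$ is open, which you establish with exactly the same arithmetic inputs the paper uses on deep units (Hensel for the prime-to-$p$ part, the $p$-adic logarithm, equivalently the result the paper cites from Serre, for the $p$-power part). The trade-off is that your route requires an extra condensed-level verification that the paper's d\'evissage sidesteps: openness of the subgroup $(K^{\times})^m$ does not by itself make $(-)^m:K^{\times}\to(K^{\times})^m$ a quotient map, and more to the point one must check that the image of $\underline{m}$ in $\Ab(\CC)$ is $\underline{(K^{\times})^m}$ and that the condensed cokernel is the discrete quotient; this does follow, e.g.\ because your logarithm computation shows $(-)^m$ restricts to a homeomorphism $\UU_K^i\overset{\sim}{\to}\UU_K^{i+ae}$ near the identity, so the power map is open onto its image (and has finite kernel $\mu_m(K)$), whence lifts exist locally on any extremally disconnected $S$ and the expected exact sequence $0\to\mu_m(K)\to\underline{K^{\times}}\overset{m}{\to}\underline{K^{\times}}\to K^{\times}/(K^{\times})^m\to 0$ holds in $\Ab(B_G)$. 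With that point made explicit, your argument is a complete and slightly more self-contained alternative; the paper's filtration argument avoids these open-mapping considerations entirely by only ever comparing objects that are manifestly discrete or manifestly isomorphic.
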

\begin{proof}
Let $e$ be ramification index of $K$, i.e.\ the valuation of $p\in K^{\times}$. We set $e_1\coloneqq e/(p-1)$ and we consider the finite filtration \[
\dots=\UU_K^{n+1}=\UU_K^n\subset \UU_K^{n-1}\subset\dots\subset \UU_K^0\coloneqq \underline{\O_K^{\times}}\subset \UU_K^{-1}=\underline{K^{\times}}
\] of $\underline{K^{\times}}$ for some $n>e_1$. Similarly, we have a finite filtration \[
\dots=\UU_K^{n+1,\delta}=\UU_K^{n,\delta}\subset \UU_K^{n-1,\delta}\subset\dots\subset \UU_K^{0,\delta}\coloneqq \O_K^{\times,\delta}\subset \UU_K^{-1,\delta}=K^{\times,\delta}
\] of $K^{\times,\delta}$. For any $-1\le i\le n$ the map $\UU_K^{i,\delta}/\UU_K^{i+1,\delta}\rightarrow \UU_K^i/\UU_K^{i+1}$is an isomorphism of discrete abelian groups, hence the map \[
(\UU_K^{i,\delta}/\UU_K^{i+1,\delta})\otimes^L \Z/m\Z \rightarrow (\UU_K^i/\UU_K^{i+1})\otimes^L \Z/m\Z
\] is an equivalence. Therefore, it is enough to check that \begin{equation}\label{eqn:lem:m}
\UU_K^{n,\delta}\otimes^L \Z/m\Z \rightarrow \UU_K^n\otimes^L \Z/m\Z
\end{equation} is an equivalence. If $m$ is coprime to $p$, \[
\UU_K^n\overset{(-)^m}{\longrightarrow} \UU_K^n
\] is an isomorphism of topological groups (see \cref{lem:mcoprime}) so that both sides of \eqref{eqn:lem:m} vanish. Hence we may suppose $m=p^{\nu}$. Then the map \[
\UU_K^n\overset{(-)^{p^{\nu}}}{\longrightarrow} \UU_K^n
\] induces an isomorphism of topological abelian groups \[
\UU_K^n\overset{(-)^{p^{\nu}}}{\longrightarrow} \UU_K^{n+\nu e}
\] onto the open subgroup $\UU_K^{n+\nu e}\subset \UU_K^n$, see \cite[Corollaire 1]{SerreAlgClos}. We obtain an isomorphism \[
\UU_K^n\otimes^L \Z/p^{\nu}\Z \cong \UU_K^n/\UU_K^{n+\nu e},
\]where $\UU_K^n/\UU_K^{n+\nu e}$ is discrete. Hence the map \[
\UU_K^{n,\delta}\otimes^L \Z/p^{\nu}\Z \cong \UU_K^{n,\delta}/\UU_K^{n+\nu e,\delta} \overset{\sim}{\longrightarrow} \UU_K^n/\UU_K^{n+\nu e}\cong \UU_K^n\otimes^L \Z/p^{\nu}\Z
\] is an equivalence. The result follows.
\end{proof}
\begin{cor}
For any positive integer $m$, one has an exact sequence \begin{equation}\label{exactseq}
0\rightarrow \mu_m(\overline{L})\longrightarrow \overline{L}^{\times}\overset{(-)^{m}}{\longrightarrow}\overline{L}^{\times}\rightarrow 0
\end{equation} in $\Ab(B_{\hat{I}})$.
\end{cor}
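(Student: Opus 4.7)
The plan is to deduce the short exact sequence by computing the derived tensor product $\overline{L}^{\times}\otimes^L \Z/m\Z$ in $\DDD(B_{\hat{I}})$ and showing that it is concentrated in cohomological degree $-1$ with value $\mu_m(\overline{L})$. Recall that for any abelian object $A$ of a topos, $A\otimes^L \Z/m\Z$ is computed by the two-term complex $[A\xrightarrow{m} A]$ in degrees $-1,0$, so that $H^{-1}=A[m]$ and $H^0=A/mA$. Hence proving the statement is equivalent to proving that the cohomology of $\overline{L}^{\times}\otimes^L \Z/m\Z$ in $\Ab(B_{\hat{I}})$ is $\mu_m(\overline{L})$ in degree $-1$ and vanishes in degree $0$.

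First I would pass from $\overline{L}^{\times}$ to its discrete analogue via \cref{lem:equivalencemodm}. Taking the filtered colimit of the equivalences $K^{\times,\delta}\otimes^L \Z/m\Z\simeq \underline{K^{\times}}\otimes^L\Z/m\Z$ over all finite Galois extensions $K/L$ (pulled back from $B_G$ to $B_{\hat{I}}$ along the evident morphism of classifying topoi) and using that filtered colimits commute with derived tensor products yields an equivalence
\[
\overline{L}^{\times,\delta}\otimes^L \Z/m\Z\;\overset{\sim}{\longrightarrow}\;\overline{L}^{\times}\otimes^L \Z/m\Z
\]
in $\DDD(B_{\hat{I}})$, where $\overline{L}^{\times,\delta}\coloneqq \underset{\rightarrow\, K}{\lim}\, K^{\times,\delta}$ is viewed as a discrete object of $B_{\hat{I}}$.

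The left-hand side is easy to compute: since $\overline{L}$ is algebraically closed, the multiplication-by-$m$ map on $\overline{L}^{\times,\delta}$ is surjective with kernel $\mu_m(\overline{L})$, so the complex is quasi-isomorphic to $\mu_m(\overline{L})[1]$ (that is, $H^{-1}=\mu_m(\overline{L})$ as a discrete sheaf and $H^0=0$). Transporting this computation across the equivalence shows that $H^0(\overline{L}^{\times}\otimes^L \Z/m\Z)=\coker\bigl((-)^m\colon \overline{L}^{\times}\to\overline{L}^{\times}\bigr)=0$, giving the desired epimorphism, and that $\ker((-)^m)=\mu_m(\overline{L})$. As a sanity check, the kernel can also be computed directly: kernels commute with filtered colimits in $\Ab(B_{\hat{I}})$, and the kernel of $(-)^m$ on $\underline{K^{\times}}$ is $\underline{\mu_m(K)}$, whose colimit over $K$ is $\mu_m(\overline{L})$.

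The main technical obstacle is entirely contained in \cref{lem:equivalencemodm}, which allows the surjectivity of $(-)^m$ on the \emph{topological} group $\overline{L}^{\times}$ to be reduced to the classical fact that every element of the algebraic closure admits an $m$-th root. Without this mod-$m$ comparison, epimorphism in $\Ab(B_{\hat{I}})$ would a priori be strictly stronger than pointwise surjectivity, and would have to be checked on extremally disconnected test objects.
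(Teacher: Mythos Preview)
Your proof is correct and follows exactly the route the paper intends: the corollary is stated without proof immediately after \cref{lem:equivalencemodm}, and your argument is precisely the unwinding of how that lemma, after passing to the filtered colimit over finite Galois $K/L$, yields the claim. The only thing you spell out beyond the paper is the identification of $\overline{L}^{\times,\delta}\otimes^L\Z/m\Z$ with $\mu_m(\overline{L})[1]$ via algebraic closedness, which is indeed the implicit step.
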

\begin{lem}\label{lem:mcoprime}
Let $n\in \N$. For $m$ coprime to $p$, the continuous homomorphism \[
(-)^m:\UU_K^n\rightarrow \UU_K^n
\] is an isomorphism of topological groups.
\end{lem}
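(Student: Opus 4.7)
The plan is to exploit the filtration $\UU_K^n\supset \UU_K^{n+1}\supset \UU_K^{n+2}\supset\dots$ and to check that $(-)^m$ is an isomorphism level-by-level on the quotients $\UU_K^n/\UU_K^{n+i}$. Note that the hypothesis $n\ge 1$ (which is implicit, since otherwise the map fails to be injective because of nontrivial $m$-th roots of unity in $\overline{k}^{\times}\subset\O_K^{\times}$) places us in the pro-$p$ range where the argument goes through.

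First, I would observe that this filtration defines the topology on $\UU_K^n$: by \cref{lem:derivedprodiscreteness}, each $\UU_K^{n+i}$ is a clopen subgroup, and hence one has an isomorphism of topological abelian groups $\UU_K^n \cong \underset{\substack{\leftarrow \\ i}}{\lim}\, \UU_K^n/\UU_K^{n+i}$, with each finite-step quotient discrete.

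Next, recalling the computation of the graded pieces preceding \cref{lem:derivedprodiscreteness}, for $n+i\ge 1$ one has $\UU_K^{n+i}/\UU_K^{n+i+1}\cong \overline{k}$ as additive groups. Since $\overline{k}$ is an $\F_p$-vector space, the operation $(-)^m$ induces multiplication by $m$ on each such graded piece, which is an automorphism because $\gcd(m,p)=1$. By induction on $i$ using the $5$-lemma applied to
\[
0\to \UU_K^{n+i}/\UU_K^{n+i+1}\to \UU_K^n/\UU_K^{n+i+1}\to \UU_K^n/\UU_K^{n+i}\to 0,
\]
the map $(-)^m$ is a group automorphism of the discrete quotient $\UU_K^n/\UU_K^{n+i}$ for every $i\ge 0$, and these automorphisms are compatible with the transition maps, so their inverses assemble into a compatible system.

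Finally, passing to the cofiltered limit, this system of inverses produces a two-sided inverse to $(-)^m$ on $\UU_K^n$; continuity is automatic because each quotient is discrete and the inverses commute with the transition maps. The conceptual content lies entirely in the graded-piece analysis, which is elementary; the only delicate point is the restriction $n\ge 1$, which is exactly what is needed to avoid the piece $\UU_K^0/\UU_K^1\cong \overline{k}^{\times}$, on which $(-)^m$ is not injective.
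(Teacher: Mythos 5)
Your proof is correct and follows essentially the same route as the paper: both write $\UU_K^n$ as the limit of its discrete quotients by the clopen subgroups $\UU_K^{n+i}$ (completeness for the valuation topology) and observe that $(-)^m$ induces multiplication by $m$ on each graded piece $\cong\overline{k}$, an isomorphism since $m$ is prime to $p$. The only difference is that you carry out the dévissage by hand (five lemma on the finite-step quotients plus passage to the limit of the inverses), where the paper simply invokes \cite[V.1, Lemma 2]{serreCL}; your observation that one implicitly needs $n\ge 1$ (to avoid the piece $\overline{k}^{\times}$) is correct and consistent with how the lemma is applied in \cref{lem:equivalencemodm}.
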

\begin{proof}
The topological abelian group $\UU_K^{n}$ is complete for the topology induced by the valuation, hence we have \[
    \UU_K^{n}=\lim_{\substack{\leftarrow \\ i\ge n}} \UU_K^{n}/\UU_K^{i}.
    \]
For all $i\ge 1$, $\UU_K^i/\UU_K^{i+1}\cong \overline{k}$ and $(-)^m$ is the multiplication $\cdot m: \overline{k}\to\overline{k}$. This is an isomorphism since $m$ is coprime with $p$. The result follows from \cite[V.1, Lemma 2]{serreCL}. \end{proof}

\begin{prop}\label{vanishing}
For any finite Galois extension $K/L$ of group $G$ we have $R\GGamma(B_G,\underline{K^{\times}})\cong \underline{L^{\times}}[0]$ and consequently $R\GGamma(B_{\hat{I}},\overline{L}^{\times})\cong \underline{L^{\times}}[0]$.
\end{prop}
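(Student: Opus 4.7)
The degree-zero part is immediate: by \cref{lowercohomologygroups}, 1), $\HH^0(B_G,\underline{K^{\times}}) = (\underline{K^{\times}})^{G}$ is represented by $L^{\times}$ with its natural (subspace) topology. For the higher cohomology my strategy is to compare $\underline{K^{\times}}$ with the condensed abelian group $K^{\times,\delta}$ associated with the underlying discrete abelian group of $K^{\times}$, on which classical Galois cohomology is available, and then exploit a torsion versus uniquely-divisible dichotomy.

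First I would compute $R\GGamma(B_G,K^{\times,\delta})$. By \cref{prop:discretegroupsections}, for every extremally disconnected $S = \pro{i}{S_i}$ we have
\[
\HH^q(B_G,K^{\times,\delta})(S) \cong \H^q(B_G(\Set),K^{\times,\delta}(S))\cong \underset{\substack{\rightarrow\\ i}}{\lim}\,\H^q(G,(K^{\times})^{S_i}).
\]
Now $L$ is $C_1$ and so is $K$; using that $I$ is profinite (so higher cohomology is torsion) together with the Kummer sequence and $\mathrm{cd}(I)\le 1$ on torsion coefficients, one obtains $\H^q(I,\overline{L}^{\times}) = 0$ for $q\ge 1$. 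The Hochschild--Serre spectral sequence for $1\to\Gal(\overline{L}/K)\to I\to G\to 1$ then collapses to $\H^p(G,K^{\times})\cong \H^p(I,\overline{L}^{\times})$, so $\H^p(G,K^{\times}) = 0$ for $p\ge 1$. Since $G$ is finite, its group cohomology commutes with finite products and filtered colimits, and therefore the displayed expression vanishes for $q\ge 1$ and equals $\underline{L^{\times,\delta}}(S)$ for $q = 0$. Thus $R\GGamma(B_G,K^{\times,\delta})\cong \underline{L^{\times,\delta}}[0]$.

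Next, I would consider the natural monomorphism $K^{\times,\delta}\hookrightarrow \underline{K^{\times}}$ in $\Ab(B_G)$ and let $C$ denote its cokernel, giving a distinguished triangle $K^{\times,\delta}\to\underline{K^{\times}}\to C$ in $\DDD(B_G)$. By \cref{lem:equivalencemodm} one has $C\otimes^L\Z/m\Z = 0$ for every $m\ge 1$, so multiplication by $m$ is an equivalence on $C$, and consequently on $R\GGamma(B_G,C)$. Therefore each $\HH^q(B_G,C)$ is uniquely divisible, i.e.\ a condensed $\Q$-vector space. The long exact sequence associated with the triangle, together with the vanishing of $R\GGamma(B_G,K^{\times,\delta})$ in positive degrees, yields $\HH^q(B_G,\underline{K^{\times}})\cong\HH^q(B_G,C)$ for $q\ge 1$. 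By \cref{lem:torsioncohomologygroups} the sections $\HH^q(B_G,\underline{K^{\times}})(S)$ are torsion, so they are simultaneously torsion and $\Q$-vector spaces, and thus vanish. This establishes the first assertion.

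For the ``consequently'' part I would invoke the continuity result \cref{prop:cont}: $\hat{I}$ is a strict pro-(compact Hausdorff) group and $\overline{L}^{\times} = \underset{\substack{\rightarrow\\ K}}{\lim}\,\pi_K^*\underline{K^{\times}}$, so
\[
R\GGamma(B_{\hat{I}},\overline{L}^{\times})\cong\underset{\substack{\rightarrow\\ K}}{\lim}\,R\GGamma(B_{\Gal(K/L)},\underline{K^{\times}})\cong \underset{\substack{\rightarrow\\ K}}{\lim}\,\underline{L^{\times}}[0]\cong\underline{L^{\times}}[0],
\]
the last step because the transition maps $\underline{K^{\times}}\hookrightarrow \underline{{K'}^{\times}}$ induce the identity on $G$-invariants. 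The main obstacle I anticipate is the clean upgrade of $\H^q(G,K^{\times}) = 0$ from the underlying abelian group to arbitrary condensed sections; but since $G$ is finite, its cohomology commutes with finite products and filtered colimits, reducing everything via \cref{prop:discretegroupsections} to classical computations.
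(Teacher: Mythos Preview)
Your proof is correct and follows essentially the same strategy as the paper: compare $K^{\times,\delta}$ with $\underline{K^{\times}}$, invoke \cref{lem:equivalencemodm} to see that their difference is uniquely divisible, and then kill the positive-degree cohomology using the torsion statement \cref{lem:torsioncohomologygroups}. The only organizational difference is that you package the comparison via the cofiber $C$ of $K^{\times,\delta}\hookrightarrow\underline{K^{\times}}$ at the level of coefficients, whereas the paper instead compares the two fiber sequences $\H^0\to R\GGamma\to\tau^{\ge 1}R\GGamma$ for $K^{\times,\delta}$ and $\underline{K^{\times}}$, tensors them with $\Z/m$, $\Q/\Z$, and $\Q$, and uses the triangle $\Z\to\Q\to\Q/\Z$ to conclude; both routes use exactly the same inputs and the same ``torsion $\cap$ uniquely divisible $=0$'' punchline.
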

\begin{proof}
By \cref{lowercohomologygroups}, 1), $\HH^0(B_G,K^{\times,\delta})=L^{\times,\delta}$ and $\HH^0(B_G,\underline{K^{\times}})=\underline{L^{\times}}$. Hence we have a morphism of fiber sequences \[
\begin{tikzcd}
L^{\times,\delta}\ar[r]\ar[d]& R\GGamma(B_G,K^{\times,\delta})\ar[d]\ar[r] & \tau^{\ge 1} R\GGamma(B_G,K^{\times,\delta})\ar[d]\\
\underline{L^{\times}}\ar[r] &R\GGamma(B_G,\underline{K^{\times}})\ar[r]&\tau^{\ge 1} R\GGamma(B_G,\underline{K^{\times}}).
\end{tikzcd}
\] Applying $(-)\otimes^L \Z/m\Z$, we obtain \[
\begin{tikzcd}
L^{\times,\delta}\otimes^L \Z/m\Z \ar[r]\ar[d]& R\GGamma(B_G,K^{\times,\delta})\otimes^L \Z/m\Z\ar[d]\ar[r] & \tau^{\ge 1} R\GGamma(B_G,K^{\times,\delta})\otimes^L \Z/m\Z\ar[d]\\
\underline{L^{\times}}\otimes^L \Z/m\Z\ar[r] &R\GGamma(B_G,\underline{K^{\times}})\otimes^L \Z/m\Z\ar[r]&\tau^{\ge 1} R\GGamma(B_G,\underline{K^{\times}})\otimes^L \Z/m\Z,
\end{tikzcd}
\] where the left and the middle vertical maps are equivalences by \cref{lem:equivalencemodm}. Hence the right vertical map is an equivalence as well. By \cref{lowercohomologygroups}, 3) and \cite[Proposition 5, Proposition 8, \S 3.3, c)]{serreGC} we have $\tau^{\ge 1}R\GGamma(B_G,K^{\times,\delta})=\tau^{\ge 1}\underline{R\Gamma(B_G(\Set),K^{\times,\delta})}=0$. Consequently, one gets \[
(\tau^{\ge 1}R\GGamma(B_G,\underline{K^{\times}}))\otimes^L \Z/m\Z =0.
\] Therefore, for all extremally disconnected $S$, we have \[
(\tau^{\ge 1}(R\GGamma(B_G,\underline{K^{\times}})(S)))\otimes^L \Z/m\Z \cong ((\tau^{\ge 1}R\GGamma(B_G,\underline{K^{\times}}))\otimes^L \Z/m\Z)(S)=0.
\] Hence we get\[
(\tau^{\ge 1}(R\GGamma(B_G,\underline{K^{\times}})(S)))\otimes^L \Q/\Z \cong \lim_{\rightarrow \, m} (\tau^{\ge 1}(R\GGamma(B_G,\underline{K^{\times}})(S)))\otimes^L \Z/m\Z=0.
\] Moreover, we have \[
(\tau^{\ge 1}(R\GGamma(B_G,\underline{K^{\times}})(S)))\otimes^L \Q=0
\] by \cref{lem:torsioncohomologygroups}. Thus the fiber sequence \[
\tau^{\ge 1}(R\GGamma(B_G,\underline{K^{\times}})(S)) \rightarrow (\tau^{\ge 1}(R\GGamma(B_G,\underline{K^{\times}})(S)))\otimes^L \Q \rightarrow (\tau^{\ge 1}(R\GGamma(B_G,\underline{K^{\times}})(S)))\otimes^L \Q/\Z
\] yields\[
(\tau^{\ge 1}R\GGamma(B_G,\underline{K^{\times}}))(S) \cong \tau^{\ge 1}(R\GGamma(B_G,\underline{K^{\times}})(S)) =0
\] for all extremally disconnected $S$. We get $\tau^{\ge 1} R\GGamma(B_G,\underline{K^{\times}})=0$, hence we have \[
R\GGamma(B_G,\underline{K^{\times}})\cong \HH^0(B_G,\underline{K^{\times}})[0]\cong \underline{L^{\times}}[0].
\]
\end{proof}

\subsection{The complex \texorpdfstring{$R\GGamma(B_{\hat{W}_F},\R/\Z(1))$}{RGammaBWRZ1}}
In this section we define a ``dualising object" in the category $\DDD^b(B_{\hat{W}_F})$, which we denote by $\R/\Z(1)$. We study the complex $R\GGamma(B_{\hat{W}_F},\R/\Z(1))$, showing that it is concentrated in cohomological degrees $0,1,2$. 
\begin{lem}\label{OLcohom}
We have $R\GGamma(B_{W_k},\underline{\O_L^{\times}})=\underline{\O_F^{\times}}[0]$.
\end{lem}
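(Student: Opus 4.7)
The plan is to apply \cref{cohomologyz} to reduce the computation to understanding the kernel and cokernel of $1-\varphi$ acting on $\underline{\O_L^{\times}}$ in $\Ab(\CC)$, where $\varphi$ is the Frobenius generator of $W_k\cong \Z$. Since this computes $R\GGamma(B_{W_k},-)$ only in degrees $0$ and $1$, the target $\underline{\O_F^{\times}}[0]$ should arise as $\ker(1-\varphi)=\underline{\O_F^{\times}}$ and $\mathrm{coker}(1-\varphi)=0$.

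First, I would adapt \cref{lem:derivedprodiscreteness} to $L$ itself: $\O_L^{\times}$ is complete for the valuation topology, the unit subgroups $\UU_L^n=1+\mathfrak{m}_L^n$ are clopen with $W_k$-equivariant surjective transitions, and $B_{W_k}$ is replete by \cref{prop:bgreplete}, so \cite[Proposition 3.1.10]{proetale} yields
\[
\underline{\O_L^{\times}}\cong R\underset{\substack{\leftarrow\\ n}}{\lim}\,\underline{\O_L^{\times}/\UU_L^n}\quad\text{in }\DDD(B_{W_k}).
\]
Hence $R\GGamma(B_{W_k},\underline{\O_L^{\times}})\cong R\lim_n R\GGamma(B_{W_k},\underline{\O_L^{\times}/\UU_L^n})$, and it suffices to compute each term and then take $R\lim$.

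Next, I would compute the graded pieces. The quotients $\O_L^{\times}/\UU_L^n$ are discrete abelian groups with continuous $W_k$-action, and similarly the graded pieces $\O_L^{\times}/\UU_L^1\cong \overline{k}^{\times}$ and $\UU_L^i/\UU_L^{i+1}\cong \overline{k}$ (choosing a uniformizer $\pi\in F$ makes the latter isomorphism Frobenius-equivariant, with action $a\mapsto a^q$). By \cref{prop:discretecohom,prop:discretegroupsections} these have condensed cohomology computed by classical group cohomology of $\Z$, so $\HH^0$ is the Frobenius fixed points and $\HH^1$ is the coinvariants. On $\overline{k}^{\times}$, fixed points are $k^{\times}$ and the map $x\mapsto x^{1-q}$ is surjective because $\overline{k}$ is algebraically closed; on $\overline{k}$, fixed points are $k$ and Artin–Schreier surjectivity of $x\mapsto x-x^q$ gives vanishing coinvariants. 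Hence $R\GGamma(B_{W_k},\underline{\overline{k}^{\times}})=\underline{k^{\times}}[0]$ and $R\GGamma(B_{W_k},\underline{\UU_L^i/\UU_L^{i+1}})=\underline{\UU_F^i/\UU_F^{i+1}}[0]$.

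Then I would induct on $n$, using the long exact sequence attached to $0\to\underline{\UU_L^i/\UU_L^{i+1}}\to \underline{\O_L^{\times}/\UU_L^{i+1}}\to \underline{\O_L^{\times}/\UU_L^i}\to 0$: $\HH^1$ vanishes term by term, and functoriality forces $\HH^0(B_{W_k},\underline{\O_L^{\times}/\UU_L^{i+1}})=\underline{\O_F^{\times}/\UU_F^{i+1}}$, giving $R\GGamma(B_{W_k},\underline{\O_L^{\times}/\UU_L^n})=\underline{\O_F^{\times}/\UU_F^n}[0]$ for all $n$. Finally, since each $\O_F^{\times}/\UU_F^n$ is finite discrete with surjective transition maps, Mittag–Leffler gives $R\lim_n\underline{\O_F^{\times}/\UU_F^n}=\underline{\O_F^{\times}}[0]$, and we conclude. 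The main obstacle is the first paragraph: justifying the derived prodiscreteness of $\underline{\O_L^{\times}}$ in the condensed setting when $\overline{k}$ is infinite discrete so that none of the quotients $\O_L^{\times}/\UU_L^n$ are compact; everything else is a routine dévissage once this is established.
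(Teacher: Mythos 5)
Your proposal is correct and follows essentially the same route as the paper: derived prodiscreteness of $\underline{\O_L^{\times}}$ via the unit filtration and repleteness of $B_{W_k}$, computation of the graded pieces $\overline{k}^{\times}$ and $\overline{k}$ through \cref{cohomologyz} (with the surjectivity of $1-\varphi$ that you spell out explicitly), dévissage to get $R\GGamma(B_{W_k},\underline{\O_L^{\times}/\UU_L^i})\cong\underline{\O_F^{\times}/\UU_F^i}[0]$, and then commuting $R\GGamma$ with $R\lim$. The only cosmetic differences are that the paper identifies $R\lim_i\underline{\O_F^{\times}/\UU_F^i}$ with $\underline{\O_F^{\times}}$ by the same repleteness argument as \cref{lem:derivedprodiscreteness} rather than via Mittag--Leffler, and your appeal to \cref{prop:discretecohom} is unnecessary (its compactness hypothesis fails for $W_k\cong\Z$), since \cref{cohomologyz} already gives discreteness of the invariants and coinvariants.
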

\begin{proof}
Let us consider the filtrations \[
\dots\subset \UU_L^i\subset \dots\subset \UU_L^2\subset \UU_L^1\subset \UU_L^0 \coloneqq \O_L^{\times}
\] and \[
\dots\subset \UU_F^i\subset \dots\subset \UU_F^2\subset \UU_F^1\subset \UU_F^0 \coloneqq \O_F^{\times}.
\] 
As in \cref{lem:derivedprodiscreteness}, we have \[
 \underline{\O_F^{\times}}\cong R\lim_{\substack{\leftarrow\\ i\ge 1}} \underline{\O_F^{\times}/\UU_F^i} \in \DDD(\CC), \qquad 
\underline{\O_L^{\times}}\cong R\lim_{\substack{\leftarrow\\ i\ge 1}} \underline{\O_L^{\times}/\UU_L^i} \in \DDD(B_{W_k}).
\] 
By \cref{cohomologyz}, we have \[
R\GGamma(B_{W_k},gr^0(\underline{\O_L^{\times}}))=k^{\times}[0]=gr^0(\underline{\O_F^{\times}})[0]
\] and \[
R\GGamma(B_{W_k},gr^i(\underline{\O_L^{\times}}))=k[0]=gr^i(\underline{\O_F^{\times}})[0].
\]
Consequently, we have \[
R\GGamma(B_{W_k},\underline{\O_L^{\times}/\UU_L^i})\cong \underline{\O_F^{\times}/\UU_F^i}[0]
\] and thus \[
R\GGamma(B_{W_k},\underline{\O_L^{\times}})\cong R\GGamma(B_{W_k},R\lim_{\substack{\leftarrow\\ i\ge 1}} \underline{\O_L^{\times}/\UU_L^i})\cong R\lim_{\substack{\leftarrow\\ i\ge 1}} R\GGamma(B_{W_k},\underline{\O_L^{\times}/\UU_L^i})=R\lim_{\substack{\leftarrow\\ i\ge 1}}\underline{\O_F^{\times}/\UU_F^i}[0]=\underline{\O_F^{\times}}[0].
\]
\end{proof}
\begin{prop}\label{lcohom}
The cohomology of $\hat{W}_F$ with coefficients in $\overline{L}^{\times}$ is given by \[
\HH^q(B_{\hat{W}_F},\overline{L}^{\times})=\begin{array}{ll}
\underline{F^{\times}} & q=0\\
\Z & q=1 \\
0 & q\ge 2,
\end{array}
\] where $F^{\times}$ has its natural topology as a subspace of $F$ and $\Z$ has the discrete topology.
\end{prop}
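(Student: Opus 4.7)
The strategy is to apply the exact sequence \eqref{eqn:degeneratehs} coming from the Hochschild-Serre spectral sequence for $1 \to \hat{I} \to \hat{W}_F \to \Z \to 1$, in order to reduce the computation to the cohomology of $W_k \cong \Z$ with coefficients in $\underline{L^{\times}}$. By \cref{vanishing} we have $R\GGamma(B_{\hat{I}},\overline{L}^{\times}) \cong \underline{L^{\times}}[0]$, so $\HH^q(B_{\hat{I}},\overline{L}^{\times})$ vanishes for $q \geq 1$ and equals $\underline{L^{\times}}$ in degree $0$. Substituting into \eqref{eqn:degeneratehs} for each $q$, at most one of the two outer terms is nonzero, yielding
\[
\HH^0(B_{\hat{W}_F},\overline{L}^{\times}) \cong \HH^0(B_{W_k},\underline{L^{\times}}), \qquad \HH^1(B_{\hat{W}_F},\overline{L}^{\times}) \cong \HH^1(B_{W_k},\underline{L^{\times}}),
\]
together with vanishing in all higher degrees (using also $\HH^q(B_{\Z},-) = 0$ for $q \geq 2$ from \cref{cohomologyz}).

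Next I would exploit the $W_k$-equivariant short exact sequence in $\Ab(B_{W_k})$
\[
0 \to \underline{\O_L^{\times}} \to \underline{L^{\times}} \xrightarrow{v_L} \Z \to 0,
\]
where $v_L$ is the valuation with trivial $W_k$-action on $\Z$. Exactness at the right is verified by noting that any continuous map from an extremally disconnected space to $\Z$ is locally constant, hence lifts locally to $L^{\times}$ via powers of a uniformizer. Applying $R\GGamma(B_{W_k},-)$ and using \cref{OLcohom}, which gives $R\GGamma(B_{W_k},\underline{\O_L^{\times}}) \cong \underline{\O_F^{\times}}[0]$, together with $\HH^0(B_{W_k},\Z) = \HH^1(B_{W_k},\Z) = \Z$ from \cref{cohomologyz}, the long exact sequence collapses to
\[
0 \to \underline{\O_F^{\times}} \to \HH^0(B_{W_k},\underline{L^{\times}}) \to \Z \to 0 \to \HH^1(B_{W_k},\underline{L^{\times}}) \to \Z \to 0.
\]
This already yields $\HH^1(B_{W_k},\underline{L^{\times}}) \cong \Z$, which is discrete as it arises as the cokernel of the zero endomorphism of discrete $\Z$.

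It remains to identify the middle extension with $\underline{F^{\times}}$. Since $L/F$ is unramified, any uniformizer $\pi_F \in F^{\times}$ satisfies $v_L(\pi_F) = 1$ and, lying in $F$, is $W_k$-invariant. Hence $n \mapsto \pi_F^n$ provides a $W_k$-equivariant section of $v_L$, splitting the short exact sequence and producing an isomorphism $\HH^0(B_{W_k},\underline{L^{\times}}) \cong \underline{\O_F^{\times}} \oplus \Z$ of condensed abelian groups, matching the topological decomposition $F^{\times} \cong \O_F^{\times} \oplus \pi_F^{\Z}$. The main point requiring a bit of care is verifying that the condensed fixed-point functor recovers the correct subspace topology on $(L^{\times})^{W_k}$, so that the identification is truly with $\underline{F^{\times}}$ and not merely with its underlying abstract group; this follows from \cref{lowercohomologygroups}(1), since $\Z$ is discrete and so $\Z^n$ is compactly generated for each $n$.
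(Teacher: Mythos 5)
Your proposal is correct and follows essentially the same route as the paper: reduce via \eqref{eqn:degeneratehs} and \cref{vanishing} to $\HH^q(B_{W_k},\underline{L^\times})$, then use the valuation sequence $0\to\underline{\O_L^\times}\to\underline{L^\times}\to\Z\to 0$ together with \cref{OLcohom} and \cref{cohomologyz}. The extra details you supply (surjectivity of the valuation at the condensed level, the $W_k$-equivariant splitting by a uniformizer of $F$, and the topology check via \cref{lowercohomologygroups}) merely flesh out what the paper leaves implicit.
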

\begin{proof}
By \cref{vanishing} and by the exact sequence \eqref{eqn:degeneratehs}, we have $\HH^q(B_{\hat{W}_F},\overline{L}^{\times})\cong \HH^q(B_{W_k},\underline{L^{\times}})$. Let us consider the short exact sequence of condensed $W_k$-modules \[
0\rightarrow \underline{\O_L^{\times}}\rightarrow \underline{L^{\times}}\rightarrow \Z \rightarrow 0.
\] The result follows from the long exact cohomology sequence and from \cref{OLcohom}.
\end{proof}
\begin{cor}\label{muncohomology}
Let $n\in \N$. The cohomology of $\hat{W}_F$ with coefficients in $\mu_n(\overline{L}^{\times})$ is given by \[
\HH^q(B_{\hat{W}_F},\mu_n)=\begin{array}{ll}
\mu_n(F) & q=0\\
F^{\times}/(F^{\times})^n & q=1\\
\Z/n\Z & q=2 \\
0 & q\ge 3.
\end{array}\] In particular, the cohomology groups are finite. We set $\mu\coloneqq\lim_{\rightarrow} \mu_n$. Then we have \[
\HH^2(B_{\hat{W}_F},\mu)=\lim_{\rightarrow\, n} \HH^2(B_{\hat{W}_F},\mu_n)=\Q/\Z.
\]
\end{cor}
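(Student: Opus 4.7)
The plan is to feed the short exact sequence \eqref{exactseq}
\[
0\rightarrow \mu_n(\overline{L})\longrightarrow \overline{L}^{\times}\overset{(-)^{n}}{\longrightarrow}\overline{L}^{\times}\rightarrow 0
\]
into the long exact sequence of $\HH^{\bullet}(B_{\hat{W}_F},-)$ and exploit the computation of $\HH^{\bullet}(B_{\hat{W}_F},\overline{L}^{\times})$ from \cref{lcohom}. Since $\HH^q(B_{\hat{W}_F},\overline{L}^{\times})=0$ for $q\ge 2$, the long exact sequence collapses to
\[
0\to \HH^0(B_{\hat{W}_F},\mu_n)\to \underline{F^{\times}}\xrightarrow{(-)^n}\underline{F^{\times}}\to \HH^1(B_{\hat{W}_F},\mu_n)\to \Z\xrightarrow{n}\Z\to \HH^2(B_{\hat{W}_F},\mu_n)\to 0
\]
and $\HH^q(B_{\hat{W}_F},\mu_n)=0$ for $q\ge 3$. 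Reading off kernels and cokernels gives $\HH^0=\mu_n(F)$, $\HH^2=\Z/n\Z$, and a short exact sequence $0\to F^{\times}/(F^{\times})^n\to \HH^1(B_{\hat{W}_F},\mu_n)\to \Z[n]\to 0$ whose third term vanishes because $\Z$ is torsion-free.

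The next step is to verify that these identifications hold at the level of condensed abelian groups and not merely of their underlying abelian groups. For $\HH^0$ this follows from \cref{lowercohomologygroups}, 1) applied to a compatible finite level. For $\HH^1$, the image of $(-)^n$ on $\underline{F^{\times}}$ is a closed subgroup of finite index: using the isomorphism $F^{\times}\cong \Z\oplus \mu_{q-1}(F)\oplus \Z_p^{[F:\Q_p]}$, the quotient $F^{\times}/(F^{\times})^n$ is finite and discrete, so the condensed cokernel is represented by this finite abelian group. For $\HH^2$, the cokernel of $n:\Z\to\Z$ in $\Ab(\CC)$ is just the discrete group $\Z/n\Z$. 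Finiteness of all three cohomology groups follows at once.

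For the last assertion, I would write $\mu=\lim_{\to n}\mu_n$ in $\Ab(B_{\hat{W}_F})$ and compute $\HH^2(B_{\hat{W}_F},\mu)$ as the colimit of $\HH^2(B_{\hat{W}_F},\mu_n)=\Z/n\Z$. The transition map attached to $\mu_n\hookrightarrow \mu_{nm}$ is induced by the morphism of short exact sequences in which the right-hand $\overline{L}^{\times}$ is multiplied by $m$; this yields $\Z/n\Z\to \Z/nm\Z,\ 1\mapsto m$, whose colimit is $\Q/\Z$. To justify commuting $\HH^2$ with the filtered colimit, I would reduce via the Hochschild-Serre sequence \eqref{eqn:degeneratehs} for $1\to \hat{I}\to \hat{W}_F\to \Z\to 1$ to commuting $\HH^q(B_{\hat{I}},-)$ with the colimit; since $\hat{I}$ is a strict pro-condensed group whose components $I/U$ are profinite (hence compact Hausdorff), this is exactly the content of \cref{prop:cont}. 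Combined with \cref{cohomologyz} (which exhibits $\HH^{\bullet}(B_{W_k},-)$ as the kernel/cokernel of $1-\varphi$, an operation that trivially commutes with filtered colimits), the colimit slides past each factor of \eqref{eqn:degeneratehs}.

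The only subtle point is the continuity step in the last paragraph; everything else is a direct application of the long exact sequence and of \cref{lcohom}. The identification of the transition maps $\Z/n\Z\to \Z/nm\Z$ must be tracked carefully through the connecting homomorphism in order to recognise the colimit as $\Q/\Z\subset\R/\Z$ in its canonical description as torsion in $\R/\Z$.
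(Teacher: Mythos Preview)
Your proof is correct and follows exactly the route the paper takes: feed the Kummer sequence \eqref{exactseq} into the long exact sequence for $\HH^{\bullet}(B_{\hat{W}_F},-)$ and read everything off from \cref{lcohom}. You supply more detail than the paper does---in particular the verification that the identifications hold as condensed abelian groups (closedness of $(F^{\times})^n$) and the justification that $\HH^2$ commutes with the filtered colimit $\mu=\varinjlim\mu_n$, which the paper states without comment; your reduction via \eqref{eqn:degeneratehs} to \cref{prop:cont} for $\hat{I}$ together with the exactness of $\ker/\coker(1-\varphi)$ is a clean way to see this (equivalently, one may note that $B_{\hat{W}_F}\to\CC$ is strongly compact by \cref{sclocalisation}, \cref{compositionscmorphisms}, \cref{bgstronglycompact} and \cref{lem:2}).
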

\begin{proof}
The result follows from \cref{lcohom} and by the long exact cohomology sequence associated to \eqref{exactseq}.
\end{proof}

\begin{defn}\label{defn:twisted} We set $\Z(1)\coloneqq \overline{L}^{\times}[-1]$ and $\R(1)\coloneqq \R[-1]$ in $\DDD(B_{\hat{W}_F})$. Let us consider the shifted valuation morphism $\Z(1)\to\R(1)$. We define \[
\R/\Z(1)\coloneqq \mathrm{cofib}(\Z(1)\to\R(1))=\mathrm{fib}(\overline{L}^{\times}\to\R).
\] \end{defn} We have an exact triangle in $\DDD(B_{\hat{W}_F})$ \begin{equation}\label{twistedtriangle}
\Z(1)\rightarrow \R(1)\rightarrow \R/\Z(1).
\end{equation} \begin{prop}\label{cohomologyrz1}
The cohomology of $\hat{W}_F$ with coefficients in $\R/\Z(1)$ is given by \[
\HH^q(B_{\hat{W}_F},\R/\Z(1))=\begin{array}{ll}
\underline{\O_F^{\times}} & q=0\\
 \R/\Z & q=1,2\\ 
 0 & q\ge 3.
 \end{array} \]
\end{prop}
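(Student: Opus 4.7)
The plan is to use the defining exact triangle $\Z(1) \to \R(1) \to \R/\Z(1)$ from \cref{defn:twisted} together with the cohomology of $\Z(1) = \overline{L}^{\times}[-1]$ already computed in \cref{lcohom} and a parallel computation for $\R(1) = \R[-1]$. All that is needed is one new input (the cohomology of $\R$) and then a careful long exact sequence.

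First I would compute $R\GGamma(B_{\hat{W}_F}, \R)$. For the profinite group $\hat{I} = \lim_U I/U$ and the constant pro-system with value $\R$, \cref{prop:cont} yields $\HH^q(B_{\hat{I}}, \R) = \colim_U \HH^q(B_{I/U}, \R)$. Each $I/U$ is compact Hausdorff and acts trivially on the Banach real vector space $\R$, so by \cref{banachcoefficients} the higher cohomology vanishes and $\HH^0 = \R$. The Hochschild--Serre sequence \eqref{eqn:degeneratehs} then degenerates to $\HH^q(B_{\hat{W}_F}, \R) = \HH^q(B_{W_k}, \R) = \HH^q(B_{\Z}, \R)$; by \cref{cohomologyz} with trivial Frobenius action (so $1-\varphi = 0$) this equals $\R$ in degrees $0$ and $1$ and zero otherwise. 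Shifting, $R\GGamma(B_{\hat{W}_F}, \R(1))$ is concentrated in degrees $1$ and $2$, each equal to $\R$.

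Next I would run the long exact cohomology sequence of \eqref{twistedtriangle}, using that $\HH^q(B_{\hat{W}_F}, \Z(1)) = \HH^{q-1}(B_{\hat{W}_F}, \overline{L}^{\times})$ is $\underline{F^{\times}}$ in degree $1$, $\Z$ in degree $2$, and zero elsewhere by \cref{lcohom}. In degree $0$ the sequence reads $0 \to \HH^0(\R/\Z(1)) \to \underline{F^{\times}} \xrightarrow{v} \R$, giving $\HH^0(\R/\Z(1)) = \underline{\O_F^{\times}}$ (kernel of the valuation). In degree $1$, it reads $\underline{F^{\times}} \xrightarrow{v} \R \to \HH^1(\R/\Z(1)) \to \Z \hookrightarrow \R$; since the last arrow is injective, $\HH^1(\R/\Z(1))$ is the cokernel of $v$, which is $\R/\Z$. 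In degree $2$, the sequence $\Z \hookrightarrow \R \to \HH^2(\R/\Z(1)) \to 0$ yields $\R/\Z$. For $q \geq 3$ every contributing term vanishes.

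The only subtlety is identifying the boundary maps $\HH^1(\Z(1)) = \underline{F^{\times}} \to \HH^1(\R(1)) = \R$ as the valuation and $\HH^2(\Z(1)) = \Z \to \HH^2(\R(1)) = \R$ as the natural inclusion. Both are obtained by functoriality of the Hochschild--Serre spectral sequence in the coefficients applied to the valuation morphism $\overline{L}^{\times} \to \R$: on $\HH^0(B_{\hat{I}}, -)$ this is the valuation $\underline{L^{\times}} \to \R$, which restricts to the valuation $\underline{F^{\times}} \to \R$ on $\HH^0(B_{W_k}, -)$ and induces the inclusion $\Z = L^{\times}/(1-\varphi)L^{\times} \hookrightarrow \R = \R/(1-\varphi)\R$ on $\HH^1(B_{W_k}, -)$ (using \cref{cohomologyz}). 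This identification is the only non-mechanical ingredient; once it is in place, the result follows by straightforward bookkeeping.
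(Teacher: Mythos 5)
Your proposal is correct and follows essentially the same route as the paper: vanishing of $\HH^{q}(B_{\hat{I}},\R)$ in positive degrees via \cref{banachcoefficients} (with \cref{prop:cont} to pass to the pro-group), the degenerate Hochschild--Serre sequence to get $\HH^q(B_{\hat{W}_F},\R(1))=\R$ for $q=1,2$ and $0$ otherwise, and then the long exact sequence of the triangle \eqref{twistedtriangle} combined with \cref{lcohom}. Your explicit identification of the connecting maps as the valuation $\underline{F^{\times}}\to\R$ and the inclusion $\Z\hookrightarrow\R$ is a detail the paper leaves implicit, and it is handled correctly.
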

\begin{proof}
By \cref{banachcoefficients}, we have $\HH^q(B_{\hat{I}},\R)=0$ for all $q>0$. Therefore, we get \[ \HH^q(B_{\hat{W}_F},\R(1))=\HH^{q-1}(B_{W_k},\R)=\begin{array}{ll}
\R & q=1,2\\ 0 & q\neq 1,2.
\end{array}\] 
The long exact cohomology sequence associated to \eqref{twistedtriangle} and \cref{lcohom} imply the result.
\end{proof}

Consequently, there is a trace map 
\begin{equation}\label{tracemap}R\GGamma(B_{\hat{W}_F},\R/\Z(1))\to \R/\Z[-2].
\end{equation} 
If $M\in\DDD^b(B_{\hat{W}_F})$, we set \[
M^D\coloneqq R\underline{\Hom}(M,\R/\Z(1))\in \DDD(B_{\hat{W}_F}).
\] In the next section, we study the condensed abelian groups $\HH^q(B_{\hat{W}_F},M)$ and $\HH^q(B_{\hat{W}_F},M^D)$ for some $\hat{W}_F$-module $M$.

\subsection{Condensed structures on the cohomology groups}
We begin this section by studying some abelian groups of interest. \begin{defn}
Let $A$ be an abelian group. We say that $A$ is \begin{enumerate}[1)]
\item of \emph{finite $\Z$-type} if $A\cong \Z^r\oplus F$ for some $r\in \N$ and some $F$ finite abelian group; equivalently, if $A$ is an extension of a finite power of $\Z$ by a finite abelian group;
\item of \emph{finite $\Q_p/\Z_p$-type} if $A\cong (\Q_p/\Z_p)^r\oplus F$ for some $r\in \N$ and some $F$ finite abelian group; equivalently, if $A$ is an extension of a finite power of $\Q_p/\Z_p$ by a finite abelian group.
\end{enumerate}
\end{defn}

The equivalence in 1) comes from the fact that $\Z^r$ is a projective abelian group, while the equivalence in 2) follows from \begin{lem}\label{divisibleextension}
Let $E$ be an extension of a divisible group $D$ by a finite group $F$. Then $E\cong D\oplus F'$ for $F'$ a quotient of $F$.
\end{lem}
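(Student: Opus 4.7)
The plan is to decompose $E = E_{\mathrm{div}}\oplus R$, where $E_{\mathrm{div}}$ is the maximal divisible subgroup of $E$ and $R = E/E_{\mathrm{div}}$ is reduced; then I would argue that $E_{\mathrm{div}}\cong D$ and that $R$ is a quotient of $F$. The essential preliminary is the identity $E = mE + F$ for every $m\ge 1$, which follows by lifting: given $e\in E$, divisibility of $D$ supplies $d'\in D$ with $md' = \pi(e)$, and any lift $e'\in E$ of $d'$ satisfies $me' - e\in F$.

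The central step is to show that $R$ is finite by identifying $E_{\mathrm{div}}$ with $E_{\infty}:=\bigcap_k n^kE$, where $n$ is the exponent of $F$. The preliminary identity implies that $E/n^kE$ is a quotient of $F/n^kF$ for every $k$, hence finite of size bounded by $|F|$; thus the descending chain $\{n^kE\}$ stabilizes at some $E_{\infty} = n^KE$, giving $nE_{\infty} = E_{\infty}$. For $m$ coprime to $n$ one has $mF = F$, which combined with $E = mE + F$ gives $E = mE$; a short diagram chase in the finite group $E/n^KE$ (on which $[m]$ is invertible because $|E/n^KE|$ has only prime divisors of $n$) then upgrades this to $mE_{\infty} = E_{\infty}$. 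Combining the two cases, $E_{\infty}$ is $m$-divisible for every $m\ge 1$, so $E_{\mathrm{div}} = E_{\infty}$ and consequently $R\cong E/n^KE$ is finite.

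To finish, note that $D = \pi(E_{\mathrm{div}}) + \pi(R)$ with $\pi(R)$ finite, so $D/\pi(E_{\mathrm{div}})$ is simultaneously divisible and finite, hence zero; this gives a short exact sequence $0\to E_{\mathrm{div}}\cap F\to E_{\mathrm{div}}\to D\to 0$ with finite kernel. Applying the snake lemma to multiplication by $p$ on this sequence shows $\dim_{\F_p}E_{\mathrm{div}}[p] = \dim_{\F_p}D[p]$ for every prime $p$, while the $\Q$-vector space parts trivially coincide (finite groups contribute none), so the structure theorem for divisible abelian groups yields $E_{\mathrm{div}}\cong D$. Finally, $\pi(E_{\mathrm{div}}) = D$ forces $E = E_{\mathrm{div}} + F$, whence $R\cong F/(F\cap E_{\mathrm{div}})$ is a quotient of $F$, which is the required $F'$. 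The most delicate point I expect is verifying divisibility of $E_{\infty}$ uniformly in $m$, handling separately the primes dividing $n$ and those coprime to $n$; the rest is routine bookkeeping.
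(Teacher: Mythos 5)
Your argument is correct. Note that the paper itself gives no proof of this lemma: it simply cites Geisser--Morin (their Lemma 5.5), so your write-up is a genuinely self-contained alternative. The skeleton is sound: the identity $E=mE+F$ is exactly right; the bound $[E:n^kE]\le |F|$ forces the chain $\{n^kE\}$ to stabilize at $E_\infty=n^KE$ with $nE_\infty=E_\infty$; divisibility of $E_\infty$ at the remaining primes, the identification $E_\infty=E_{\mathrm{div}}$ (the reverse inclusion $E_{\mathrm{div}}\subseteq\bigcap_k n^kE$ being immediate), the splitting $E=E_{\mathrm{div}}\oplus R$ by injectivity of divisible groups, the vanishing of the finite divisible quotient $D/\pi(E_{\mathrm{div}})$, and the four-term sequence $0\to K[p]\to E_{\mathrm{div}}[p]\to D[p]\to K/pK\to 0$ (with $E_{\mathrm{div}}/p=D/p=0$) giving equal Ulm--Kaplansky invariants, all check out, including in the case of infinite $p$-torsion since the kernel and cokernel have the same finite dimension; tensoring with $\Q$ handles the rank. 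Two small points of polish: the step you flag as delicate is actually immediate, since $E=mE$ gives $mE_\infty=m\,n^KE=n^K(mE)=n^KE=E_\infty$ with no diagram chase in $E/n^KE$; and the phrase ``$D=\pi(E_{\mathrm{div}})+\pi(R)$'' is an abuse ($R$ is a quotient, not a subgroup of $E$) and should be read as the observation that $D/\pi(E_{\mathrm{div}})$ is a quotient of the finite group $E/E_{\mathrm{div}}$, which is what your conclusion actually uses.
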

\begin{proof}
This is \cite[Lemma 5.5]{GeisMor}.
\end{proof}
\begin{lem}\label{extsubquo}
\begin{enumerate}[(a)]
\item An extension of an abelian group of finite $\Z$-type (resp.\ of finite $\Q_p/\Z_p$-type) by a finite abelian group is of finite $\Z$-type (resp.\ of finite $\Q_p/\Z_p$-type). \label{extsubquo:ext}
\item Finite $\Z$-type and finite $\Q_p/\Z_p$-type abelian groups are stable by taking subgroups. \label{extsubquo:sub}
\item Finite $\Z$-type and finite $\Q_p/\Z_p$-type abelian groups are stable by taking quotients. \label{extsubquo:quo}
\end{enumerate}
\end{lem}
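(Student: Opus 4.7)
The plan is to treat the $\Z$-type and $\Q_p/\Z_p$-type cases essentially separately. The $\Z$-type case is a tautology in disguise, while the $\Q_p/\Z_p$-type case requires Pontryagin duality together with the cited \cref{divisibleextension}.

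First, I would observe that an abelian group is of finite $\Z$-type if and only if it is finitely generated: one implication is obvious, and conversely any finitely generated abelian group decomposes as $\Z^r \oplus F$ with $F$ its (finite) torsion subgroup by the structure theorem over a PID. Granting this characterisation, (a), (b) and (c) for the $\Z$-type class reduce at once to the classical facts that finitely generated abelian groups are closed under taking extensions, subgroups, and quotients.

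For the $\Q_p/\Z_p$-type class I would handle (a) first. Given an extension $0 \to B \to E \to (\Q_p/\Z_p)^r \oplus F_A \to 0$ with $B$ finite, pull back along $F_A \hookrightarrow (\Q_p/\Z_p)^r \oplus F_A$ to obtain a finite subgroup $E_1 \subset E$ with $E/E_1 \cong (\Q_p/\Z_p)^r$. Applying \cref{divisibleextension} to $0 \to E_1 \to E \to (\Q_p/\Z_p)^r \to 0$ yields $E \cong (\Q_p/\Z_p)^r \oplus E_1'$ with $E_1'$ a quotient of $E_1$, hence finite.

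For (b) and (c) in the $\Q_p/\Z_p$ case I would exploit that the primary decomposition of a torsion abelian group is functorial, so writing $A = (\Q_p/\Z_p)^r \oplus F_p \oplus F_{p'}$, with $F_p$ the $p$-primary part of $F$ and $F_{p'}$ its prime-to-$p$ complement, any subgroup and any quotient decomposes compatibly. The $F_{p'}$ piece and its subquotients remain finite, so it suffices to handle $A = (\Q_p/\Z_p)^r \oplus F_p$ with $F_p$ a finite $p$-group. Here I invoke Pontryagin duality: $A^\vee \cong \Z_p^r \oplus F_p^\vee$ is a finitely generated $\Z_p$-module, subgroups of $A$ correspond bijectively to continuous quotients of $A^\vee$, and quotients of $A$ correspond to closed subgroups of $A^\vee$. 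Since $\Z_p$ is Noetherian, both operations preserve the class of finitely generated $\Z_p$-modules, which by the structure theorem over the PID $\Z_p$ have the shape $\Z_p^s \oplus T$ with $T$ a finite $p$-group. Dualising back gives $(\Q_p/\Z_p)^s \oplus T$, which is of finite $\Q_p/\Z_p$-type. Reassembling with the prime-to-$p$ part finishes (b) and (c).

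The only real obstacle is bookkeeping: the two types behave quite differently, and the $\Q_p/\Z_p$-type case forces one to separate the $p$-primary part from the prime-to-$p$ torsion (because $F$ need not be a $p$-group) before invoking the Pontryagin/Matlis duality between discrete torsion $\Z_p$-modules and compact finitely generated $\Z_p$-modules. Once this is done, no further input is required beyond the structure theorem and \cref{divisibleextension}.
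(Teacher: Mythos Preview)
Your proof is correct, and for the $\Z$-type class your argument is essentially identical to the paper's (which simply declares (a) clear and omits the $\Z$-type case for (b) and (c), the implicit reasoning being exactly the identification with finitely generated groups that you spell out).

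For the $\Q_p/\Z_p$-type class your route is genuinely different from the paper's. The paper reduces (b) and (c) to subgroups and quotients of $(\Q_p/\Z_p)^r$ by projecting onto the divisible summand and invoking (a); it then observes that any such subgroup or quotient is a $p$-torsion group with finite $p$-torsion, and appeals to the structure theory in Fuchs (a $p$-group whose $p$-torsion is finite is a finite direct sum of cyclic and quasicyclic $p$-groups). You instead isolate the $p$-primary part and pass via Pontryagin duality to finitely generated $\Z_p$-modules, where closure under submodules and quotients is the Noetherianity of $\Z_p$. Both arguments are short; the paper's trades a duality step for an external citation, while yours is self-contained modulo standard Pontryagin duality but requires the extra bookkeeping of splitting off the prime-to-$p$ torsion. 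Your treatment of (a) via \cref{divisibleextension} is slightly more explicit than the paper's, which just notes that the equivalent description of finite $\Q_p/\Z_p$-type (as an extension of $(\Q_p/\Z_p)^r$ by a finite group) makes (a) immediate by composing extensions.
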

\begin{proof}
(a) is clear. We only prove (b) and (c) for finite $\Q_p/\Z_p$-type abelian groups. For (b), let $A$ be a subgroup of $(\Q_p/\Z_p)^r\oplus F$, where $n\in\N$ and $F$ finite. By (a) we can reduce to the case where $A$ is a subgroup of $(\Q_p/\Z_p)^r$. Hence $A$ is a torsion $p$-group with a finite $p$-torsion. By \cite[Ch.\ III, Theorem 19.2 and Exercise 19]{Fuchs} we are done.

For (c), let $Q$ be a quotient $(\Q_p/\Z_p)^r\oplus F/A$, for $A$ a subgroup of $(\Q_p/\Z_p)^r\oplus F$. By (a) we can reduce to the case $Q=(\Q_p/\Z_p)^r/A$. Hence $Q$ is a torsion $p$-group with a finite $p$-torsion. By \cite[Ch.\ III, Theorem 19.2 and Exercise 19]{Fuchs} we are done.
\end{proof}

\vspace{0.5 em}

We determine the structure of $R\GGamma(B_{\hat{W}_F},M)$ and $R\GGamma(B_{\hat{W}_F},M^D)$ in some cases of interest. In particular, we consider $M$ either a finite-dimensional real vector spaces with its Euclidean topology or a finite $\Z$-type discrete abelian group. 

Let $V$ be a finite-dimensional real vector space with a continuous action of $W_F/U$, where $U$ is an open normal subgroup of $I$. By \cite[VII.2.1, Prop.\ 1]{BourbakiGT}, the action is $\R$-linear. We set $V^*\coloneqq \Hom_{\R}(V,\R)$. 
 \begin{lem}\label{lem:homrealgm}
We have $R\underline{\Hom}(\R,\overline{L}^{\times})=0$ in $\DDD(B_{\hat{W}_F})$.
\end{lem}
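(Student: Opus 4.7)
The strategy is to reduce the claimed vanishing to a statement about condensed $\Ext$ groups between $\R$ and totally disconnected (prodiscrete) targets in $\CC$, and ultimately to the vanishing of $R\underline{\Hom}_{\CC}(\R,\underline{D})$ for discrete abelian groups $D$.

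First, since $W_F$ acts trivially on $\R$ (the valuation extends canonically to $\overline{L}$ and is $G_F$-invariant), we have $\R = f^*_{\hat{W}_F}\R$ where $f_{\hat{W}_F}:B_{\hat{W}_F}\to\CC$ is the structure morphism. Using the projection formula for internal Hom along a morphism of topoi,
\[
Rf_{\hat{W}_F,*}\, R\underline{\Hom}_{B_{\hat{W}_F}}(f^*_{\hat{W}_F}\R,\overline{L}^{\times}) \;\simeq\; R\underline{\Hom}_{\CC}(\R,Rf_{\hat{W}_F,*}\overline{L}^{\times}).
\]
By \cref{lcohom}, $Rf_{\hat{W}_F,*}\overline{L}^{\times}$ has cohomology $\underline{F^{\times}}$ in degree $0$ and $\Z$ in degree $1$. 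Since $Rf_{\hat{W}_F,*}$ is not conservative, I would carry the argument one step deeper: apply the same projection formula to the restriction along each $B_{\hat{W}_F}/EG_i\times S \to \CC/S$ in the spirit of \cref{lem:sectioncheck}, so that testing the vanishing of the internal hom reduces to proving $R\underline{\Hom}_{\CC}(\R,\underline{F^{\times}})=0$ and $R\underline{\Hom}_{\CC}(\R,\Z)=0$.

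Next, \cref{lem:derivedprodiscreteness} (applied to $F$ instead of $K$) gives $\underline{F^{\times}} \cong R\varprojlim_i \underline{F^{\times}/\UU_F^i}$ with discrete graded pieces. Since $R\underline{\Hom}(\R,-)$ commutes with $R\varprojlim$ in the target, both vanishing statements above reduce to the single claim
\[
R\underline{\Hom}_{\CC}(\R,\underline{D})=0 \quad \text{for every discrete abelian group } D.
\]

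Finally, for discrete $D$, the $\Hom$ level vanishes immediately: any continuous homomorphism $\R\to D^{\delta}$ is constant by connectedness of $\R$ and must be $0$. For the higher $\Ext$ groups, I would exploit that $\R$ is uniquely divisible, hence $R\underline{\Hom}_{\CC}(\R,-)$ is a $\Q$-local functor; in particular it annihilates the torsion subgroups of $\underline{D}$, and after tensoring with $\Q$ one reduces to showing that any condensed extension of $\R$ by $\underline{D}\otimes\Q$ splits, which follows from the vanishing of $\Ext$ between a $\Q$-vector space and a discrete $\Q$-vector space in $\Cond(\Ab)$ (using that $\R$ is a topological $\Q$-vector space and that mapping $\R\otimes \Z[S]$ into a discrete abelian sheaf factors through $\pi_0(\R\times S) = S$).

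The main obstacle will be the last step: cleanly verifying $R\underline{\Hom}_{\CC}(\R,\underline{D})=0$ in all positive degrees. This is a purely condensed-algebraic input and I expect it to rely either on a direct resolution of $\R$ by free condensed abelian groups on extremally disconnected profinite sets, or on results from the solid theory of Clausen--Scholze (\cite{LCM}) giving the appropriate interaction between $\R$ and discrete coefficients.
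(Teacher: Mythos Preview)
Your overall strategy---reduce to a vanishing statement $R\underline{\Hom}_{\CC}(\R,-)=0$ against a solid/prodiscrete target---is the right one, and it is essentially what the paper does. Two points, though.

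First, a slip in the reduction. After invoking \cref{lem:sectioncheck} and restricting to $B_{\hat{W}_F}/EG_i$, you do \emph{not} land on $\underline{F^{\times}}$ and $\Z$: those are the values of $Rf_{\hat{W}_F,*}\overline{L}^{\times}$, and you have just explained why that pushforward is not enough. What you actually obtain on $B_{\hat{W}_F}/E(W_F/U)\cong B_{\hat{U}}$ is $R\GGamma(B_{\hat{U}},\overline{L}^{\times})\cong\underline{K^{\times}}$ for $K=\overline{L}^{U}$, by \cref{vanishing}. The paper organises this reduction slightly differently (restrict first to $B_{\hat{I}}$, then use \cref{internalhomdescr}(i) to reduce to $R\underline{\Hom}_{B_G}(\R,R\pi_{K,*}\overline{L}^{\times})$ for finite $G$, then localise at $EG$), but the endpoint is the same: one must show $R\underline{\Hom}_{\CC}(\R,\underline{K^{\times}})=0$. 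Your subsequent prodiscrete argument via \cref{lem:derivedprodiscreteness} applies equally well to $K^{\times}$, so this slip is harmless.

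Second, the final step is where your sketch has a genuine gap. Your proposed elementary argument for $R\underline{\Hom}_{\CC}(\R,\underline{D})=0$ in positive degrees does not go through: the observation that continuous maps $\R\times S\to D^{\delta}$ factor through $\pi_0(\R\times S)=S$ controls $\underline{\Hom}$, not $\underline{\Ext}^{\ge 1}$, and ``any condensed extension of $\R$ by a discrete $\Q$-vector space splits'' is precisely the nontrivial assertion you are trying to prove. The paper dispatches this in one line using exactly the solid-theoretic input you anticipated: $\underline{K^{\times}}$ is solid, and $\R^{L\blacksquare}=0$ by \cite[Corollary 6.1(iii)]{LCM}, so
\[
R\underline{\Hom}_{\CC}(\R,\underline{K^{\times}})\;=\;R\underline{\Hom}_{\Solid}(\R^{L\blacksquare},\underline{K^{\times}})\;=\;0.
\]
This replaces your entire last paragraph and requires no reduction to discrete graded pieces.
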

\begin{proof}
By \cref{rmk:localizediso,rmk:morphismofinternalhom}, it is enough to check that $R\underline{\Hom}_{B_{\hat{I}}}(\R,\overline{L}^{\times})=0$. By \cref{internalhomdescr}, (i), it is sufficient to show that for every finite extension $K/L$ of group $G$, we have \[
R\underline{\Hom}_{B_{G}}(\R,R\pi_{K,*}\overline{L}^{\times})=0.
\]
We have \[
(R\pi_{K,*}\overline{L}^{\times})_{|EG}=R\GGamma(B_{\hat{I}}/EG,\overline{L}^{\times}_{|EG})=\lim_{\substack{\rightarrow\\K'/K}} R\GGamma(B_{G_{K'/K}},\underline{(K')^{\times}})=\underline{K^{\times}},
\] where the last equality follows from \cref{vanishing}. Since $\underline{K^{\times}}$ is solid and $\R^{L\blacksquare}=0$ (\cite[Corollary. 6.1,(iii)]{LCM}), we have \[
R\underline{\Hom}_{B_{G}}(\R,R\pi_{K,*}\overline{L}^{\times})_{|EG}=R\underline{\Hom}_{\CC}(\R,\underline{K^{\times}})=R\underline{\Hom}_{\Solid}(\R^{L\blacksquare},\underline{K^{\times}})=0.
\] We conclude by observing that $\CC\to B_G$ is a localisation morphism.
\end{proof}
\begin{rmk}\label{rmk:dualofv}
There is an equivalence $\underline{V^*}\cong R\underline{\Hom}(\underline{V},\R)$ in $\DDD(B_{\hat{W}_F})$. Hence by \cref{lem:homrealgm}, the induced  morphism $\underline{V^*}[-1]\rightarrow \underline{V}^D$ is an equivalence in $\DDD(B_{\hat{W}_F})$. We denote $\underline{V}$ and $\underline{V^*}$ simply by $V$ and $V^*$ from now on.
\end{rmk}
\begin{prop}\label{rmk:structurervs}
Let $V$ be a finite-dimensional real vector space with a continuous action of $W_F/U$, with $U\subset I$ an open subgroup. Then $\HH^q(B_{\hat{W}_F},V)$ is a finite-dimensional vector space for $q=0,1$, and vanishes for all $q\ge 2$. Moreover, $\HH^q(B_{\hat{W}_F},V^D)$ is a finite-dimensional real vector space for $q=1,2$, and vanishes for all $q\neq 1,2$.
\end{prop}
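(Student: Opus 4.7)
The plan is to reduce everything to the Hochschild--Serre spectral sequence \eqref{eqn:degeneratehs} for $1 \to \hat{I} \to \hat{W}_F \to \Z \to 1$, and then compute the inertia and Frobenius cohomologies separately. First I would compute $\HH^q(B_{\hat{I}},V)$. By hypothesis the action of $\hat{I}$ on $V$ factors through the finite quotient $I/U$, so $V$ is pulled back from an object of $B_{I/U}$, and $V$ in $B_{\hat{I}}$ is the colimit over open normal $U' \subseteq U$ of the pullbacks from $B_{I/U'}$ in the sense of \cref{constructionlimittopos}. The continuity result \cref{prop:cont} then gives
\[
\HH^q(B_{\hat{I}},V) \;\cong\; \underset{\substack{\rightarrow\\ U' \subseteq U}}{\lim}\, \HH^q(B_{I/U'},V).
\]
Each $I/U'$ is a finite (hence compact Hausdorff) group and $V$ is a Banach space with a continuous $I/U'$-action, so \cref{banachcoefficients} yields $\HH^q(B_{I/U'},V)=0$ for $q \geq 1$, while $\HH^0(B_{I/U'},V) = V^{I/U'} = V^{I/U}$ (the action factors through $I/U$). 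Therefore $\HH^0(B_{\hat{I}},V) = V^{I/U}$ as a condensed $\Z$-module (with its Euclidean topology) and $\HH^q(B_{\hat{I}},V)=0$ for $q \geq 1$.

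Next I plug this into \eqref{eqn:degeneratehs}. Only the rows $q = 0, 1$ of the spectral sequence can contribute, and more precisely the sequence collapses to
\[
\HH^0(B_{\hat{W}_F},V) \cong \HH^0(B_{W_k}, V^{I/U}), \qquad \HH^1(B_{\hat{W}_F},V) \cong \HH^1(B_{W_k}, V^{I/U}),
\]
and $\HH^q(B_{\hat{W}_F},V)=0$ for $q \geq 2$. The residual action of $W_k \cong \Z$ on the finite-dimensional real vector space $V^{I/U}$ is continuous, and \cref{cohomologyz} identifies these two groups with $\ker(1-\varphi)$ and $\mathrm{coker}(1-\varphi)$ on $V^{I/U}$, where $\varphi$ is the Frobenius action. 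Since $V^{I/U}$ is a finite-dimensional real vector space and $1-\varphi$ is $\R$-linear with (automatically) closed image, both the kernel and the cokernel are again finite-dimensional real vector spaces, proving the first assertion.

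For the second assertion, I invoke \cref{rmk:dualofv}, which gives an equivalence $V^D \simeq V^*[-1]$ in $\DDD^b(B_{\hat{W}_F})$. Therefore $\HH^q(B_{\hat{W}_F}, V^D) \cong \HH^{q-1}(B_{\hat{W}_F}, V^*)$. The dual $V^*$ is again a finite-dimensional real vector space with a continuous action of $W_F/U$, so applying the first part to $V^*$ we conclude that $\HH^q(B_{\hat{W}_F}, V^D)$ is a finite-dimensional real vector space for $q=1,2$ and vanishes for $q \neq 1,2$. There is no real obstacle here; the only slightly subtle point is verifying that continuity \cref{prop:cont} applies, i.e.\ that writing $V$ as the colimit over $U' \subseteq U$ of its pullbacks matches \cref{constructionlimittopos}, which is immediate from the fact that the transition maps in the pro-group $\hat{I}$ are the projections $I/U'' \twoheadrightarrow I/U'$ and $V$ is constant along them once $U' \subseteq U$.
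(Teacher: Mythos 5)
Your proposal is correct and follows essentially the same route as the paper: continuity (\cref{prop:cont}) plus the Banach-coefficient vanishing (\cref{banachcoefficients}) to reduce $R\GGamma(B_{\hat I},V)$ to $V^{I/U}$ in degree $0$, then the Hochschild--Serre/\cref{cohomologyz} computation over $W_k\cong\Z$, and finally $V^D\simeq V^*[-1]$ from \cref{rmk:dualofv} for the dual statement. The only cosmetic difference is that the paper phrases the middle step as $R\GGamma(B_{\hat W_F},V)=R\GGamma(B_{W_k},R\GGamma(B_{\hat I},V))$ rather than via the degenerate exact sequence \eqref{eqn:degeneratehs}, which is the same tool.
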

\begin{proof}
By \cref{prop:cont,banachcoefficients}, we have \[ \begin{split}R\GGamma(B_{\hat{W}_F},V)&=R\GGamma(B_{W_k},R\GGamma(B_{\hat{I}},V))\\ &=R\GGamma(B_{W_k},\lim_{\substack{\rightarrow\\ U'\subset U}}\,R\GGamma(B_{I/U'},V))\\ &= R\GGamma(B_{W_k},V^{I/U}). \end{split}\] By \cref{cohomologyz}, this complex is concentrated in cohomological degrees $0,1$. Moreover, its cohomology groups are finite dimensional real vector spaces. The same holds if we replace $V$ with $V^*$. We conclude by \cref{rmk:dualofv}. \end{proof}

Let $M$ be a discrete abelian group with a continuous action of $W_F$ (resp.\ $I$). Let us consider $M$ as an object of $\Ab(B_{\hat{W}_F})$ (resp.\ $\Ab(B_{\hat{I}})$) by pullback along $B_{\hat{W}_F}\rightarrow B_{\hat{W}_F}(\Set)=B_{W_F}(\Set)$ (resp.\ $B_{\hat{I}}\rightarrow B_{\hat{I}}(\Set)=B_I(\Set))$.
\begin{rmk}\label{rmk:cohomdiscretemod}
Let $N$ be a discrete abelian group with a continuous action of $I$. By \cref{prop:cont} we have \[
R\GGamma(B_{\hat{I}},N)=\underset{\substack{\rightarrow\\ U}}{\lim}\,R\GGamma(B_{I/U},N^U).
\] By \cref{prop:discretecohom,prop:discretegroupsections}, this is a complex of discrete abelian groups and it is equivalent to  $\underline{R\GGamma(B_{I}(\Set),N)}$ in $\DDD(\Ab(\CC))$. 
The same holds for $\hat{W}_F$ and $W_F$ instead of $\hat{I}$ and $I$ respectively. \end{rmk}
\begin{prop}\label{weilcohomologydiscrete}
 Let $M$ be a discrete abelian group with a continuous action of $W_F$. Then $\HH^q(B_{\hat{W}_F},M)$ is discrete for all $q$ and vanishes for all $q\ge 4$. If $M$ is torsion, then we have $\HH^3(B_{\hat{W}_F},M)=0$.
 \end{prop}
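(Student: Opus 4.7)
The plan is to combine the Hochschild--Serre short exact sequence \cref{eqn:degeneratehs} with the identification provided by \cref{rmk:cohomdiscretemod}, which yields an equivalence $R\GGamma(B_{\hat I},M)\simeq\underline{R\Gamma(B_I(\Set),M)}$ in $\DDD(\Ab(\CC))$. Thus each $\HH^q(B_{\hat I},M)$ is a discrete condensed abelian group whose underlying group is the classical continuous cohomology $H^q(I,M)$, and the whole problem reduces to the classical behaviour of this cohomology together with the $\Z$-action coming from the Frobenius.

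For the discreteness of $\HH^q(B_{\hat W_F},M)$: by \cref{cohomologyz}, for any condensed $\Z$-module $A$ the functors $\HH^0(B_\Z,A)=A^\Z$ and $\HH^1(B_\Z,A)=A_\Z$ are given respectively by a subobject and a quotient of $A$. Applying this to the discrete modules $\HH^{q-1}(B_{\hat I},M)$ and $\HH^q(B_{\hat I},M)$, \cref{discretemonoepi} shows that both outer terms of \cref{eqn:degeneratehs} are discrete, and \cref{discreteextensions} then yields the discreteness of the middle term.

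The vanishing rests on two classical facts about the cohomological dimension of $I$ on discrete coefficients: (a) if $N$ is a torsion discrete $I$-module, then $H^q(I,N)=0$ for all $q\geq 2$; and (b) for an arbitrary discrete $I$-module $N$, $H^q(I,N)=0$ for all $q\geq 3$. Fact (a) is the classical bound $\mathrm{cd}(I)\leq 1$ on torsion coefficients---for $\ell\neq p$ this is \cite[Ch.~II, \S 4.3]{serreGC}, while for the $p$-primary part it follows from the triviality of $\mathrm{Br}(F^{ur})$ (since $F^{ur}$ is $C_1$), which forces $H^2(I,\mu_{p^n})=0$. Fact (b) follows from (a) by a d\'evissage along $0\to N_{\mathrm{tors}}\to N\to N/N_{\mathrm{tors}}\to 0$, using additionally that $H^q(I,V)=0$ for $q\geq 1$ whenever $V$ is a $\Q$-vector space (standard for continuous cohomology of a profinite group on uniquely divisible coefficients). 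Substituting these bounds into \cref{eqn:degeneratehs} gives $\HH^q(B_{\hat W_F},M)=0$ for $q\geq 4$ in general, and $\HH^q(B_{\hat W_F},M)=0$ for $q\geq 3$ when $M$ is torsion.

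The only non-formal ingredient is the cd bound on $I$, and in particular its $p$-primary refinement in mixed characteristic; the remainder is bookkeeping in the Hochschild--Serre short exact sequence combined with the preservation of discreteness under $(-)^\Z$ and $(-)_\Z$.
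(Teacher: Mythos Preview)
Your approach is essentially the paper's: reduce to the classical cohomological dimension of $I$ via the Hochschild--Serre sequence \eqref{eqn:degeneratehs}, after identifying $\HH^q(B_{\hat I},M)$ with continuous cohomology through \cref{rmk:cohomdiscretemod}. The paper is more terse on discreteness---it simply invokes the last sentence of \cref{rmk:cohomdiscretemod}, which asserts the conclusion directly for $\hat W_F$---and cites \cite[Ch.~II, \S 3.3~c)]{serreGC} for both cd bounds at once. Your explicit route for discreteness via \cref{discretemonoepi} and \cref{discreteextensions} is correct and arguably more self-contained, since \cref{prop:cont} (on which \cref{rmk:cohomdiscretemod} rests) is only stated for pro-compact groups, while the $W_F/U$ are not compact.

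There is one genuine imprecision in your justification of fact~(b). The d\'evissage along $0\to N_{\mathrm{tors}}\to N\to N/N_{\mathrm{tors}}\to 0$ only reduces to the torsion-free case, and $N/N_{\mathrm{tors}}$ is in general not a $\Q$-vector space (take $N=\Z$). A second step is needed: for $N$ torsion-free, use $0\to N\to N\otimes\Q\to N\otimes\Q/\Z\to 0$; then $H^q(I,N\otimes\Q)=0$ for $q\ge 1$ (uniquely divisible coefficients) and $H^q(I,N\otimes\Q/\Z)=0$ for $q\ge 2$ by your fact~(a), so the long exact sequence gives $H^q(I,N)=0$ for $q\ge 3$. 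Equivalently, one may just cite the general inequality $\mathrm{scd}(I)\le\mathrm{cd}(I)+1\le 2$ from \cite[Ch.~I, \S 3.1]{serreGC}, which is what the paper's reference to \cite[Ch.~II, \S 3.3~c)]{serreGC} implicitly packages.
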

 \begin{proof}
By \cref{rmk:cohomdiscretemod}, the condensed abelian group $\HH^q(B_{\hat{W}_F},M)$ is discrete for all $q$.
 By \cite[Ch. II, \S 3.3 c)]{serreGC}, $\H^q_{cont}(I,M)=0$ for all $q\ge 3$, and even for all $q\ge 2$ if $M$ is torsion. We conclude again by \cref{rmk:cohomdiscretemod} and by \eqref{eqn:degeneratehs}.
 \end{proof}
Suppose that $M$ is finite. Then there exists an open normal subgroup $U$ of $I$ acting trivially on $M$. We set $H\coloneqq W_F/U$. \begin{lem}\label{lem:finiterealhomvanishing}
Let $M$ be a finite abelian group with a continuous action of $W_F$. Then we have $R\underline{\Hom}(M,\R)=0$ in $\DDD(B_{\hat{W}_F})$.
\end{lem}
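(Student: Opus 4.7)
The plan is to prove the lemma by a \emph{multiplication-by-$n$} argument, exploiting the torsion nature of $M$ against the unique divisibility of $\R$; this bypasses any need to unpack the internal $\Hom$ via the pro-system description of \cref{internalhomdescr}. First, since $M$ is finite, some positive integer $n$ (for instance the order of $M$) annihilates $M$, so $n\cdot \id_M=0$ as a morphism in $\Ab(B_{\hat{W}_F})$, and hence in $\DDD(B_{\hat{W}_F})$. Applying the additive (in fact triangulated) bifunctor $R\underline{\Hom}(-,-)$ in the first slot, I obtain
\[
n\cdot \id_{R\underline{\Hom}(M,\R)} \;=\; R\underline{\Hom}(n\cdot\id_M,\R) \;=\; 0.
\]

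On the other hand, $\R$ carries the trivial $\hat{W}_F$-action, i.e.\ it is the pullback along $f_{\hat{W}_F}:B_{\hat{W}_F}\to\CC$ of the real numbers as a condensed abelian group. Multiplication by $n$ is a continuous group homomorphism $\R\to\R$ admitting a continuous inverse (division by $n$), hence an isomorphism in $\CC$, and therefore in $B_{\hat{W}_F}$ by pullback. Applying $R\underline{\Hom}$ in the second slot then gives that
\[
n\cdot \id_{R\underline{\Hom}(M,\R)} \;=\; R\underline{\Hom}(M,n\cdot\id_\R)
\]
is an isomorphism.

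Combining the two identities, the identity of $R\underline{\Hom}(M,\R)$ is simultaneously zero and an isomorphism, which forces $R\underline{\Hom}(M,\R)=0$ in $\DDD(B_{\hat{W}_F})$. The argument is purely formal: it uses only the additivity of $R\underline{\Hom}$ in each variable and the fact that $\R$ is a $\Q$-vector space in $\CC$. For this reason I do not anticipate any real obstacle; the only mild point of care is to make sure that the equalities $R\underline{\Hom}(n\cdot\id_M,\R)=n\cdot\id_{R\underline{\Hom}(M,\R)}=R\underline{\Hom}(M,n\cdot\id_\R)$ hold in the derived $\infty$-category, which follows from $R\underline{\Hom}$ being a $\Z$-linear bifunctor.
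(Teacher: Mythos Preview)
Your argument is correct. Both your proof and the paper's ultimately rest on the same observation, namely that multiplication by $n$ is an isomorphism on $\R$, but the routes differ. The paper first localizes along $EH$ (where $H=W_F/U$) via \cref{rmk:localizediso,rmk:morphismofinternalhom} so that the action on $M$ becomes trivial; it then decomposes $M_{|EH}$ as a direct sum of cyclic groups and, for each summand, uses the fiber sequence $R\underline{\Hom}(\Z/n\Z,\R)\to\R\xrightarrow{\cdot n}\R$. That localization step is needed precisely because a finite $W_F$-module need not split equivariantly into cyclic pieces. Your approach sidesteps this entirely: since $n\cdot\id_M=0$ is automatically $W_F$-equivariant, the whole argument runs in $\DDD(B_{\hat{W}_F})$ without ever trivializing the action or choosing a decomposition. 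The gain is a shorter, more formal proof that applies verbatim in any $\Z$-linear closed stable $\infty$-category; the paper's version, on the other hand, makes the mechanism more visible and parallels the proof of the companion \cref{dualizingobjectfinite}, where localization is genuinely required.
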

\begin{proof}
By \cref{rmk:localizediso,rmk:morphismofinternalhom} it is enough to prove that $R\underline{\Hom}_{B_{\hat{U}}}(M|_{|EH},\R_{|EH})=0$ in $\DDD(B_{\hat{U}})$. Since $M_{|EH}\cong \Z/n_1\Z\oplus \dots \oplus \Z/n_k\Z$, we can reduce to $M=\Z/n\Z$. We have the exact triangle in $\DDD(B_{\hat{U}})$ \[
R\underline{\Hom}(\Z/n\Z,\R)\longrightarrow R\underline{\Hom}(\Z,\R)\overset{\cdot n}{\longrightarrow} R\underline{\Hom}(\Z,\R).
\] Since $\cdot n:\R\to\R$ is an isomorphism, the result follows.
\end{proof}
\begin{lem}\label{dualizingobjectfinite}
Let $M$ be a finite abelian group with a continuous action of $W_F$. Then $M^D$ is represented by the finite abelian group $M'\coloneqq \Hom(M,\mu)=R\Hom(M,\mu)$.\end{lem}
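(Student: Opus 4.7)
The plan is to reduce the computation of $M^D$ to an analysis of $R\underline{\Hom}(M,\mu_n)$, where $n$ is the exponent of $M$, by means of the defining triangle of $\R/\Z(1)$ and the Kummer sequence. First I will apply $R\underline{\Hom}(M,-)$ to the fiber sequence $\R/\Z(1)\to\overline{L}^{\times}\to\R$ from \cref{defn:twisted}. Since $R\underline{\Hom}(M,\R)=0$ by \cref{lem:finiterealhomvanishing}, this yields $M^D\simeq R\underline{\Hom}(M,\overline{L}^{\times})$. Applying $R\underline{\Hom}(M,-)$ to the Kummer sequence \eqref{exactseq}, the endomorphism of $R\underline{\Hom}(M,\overline{L}^{\times})$ given by multiplication by $n$ agrees by functoriality with the map induced by $n\cdot\mathrm{id}_M=0$, so the resulting fiber sequence splits:
\[
R\underline{\Hom}(M,\mu_n)\simeq R\underline{\Hom}(M,\overline{L}^{\times})\oplus R\underline{\Hom}(M,\overline{L}^{\times})[-1].
\]

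The crucial step will be to show that $R\underline{\Hom}_{B_{\hat{W}_F}}(M,\mu_n)$ is concentrated in cohomological degrees $\{0,1\}$: combined with the above splitting and the connectivity of $R\underline{\Hom}(M,\overline{L}^{\times})$, this forces the latter to be concentrated in degree $0$. To this end I will fix an open normal $U\subset I$ such that both $M$ and $\mu_n$ are pulled back from $B_{W_F/U}$. By \cref{rmk:morphismofinternalhom} applied to the localization $B_{\hat{W}_F}/EG_U\simeq B_{\hat{U}}$, one gets $R\underline{\Hom}_{B_{\hat{W}_F}}(M,\mu_n)_{|EG_U}\simeq R\underline{\Hom}_{B_{\hat{U}}}(M,\mu_n)$, on which $\hat{U}$ acts trivially. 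Iterating the analogous localization for the profinite group $\hat{U}$ reduces to computing $R\underline{\Hom}_{\CC}(M,\mu_n)$, which is concentrated in degrees $\{0,1\}$ because $\Z$ is projective in $\Ab(\CC)$ (the functor $\Hom_{\Ab(\CC)}(\Z,-)=-(*)$ is exact) and every finite abelian group admits a length-$2$ free resolution by copies of $\Z$. Descent along the epimorphisms $EG_U\to *$ and the corresponding cover of $B_{\hat{U}}$ then transports the vanishing of higher cohomology sheaves back up to $B_{\hat{W}_F}$.

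Once the concentration is in hand, the Kummer splitting and connectivity force $R\underline{\Hom}(M,\overline{L}^{\times})$ to be concentrated in degree $0$ with $H^0=\underline{\Hom}(M,\overline{L}^{\times})=\underline{\Hom}(M,\mu_n)$, since any morphism from the $n$-torsion module $M$ factors through $\mu_n\subset\mu$. This identifies $M^D$ with the finite abelian group $M'=\Hom(M,\mu)$ carrying its natural continuous $W_F$-action. Finally, $\mu$ is divisible and therefore injective in $\Ab$, so $\Ext^i_{\Ab}(M,\mu)=0$ for $i\geq 1$, giving $R\Hom(M,\mu)=\Hom(M,\mu)$. The main obstacle is the concentration argument for $R\underline{\Hom}(M,\mu_n)$, which requires a careful descent through the iterated localizations of the pro-condensed classifying topos $B_{\hat{W}_F}$ to reduce it ultimately to the vanishing statement in $\CC$, where projectivity of $\Z$ is available.
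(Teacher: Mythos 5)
Your first reduction ($M^D\simeq R\underline{\Hom}(M,\overline{L}^{\times})$ via \cref{lem:finiterealhomvanishing}) and the splitting trick coming from $n\cdot\mathrm{id}_M=0$ are fine, but the justification of your ``crucial step'' has a genuine gap. You claim that, after restricting to $B_{\hat{U}}$, ``iterating the analogous localization for the profinite group $\hat{U}$ reduces to computing $R\underline{\Hom}_{\CC}(M,\mu_n)$''. This reduction is not available: the morphism $\CC\to B_{\hat{U}}$ is \emph{not} a localisation morphism (the paper stresses exactly this point right after \cref{lem:colimitdescription}), and localising $B_{\hat{U}}$ at any cover of the form $EG_i$ produces another pro-condensed classifying topos $B_{\hat{U}'}$, never $\CC$; no finite (or infinite) iteration of such localisations lets you check the vanishing of higher $\underline{\Ext}$-sheaves in $\CC$. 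So the concentration of $R\underline{\Hom}_{B_{\hat{W}_F}}(M,\mu_n)$ in degrees $\{0,1\}$ is left unproved as written. The statement is true, but the correct argument is much simpler and does not pass through $\CC$ at all: after restricting along $EH$ (where $M$ becomes a constant finite group), resolve $M$ by a two-term complex of finite free constant $\Z$-modules and use that $R\underline{\Hom}(\Z,X)=X$ in any topos, so that $R\underline{\Hom}(\Z/m,X)=\mathrm{fib}(X\xrightarrow{\,m\,}X)$. Note, however, that once you have this fiber description you can apply it directly to $X=\overline{L}^{\times}$ and conclude from the exactness of the Kummer sequence \eqref{exactseq} that $R\underline{\Hom}(\Z/m,\overline{L}^{\times})=\mu_m$ in degree $0$ — which is exactly the paper's final step — so your detour through $\mu_n$ and the splitting becomes unnecessary. (Also, you invoke \eqref{exactseq} in $\DDD(B_{\hat{W}_F})$ while the paper states it in $\Ab(B_{\hat{I}})$; this is harmless but should be checked after localisation at $E(W_F/U)$.)

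A second, smaller gap: at the end you ``identify'' $M^D$ with the finite abelian group $M'=\Hom(M,\mu)$ with its $W_F$-action, but you never construct a comparison morphism in $\DDD(B_{\hat{W}_F})$ between these two objects; computing the cohomology sheaves of $R\underline{\Hom}(M,\overline{L}^{\times})$ after restriction to $EH$ only determines the restricted object, not the object of $B_{\hat{W}_F}$ with its action. This is precisely why the paper first builds the global map $R\Hom(M,\mu)\to R\underline{\Hom}(M,\overline{L}^{\times})$ using the inclusion $\mu\subset\overline{L}^{\times,\delta}$ and \cref{rmk:morphismofinternalhom} applied to $B_{\hat{W}_F}\to B_{W_F}(\Set)$, and only then checks it is an equivalence after localisation (where \cref{rmk:localizediso} applies). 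Your argument can be completed the same way, but as it stands the equivariant identification is asserted rather than proved.
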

\begin{proof}
By \cref{lem:finiterealhomvanishing}, it is enough to check that we have an equivalence \[
R\Hom(M,\mu)\rightarrow R\underline{\Hom}(M,\overline{L}^{\times})
\] in $\DDD(B_{\hat{W}_F})$. The inclusion $\mu\subset \overline{L}^{\times,\delta}$ yields a morphism $\Hom(M,\mu)\to R\Hom(M,\overline{L}^{\times,\delta})$ in $\DDD(B_{W_F}(\Set))$. By \cref{rmk:morphismofinternalhom}, the morphism of topoi $\DDD(B_{\hat{W}_F})\to \DDD(B_{W_F}(\Set))$ yields a morphism \[R\Hom_{\DDD(B_{W_F}(\Set))}(M,\overline{L}^{\times,\delta})\rightarrow R\underline{\Hom}_{\DDD(B_{\hat{W}_F})}(M,\overline{L}^{\times,\delta})\rightarrow R\underline{\Hom}_{\DDD(B_{\hat{W}_F})}(M,\overline{L}^{\times}). \] We get a morphism $R\Hom(M,\mu)\to R\underline{\Hom}(M,\overline{L}^{\times})$ in $\DDD(B_{\hat{W}_F})$. By \cref{rmk:localizediso}, it is enough to check that the morphism \[
R\Hom(M_{|EH},\mu_{|EH})\to R\underline{\Hom}(M_{|EH},\overline{L}^{\times}_{|EH})
\] is an equivalence in $\DDD(B_{\hat{U}})$. We can suppose $M_{|EH}=\Z/n\Z$. The result follows since $R\underline{\Hom}(\Z/n\Z,\overline{L}^{\times})=\mu_n$ is discrete.
\end{proof}
\begin{prop}\label{finitecoeffstructure}
Let $M$ be a finite abelian group with a continuous action of $W_F$. Then $\HH^q(B_{\hat{W}_F},M)$ and $\HH^q(B_{\hat{W}_F},M^D)$ are finite for all $q$, and vanish for $q\ge 3$.
\end{prop}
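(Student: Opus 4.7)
The plan is to reduce this finiteness statement to the classical finiteness of local Galois cohomology with finite coefficients.

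Two immediate simplifications come first. By \cref{dualizingobjectfinite}, the dual $M^D$ is represented by the finite abelian group $\Hom(M,\mu)$ equipped with its induced continuous $W_F$-action, so it suffices to establish the result for $M$ itself. Moreover, \cref{weilcohomologydiscrete} already asserts that $\HH^q(B_{\hat{W}_F},M)$ is discrete for every $q$, vanishes for $q\ge 4$, and, since $M$ is finite and hence torsion, vanishes also in degree $q=3$. As a discrete condensed abelian group, $\HH^q(B_{\hat{W}_F},M)$ is determined by its underlying abelian group, which by \cref{rmk:cohomdiscretemod} is naturally identified with the continuous cohomology $\H^q_{cont}(W_F,M)$. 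The problem is thereby reduced to showing that $\H^q_{cont}(W_F,M)$ is finite for $q\in\{0,1,2\}$.

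For this step, I would compare the Hochschild--Serre short exact sequence attached to $1\to I\to W_F\to \Z\to 1$, obtained from \eqref{eqn:degeneratehs} together with the cohomology of $\Z$ computed in \cref{cohomologyz}, with the analogous short exact sequence attached to $1\to I\to G_F\to \hat{\Z}\to 1$ in continuous Galois cohomology (using that $\hat{\Z}$ has strict cohomological dimension one). Both sequences are built from $A_q:=\H^q_{cont}(I,M)$ wrapped in either $\H^p(\Z,-)$ or $\H^p_{cont}(\hat{\Z},-)$ for $p\in\{0,1\}$. Because $M$ is finite, $A_q$ is a torsion abelian group of exponent dividing $|M|$, and continuity of the $\hat{\Z}$-action forces each element to have a finite orbit, so $A_q$ is exhausted by its finite $\hat{\Z}$-stable subgroups $A_{q,i}$. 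For each such $A_{q,i}$ one has $\H^p(\Z,A_{q,i})=\H^p_{cont}(\hat{\Z},A_{q,i})$, both computing Tate cohomology of the finite cyclic quotient of $\hat{\Z}$ through which the action factors; passing to the filtered colimit, with which both functors commute, yields $\H^p(\Z,A_q)\cong \H^p_{cont}(\hat{\Z},A_q)$ naturally. The two Hochschild--Serre sequences are therefore isomorphic, giving $\H^q_{cont}(W_F,M)\cong \H^q_{cont}(G_F,M)$, which is finite by the classical local theorem of Tate--Serre (\cite{serreGC}).

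The main obstacle I expect is precisely this comparison $\H^p(\Z,A_q)\cong \H^p_{cont}(\hat{\Z},A_q)$: the module $A_q=\H^q_{cont}(I,M)$ is in general neither finite nor finitely generated, so the equality cannot be read off one degree at a time. What makes it work is the combination of finiteness of $M$, which forces $A_q$ to be a torsion group of bounded exponent, and continuity of the $\hat{\Z}$-action, which forces each orbit to be finite, exhausting $A_q$ by finite $\hat{\Z}$-stable subgroups where the two cohomologies manifestly agree. Once this comparison is granted, the remaining bookkeeping is routine spectral-sequence manipulation.
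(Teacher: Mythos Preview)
Your argument is essentially correct and reaches the same conclusion, but by a different route than the paper. The paper's proof starts from the explicit computation for $M=\mu_n$ (\cref{muncohomology}) and then adapts Milne's finiteness argument from \cite[Theorem 2.1]{ADT} to the Weil setting. You instead prove directly that $\H^q_{cont}(W_F,M)\cong \H^q_{cont}(G_F,M)$ for finite $M$ by comparing Hochschild--Serre sequences, and then invoke classical finiteness. Your approach is essentially the content of \cite[Proposition~4.1.1]{Karpuk}, which the paper cites later in the proof of \cref{condensedweiltate}; it has the advantage of giving the comparison isomorphism as a byproduct, while the paper's route avoids the $\hat{\Z}$-vs-$\Z$ comparison entirely.

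Two small corrections. First, the justification ``both computing Tate cohomology of the finite cyclic quotient'' is not right: for a finite $\hat{\Z}$-module $B$ with generator $\varphi$, neither $\H^0(\Z,B)=B^{\varphi}$ nor $\H^1(\Z,B)=B/(1-\varphi)B$ agrees with Tate cohomology $\hat{\H}^p(\Z/n,B)$. The correct reason $\H^p(\Z,B)=\H^p_{cont}(\hat{\Z},B)$ for finite $B$ is that both are computed by the two-term complex $[B\xrightarrow{1-\varphi}B]$ (for $\hat{\Z}$ this is standard, e.g.\ \cite[XIII, \S1]{serreCL}). Second, $\hat{\Z}$ has cohomological dimension~$1$ for torsion modules, but strict cohomological dimension~$2$; since your $A_q$ are torsion this does not affect the argument, only the wording. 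Finally, to turn ``the two short exact sequences are isomorphic'' into an isomorphism of the middle terms, you should name the comparison map: the inclusion $W_F\hookrightarrow G_F$ induces a restriction map on cohomology compatible with both Hochschild--Serre sequences, and then the five lemma finishes.
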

\begin{proof}
If $M=\mu_n$, this is just a consequence of \cref{muncohomology}. Then we adapt the proof of the finiteness statement in \cite[Theorem 2.1]{ADT}, plus \cref{weilcohomologydiscrete}. By \cref{dualizingobjectfinite}, the result for $M^D$ follows as well.
\end{proof}
 \begin{lem}\label{mconcentration}
Let $M$ be a free abelian group of finite $\Z$-type with a continuous action of $W_F$. Then $R\GGamma(B_{\hat{W}_F},M)$ is concentrated in degrees $0,1,2$.
\end{lem}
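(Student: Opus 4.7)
By \cref{weilcohomologydiscrete} we already have $\HH^q(B_{\hat{W}_F},M)=0$ for $q\ge 4$, so the entire content of the lemma is the vanishing $\HH^3(B_{\hat{W}_F},M)=0$. My plan is to prove this by squeezing $\HH^3(B_{\hat{W}_F},M)$ between two short exact sequences that force it to be simultaneously \emph{torsion} (via rationalisation) and \emph{uniquely divisible modulo a cofinite–type summand} (via reduction mod $n$), which will force it to vanish.

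\emph{Step 1 (Torsion).} I would consider the short exact sequence in $\Ab(B_{\hat{W}_F})$
\[
0\longrightarrow M\longrightarrow M\otimes_{\Z}\Q\longrightarrow M\otimes_{\Z}\Q/\Z\longrightarrow 0.
\]
Since $M$ is of finite $\Z$-type and $\mathrm{GL}(M)$ is a discrete group, the $W_F$-action on $M$ (hence on $M\otimes\Q$ and $M\otimes\Q/\Z$) factors through a quotient $W_F/U$ with $U\subset I$ open normal and $I/U$ finite. Running the Hochschild–Serre spectral sequence of \cref{prop:proHS} for the extension $1\to\hat{I}\to\hat{W}_F\to\Z\to 1$ and using that $\HH^q(B_{\hat{I}},M\otimes\Q)=(M\otimes\Q)^{I/U}$ for $q=0$ and vanishes for $q\ge 1$ (as finite-group cohomology with $\Q$-vector space coefficients is trivial), together with \cref{cohomologyz}, gives $\HH^q(B_{\hat{W}_F},M\otimes\Q)=0$ for $q\ge 2$. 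Combined with $\HH^q(B_{\hat{W}_F},M\otimes\Q/\Z)=0$ for $q\ge 3$ from \cref{weilcohomologydiscrete}, the long exact sequence forces $\HH^3(B_{\hat{W}_F},M)\otimes\Q=0$, so $\HH^3(B_{\hat{W}_F},M)$ is torsion.

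\emph{Step 2 (Divisibility and finite $n$-torsion).} For each $n\ge 1$ I would then use
\[
0\longrightarrow M\xrightarrow{\;\cdot n\;} M\longrightarrow M/nM\longrightarrow 0,
\]
where $M/nM$ is finite with its induced $W_F$-action. By \cref{finitecoeffstructure}, $\HH^q(B_{\hat{W}_F},M/nM)=0$ for $q\ge 3$ and $\HH^2(B_{\hat{W}_F},M/nM)$ is finite. The long exact sequence then shows that multiplication by $n$ is surjective on $\HH^3(B_{\hat{W}_F},M)$ and that its kernel $\HH^3(B_{\hat{W}_F},M)[n]$ is a quotient of the finite group $\HH^2(B_{\hat{W}_F},M/nM)$, hence finite.

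\emph{Step 3 (Conclusion).} Combining Steps 1 and 2, $\HH^3(B_{\hat{W}_F},M)$ is a discrete (by \cref{weilcohomologydiscrete}) torsion divisible abelian group of cofinite type, so it decomposes as $\bigoplus_\ell (\Q_\ell/\Z_\ell)^{r_\ell}$ with each $r_\ell$ finite. To conclude $r_\ell=0$, I would compare, prime by prime, the finite $\ell^k$-torsion bounds $\#\HH^3(B_{\hat{W}_F},M)[\ell^k]\le \#\HH^2(B_{\hat{W}_F},M/\ell^kM)$ from Step 2 with the growth rate of $(\Q_\ell/\Z_\ell)^{r_\ell}[\ell^k]=(\Z/\ell^k)^{r_\ell}$, using the explicit description of $\HH^2$ with finite coefficients (obtained by combining \cref{cohomologyz}, the Hochschild–Serre sequence for $\hat{I}\hookrightarrow\hat{W}_F$, and the fact that $\H^*(I,M/\ell^kM)$ is bounded in terms of the finite group through which the action factors) to verify that these bounds are independent of $k$, which forces $r_\ell=0$.

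\emph{Main obstacle.} The delicate point is the last step: torsion and divisibility alone do not rule out divisible $\ell$-primary summands. The bound on $\ell^k$-torsion from Step 2 must be controlled \emph{uniformly in $k$}, and carrying out this uniform estimate cleanly will likely require either an explicit computation of $\HH^2(B_{\hat{W}_F},M/\ell^k M)$ via the Hochschild–Serre sequence in the style used in the proof of \cref{muncohomology}, or a more conceptual input that already packages the vanishing—for instance, reducing to the case of trivial $I$-action (where $M$ becomes a module for the prodiscrete group $\hat{W}_F/\hat{I}=\Z$ twisted by a finite group) where cohomology can be computed explicitly using \cref{cohomologyz}.
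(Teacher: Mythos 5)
Your Steps 1 and 2 are fine and are genuine consequences of results in the paper: \cref{weilcohomologydiscrete} reduces everything to $\HH^3(B_{\hat{W}_F},M)$, rationalisation shows it is torsion, and the coefficient sequences $0\to M\xrightarrow{\cdot n}M\to M/nM\to 0$ together with \cref{finitecoeffstructure} show it is divisible with finite $n$-torsion for every $n$. But this only establishes that $\HH^3(B_{\hat{W}_F},M)$ is a divisible torsion group of cofinite type, and your Step 3 — the actual vanishing — is not a proof, it is a plan whose key estimate is missing. Concretely, the uniform bound you want, $\#\HH^3(B_{\hat{W}_F},M)[\ell^k]\le \#\HH^2(B_{\hat{W}_F},M/\ell^kM)$ with the right-hand side bounded independently of $k$, is essentially equivalent to the statement you are trying to prove: the long exact sequence for $\cdot\ell^k$ gives
\[
\#\HH^2(B_{\hat{W}_F},M/\ell^kM)=\#\bigl(\HH^2(B_{\hat{W}_F},M)/\ell^k\bigr)\cdot \#\HH^3(B_{\hat{W}_F},M)[\ell^k],
\]
so boundedness in $k$ already encodes $r_\ell=0$ and cannot be read off from the filtration arguments you sketch without an independent evaluation of $\HH^2$ with finite coefficients (say via the finite-coefficient duality of \cref{condensedweiltate}, i.e.\ $\#\HH^2(B_{\hat{W}_F},N)=\#\H^0(W_F,\Hom(N,\mu))$, followed by an argument about the fixed integral Frobenius matrix acting on $M$ — note \cref{muncohomology} shows these groups do grow with $k$ for the ``wrong'' twists, so the claimed boundedness is not formal). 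Your fallback suggestion of reducing to trivial inertia action is also not immediate: with trivial $I$-action one still has $\H^2(I,\Z)\cong\Hom_{cont}(I,\Q/\Z)\neq 0$ feeding into degree $3$ via Hochschild--Serre, so the vanishing there is again a nontrivial computation, not a consequence of \cref{cohomologyz} alone.

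For comparison, the paper's proof is a one-liner of a different nature: by \cref{weilcohomologydiscrete} (via \cref{rmk:cohomdiscretemod}) one has $\HH^3(B_{\hat{W}_F},M)=\underline{\H^3(B_{W_F}(\Set),M)}$, and the vanishing of $\H^3(W_F,M)$ for finitely generated $M$ is exactly Karpuk's Theorem 3.2.1, which is cited. So the missing content in your Step 3 is precisely the external input the paper invokes; either cite that theorem (which makes Steps 1--3 unnecessary) or supply the uniform bound on $\#\H^0(W_F,\Hom(M/\ell^kM,\mu_{\ell^k}))$ in $k$, which is the real work.
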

\begin{proof}
By \cref{weilcohomologydiscrete}, it is enough to show that $\HH^3(B_{\hat{W}_F},M)=\underline{\H^3(B_{W_F}(\Set),M)}=0$. This is \cite[Theorem 3.2.1]{Karpuk}.\end{proof}
 \begin{lem}
\label{wcohomologyofz}
Let $F$ be a finite extension of $\Q_p$. Then we have \[
\HH^q(B_{\hat{W}_F},\Z)= \begin{array}{ll}
\Z & q=0,1,\\
J^{\vee} & q=2,\\
0 & q\ge 3, \end{array}
\] where $J$ is the kernel of $G_K^{ab}\twoheadrightarrow G_k$.
\end{lem}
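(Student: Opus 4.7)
The plan is to combine Lemma \ref{mconcentration} with a direct Hochschild--Serre computation. Since $\Z$ (with trivial $W_F$-action) is free of finite $\Z$-type, Lemma \ref{mconcentration} immediately furnishes $\HH^q(B_{\hat{W}_F},\Z)=0$ for $q\ge 3$, so one only needs to compute the degrees $q\in\{0,1,2\}$ and identify $\HH^2$ with $J^\vee$.

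For this, I would first compute $\HH^q(B_{\hat{I}},\Z)$. By continuity (Proposition \ref{prop:cont}) it equals $\varinjlim_U \HH^q(B_{I/U},\Z)$ where $U$ ranges through the open normal subgroups of $I$. Combining the fiber sequence $\Z\to\Q\to\Q/\Z$ with the vanishing of $\Q$-cohomology for finite groups, and using $\mathrm{cd}(I)\le 1$ (since $L$ is $C_1$), this gives
\[
\HH^q(B_{\hat{I}},\Z)= \begin{cases}\Z & q=0,\\ 0 & q=1,\\ I^\vee & q=2,\\ 0 & q\ge 3,\end{cases}
\]
where $I^\vee$ is the (discrete) Pontryagin dual of the profinite group $I$, endowed with the $W_k$-action induced by Frobenius conjugation.

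Feeding this into \eqref{eqn:degeneratehs} and using $\HH^*(B_{W_k},\Z)$ from Example \ref{cohomologyz}, the vanishing $\HH^1(B_{\hat{I}},\Z)=0$ collapses the connecting terms and yields $\HH^0(B_{\hat{W}_F},\Z)=\Z$, $\HH^1(B_{\hat{W}_F},\Z)=\HH^1(B_{W_k},\Z)=\Z$, and $\HH^2(B_{\hat{W}_F},\Z)=(I^\vee)^{W_k}$. The remaining step---which I expect to be the main one---is the identification $(I^\vee)^{W_k}\cong J^\vee$: a continuous character $\phi\colon I\to\Q/\Z$ is $W_k$-fixed iff it is invariant under conjugation by any lift of Frobenius, equivalently iff it factors through the image of $I$ in $W_F^{ab}$. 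Local class field theory identifies $W_F^{ab}$ topologically with $F^\times$ and this image with $\O_F^\times$, which in turn coincides under the reciprocity map for $G_F$ with $J=\ker(G_F^{ab}\twoheadrightarrow G_k)$. The subtlety lies in tracking the condensed structure through Pontryagin duality ($J$ profinite, $J^\vee$ discrete torsion) and in matching the two natural descriptions of $J$; everything else is a mechanical unwinding of results established earlier in the paper.
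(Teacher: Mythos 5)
Your computation is correct, but it takes a genuinely different route from the paper's at the key step. The paper first reduces to classical Weil-group cohomology via \cref{rmk:cohomdiscretemod} and \cref{mconcentration}, gets $\H^1$ from $\Hom^{cont}(I,\Z)=0$, and computes $\H^2$ from the coefficient sequence $0\to\Z\to\Q\to\Q/\Z\to 0$: a snake-lemma argument identifies $\H^2(B_{W_F}(\Set),\Z)$ with $\mathrm{coker}(\H^1(B_{W_k}(\Set),\Q/\Z)\to\H^1(B_{W_F}(\Set),\Q/\Z))$, which becomes $\mathrm{coker}(G_k^{\vee}\to (G_F^{ab})^{\vee})=J^{\vee}$ using Karpuk's comparison of Weil and Galois cohomology for torsion coefficients (\cite[Proposition 4.1.1]{Karpuk}) and Pontryagin duality applied to $1\to J\to G_F^{ab}\to G_k\to 1$. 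You instead run the degenerate Hochschild--Serre sequence \eqref{eqn:degeneratehs} directly with $\Z$-coefficients, after computing $\HH^q(B_{\hat{I}},\Z)=\Z,\,0,\,I^{\vee},\,0,\dots$ by \cref{prop:cont}, $\Q$-acyclicity and $\mathrm{cd}(I)\le 1$; this correctly yields $\HH^0=\HH^1=\Z$ and $\HH^2=(I^{\vee})^{W_k}$ (all discrete, so the condensed structure is not an issue), and the vanishing for $q\ge 3$ is delegated to \cref{mconcentration} exactly as in the paper. What differs is the identification $(I^{\vee})^{W_k}\cong J^{\vee}$: the paper gets it by dualizing the abelianized Galois sequence, while you pass through the reciprocity isomorphism $W_F^{ab}\cong F^{\times}$ and $J\cong\O_F^{\times}$ (\cite[Corollary 9.16]{Har}), so your argument trades Karpuk's soft comparison statement for the full strength of local class field theory. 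Two points you should write out: (a) the equivalence ``fixed under Frobenius conjugation $\iff$ factors through $\mathrm{im}(I\to W_F^{ab})$'' needs the observation that the commutators $[w,i]$, $w\in W_F$, $i\in I$, generate $[W_F,W_F]$ (true because $W_F/I\cong\Z$ is abelian and the quotient of $W_F$ by the subgroup they generate is abelian), or alternatively the inflation--restriction sequence with $\H^2(W_k,\Q/\Z)=0$; (b) the class field theory detour can in fact be bypassed: since $W_F$ is dense in $G_F$ and characters are continuous, Frobenius-invariance equals invariance under all of $G_F$-conjugation, so the fixed characters are exactly those factoring through $\mathrm{im}(I\to G_F^{ab})=J$, giving $(I^{\vee})^{W_k}=J^{\vee}$ directly. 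Neither point is a gap in substance, and both routes prove the lemma.
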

\begin{proof}
By \cref{rmk:cohomdiscretemod,mconcentration}, we have $\HH^q(B_{\hat{W}_F},\Z)=\underline{\H^q(B_{W_F}(\Set),\Z)}$ for all $q$ and $\HH^q(B_{\hat{W}_F},\Z)=0$ for all $q\ge 3$. In particular, $\H^0(B_{\hat{W_F}},\Z)=\Z$. In order to determine $\H^1(B_{W_F}(\Set),\Z)$, we observe that  $\H^1(B_I(\Set),\Z)=\Hom^{cont}(I,\Z)=0$. Thus we have \[
\H^1(B_{W_F}(\Set),\Z)\cong\H^1(B_{W_k}(\Set),\Z)\cong \Z.\] It remains to determine $\H^2(B_{W_F}(\Set),\Z)$.
Let us consider the long exact cohomology sequence associated to \[
0\rightarrow \Z\rightarrow \Q\rightarrow\Q/\Z\rightarrow 0.
\] 
Since $\H^q(B_I(\Set),\Q)=0$ for all $q\ge 1$, we have\[ \qquad \H^1(B_{W_F}(\Set),\Q)\cong \H^1(B_{W_k}(\Set),\Q)\cong \Q.
\]

By the Snake Lemma, we have
\[
\begin{split}
\H^2(B_{W_F}(\Set),\Z)&\cong\mathrm{coker}(\H^1(B_{W_F}(\Set),\Q)/\H^1(B_{W_F}(\Set),\Z)\rightarrow \H^1(B_{W_F}(\Set),\Q/\Z))\\
&\cong \mathrm{coker}(\H^1(B_{W_k}(\Set),\Q)/\H^1(B_{W_k}(\Set),\Z)\rightarrow \H^1(B_{W_F}(\Set),\Q/\Z))\\
&\cong \mathrm{coker}(\H^1(B_{W_k}(\Set),\Q/\Z)\rightarrow \H^1(B_{W_F}(\Set),\Q/\Z)),
\end{split}
\] where the last isomorphism follows from the fact that $\H^2(B_{W_k}(\Set),\Z)=0$. 
Since $\Q/\Z$ is torsion, by \cite[Proposition 4.1.1.]{Karpuk} we have \[ 
\H^2(B_{W_F}(\Set),\Z)=\mathrm{coker}(\H^1(B_{G_k}(\Set),\Q/\Z)\rightarrow \H^1(G_F(\Set),\Q/\Z))=\mathrm{coker}((G_k)^{\vee}\to (G_F^{ab})^{\vee})= J^{\vee}.
\]
\end{proof}
\begin{rmk}
By Local Class Field Theory, we have an isomorphism of topological groups \[
\O_F^{\times}\overset{\sim}{\longrightarrow} J
\] Hence $R\GGamma(B_{\hat{W}_F},\Z)$ and $R\underline{\Hom}(R\GGamma(B_{\hat{W}_F},\R/\Z(1)),\R/\Z[-2])$ have the same cohomology.
\end{rmk}

 Let $F'$ be a finite extension of $F$, and let $G\coloneqq \Gal(F'/F)$.
 \begin{thm}\label{thm:structurem}
Let $M$ be a free abelian group of finite $\Z$-type with a continuous action of $G$. Then $\HH^q(B_{\hat{W}_F},M)$ is discrete of finite $\Z$-type for $q=0,1$, discrete of finite $\Q_p/\Z_p$-type for $q=2$, and vanishes for all $q\ge 3$.
\end{thm}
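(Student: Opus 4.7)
The plan is to apply the Hochschild--Serre spectral sequence (\cref{prop:proHS}) to the exact sequence of pro-condensed groups
\[
1 \to \hat{W}_{F'} \to \hat{W}_F \to G \to 1,
\]
where $G$ is viewed as a constant pro-condensed group. Concretely, I would index the pro-system by the cofinal family of open normal subgroups $U \subset I$ which are moreover normal in $W_F$ and contained in $I \cap W_{F'}$; then the level-wise sequences $1 \to W_{F'}/U \to W_F/U \to G \to 1$ of discrete condensed groups satisfy the hypotheses of \cref{cns:hs}. This yields
\[
E_2^{p,q} = \HH^p(B_G, \HH^q(B_{\hat{W}_{F'}}, M|_{\hat{W}_{F'}})) \Rightarrow \HH^{p+q}(B_{\hat{W}_F}, M).
\]
Since $W_{F'}$ lies in the kernel of $W_F \twoheadrightarrow G$, the restriction $M|_{\hat{W}_{F'}}$ is $\Z^r$ with trivial action.

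Applying \cref{wcohomologyofz} to $F'$, the rows of $E_2$ vanish for $q \geq 3$, while $\HH^q(B_{\hat{W}_{F'}}, \Z^r) = \Z^r$ for $q = 0, 1$ and $(J_{F'}^\vee)^r$ for $q = 2$. By Local Class Field Theory one has $J_{F'} \cong \O_{F'}^\times \cong \Z_p^{[F':\Q_p]} \oplus T$ for some finite $T$, so $(J_{F'}^\vee)^r$ is discrete and of finite $\Q_p/\Z_p$-type. Every $E_2^{p,q}$ is then discrete by \cref{finitegrouptop}, and I would argue term by term: for $p = 0$, $E_2^{0,q}$ is a subgroup of the coefficient module, hence of finite $\Z$-type for $q = 0, 1$ and of finite $\Q_p/\Z_p$-type for $q = 2$ (\cref{extsubquo} (b)); for $p \geq 1$, $E_2^{p,q}$ is finite, which for $q \leq 1$ is the classical finiteness of $\HH^{\geq 1}$ of a finite group with finitely generated coefficients, and for $q = 2$ follows by applying the same principle to $0 \to \Z_p^s \to \Q_p^s \to (\Q_p/\Z_p)^s \to 0$ together with the vanishing $\HH^{\geq 1}(G, \Q_p^s) = 0$ and the finite summand.

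The spectral sequence then gives the desired structure in each total degree. In degree $0$, $\HH^0(B_{\hat{W}_F}, M) \cong E_\infty^{0,0} = M^G$ is free of finite rank. In degree $1$, there is an extension $0 \to E_\infty^{1,0} \to \HH^1 \to E_\infty^{0,1} \to 0$ with $E_\infty^{1,0}$ finite and $E_\infty^{0,1}$ of finite $\Z$-type, so \cref{extsubquo} (a) yields finite $\Z$-type. In degree $2$, $\HH^2$ admits a three-step filtration with successive quotients $E_\infty^{2,0}, E_\infty^{1,1}$ (both finite) and $E_\infty^{0,2}$ (of finite $\Q_p/\Z_p$-type); two applications of \cref{extsubquo} (a) yield finite $\Q_p/\Z_p$-type. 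Vanishing in degrees $\geq 3$ is already \cref{mconcentration}, and the discreteness of all the cohomology groups is \cref{weilcohomologydiscrete}.

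The main technical obstacle is the careful setup of the Hochschild--Serre pro-system described above, and in particular the verification that a cofinal subsystem of $U$'s normal in the whole of $W_F$ exists; once that is settled, the rest is a routine spectral-sequence bookkeeping driven by \cref{wcohomologyofz} and the structural \cref{extsubquo}.
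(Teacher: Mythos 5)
Your argument is correct, and its core — the degree-$2$ computation via the Hochschild--Serre spectral sequence for $W_{F'}\subset W_F$ with quotient $G$, fed by \cref{wcohomologyofz} and assembled with \cref{extsubquo} — is exactly the paper's argument. Where you differ is in packaging: the paper first reduces everything to classical group cohomology of $W_F$ via \cref{rmk:cohomdiscretemod}, treats $q=0,1$ directly with the inertia sequence (invariants, and the extension $0\to \H^1(W_k,M^I)\to \H^1(W_F,M)\to \H^0(W_k,\H^1(I,M))\to 0$), and only runs Hochschild--Serre in the $\Set$-topos for $q=2$; you instead run a single condensed-level spectral sequence (\cref{prop:proHS}) for $\hat{W}_{F'}\triangleleft \hat{W}_F$ in all degrees. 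Your route works, but it carries a few extra verifications that the paper's reduction makes automatic: the cofinal system of open subgroups $U\subset I\cap W_{F'}$ normal in all of $W_F$ (this is standard — take $U=I\cap W_E$ for $E/F$ finite Galois containing $F'$, which are cofinal among open subgroups of $I$ — but you only assert it), the discreteness of the $E_2$-terms, and the vanishing $\HH^{\ge 1}(B_G,\Q_p^s)=0$ in the condensed setting (true, by the usual averaging homotopy at the cochain level of \cref{finitegrouptop}, but not stated in the paper). Also note that, like the paper, you get vanishing for $q\ge 3$ not from the spectral sequence (whose $E_\infty^{p,q}$ with $p\ge 1$, $q\le 2$ are merely finite) but from \cref{mconcentration}, which is the right move. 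The paper's detour through $B_{W_F}(\Set)$ buys simplicity; your version stays uniformly in the condensed world at the cost of these routine checks.
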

\begin{proof}
By \cref{rmk:cohomdiscretemod}, we have $\HH^q(B_{\hat{W}_F},M)=\underline{\H^q(B_{W_F}(\Set),M)}$. Firstly, the abelian group \[
\H^0(B_{W_F}(\Set),M)=M^{G}
\] is of finite $\Z$-type. Moreover, the exact sequence \[
0\rightarrow \H^1(B_{W_k}(\Set),M^I)\rightarrow \H^1(B_{W_F}(\Set),M)\rightarrow \H^0(B_{W_k}(\Set),\H^1(B_I(\Set),M))\rightarrow 0
\] presents $\H^1(B_{W_F}(\Set),M)$ as an extension of a finite group by a finite $\Z$-type abelian group. Hence $\H^1(B_{W_F}(\Set),M)$ is of finite $\Z$-type. It remains to determine the structure of $\H^2(B_{W_F}(\Set),M)$. Let us consider the Hochschild-Serre spectral sequence \[
E_2^{i,j}=\H^i(B_G(\Set),\H^j(B_{W_{F'}}(\Set),M))\implies \H^{i+j}(B_{W_F}(\Set),M).
\] By \cref{wcohomologyofz}, $E_2^{i,0}$ and $E_2^{i,1}$ are finite for all $i\ge 1$. Moreover, by \cref{wcohomologyofz,extsubquo}, $E_2^{0,2}$ is of finite $\Q_p/\Z_p$-type.
Thus we have a finite filtration \[
0\subset F^2\subset F^1\subset F^0=\H^2(B_{W_F}(\Set),M), \qquad F^0/F^1=E_{\infty}^{0,2},\, F^1/F^2=E_{\infty}^{1,1},\, F^2=E_{\infty}^{2,0},
\] where $E_{\infty}^{1,1}$ and $E_{\infty}^{2,0}$ are finite and $E_{\infty}^{0,2}$ is of finite $\Q_p/\Z_p$-type. Hence $\H^2(B_{W_F}(\Set),M)$ is of finite $\Q_p/\Z_p$-type by \cref{extsubquo}, (a).
\end{proof}
\begin{thm}\label{thm:structuremd}
Let $M$ be a free abelian group of finite $\Z$-type with a continuous action of $G$. Then the Pontryagin dual of $\HH^q(B_{\hat{W}_F},M^D)$ is discrete of finite $\Z$-type for $q=1,2$, discrete of finite $\Q_p/\Z_p$-type for $q=0$, and vanishes for all $q\ge 3$.
\end{thm}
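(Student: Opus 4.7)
The plan is to mirror the proof of \cref{thm:structurem}, running the Hochschild-Serre spectral sequence for the finite quotient $G = \Gal(F'/F)$ that trivialises the action on $M$. Since $M$ is free of finite $\Z$-type, the restriction $M^D|_{B_{\hat{W}_{F'}}}\simeq M^{\vee}\otimes\R/\Z(1)$ in $\DDD^b(B_{\hat{W}_{F'}})$, and \cref{cohomologyrz1} applied to $F'$ gives
\[
\HH^0(B_{\hat{W}_{F'}}, M^D)\cong M^{\vee}\otimes\underline{\O_{F'}^{\times}},\qquad \HH^1(B_{\hat{W}_{F'}},M^D)\cong \HH^2(B_{\hat{W}_{F'}},M^D)\cong M^{\vee}\otimes \R/\Z,
\]
with vanishing in higher degrees, each carrying the diagonal $G$-action. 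The corresponding spectral sequence
\[ E_2^{p,q}=\HH^p(B_G,\HH^q(B_{\hat{W}_{F'}},M^D))\Longrightarrow \HH^{p+q}(B_{\hat{W}_F},M^D) \]
therefore has only three potentially nonzero rows.

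Next I would identify the Pontryagin-dual structure row by row. For $q=0$, the canonical decomposition $\O_{F'}^{\times}\cong\mu_{F'}\oplus\Z_p^{[F':\Q_p]}$ of topological $G$-modules together with part (ii) of \cref{finitegrouptop} presents each $E_2^{p,0}$ as a compact Hausdorff abelian group whose Pontryagin dual is discrete of finite $\Q_p/\Z_p$-type. For $q=1,2$, the short exact sequence $0\to M^{\vee}\to M^{\vee}\otimes\R\to M^{\vee}\otimes\R/\Z\to 0$ combined with \cref{banachcoefficients} yields $\HH^p(B_G,M^{\vee}\otimes\R/\Z)\cong \HH^{p+1}(B_G,M^{\vee})$ for $p\ge 1$ and, for $p=0$, a finite extension of the compact torus $(M^{\vee}\otimes\R)^G/(M^{\vee})^G$ by $\HH^1(B_G,M^{\vee})$; in both cases the Pontryagin dual is discrete of finite $\Z$-type. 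Since these two classes are stable under extensions and subquotients by Pontryagin duality and \cref{extsubquo}, the $E_\infty$-filtration of $\HH^q(B_{\hat{W}_F},M^D)$ produces the claimed structure in degrees $q=0,1,2$.

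For the vanishing in degrees $q\ge 3$, I would use the cup-product pairing induced by the trace map \eqref{tracemap}, namely $R\GGamma(B_{\hat{W}_F},M^D)\otimes^{L}R\GGamma(B_{\hat{W}_F},M)\to\R/\Z[-2]$, and verify its compatibility with the Hochschild-Serre spectral sequences on both sides. This reduces the vanishing to checking that the induced pairing $E_2^{p,q}(M^D)\otimes E_2^{2-p,2-q}(M)\to\R/\Z$ is a perfect Pontryagin pairing of $E_2$-entries. Granted this, \cref{thm:structurem} forces the $M$-side to vanish in total degree $\ge 3$, which propagates to the $M^D$-side via the row-wise duality. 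The main obstacle is verifying this row-wise perfectness at the $E_2$-level: for $q=0$ this is essentially local Tate duality for $G$ acting on $\underline{\O_{F'}^{\times}}$ paired with its action on $\underline{(F')^{\times}}$ via the valuation, while for $q=1,2$ it amounts to an elementary computation for $G$ acting on $\R/\Z$ paired with the finite cohomology of $M$. Once the $E_2$-pairing is verified to be perfect row by row, the rest of the argument is a bookkeeping exercise with the $E_\infty$-filtration.
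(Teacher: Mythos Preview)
Your spectral-sequence analysis for degrees $q=0,1,2$ follows the paper's approach, but two points need tightening. First, the splitting $\O_{F'}^{\times}\cong\mu_{F'}\oplus\Z_p^{[F':\Q_p]}$ is not $G$-equivariant in general; only the filtration by the torsion subgroup is. Second, and more importantly, to conclude that $\HH^1(B_{\hat{W}_F},M^D)^\vee$ is of finite $\Z$-type you need $E_\infty^{1,0}$ to be \emph{finite}, not merely dual-of-finite-$\Q_p/\Z_p$-type: an extension of a group whose dual is finite $\Z$-type by one whose dual is finite $\Q_p/\Z_p$-type need not have finite $\Z$-type dual. The paper makes this explicit: for $i\ge 1$, $E_2^{i,0}$ is a finitely generated $\Z_p$-module killed by $|G|$, hence finite. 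You can recover this from your setup, but it is not stated.

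The vanishing for $q\ge 3$ is where there is a genuine gap. Your proposed row-wise perfect pairing $E_2^{p,q}(M^D)\otimes E_2^{2-p,2-q}(M)\to\R/\Z$ fails already for $M=\Z$ with trivial action, $p=0$, $q=2$: the left factor is $(\R/\Z)^G=\R/\Z$ with dual $\Z$, while the right factor is $\H^2(G,\Z)$, a finite group. There is no general Pontryagin duality of the form $\H^p(G,A)^\vee\cong\H^{2-p}(G,A^\vee)$ for finite $G$; the closest analogue lives in Tate cohomology with different indexing and requires a fundamental class. Even granting some pairing compatible with differentials, the vanishing of the $M$-side \emph{abutment} in degree $\ge 3$ does not force vanishing of its $E_2$-terms in those total degrees (e.g.\ $E_2^{3,0}(M)=\H^3(G,M)$ is typically nonzero), so nothing transports. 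There is also a circularity risk: the paper uses \cref{thm:structuremd} as an input to the duality theorem via \cref{conservativitycompletion}, so invoking cup-product duality here is premature.

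The paper's argument for $q\ge 3$ is much shorter and avoids all of this. From the spectral sequence one sees that every $E_\infty^{i,j}$ with $i+j=q\ge 3$ is finite (since $E_2^{0,j}=0$ for $j\ge 3$ and $E_2^{i,j}$ is finite for $i\ge 1$), so $\HH^q(B_{\hat{W}_F},M^D)$ is finite and hence equals $\lim_m\HH^q(B_{\hat{W}_F},M^D)/m$. The fiber sequence $(M/m)^D\to M^D\xrightarrow{m}M^D$ then gives an injection of $\HH^q(B_{\hat{W}_F},M^D)/m$ into a cohomology group of $(M/m)^D$, which vanishes by \cref{finitecoeffstructure}. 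This completion-and-reduction step is the missing idea in your outline.
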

\begin{proof}
Let us consider the Hochschild-Serre spectral sequence \begin{equation}\label{eqn:lem:concentration:hs}
E_2^{i,j}=\HH^i(B_{G},\HH^j(B_{\hat{W}_{F'}},M^D))\implies \HH^{i+j}(B_{\hat{W}_F},M^D).
\end{equation}  We determine the structure of $E_2^{i,j}$ and $E_{\infty}^{i,j}$ for all $i,j$. By \cref{cohomologyrz1,extsubquo}, $E_2^{0,0}$ is the Pontryagin dual of a finite $\Q_p/\Z_p$-type abelian group. Moreover, $E_2^{0,j}$ is the Pontryagin dual of a finite $\Z$-type abelian group for $j=1,2$. Finally, $E_2^{0,j}$ vanishes for $j\ge 3$. Consequently, $E_{\infty}^{0,j}$ is the Pontryagin dual of a finite $\Z$-type abelian group for $j=1,2$ and vanishes for all $j$.

Let $i\ge 1$. By \cref{cohomologyrz1}, there exist $N\in \N$ and a finite abelian group $K$ such that $E_2^{i,0}=\H^i(B_G,\underline{\Z_p^N\oplus K})$. Hence $E_2^{i,0}$ is represented by a compact Hausdorff topological abelian group (see \cref{finitegrouptop}, (ii)). Up to replacing $K$ with its $p^{\infty}$-torsion, the abelian group \[\HH^i(B_G,\underline{\Z_p^N\oplus K})(*)=\HH^i(B_G(\Set),\Z_p^{N,\delta}\oplus K)\] is a torsion $\Z_p$-module. Moreover, by \cref{cohomologyrz1,banachcoefficients} there exist $n_1,n_2\in \N$ such that $E_2^{i,j}=\HH^{i+1}(B_G,\Z^{n_j})$, for $j=1,2$. Hence $E_2^{i,j}$ is finite for all $i\ge 1$ and for all $j$. Consequently, $E_{\infty}^{i,j}$ is finite for all $i\ge 1$ and for $j=0,1,2$, and vanishes for all $j\ge 3$.

We now determine the structure of $\HH^q(B_{\hat{W}_F},M^D)$. For all $q\ge 2$, \eqref{eqn:lem:concentration:hs} gives a three-terms filtration \begin{equation}\label{eqn:lem:concentration:fil}
0\subset F^2 \subset F^1\subset F^0=\HH^q(B_{\hat{W}_F},M^D), \qquad gr^i\coloneqq F^i/F^{i+1}=E_{\infty}^{q-i,i}.
\end{equation}
Let $q\ge 3$. Then all the graded groups in \eqref{eqn:lem:concentration:fil} are finite. Hence $\HH^q(B_{\hat{W}_F},M^D)$ is finite for all $q\ge 3$. In particular, we have $\HH^q(B_{\hat{W}_F},M^D)=\lim_m \HH^q(B_{\hat{W}_F},M^D)/m$. By the inclusion $\lim_m \HH^q(B_{\hat{W}_F},M^D)/m\hookrightarrow \lim_m \HH^q(B_{\hat{W}_F},(M/m)^D)$ and by \cref{finitecoeffstructure}, we have $\lim_m \HH^q(B_{\hat{W}_F},M^D)/m=0$ for all $q\ge 3$. Hence $\HH^q(B_{\hat{W}_F},M^D)$ vanishes for all $q\ge 3$.

 Let $q=2$. Then $gr^i$ is finite for $i=0,1$, and $gr^0$ is the Pontryagin dual of a finite $\Z$-type abelian group. Hence $\HH^2(B_{\hat{W}_F},M^D)^{\vee}$ is of finite $\Z$-type. 

Let $q=1$. Then \eqref{eqn:lem:concentration:hs} gives an exact sequence \[
0\rightarrow E_{\infty}^{1,0}\rightarrow \HH^1(B_{\hat{W}_F},M^D)\rightarrow E_{\infty}^{0,1}\rightarrow 0.
\] The left term is finite and the right term is the Pontryagin dual of a finite $\Z$-type abelian group. Hence $\HH^1(B_{\hat{W}_F},M^D)^{\vee}$ is of finite $\Z$-type.

Let $q=0$. By \eqref{eqn:lem:concentration:hs}, we have $\HH^0(B_{\hat{W}_F},M^D)^{\vee}=(E_2^{0,0})^{\vee}$, which is of finite $\Q_p/\Z_p$-type by \cref{cohomologyrz1,extsubquo}, (c). 
\end{proof}
\cref{rmk:structurervs,thm:structurem} imply the following.
\begin{prop}\label{prop:structurecircle}
Let $M$ be a finite power of $\R/\Z$ with a continuous action of $G$. Then there exist $n_0,n_1,m_1\in \N$ and finite abelian groups $H_0,H_1$ such that \[
\HH^q(B_{\hat{W}_F},M)\cong \begin{array}{ll}
(\R/\Z)^{n_0}\oplus H_0 & q=0,\\
(\R/\Z)^{n_1}\oplus (\Q_p/\Z_p)^{m_1}\oplus H_1 & q=1,\\
0 & q\ge 2. \end{array}
\]
\end{prop}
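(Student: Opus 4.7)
The plan is to analyze $M \cong (\R/\Z)^n$ via the $\hat{W}_F$-equivariant short exact sequence
\[
0\to \Z^n \to \R^n \to M \to 0
\]
in $\Ab(B_{\hat{W}_F})$, which is well-defined because every continuous $G$-action on $(\R/\Z)^n$ factors through $\mathrm{GL}_n(\Z)\subset \mathrm{GL}_n(\R)$ and hence lifts linearly to the universal cover. Applying $\HH^*(B_{\hat{W}_F},-)$ and combining Proposition~\ref{rmk:structurervs} (the cohomology of $\R^n$ is a finite-dimensional real vector space in degrees $0,1$ and vanishes for $q\ge 2$) with Theorem~\ref{thm:structurem} (the cohomology of $\Z^n$ is discrete of finite $\Z$-type for $q=0,1$, of finite $\Q_p/\Z_p$-type for $q=2$, and zero for $q\ge 3$), I immediately obtain $\HH^q(B_{\hat{W}_F},M)=0$ for $q\ge 2$ together with two short exact sequences
\[
0\to (\R^n)^G/(\Z^n)^G \to \HH^0(B_{\hat{W}_F},M) \to \ker\alpha \to 0,
\]
\[
0\to \coker\alpha \to \HH^1(B_{\hat{W}_F},M) \to \HH^2(B_{\hat{W}_F},\Z^n) \to 0,
\]
where $\alpha\colon\HH^1(B_{\hat{W}_F},\Z^n)\to\HH^1(B_{\hat{W}_F},\R^n)$ is the induced map.

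The next step identifies the connected part of each extension. Because $G$ is finite, semisimplicity of $\R[G]$ yields $(\Z^n)^G\otimes_\Z\R\xrightarrow{\sim}(\R^n)^G$, so $(\Z^n)^G$ sits as a full-rank lattice in $(\R^n)^G$ and the quotient is a compact torus $(\R/\Z)^{n_0}$ with $n_0=\dim_\R(\R^n)^G$. For $\alpha$, I will use that $\HH^1(B_{\hat{W}_F},\R^n)=(\R^n)^I/(1-\phi)$ (by Proposition~\ref{banachcoefficients}, Example~\ref{cohomologyz}, and Hochschild--Serre for $1\to\hat I\to \hat W_F\to W_k\to 1$) and the corresponding Hochschild--Serre description of $\HH^1(B_{\hat{W}_F},\Z^n)$ as an extension of the finite group $(\H^1(I,\Z^n))^{W_k}$ by $(\Z^n)^I/(1-\phi)$; the averaging isomorphism $(\Z^n)^I\otimes_\Z\R\xrightarrow{\sim}(\R^n)^I$ combined with right-exactness of $\otimes_\Z\R$ then forces $\alpha\otimes_\Z\R$ to be an isomorphism, so the image of $\alpha$ is a finitely generated subgroup of $\HH^1(B_{\hat{W}_F},\R^n)$ of full $\R$-span, hence a full-rank lattice, whence $\coker\alpha\cong(\R/\Z)^{n_1}$ with $n_1=\dim_\R\HH^1(B_{\hat{W}_F},\R^n)$. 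Moreover $\ker\alpha$ is a discrete subgroup of the compact group $M^G=\HH^0(B_{\hat{W}_F},M)$ and therefore finite, while Theorem~\ref{thm:structurem} writes $\HH^2(B_{\hat{W}_F},\Z^n)\cong(\Q_p/\Z_p)^{m_1}\oplus F$ for some finite abelian group $F$.

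The last step is to split both extensions. In each case the connected subgroup $(\R/\Z)^{n_i}$ is divisible, hence injective in the category of abstract abelian groups, so the extension splits abstractly. Since the quotient (either $\ker\alpha$ or $(\Q_p/\Z_p)^{m_1}\oplus F$) is a discrete condensed abelian group, by full faithfulness of the embedding $\Ab\hookrightarrow\Ab(\CC)$ given by the discrete-object functor the abstract splitting promotes to a splitting in $\Ab(\CC)$, yielding the claimed decomposition with $H_0=\ker\alpha$ and $H_1=F$. The only non-formal step is the averaging argument showing $\alpha\otimes_\Z\R$ is an isomorphism, which rests on the fact that fixed points and coinvariants of finite groups commute with $\otimes_\Z\R$; the rest is bookkeeping with the long exact sequence.
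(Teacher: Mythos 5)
Your proposal is correct in substance, and its skeleton coincides with the paper's: your sequence $0\to\Z^n\to\R^n\to M\to 0$ is literally the paper's sequence $0\to\underline{\Hom}(\R/\Z,M)\to\underline{\Hom}(\R,M)\to M\to 0$ (the functorial formulation has the advantage that equivariance is automatic, but your $\mathrm{GL}_n(\Z)$ argument is fine), and the vanishing in degrees $\ge 2$ is obtained exactly as in the paper from \cref{rmk:structurervs} and \cref{thm:structurem}. Where you genuinely diverge is the treatment of $\HH^1$ for a non-trivial action: the paper first computes the trivial-action case from the long exact sequence and then uses inflation--restriction along $W_{F'}\subset W_F$, exhibiting $\HH^1(B_{\hat{W}_F},M)$ as an extension of a \emph{closed subgroup} $K\subset\HH^1(B_{\hat{W}_{F'}},M)\cong(\R/\Z)^n\oplus((\O_{F'}^{\times})^{\vee})^n$ by the finite group $\HH^1(B_G,M)\cong\HH^2(B_G,\Z^n)$, concluding with \cref{divisibleextension}; you instead stay over $F$ and identify $\operatorname{coker}\alpha$ directly, showing via \eqref{eqn:degeneratehs}, \cref{cohomologyz}, \cref{banachcoefficients} and exactness of $-\otimes_\Z\R$ that $\alpha$ becomes an isomorphism after $\R$-linearization, so its image is a full lattice and $\operatorname{coker}\alpha$ is a torus. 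Your route gives the exponents explicitly and avoids the structure theory of closed subgroups of $(\R/\Z)^n\oplus((\O_{F'}^{\times})^{\vee})^n$; the paper's route avoids the lattice argument and, for $\HH^0$, is shorter (by \cref{lowercohomologygroups}, 1), it is a closed subgroup of $(\R/\Z)^n$, with no splitting needed).

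Three phrasings should be patched, though none is a genuine gap. First, $\ker\alpha$ is a \emph{quotient} of $M^G$, not a subgroup of it; finiteness holds because it is the image of the compact group $M^G$ inside the discrete group $\HH^1(B_{\hat{W}_F},\Z^n)$, or because it is torsion inside a finite $\Z$-type group once $\alpha$ is known to be injective after linearization. Second, ``finitely generated with full $\R$-span'' does not by itself give a lattice (consider $\Z+\Z\sqrt{2}\subset\R$); you need rank equal to $\dim_{\R}\HH^1(B_{\hat{W}_F},\R^n)$, which is exactly the injectivity half of your claim, so state it; also the map that is an isomorphism is the $\R$-linear extension $\HH^1(B_{\hat{W}_F},\Z^n)\otimes_\Z\R\to\HH^1(B_{\hat{W}_F},\R^n)$, not literally $\alpha\otimes_\Z\R$ with target $\HH^1(B_{\hat{W}_F},\R^n)\otimes_\Z\R$. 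Third, promoting the abstract splitting to a condensed one is not a full-faithfulness statement but the adjunction between the discrete-object functor and $(-)(*)$: since the quotients are discrete condensed groups (by \cref{thm:structurem} and finiteness of $\ker\alpha$), any abstract section is automatically a morphism of condensed abelian groups. With these corrections your argument is complete.
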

\begin{proof}
By \cref{lowercohomologygroups}, 1), the condensed abelian group $\HH^0(B_{\hat{W}_F},M)$ is represented by a closed subgroup of $(\R/\Z)^n$, hence it is of the desired form. Let us consider the exact sequence in $\Ab(B_{\hat{W}_F})$
\begin{equation}\label{prop:structurecircle:exactsequence}
0\rightarrow L\rightarrow R\rightarrow M\rightarrow 0,
\end{equation} where $R\coloneqq \underline{\Hom}(\R,M)$ and $L\coloneqq\underline{\Hom}(\R/\Z,M)$. Applying \cref{rmk:structurervs,thm:structurem} to $R$ and $L$ respectively, we get $\HH^q(B_{\hat{W}_F},M)=0$ for all $q\ge 2$.

We only need to determine $\HH^1(B_{\hat{W}_F},M)$. If the action is trivial, by the long exact cohomology sequence associated to \eqref{prop:structurecircle:exactsequence}, we get \[
\HH^1(B_{\hat{W}_F},M)\cong (\R/\Z)^n\oplus ((\O_F^{\times})^{\vee})^n.
\] In the general case, we have an exact sequence \[
0\rightarrow \HH^1(B_G,\HH^0(B_{\hat{W}_{F'}},M))\rightarrow \HH^1(B_{\hat{W}_F},M)\rightarrow K\rightarrow 0,
\] where $K$ is represented by a closed subgroup of $\HH^1(B_{\hat{W}_{F'}},M)\cong (\R/\Z)^n\oplus ((\O_{F'})^{\vee})^n$. Hence $K$ is a direct sum of a finite power of $\R/\Z$ and a discrete abelian group of finite $\Q_p/\Z_p$-type, while $\HH^1(B_G,\HH^0(B_{\hat{W}_{F'}},M))\cong \HH^2(B_G,\Z^n)$ is a finite abelian group. We conclude by \cref{divisibleextension}.
\end{proof}

\section{Duality}\label{section3}
By \cref{cohomologyrz1} we have a trace map \[
R\GGamma(B_{\hat{W}_F},\R/\Z(1))\rightarrow \R/\Z[-2].
\] Let $M\in \DDD^b(B_{\hat{W}_F})$. We have a cup-product pairing in $\DDD(\Cond(\Ab))$ \begin{equation}\label{cupprod}
R\GGamma(W_F,M)\otimes^L R\GGamma(W_F,M^D)\rightarrow \R/\Z[-2]
\end{equation} inducing maps \begin{equation}\label{cupprodmap}\begin{split}
\psi(M^D)&:R\GGamma(W_F,M)\rightarrow R\underline{\Hom}(R\GGamma(W_F,M^D),\R/\Z[-2])\\
\psi(M)&:R\GGamma(W_F,M^D)\rightarrow R\underline{\Hom}(R\GGamma(W_F,M),\R/\Z[-2]).
\end{split}
\end{equation}
In this section we first define a full stable $\infty$-subcategory $\DDD^{perf}_{\Z,\R}$ of $\DDD^b(\Cond(\Ab))$ (see \cref{locallycompactabeliangroups}). Then we prove that for all $M\in \DDD^{perf}_{\Z,\R}$ with an action of a finite quotient of $G_F$, \eqref{cupprod} is perfect, i.e.\ both $\psi(M)$ and $\psi(M^D)$ are equivalences (see \cref{mainresult}). In order to do this, we need two tools: these are presented in \cref{dualityforwk,conservativityofcompletion}.
\subsection{Locally compact abelian groups of finite ranks}\label{locallycompactabeliangroups}
In the following, we recall the definitions of the derived $\infty$-categories $\DDD^b(\FLCA)$ and $\DDD^b(\Ab(\CC))$ (see \cite[\S 2.1]{GeisserMor2}). We show that $\FLCA$ is stable by extensions in $\Ab(\CC)$. Moreover, we define a convenient full stable $\infty$-subcategory of $\DDD(\Ab(\CC))$.

Let $A$ be a quasi-abelian category in the sense of \cite{Schn}. Let $\NN\subset C^b(A)$ be the full subcategory of strictly acyclic complexes, and let $S$ be the set of strict quasi-isomorphisms. We define the bounded derived $\infty$-category of $A$ \[
\DDD^b(A)\coloneqq N_{dg}(C^b(A))/N_{dg}(\NN)\coloneqq N_{dg}(C^b(A))[S^{-1}],
\] where $N_{dg}(-)$ denotes the differential graded nerve \cite[Construction 1.3.1.6]{HA}. The homotopy category \[
D^b(A)\coloneqq h(\DDD^b(A))\cong h(N_{dg}(C^b(A)))/h(N_{dg}(\NN))
\] is equivalent to the classical Verdier quotient. Hence it is the bounded derived category of the quasi-abelian category $A$ in the sense of \cite{Schn}.
\vspace{0.5 em}

The category $\Ab(\CC)$ is an abelian category \cite[Theorem 2.2]{LCM}. We define $\DDD^b(\Ab(\CC))$ as above, where $\NN\subset C^b(\Ab(\CC))$ is the full subcategory of acyclic complexes. This is the bounded derived $\infty$-category of condensed abelian groups, and its homotopy category $D^b(\Ab(\CC))$ appears in \cite{LCM}.

We denote by $\FLCA$ the category of locally compact abelian groups of finite ranks in the sense of \cite[Definition 2.6]{Hoff}, which is a quasi-abelian category (see \cite[Corollary 2.11]{Hoff}). Let $\DDD^b(\FLCA)$ be its bounded derived $\infty$-category. Following \cite{GeisserMor2}, we observe that $\DDD^b(\FLCA)$ is a stable $\infty$-category in the sense of \cite{HA}. Its homotopy category is the bounded derived category $D^b(\FLCA)$ defined in \cite{Hoff}. In \cite{GeisserMor2} the authors define an internal Hom \[
R\underline{\Hom}(-,-):\DDD^b(\FLCA)^{op}\times \DDD^b(\FLCA)\rightarrow \DDD^b(\FLCA).
\]
The Pontryagin dual $X^{\vee}$ is given by \[
R\underline{\Hom}(-,\R/\Z):\DDD^b(\FLCA)^{op}\rightarrow \DDD^b(\FLCA), \qquad X\mapsto X^{\vee}.
\]
The functor $\underline{(-)}:\FLCA \rightarrow \Ab(\CC)$ sending $A$ to the associated condensed abelian group $\underline{A}$ is fully faithful (see \cite[Proposition 1.7]{LCM}) and sends strict quasi-isomorphisms to usual quasi-isomorphisms. Thus it induces a functor \[
\DDD^b(\FLCA)\rightarrow \DDD^b(\Ab(\CC)).
\]
\begin{rmk}\label{internalrhom}
Considering $\underline{\Hom}$ instead of $\Hom$ in the proof of \cite[Corollary 4.9]{LCM}, it follows that the two functors \[
R\underline{\Hom}(-,-):\DDD^b(\FLCA)^{op}\times \DDD^b(\FLCA)\longrightarrow \DDD^b(\FLCA)\overset{\underline{(-)}}{\longrightarrow} \DDD^b(\Ab(\CC))
\] and \[
R\underline{\Hom}(-,-):\DDD^b(\FLCA)^{op}\times \DDD^b(\FLCA)\overset{\underline{(-)}\times\underline{(-)}}{\longrightarrow}\DDD^b(\Ab(\CC))^{op}\times \DDD^b(\Ab(\CC))\rightarrow \DDD^b(\Ab(\CC))
\] coincide.
\end{rmk}
\begin{lem}\label{lem:exff}
The functor $\DDD^b(\FLCA)\rightarrow \DDD^b(\Ab(\CC))$ is an exact and fully faithful functor of stable $\infty$-categories.
\end{lem}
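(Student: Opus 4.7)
The plan is to establish exactness and fully faithfulness separately. Exactness of the functor $\DDD^b(\FLCA) \to \DDD^b(\Ab(\CC))$ as a functor of stable $\infty$-categories is equivalent to preservation of cofiber (equivalently, fiber) sequences. Since both derived $\infty$-categories are obtained by localising bounded chain complexes at (strict) quasi-isomorphisms, and cofiber sequences are represented by (strict) short exact sequences, exactness reduces to the claim that $\underline{(-)}\colon \FLCA \to \Ab(\CC)$ sends any strict short exact sequence $0 \to A \to B \to C \to 0$ to a short exact sequence in $\Ab(\CC)$. Such a sequence in $\FLCA$ corresponds to $A$ being a closed subgroup of $B$ with quotient topology on $C$; the condensation functor turns the closed immersion $A \hookrightarrow B$ into a monomorphism and the topological quotient $B \twoheadrightarrow C$ into an epimorphism of condensed abelian groups (the latter since the quotient map of locally compact groups admits continuous local sections after pulling back along any surjection from an extremally disconnected set), with set-theoretic exactness in the middle.

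For fully faithfulness, I must check that the map $\Hom_{\DDD^b(\FLCA)}(X, Y) \to \Hom_{\DDD^b(\Ab(\CC))}(\underline{X}, \underline{Y})$ is a bijection for all $X, Y \in \DDD^b(\FLCA)$. The key input is \cref{internalrhom}, which provides an equivalence
\[
\underline{R\underline{\Hom}_{\DDD^b(\FLCA)}(X, Y)} \simeq R\underline{\Hom}_{\DDD^b(\Ab(\CC))}(\underline{X}, \underline{Y})
\]
in $\DDD^b(\Ab(\CC))$. Evaluating on $*$ and taking $H^0$ yields a comparison at the level of abelian groups. On the condensed side, $(-)(*) = \Hom_{\Ab(\CC)}(\Z, -)$ is exact by \cref{rmk:edtopspaces}, 2), so $R\Hom_{\Ab(\CC)}(\underline{X}, \underline{Y}) \simeq R\underline{\Hom}_{\Ab(\CC)}(\underline{X}, \underline{Y})(*)$, identifying the right-hand side of the comparison with $\Hom_{\DDD^b(\Ab(\CC))}(\underline{X}, \underline{Y})$. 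On the $\FLCA$-side, I use the symmetric monoidal structure on $\DDD^b(\FLCA)$ from \cite{GeisserMor2}, in which $\Z$ is the unit, to identify $\Hom_{\DDD^b(\FLCA)}(X, Y)$ with $H^0$ of the underlying abelian group of $R\underline{\Hom}_{\DDD^b(\FLCA)}(X, Y)$; the latter is precisely $H^0(\underline{R\underline{\Hom}_{\DDD^b(\FLCA)}(X, Y)}(*))$ since the forgetful functor $\FLCA \to \Ab$ factors through $\Ab(\CC)$ by evaluation at $*$.

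The main obstacle will be this last step: making rigorous the identification of $H^0(\underline{R\underline{\Hom}_{\DDD^b(\FLCA)}(X, Y)}(*))$ with the morphism group $\Hom_{\DDD^b(\FLCA)}(X, Y)$ in the quasi-abelian derived $\infty$-category. This is a compatibility between the construction of $R\underline{\Hom}$ in \cite{GeisserMor2} (which proceeds via resolutions adapted to the quasi-abelian structure) and the global sections functor, and requires unwinding the definition carefully. Once this is settled, fully faithfulness drops out directly from \cref{internalrhom}, since the comparison of internal Homs is by construction compatible with the comparison of Hom-sets.
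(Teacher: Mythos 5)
Your exactness argument takes a genuinely different and more concrete route than the paper's. You reduce exactness of the derived functor to exactness of $\underline{(-)}$ on strict short exact sequences in $\FLCA$; that is morally the right reduction, but passing from ``preserves cofiber sequences'' to ``preserves strict short exact sequences of objects'' needs the universal property of the localisation $\DDD^b(\FLCA) = N_{dg}(C^b(\FLCA))[\text{strict qis}^{-1}]$, which your phrase ``cofiber sequences are represented by strict short exact sequences'' elides. The paper invokes \cite[Theorem I.3.3,(i)]{TCH} for precisely this step: one observes the composite $N_{dg}(C^b(\FLCA)) \rightarrow N_{dg}(C^b(\Ab(\CC))) \rightarrow \DDD^b(\Ab(\CC))$ is exact and kills strictly acyclic complexes, and then the induced functor on the localisation is automatically exact. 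Your hands-on verification on a single strict short exact sequence (closed immersion gives a monomorphism, strict topological quotient gives an epimorphism by lifting over extremally disconnected sets, middle exactness from the subspace topology on the kernel) is sound and is precisely the content behind ``sends strictly acyclic complexes to $0$''.

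The fully faithfulness step is where the genuine gap lies, and you flag it yourself. Your plan is to derive the isomorphism of external $\Hom$-groups from \cref{internalrhom} by evaluating the internal $R\underline{\Hom}$ at $*$ and taking $H^0$; but the identification $H^0\bigl(\underline{R\underline{\Hom}_{\DDD^b(\FLCA)}(X,Y)}(*)\bigr) \cong \Hom_{\DDD^b(\FLCA)}(X,Y)$ is itself the substance of what you need to prove, not a detail to unwind afterward. It would require, for instance, that $\Z$ behave as a projective monoidal unit compatible with the construction of $R\underline{\Hom}$ in \cite{GeisserMor2}, together with a compatibility of that $R\underline{\Hom}$ with the underlying-group functor on $\DDD^b(\FLCA)$; none of this is established in your proposal. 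The paper circumvents the issue entirely: fully faithfulness of $D^b(\FLCA)\rightarrow D^b(\Ab(\CC))$ at the level of homotopy $1$-categories is taken directly from \cite[Lemma 2.1]{GeisserMor2} and \cite[Corollary 4.9]{LCM}, and then the general principle that an exact functor of stable $\infty$-categories which is fully faithful on homotopy categories is fully faithful (because $\pi_n$ of mapping spectra are the shifted $\Hom$-groups in the homotopy category, and exactness lets you shift) finishes the argument. That short detour through the homotopy category is what your proof needs in place of the internal-$\Hom$ route.
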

\begin{proof}
We observe that \[
N_{dg}(C^b(\FLCA))\rightarrow N_{dg}(C^b(\Ab(\CC)))\rightarrow \DDD^b(\Ab(\CC))
\] is exact, and sends strictly acyclic complexes of $C^b(\FLCA)$ to $0$. By \cite[Theorem I.3.3,(i)]{TCH}, it induces an exact functor\footnote{If $\kappa$ is an uncountable strong limit cardinal such that $\mathrm{cof}(\kappa)>\aleph_0$, then the cardinality of any $A\in\FLCA$ is less than $\kappa$. Hence the category $\FLCA$ is essentially small, and so is the $\infty$-category $N_{dg}(C^b(\FLCA))$. Consequently, \cite[Theorem I.3.3,(i)]{TCH} applies.} $\DDD^b(\FLCA)\rightarrow \DDD^b(\Ab(\CC))$. Moreover, the induced functor on homotopy categories \[
D^b(\FLCA)\rightarrow D^b(\Ab(\CC))
\] is fully faithful by \cite[Lemma 2.1]{GeisserMor2} and \cite[Corollary 4.9]{LCM}. Hence $\DDD^b(\FLCA)\to \DDD^b(\Ab(\CC))$ is an exact functor of stable $\infty$-categories which induces a fully faithful functor between the corresponding homotopy categories. The result follows.
\end{proof}

The stable $\infty$-category $\DDD^b(\FLCA)$ is endowed with a $t$-structure by \cite[Section 1.2.2]{Schn}, since a $t$-structure on a stable $\infty$-category is defined as a $t$-structure on its homotopy category \cite[Definition 1.2.1.4]{HA}. We denote their heart by $\LH(\FLCA)$.

\begin{lem}\label{lem:tex}
The fully faithful functor $\DDD^b(\FLCA)\rightarrow \DDD^b(\Ab(\CC))$ is $t$-exact. Therefore, the induced functor \[
\LH(\FLCA)\hookrightarrow \Ab(\CC)
\] is exact and fully faithful.
\end{lem}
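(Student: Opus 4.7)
My plan is to combine Schneiders' explicit description of the heart $\LH(\FLCA)$ with the sheaf-theoretic behaviour of the embedding $\underline{(-)}\colon \FLCA\to \Ab(\CC)$. I will use the fact that every object of $\LH(\FLCA)$ is represented by a two-term complex $[A^{-1}\xrightarrow{d} A^0]$ in degrees $-1,0$, where $d$ is a monomorphism in $\FLCA$, i.e.\ an injective continuous homomorphism of locally compact abelian groups.

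The first step is to observe that such a $d$ induces a monomorphism $\underline{d}\colon \underline{A^{-1}}\to \underline{A^0}$ in $\Ab(\CC)$. Since monomorphisms of condensed abelian groups can be tested on sections over extremally disconnected $S$ (see \cref{rmk:edtopspaces}) and $\underline{A}(S)=\Cont(S,A)$ is functorial in $A$ by post-composition, injectivity of $d$ transfers to injectivity of $\Cont(S,A^{-1})\to \Cont(S,A^0)$. Hence the image complex $[\underline{A^{-1}}\to \underline{A^0}]$ has $H^{-1}=0$ and is concentrated in degree $0$, so it already lies in $\Ab(\CC)\subset \DDD^b(\Ab(\CC))$. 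This shows that the heart of the left $t$-structure on $\DDD^b(\FLCA)$ is sent into the standard heart of $\DDD^b(\Ab(\CC))$.

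Next I would promote this to genuine $t$-exactness via the usual filtration argument: a bounded object $X\in \DDD^{b,\le 0}(\FLCA)$ is obtained by iterated extensions from its shifted left cohomologies $H^i(X)[-i]$, $i\le 0$, each of which is sent by the embedding to an object of $\Ab(\CC)[-i]\subset \DDD^{b,\le 0}(\Ab(\CC))$. Since $\DDD^{b,\le 0}(\Ab(\CC))$ is closed under extensions, the image of $X$ lies in $\DDD^{b,\le 0}(\Ab(\CC))$, and the symmetric argument handles $\DDD^{b,\ge 0}$.

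The ``therefore'' statement is then formal: any $t$-exact functor between stable $\infty$-categories restricts to an exact functor between hearts (short exact sequences in the heart are precisely the fiber sequences, which are preserved), and fully faithfulness of the restricted $\LH(\FLCA)\hookrightarrow \Ab(\CC)$ follows from \cref{lem:exff} because each heart sits as a full subcategory of the corresponding bounded derived $\infty$-category. I do not expect a serious obstacle; the only subtle point to get right is the characterisation of $\LH(\FLCA)$ as (not necessarily strict) monomorphisms in $\FLCA$, which is the input that makes the whole argument go through.
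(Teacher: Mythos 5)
Your proof is correct but takes a genuinely different route from the paper's. The paper invokes Schneiders' explicit truncation formulas (Proposition 1.2.19): it represents any $X\in \DDD^b(\FLCA)^{\ge 0}$ by the strict truncation $0\to \mathrm{Coim}(d_X^{-1})\to X^0\to\cdots$, shows directly that $\underline{(-)}$ respects kernels and coimages (via Hoffmann's Corollary 2.11 and the fact that kernels are closed immersions), and reads off acyclicity of $\underline{X^{\bullet}}$ in negative degrees termwise; the argument for $\DDD^{\le 0}$ is dual, using kernels. You instead start from Schneiders' characterisation of $\LH(\FLCA)$ by (not necessarily strict) monomorphisms $[A^{-1}\hookrightarrow A^0]$, observe that $\underline{(-)}$ preserves monomorphisms (checking on extremally disconnected sections), conclude that the heart lands in $\Ab(\CC)[0]$, and then upgrade to $t$-exactness by the standard Postnikov/d\'evissage argument (bounded objects are finite iterated extensions of shifted left cohomologies, and $\mathcal{D}^{\le 0}$, $\mathcal{D}^{\ge 0}$ are closed under extensions). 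Both arguments hinge on the same structural input about $\underline{(-)}$; the paper's is more concrete and reads the answer off the truncation representative, whereas yours requires slightly less from Schneiders (only the heart description) at the cost of the extra Postnikov step. Your handling of the ``therefore'' statement — $t$-exact functors restrict to exact functors on hearts, and fully faithfulness of $\LH(\FLCA)\hookrightarrow\Ab(\CC)$ follows from \cref{lem:exff} since each heart is full in its derived $\infty$-category — matches the paper's.
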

\begin{proof}
Let $X^{\bullet}\in \DDD^b(\FLCA)^{\ge 0}$. By \cite[Proposition 1.2.19]{Schn}, in $\DDD^b(\FLCA)$ the complex $X^{\bullet}$ is isomorphic to \[
0\rightarrow \mathrm{Coim} (d_X^{-1})\rightarrow X^0\rightarrow X^1\rightarrow \dots,
\] with $\mathrm{Coim} (d_X^{-1})$ in degree -1. The functor $\underline{(-)}:\LCA\rightarrow \Ab(\CC)$ respects cokernels of closed immersions and all kernels, hence it respect coimages.  By \cite[Corollary 2.11]{Hoff}, the same holds for the functor $\underline{(-)}:\FLCA\rightarrow \Ab(\CC)$. Hence in $\DDD^b(\Ab(\CC))$ the complex $\underline{X^{\bullet}}$ is isomorphic to \[
0\rightarrow\mathrm{Coim}(\underline{d_X^{-1}})\rightarrow \underline{X^0}\rightarrow \underline{X^1}\rightarrow \dots,
\] with $\mathrm{Coim}(\underline{d_X^{-1}})$ in degree -1. This complex is acyclic in strictly negative degree. Consequently, we have $\underline{X^{\bullet}}\in \DDD^b(\Ab(\CC))^{\ge 0}$.

Now, let $X^{\bullet}\in \DDD^b(\LCA)^{\le 0}$. By \cite[Proposition 1.2.19]{Schn}, it is isomorphic to \[
\dots \rightarrow X^{-2}\rightarrow X^{-1}\rightarrow \mathrm{Ker}(d_X^0) \rightarrow 0,
\] with $\mathrm{Ker}(d_X^0)$ in degree $0$. Then the complex $\underline{X^{\bullet}}$ is given by \[
\dots \rightarrow \underline{X^{-2}}\rightarrow \underline{X^{-1}}\rightarrow \mathrm{Ker}(\underline{d_X^0}) \rightarrow 0,
\] with $\mathrm{Ker}(\underline{d_X^0})$ in degree $0$, which is acyclic in strictly positive degree. Consequently, we have $\underline{X^{\bullet}}\in\DDD^b(\Ab(\CC))^{\le 0}$.

Hence the functor $\DDD^b(\FLCA)\rightarrow \DDD^b(\Ab(\CC))$ induces a functor on left hearts. Since $\Hom$-sets are computed as in the derived category, the functor $\LH(\FLCA)\rightarrow \Ab(\CC)$ is fully faithful by \cref{lem:exff}.
\end{proof}
\begin{cor}\label{cor:intersections}
Let $A\in \Ab(\CC)$. If $A$ lies in the essential image of $\DDD^b(\FLCA)$, then it lies in the essential image of $\LH(\FLCA)$. In other words, we have \[
\Ab(\CC)\cap \DDD^b(\FLCA)=\LH(\FLCA).
\]
\end{cor}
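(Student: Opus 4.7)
The plan is to derive the nontrivial inclusion $\Ab(\CC)\cap \DDD^b(\FLCA)\subseteq \LH(\FLCA)$ as a formal consequence of the two preceding lemmas; the reverse inclusion is immediate from \cref{lem:tex}. Let $A\in\Ab(\CC)$ and assume $A\cong \underline{X^\bullet}$ for some $X^\bullet\in \DDD^b(\FLCA)$. The goal is to show that $X^\bullet$ already lies in the heart $\LH(\FLCA)$ of the $t$-structure on $\DDD^b(\FLCA)$.

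The first step is to use the $t$-exactness of the embedding $\DDD^b(\FLCA)\hookrightarrow \DDD^b(\Ab(\CC))$ established in \cref{lem:tex}: this implies that the embedding commutes with the cohomology functors of the two $t$-structures, so $H^i(A)\cong \underline{H^i(X^\bullet)}$ in $\Ab(\CC)$ for every $i\in\Z$. Since $A$ is viewed as an object of $\DDD^b(\Ab(\CC))$ concentrated in degree zero, $H^i(A)=0$ for $i\neq 0$, whence $\underline{H^i(X^\bullet)}=0$ in $\Ab(\CC)$ for all $i\neq 0$.

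The second step invokes the fully faithfulness of the induced exact functor $\LH(\FLCA)\hookrightarrow \Ab(\CC)$ (again from \cref{lem:tex}): a fully faithful additive functor reflects zero objects, so from $\underline{H^i(X^\bullet)}=0$ we deduce $H^i(X^\bullet)=0$ in $\LH(\FLCA)$ for every $i\neq 0$. Hence $X^\bullet$ is concentrated in degree zero, i.e.\ $X^\bullet\in \LH(\FLCA)$, and $A\cong \underline{X^\bullet}$ lies in the essential image of $\LH(\FLCA)\to \Ab(\CC)$. No step presents a real obstacle; the only subtle point is the reflection of vanishing across the embedding of hearts, which is ensured by the fully faithfulness part of \cref{lem:tex}.
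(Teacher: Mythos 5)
Your proof is correct and reaches the conclusion by a slightly different route from the paper. The paper checks directly that $X^{\bullet}$ lies in both aisles of the $t$-structure by using the orthogonality characterization $X^\bullet\in\DDD^b(\FLCA)^{\le 0}$ iff $\Hom(X^\bullet,Y^\bullet)=0$ for all $Y^\bullet\in\DDD^b(\FLCA)^{\ge 1}$, transporting these $\Hom$-groups across the fully faithful $t$-exact embedding and using $\Hom_{\DDD^b(\Ab(\CC))}(A,\underline{Y^\bullet})=0$ when $A$ sits in degree $0$ and $\underline{Y^\bullet}$ is coconnective. You instead argue through the cohomology objects: $t$-exactness gives $\underline{H^i(X^\bullet)}\cong H^i(A)=0$ for $i\neq 0$, full faithfulness of $\LH(\FLCA)\hookrightarrow\Ab(\CC)$ reflects zero objects, and then non-degeneracy of the bounded $t$-structure forces $X^\bullet$ to lie in the heart. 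Both arguments invoke precisely the two ingredients of \cref{lem:tex}, and both are valid; the paper's version avoids the (standard but unstated) non-degeneracy step, while yours is arguably more concrete, going through cohomology objects rather than $\Hom$-orthogonality. You might spell out why vanishing of all $H^i(X^\bullet)$ for $i\neq 0$ implies $X^\bullet\in\LH(\FLCA)$ — it requires boundedness of the $t$-structure on $\DDD^b(\FLCA)$ — but this is a routine fact.
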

\begin{proof}
We write $A=\underline{X^{\bullet}}$, with $X^{\bullet}\in \DDD^b(\FLCA)$. The condition $X^{\bullet}\in \DDD^b(\FLCA)^{\le 0}$ is equivalent to the condition \[
\Hom_{\DDD^b(\FLCA)}(X^{\bullet},Y^{\bullet})=0 \qquad \forall Y^{\bullet}\in \DDD^b(\FLCA)^{\ge 1}.
\] Since the functor $\underline{(-)}:\DDD^b(\FLCA)\to \DDD^b(\Ab(\CC))$ is fully faithful and $t$-exact, one gets \[
\Hom_{\DDD^b(\FLCA)}(X^{\bullet},Y^{\bullet})=\Hom_{\DDD^b(\Ab(\CC))}(A,\underline{Y^{\bullet}})=0.
\] Thus $X^{\bullet}\in \DDD^b(\FLCA)^{\le 0}$. Similarly, one gets $X^{\bullet}\in \DDD^b(\FLCA)^{\ge 0}$. The result follows.
\end{proof}
\begin{prop}\label{prop:stableextensions}
Up to identification with its essential image via $\underline{(-)}:\FLCA\to \Ab(\CC)$, the category $\FLCA$ is a full subcategory of $\Ab(\CC)$ which is stable by extensions. \end{prop}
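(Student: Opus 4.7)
Fullness of the embedding $\underline{(-)}:\FLCA\to\Ab(\CC)$ is already recorded in the excerpt via \cite[Proposition 1.7]{LCM}, so the only content is stability under extensions. My plan is to start with a short exact sequence $0\to A'\to A\to A''\to 0$ in $\Ab(\CC)$ whose outer terms lie in the essential image of $\FLCA$, and to show in two steps that $A$ must lie there as well: first detect $A$ inside $\DDD^b(\FLCA)$ through a derived-category argument, and then use \cref{cor:intersections} together with general quasi-abelian theory to descend from $\LH(\FLCA)$ back to $\FLCA$.

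For the first step, the short exact sequence is a fiber sequence in $\DDD^b(\Ab(\CC))$, equivalently determined by its boundary morphism $\alpha: A''[-1]\to A'$, of which $A$ is the cofiber. Since $\DDD^b(\FLCA)\hookrightarrow \DDD^b(\Ab(\CC))$ is a fully faithful exact functor of stable $\infty$-categories by \cref{lem:exff}, the morphism $\alpha$ lifts uniquely to a morphism $\widetilde{\alpha}:A''[-1]\to A'$ in $\DDD^b(\FLCA)$, and its cofiber formed there maps to an equivalent object in $\DDD^b(\Ab(\CC))$ because exact functors of stable $\infty$-categories preserve cofibers. Hence $A$ lies in the essential image of $\DDD^b(\FLCA)$, and because $A\in\Ab(\CC)$, \cref{cor:intersections} immediately gives $A\in\LH(\FLCA)$.

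The remaining task is to upgrade $A\in\LH(\FLCA)$ to $A\in\FLCA$. By \cref{lem:tex}, the embedding $\LH(\FLCA)\hookrightarrow\Ab(\CC)$ is exact and fully faithful, hence reflects short exact sequences, so the original sequence is also a short exact sequence in $\LH(\FLCA)$ between two objects of $\FLCA$. At this point I would invoke the general quasi-abelian principle of Schneiders \cite{Schn}: a quasi-abelian category $\mathcal{E}$ is closed under extensions inside its left heart $\LH(\mathcal{E})$, i.e.\ any short exact sequence in $\LH(\mathcal{E})$ with outer terms in $\mathcal{E}$ has its middle term in $\mathcal{E}$ and is in fact strictly exact there. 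Applied to $\mathcal{E}=\FLCA$, this yields $A\in\FLCA$ and completes the proof. The main obstacle is precisely this last appeal to Schneiders; if one wished to avoid it, one would represent $A=[X^{-1}\to X^0]$ in $\LH(\FLCA)$ with $X^{-1}\hookrightarrow X^0$ a monomorphism in $\FLCA$ and then argue directly that the presence of an extension of two $\FLCA$ objects forces this monomorphism to be strict, so that $A\cong\underline{X^0/X^{-1}}$ is genuinely an object of $\FLCA$, but this ultimately reduces to the same quasi-abelian content.
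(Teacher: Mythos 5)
Your proof is correct and follows essentially the same path as the paper's: interpret the extension as a fiber/cofiber in $\DDD^b(\Ab(\CC))$, use the exactness and full faithfulness of $\DDD^b(\FLCA)\hookrightarrow\DDD^b(\Ab(\CC))$ (\cref{lem:exff}) to see that the middle term lies in the essential image of $\DDD^b(\FLCA)$, land in $\LH(\FLCA)$ via \cref{cor:intersections}, and finish with Schneiders' result that a quasi-abelian category is stable by extensions in its left heart (the paper cites \cite[Proposition 1.2.29(c)]{Schn}, which is exactly the ``quasi-abelian principle'' you were worried about invoking, so it is not actually a gap).
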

\begin{proof}
Let us consider the exact sequence in $\Ab(\CC)$ \[
0\rightarrow \underline{A'}\rightarrow E \rightarrow \underline{A''}\rightarrow 0,
\] where $A',A''\in \FLCA$. Then we have \[
E=\mathrm{fib}(\underline{A''}\rightarrow \underline{A'}[1])=\underline{\mathrm{fib}(A''\rightarrow A'[1])},
\] where the last equality comes from exactness of $\DDD^b(\FLCA)\rightarrow \DDD^b(\Ab(\CC))$ (see \cref{lem:exff}). In particular, we have $E\in \Ab(\CC)\cap \DDD^b(\FLCA)$. By \cref{cor:intersections}, we have $E\in \LH(\FLCA)$. Since $\FLCA\subset \LH(\FLCA)$ is stable by extensions \cite[Proposition 1.2.29, (c)]{Schn}, the result follows.
\end{proof}
\begin{rmk}
Replacing $\FLCA$ with $\LCA_{\kappa}$, we can build the derived $\infty$-category $\DDD^b(\LCA_{\kappa})$. All the previous results hold for $\DDD^b(\LCA_{\kappa})$ instead of $\DDD^b(\FLCA)$.
\end{rmk}

\vspace{0.5 em}
Let us define a convenient full stable $\infty$-subcategory of $\DDD^b(\Ab(\CC))$.
\begin{defn}\label{def:dperf} We define $\DDD^{perf}_{\Z,\R}$ as the full $\infty$-subcategory of $\DDD^b(\FLCA)$ whose objects are those $X\in \DDD^b(\FLCA)$ such that $R\underline{\Hom}(\R/\Z,X)\in \DDD^{perf}(\Z)$.
\end{defn}
Let $X\in \DDD(\Ab(\CC))$. We have a distinguished triangle in $\DDD(\Ab(\CC))$  \begin{equation}\label{eqn:trianglefordperf}
R\underline{\Hom}(\R/\Z,X)\rightarrow R\underline{\Hom}(\R,X)\rightarrow X.
\end{equation}
If $X\in \DDD^{perf}_{\Z,\R}$, then the term on the left is in $\DDD^{perf}(\Z)$ and the middle term is in $\DDD^{perf}(\R)$.
\begin{rmk} We highlight some properties of $\DDD^{perf}_{\Z,\R}$.
\begin{itemize}
\item $\DDD^{perf}_{\Z,\R}$ is a stable $\infty$-subcategory of $\DDD(\Ab(\CC))$.
\item $\DDD^{perf}(\Z)$ is stable under retracts, hence so is $\DDD^{perf}_{\Z,\R}$.
\item $X\in \DDD^{perf}_{\Z,\R}$ if and only if its Pontryagin dual is in $\DDD^{perf}_{\Z,\R}$. Indeed we have\[R\underline{\Hom}(\R/\Z,X^{\vee})=R\underline{\Hom}(X,\R/\Z^{\vee})=R\underline{\Hom}(R\underline{\Hom}(\R/\Z,X),\Z)[1],\] where the last equality comes from \eqref{eqn:trianglefordperf}.
\item $\R$ and $\Z$ are in $\DDD^{perf}_{\Z,\R}$.
\item If $\mathcal{A}$ is a stable $\infty$-subcategory of $\DDD(\Ab(\CC))$ containing $\Z$ and $\R$, then $\DDD^{perf}_{\Z,\R}$ is contained in $\mathcal{A}$. This follows from stability of $\mathcal{A}$ and \eqref{eqn:trianglefordperf}.
\end{itemize}
\end{rmk}
\begin{prop}\label{prop:degree0}
Let $A\in \DDD^{perf}_{\Z,\R}\cap \FLCA$ with a continuous action of a finite quotient $G$ of $G_F$. Then we have a $G$-equivariant filtration in $\Ab(\CC)$\[
0\subset F^0A\subset F^1A\subset F^2A=A,
\] where \begin{itemize}\item $F^0A\eqqcolon A_{\SSS^1}$ is a power of $\R/\Z$;\\
\item $F^1A/F^0A\eqqcolon A_{\R}$ is a finite-dimensional real vector space;\\
\item $A/F^1A\eqqcolon A_{\Z}$ is a finite $\Z$-type abelian group. \end{itemize}
\end{prop}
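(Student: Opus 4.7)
The plan is to read off the filtration directly from the topological structure of $A$ as a locally compact abelian group, and then to use the perfect-complex hypothesis to control the totally disconnected quotient.

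First, let $A^{0}$ denote the identity component of $A$, a closed characteristic subgroup, hence $G$-stable. Since $A$ has finite ranks, the structure theorem for connected LCA groups of finite ranks (Hoffmann) gives a topological isomorphism $A^{0}\cong \R^{n}\oplus T$, where $T$ is a finite-dimensional torus, i.e.\ a finite power of $\R/\Z$. The maximal compact subgroup of $A^{0}$ is characteristic, so $T$ is $G$-stable as well. Setting $F^{0}A\coloneqq T$ and $F^{1}A\coloneqq A^{0}$ gives a two-step $G$-equivariant filtration of $A$ by closed subgroups with $F^{0}A$ a finite power of $\R/\Z$ and $F^{1}A/F^{0}A\cong \R^{n}$ a finite-dimensional real vector space.

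It remains to show that the quotient $A_{\Z}\coloneqq A/F^{1}A$ is discrete and finitely generated. As $A/F^{1}A$ is the totally disconnected LCA quotient, it admits a profinite compact open subgroup with discrete quotient, and is therefore solid as a condensed abelian group. Since $\R^{L\blacksquare}=0$, we obtain $R\underline{\Hom}(\R,A_{\Z})=0$, so the defining fiber sequence for $R\underline{\Hom}(\R/\Z,-)$ yields $R\underline{\Hom}(\R/\Z,A_{\Z})\cong A_{\Z}[-1]$. Now apply $R\underline{\Hom}(\R/\Z,-)$ to the short exact sequences $0\to F^{0}A\to F^{1}A\to \R^{n}\to 0$ and $0\to F^{1}A\to A\to A_{\Z}\to 0$. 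A direct computation from the defining fiber sequence gives $R\underline{\Hom}(\R/\Z,\R^{n})=0$, and $R\underline{\Hom}(\R/\Z,F^{0}A)\cong \Z^{m}$, so $R\underline{\Hom}(\R/\Z,F^{1}A)\in \DDD^{perf}(\Z)$. Combined with the hypothesis $R\underline{\Hom}(\R/\Z,A)\in \DDD^{perf}(\Z)$ and the stability of $\DDD^{perf}(\Z)$ under fiber sequences, we deduce $A_{\Z}[-1]\in \DDD^{perf}(\Z)$. Since $A_{\Z}$ is concentrated in degree $0$, this forces it to be a discrete, finitely generated abelian group, hence of finite $\Z$-type by the usual classification.

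The main obstacle is the structural/topological input: invoking Hoffmann's classification to pin down $A^{0}\cong \R^{n}\oplus (\R/\Z)^{m}$ precisely (ruling out general solenoids), and verifying that the totally disconnected quotient is truly solid as a condensed abelian group. Once these are granted, the rest of the argument is formal manipulation of fiber sequences inside the stable $\infty$-subcategory $\DDD^{perf}(\Z)\subset \DDD(\Ab(\CC))$.
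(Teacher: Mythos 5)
There is a genuine gap in your first step, and it is precisely the one you flag as "the main obstacle" and then ask to be "granted." You claim that Hoffmann's structure theory for connected LCA groups of finite ranks gives $A^0\cong\R^n\oplus T$ with $T$ a finite power of $\R/\Z$. That is false as a statement about $\FLCA$: connected objects of $\FLCA$ include solenoids (duals of non--finitely generated torsion-free finite-rank groups such as $\Z[1/p]\subset\Q$), whose compact part is connected but is not a torus. In other words, "finite ranks" in Hoffmann's sense does not rule out solenoids, so one cannot read the decomposition $A^0\cong\R^n\oplus(\R/\Z)^m$ off from membership in $\FLCA$ alone. What rules solenoids out is the hypothesis $A\in\DDD^{perf}_{\Z,\R}$ (e.g.\ for a $p$-adic solenoid, $\underline{\Ext}^1(\R/\Z,-)$ is not finitely generated), but your argument uses that hypothesis only afterwards, for the discrete quotient. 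So the key structural input is asserted, not proved, and this is exactly where the content of the proposition lies.

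The paper proceeds differently and avoids this issue. It starts directly from the triangle $R\underline{\Hom}(\R/\Z,A)\to R\underline{\Hom}(\R,A)\to A$: since $A\in\DDD^{perf}_{\Z,\R}\cap\FLCA$ is concentrated in degree $0$, this yields a four-term exact sequence
\[
0\longrightarrow \Z^{n_1}\longrightarrow \R^{n_2}\longrightarrow A\longrightarrow \Z^{n_3}\oplus H\longrightarrow 0
\]
in $\Ab(\CC)$. Then it observes that the image of $\Z^{n_1}$ in $\R^{n_2}$ is generated by linearly independent vectors, so $\R^{n_2}/\Z^{n_1}\cong(\R/\Z)^k\oplus\R^{n_2-k}$, and the image of this group in $A$ (a continuous homomorphism since $A\in\FLCA$) is the identity component $F^1A$; its maximal compact subgroup is $F^0A$; and $A/F^1A$ is a quotient of $\Z^{n_3}\oplus H$, hence of finite $\Z$-type. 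Both the $G$-equivariance (identity component and maximal compact subgroup are characteristic) and the form of the graded pieces fall out at once, without appealing to any classification of connected finite-rank LCA groups. Your argument for the third graded piece is also more roundabout than needed: once one has the four-term sequence, $A/F^1A$ is a quotient of $\Z^{n_3}\oplus H$, and one can invoke the stability of finite $\Z$-type groups under quotients (\cref{extsubquo}) directly, with no detour through solidity and $\R^{L\blacksquare}=0$. To repair your proof you would need to prove, rather than assume, that $A\in\DDD^{perf}_{\Z,\R}$ forces $A^0$ to have a toral compact part — and the cleanest way to do that is in effect to reproduce the paper's exact-sequence argument.
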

\begin{proof}
By the distinguished triangle \eqref{eqn:trianglefordperf}, we get an exact sequence in $\Ab(\CC)$ \[
0\rightarrow \Z^{n_1}\rightarrow \R^{n_2}\rightarrow A \rightarrow \Z^{n_3}\oplus H\rightarrow 0,
\] for some $n_1,n_2,n_3\in \N$ and some finite abelian group $H$. The image of $\Z^{n_1}$ is of the form $\Z v_1\oplus \dots \Z v_k$, for $\{v_1,\dots, v_k\}$ linearly independent vectors of $\R^m$. Hence $\R^{n_2}/\Z^{n_1}\cong (\R/\Z)^k \oplus \R^{n_2-k}$. We have a short exact sequence in $\Ab(\CC)$ \[
0\rightarrow (\R/\Z)^k\oplus \R^{n_2-k}\rightarrow A\rightarrow \Z^{n_3}\oplus H\rightarrow 0.
\] Since we have $A\in \FLCA$, the morphism $(\R/\Z)^k\oplus \R^{n_2-k}\rightarrow A$ is represented by a continuous homomorphism of topological abelian groups.  The image of this morphism is the connected component of $0$, which is a $G$-equivariant subgroup. We call it $F^1A$, and we observe that $A_{\Z}\coloneqq A/F^1A$ is a finite $\Z$-type abelian group.

Moreover, we have an exact sequence in $\Ab(\CC)$ \[
0\rightarrow (\R/\Z)^k\rightarrow F^1A\rightarrow \R^{n_2-k}\rightarrow 0.
\] The image of $(\R/\Z)^k$ is the maximal compact subgroup of $F^1A$, which is a $G$-equivariant subgroup. We call it $F^0A$ or $A_{\SSS^1}$. Then $A_{\R}\coloneqq F^1A/F^0A$ is a finite-dimensional real vector space.
\end{proof}
\begin{prop}\label{prop:degree0cohomology}
Let $A\in\DDD^{perf}_{\Z,\R}\cap \FLCA$ with a continuous action of a finite quotient $G$ of $G_F$. Then $\HH^q(B_{\hat{W}_F},A)$ is locally compact of finite ranks for $q=0,1,2$ and vanishes for all $q\ge 3$. Moreover, $\HH^2(B_{\hat{W}_F},A)$ is discrete of finite $\Q_p/\Z_p$-type.
\end{prop}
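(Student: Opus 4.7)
The plan is to exploit the $G$-equivariant filtration $0\subset A_{\SSS^1}\subset F^1A\subset A$ given by \cref{prop:degree0} to reduce the computation to the cohomology of each graded piece. The two short exact sequences in $\Ab(B_{\hat{W}_F})$
\[ 0\to A_{\SSS^1}\to F^1A\to A_{\R}\to 0, \qquad 0\to F^1A\to A\to A_{\Z}\to 0, \]
produce long exact sequences whose terms are controlled by \cref{prop:structurecircle} for $A_{\SSS^1}$, by \cref{rmk:structurervs} for $A_{\R}$ (the $W_F$-action factors through the finite quotient $G$, hence through $W_F/U$ for some open $U\subset I$), and by \cref{thm:structurem} applied to the free part of $A_{\Z}$ combined with \cref{finitecoeffstructure} applied to its torsion subgroup.

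The vanishing in degrees $\ge 3$ and the degree-$2$ identification should fall out quickly. Since $\HH^q(B_{\hat{W}_F},A_{\SSS^1})$ and $\HH^q(B_{\hat{W}_F},A_{\R})$ both vanish for $q\ge 2$, the first long exact sequence forces $\HH^q(B_{\hat{W}_F},F^1A)=0$ for $q\ge 2$. Feeding this into the long exact sequence of the second filtration step yields $\HH^q(B_{\hat{W}_F},A)=0$ for $q\ge 3$ together with an isomorphism $\HH^2(B_{\hat{W}_F},A)\cong \HH^2(B_{\hat{W}_F},A_{\Z})$, which is discrete of finite $\Q_p/\Z_p$-type by \cref{thm:structurem}.

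To establish that $\HH^0(B_{\hat{W}_F},A)$ and $\HH^1(B_{\hat{W}_F},A)$ lie in $\FLCA$ I would proceed inductively along the filtration, splitting each long exact sequence into short exact sequences in $\Ab(\CC)$ and invoking stability of $\FLCA$ under extensions \cref{prop:stableextensions}. Kernels of morphisms between objects of $\FLCA$ automatically remain in $\FLCA$, and $\HH^0(B_{\hat{W}_F},A)$ admits a direct description as a closed subgroup of $A$ via \cref{lowercohomologygroups}.

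The main technical obstacle will be verifying that the cokernels of the connecting morphisms lie in $\FLCA$ rather than merely in $\Ab(\CC)$; equivalently, that the images of these boundary maps are closed subgroups, so that the cokernel computed in $\Ab(\CC)$ coincides with the quasi-abelian cokernel in $\FLCA$. The delicate case is the connecting map $\HH^0(B_{\hat{W}_F},A_{\Z})\to \HH^1(B_{\hat{W}_F},F^1A)$: its source is discrete of finite $\Z$-type, and closedness of its image will rely on the explicit structural descriptions of $\HH^1(B_{\hat{W}_F},A_{\SSS^1})$ and $\HH^1(B_{\hat{W}_F},A_{\R})$ provided by \cref{prop:structurecircle} and \cref{rmk:structurervs}, combined with the fact that the image is a finitely generated subgroup of the target LCA.
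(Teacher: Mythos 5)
You correctly reduce to the graded pieces of the filtration from \cref{prop:degree0}, you handle degrees $q\ge 2$ exactly as the paper does (the two long exact sequences force $\HH^q(B_{\hat{W}_F},A)\cong\HH^q(B_{\hat{W}_F},A_{\Z})$ for $q\ge 2$), and your treatment of $\HH^0$ via \cref{lowercohomologygroups} is the paper's.

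The gap is in degree $1$. You identify the right obstruction --- the cokernels of the connecting maps $\delta^0:\HH^0(B_{\hat{W}_F},A_{\R})\to\HH^1(B_{\hat{W}_F},A_{\SSS^1})$ and $\HH^0(B_{\hat{W}_F},A_{\Z})\to\HH^1(B_{\hat{W}_F},F^1A)$ must be in $\FLCA$, which requires controlling the images --- but your proposed resolution does not close it. Knowing that the image is a finitely generated subgroup of a target of the shape $(\R/\Z)^{n}\oplus(\Q_p/\Z_p)^{m}\oplus H\oplus\R^d$ does \emph{not} imply it is closed: finitely generated subgroups of $\R$ or of $(\R/\Z)^n$ can be dense, so this argument cannot produce an object of $\FLCA$ as the quotient. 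The ingredient you are missing is the comparison with the finite quotient $G$ through the inflation maps. Since the action factors through $G$, one has $\HH^0(B_{\hat{W}_F},M)=\HH^0(B_G,M)$ for each piece $M$, and the connecting maps for $B_{\hat{W}_F}$ factor through those for $B_G$. But $\HH^1(B_G,A_{\SSS^1})\cong\HH^2(B_G,\underline{\Hom}(\R/\Z,A_{\SSS^1}))$ is \emph{finite} ($G$ finite, coefficients of finite $\Z$-type), and $\HH^1(B_G,A_{\R})=0$, so $\HH^1(B_G,F^1A)$ is finite as well. Consequently $\delta^0$ is zero (a divisible torsion-free source mapping to a finite group), giving the split short exact sequence for $\HH^1(B_{\hat{W}_F},F^1A)$, and the second connecting map has \emph{finite} image $K$. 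Only then is $\HH^1(B_{\hat{W}_F},F^1A)/K$ manifestly in $\FLCA$, and \cref{prop:stableextensions} finishes the argument. Without the $B_G$ comparison step, the closedness you need is unsubstantiated.
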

\begin{proof}
Let us consider the filtration of $A$ given by \cref{prop:degree0}. By \cref{rmk:structurervs,prop:structurecircle}, we have $\HH^q(B_{\hat{W}_F},A)\cong \HH^q(B_{\hat{W}_F},A_{\Z})$ for all $q\ge 2$. By \cref{thm:structurem}, this condensed abelian group is discrete of finite $\Q_p/\Z_p$-type for $q=2$, and vanishes for all $q\ge 3$.

Let us consider the morphism of long exact sequences in $\Ab(\CC)$  \[
 \begin{tikzcd}
0\ar[r] & (A_{\SSS^1})^G\arrow[equal]{d}\ar[r] & (F^1A)^G\arrow[equal]{d}\ar[r]& (A_{\R})^G\arrow[equal]{d}\ar[r]& \HH^1(B_G,A_{\SSS^1})\arrow[hook]{d}\\
0\ar[r] & \HH^0(B_{\hat{W}_F},A_{\SSS^1})\ar[r] & \HH^0(B_{\hat{W}_F},F^1A)\ar[r]& \HH^0(B_{\hat{W}_F},A_{\R})\ar[r,"\delta^0"]& \HH^1(B_{\hat{W}_F},A_{\SSS^1}).
\end{tikzcd}\]
The group $G$ is finite and $\underline{\Hom}(\R/\Z,A_{\SSS^1})$ is of finite $\Z$-type. Moreover we have \[\HH^1(B_G,A_{\SSS^1})=\HH^2(B_G,\underline{\Hom}(\R/\Z,A_{\SSS^1})).\] Hence $\HH^1(B_G,A_{\SSS^1})$ is finite and $\delta^0$ is the zero morphism. Since $\Ext^1_{\Ab(\CC)}(\HH^1(B_{\hat{W}_F},A_{\R}), \HH^1(B_{\hat{W}_F},A_{\SSS^1}))=0$, we have \[
\HH^1(B_{\hat{W}_F},F^1A)\cong  \HH^1(B_{\hat{W}_F},A_{\SSS^1})\oplus \HH^1(B_{\hat{W}_F},A_{\R}),
\] which is locally compact of finite ranks by \cref{rmk:structurervs,prop:structurecircle}. By the same argument, the morphism $\HH^0(B_{\hat{W}_F},A_{\Z})\to\HH^1(B_{\hat{W}_F},F^1A)$ has finite image, we call it $K$. Hence we have an exact sequence in $\Ab(\CC)$ \[
0\rightarrow \HH^1(B_{\hat{W}_F},F^1A)/K\rightarrow \HH^1(B_{\hat{W}_F},A)\rightarrow \HH^1(B_{\hat{W}_F},A_{\Z})\rightarrow 0,
\] where both the left and the right term are locally compact abelian groups of finite ranks. By \cref{prop:stableextensions}, $\HH^1(B_{\hat{W}_F},A)$ is a locally compact abelian group of finite ranks. 

We conclude by observing that by \cref{lowercohomologygroups},1), $\HH^0(B_{\hat{W}_F},A)=\HH^0(B_G,A)$ is represented by $A^G$ with subspace topology. This is a closed subgroup of $A$, hence it is locally compact of finite ranks.
\end{proof}

\begin{lem}\label{stabilitywrtcohomology}
Let $M\in \DDD^{perf}_{\Z,\R}$ such that $\H^q(M)$ is a locally compact abelian group for all $q$. Then $\H^q(M)\in \DDD^{perf}_{\Z,\R}\cap \FLCA$ for all $q$.
\end{lem}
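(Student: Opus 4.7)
The plan is to first show that every cohomology object $\H^q(M)$ already belongs to $\FLCA$, and then to deduce from the hypothesis on $M$ that $R\underline{\Hom}(\R/\Z,\H^q(M))$ lies in $\DDD^{perf}(\Z)$. For the first claim, $\H^q(M)\in\Ab(\CC)\cap\DDD^b(\FLCA)$, which by \cref{cor:intersections} equals $\LH(\FLCA)$; so $\H^q(M)$ can be represented in $\Ab(\CC)$ as the cokernel of a monomorphism $B^{-1}\hookrightarrow B^0$ in $\FLCA$. The assumption that $\H^q(M)$ is a locally compact, in particular Hausdorff, abelian group forces the image of $B^{-1}$ to be closed in $B^0$, so this cokernel coincides with the topological quotient $B^0/B^{-1}$, which is again a locally compact abelian group of finite ranks. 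Hence $\H^q(M)\in\FLCA$.

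For the perfectness part, I would use the hyper-Ext spectral sequence
\[
E_2^{p,q}=\underline{\Ext}^p(\R/\Z,\H^q(M))\Longrightarrow\underline{\Ext}^{p+q}(\R/\Z,M),
\]
combined with the vanishing $\underline{\Ext}^p(\R/\Z,A)=0$ for $p\ge 2$ and $A\in\FLCA$. The latter follows by applying $R\underline{\Hom}(-,A)$ to $0\to\Z\to\R\to\R/\Z\to 0$, which reduces the claim to $\underline{\Ext}^p(\R,A)=0$ for $p\ge 1$; this can be verified by filtering $A$ into elementary pieces (real vector spaces, circles, finitely generated discrete groups, finite groups) and explicitly computing $R\underline{\Hom}(\R,-)$ on each. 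Granting the vanishing, the spectral sequence is concentrated in the two rows $p=0,1$, so it degenerates to short exact sequences
\[
0\to\underline{\Ext}^1(\R/\Z,\H^{n-1}(M))\to\underline{\Ext}^n(\R/\Z,M)\to\underline{\Hom}(\R/\Z,\H^n(M))\to 0
\]
in $\Ab(\CC)$ for every $n$. By hypothesis the middle term is a finitely generated abelian group, vanishing for $|n|$ large, so both outer terms inherit these properties. Consequently $R\underline{\Hom}(\R/\Z,\H^q(M))$ is concentrated in degrees $0,1$ with finitely generated cohomology, hence lies in $\DDD^{perf}(\Z)$, yielding $\H^q(M)\in\DDD^{perf}_{\Z,\R}\cap\FLCA$.

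The main obstacle I foresee is in verifying the vanishing $\underline{\Ext}^p(\R/\Z,A)=0$ for $p\ge 2$ and $A\in\FLCA$. Although the computation is standard on each elementary piece, to run it uniformly over $\FLCA$ one must use the structure of $\FLCA$-objects as iterated extensions (rather than direct sums) of these pieces, together with stability of the vanishing class under extensions. Since $\FLCA\subset\Ab(\CC)$ is stable by extensions (\cref{prop:stableextensions}) and $R\underline{\Hom}(\R,-)$ is an exact triangle functor, this reduces to careful condensed computations of $R\underline{\Hom}(\R,-)$ on each step of a composition series of $A$; a secondary subtlety is to identify, via \cref{internalrhom}, the internal $\Ext$ appearing in the spectral sequence with the one computed in $\DDD^b(\FLCA)$.
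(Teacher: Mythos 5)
Your proof is correct and follows the paper's argument: the same short exact sequences
\[
0\to\underline{\Ext}^1(\R/\Z,\H^{i-1}(M))\to\underline{\Ext}^i(\R/\Z,M)\to\underline{\Hom}(\R/\Z,\H^i(M))\to 0
\]
in degrees $i=q,q+1$, then the observation that sub- and quotient objects of discrete finite-$\Z$-type condensed groups are again discrete of finite $\Z$-type, with the added bonus that your first paragraph explicitly checks $\H^q(M)\in\FLCA$ (and not merely $\LH(\FLCA)$) via \cref{cor:intersections} and closedness of the image, a point the paper's proof leaves implicit. The vanishing $\underline{\Ext}^p(\R/\Z,A)=0$ for $p\ge 2$ and $A$ locally compact, which you flag as the main obstacle, is in fact standard and needs no case-by-case filtration: $\R$ is projective (and injective) in $\LCA$, and since the internal $R\underline{\Hom}$ computed in $\Ab(\CC)$ agrees with the one in $\LCA$ (\cite[Corollary 4.9]{LCM}, cf.\ \cref{internalrhom}), one has $\underline{\Ext}^{\ge 1}(\R,A)=0$, so applying $R\underline{\Hom}(-,A)$ to $0\to\Z\to\R\to\R/\Z\to 0$ already gives the claim.
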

\begin{proof}
Since $\H^q(M)$ is locally compact, the complex $R\underline{\Hom}(\R/\Z,\H^q(M))$ is concentrated in degrees $0,1$. For all $i$, we have an exact sequence in $\Ab(\CC)$\[
0\rightarrow \underline{\Ext}^1(\R/\Z,\H^{i-1}(M))\rightarrow \underline{\Ext}^i(\R/\Z,M)\rightarrow \underline{\Hom}(\R/\Z,\H^i(M))\rightarrow 0.
\] By hypothesis, $\underline{\Ext}^i(\R/\Z,M)$ is a finite $\Z$-type abelian group for all $i$. The exact sequence with $i=q$ implies that $\underline{\Hom}(\R/\Z,\H^q(M))$ is of finite $\Z$-type. The exact sequence with $i=q+1$ implies that $\underline{\Ext}^1(\R/\Z,\H^q(M))$ is of finite $\Z$-type. Thus $R\underline{\Hom}(\R/\Z,\H^q(M))\in \DDD^{perf}(\Z)$.
\end{proof}

\subsection{Duality for the cohomology of \texorpdfstring{$W_k$}{Wk}}\label{dualityforwk}
Let $R$ be a condensed commutative ring with 1 and $M$ a condensed $R[W_k]$-module. \cref{cohomologyz} gives \[
\HH^0(B_{W_k},M)=\mathrm{ker}(1-\varphi)=M^{W_k}, \qquad \HH^1(B_{W_k},M)=\mathrm{coker}(1-\varphi)=M_{W_k}, \qquad \HH^q(B_{W_k},M)=0 \quad \forall q\ge 2.
\]
\begin{cns}\label{cns:cupprodwk} Let $D$ be a condensed $R[W_k]$-module with the trivial action of $W_k$. Then $\HH^0(B_{W_k},D)=\HH^1(B_{W_k},D)=D$ and $\HH^q(B_{W_k},D)=0$ for all $q\ge 2$. Hence we have a trace map $R\GGamma(B_{W_k},D)\rightarrow D[-1]$. Let $M,N$ be condensed $R[W_k]$-modules. Suppose that there exists a $W_k$-equivariant perfect pairing of $R$-modules $M\times N \rightarrow D$. For all $q$, one gets an induced cup product pairing \begin{equation}\label{cupprodwk}
\HH^q(B_{W_k},M)\otimes_R \HH^{1-q}(B_{W_k},N) \rightarrow D.
\end{equation}
\end{cns}
\begin{thm}\label{wtfiniteduality}
Let $M,N,D$ as in \cref{cns:cupprodwk}. Suppose that $\underline{\Ext}^1_R(M_{W_k},D)=0$ and $\underline{\Ext}^1_R(N_{W_k},D)=0$. Then the cup-product pairing \eqref{cupprodwk} yields a perfect pairing for all $q$. 
\end{thm}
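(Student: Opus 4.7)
The approach uses the explicit presentation from \cref{cohomologyz}: $R\GGamma(B_{W_k}, X) \simeq [X \xrightarrow{1-\varphi} X]$ in degrees $0,1$, so $\HH^0(B_{W_k}, X) = X^{W_k}$ and $\HH^1(B_{W_k}, X) = X_{W_k}$. Unwinding the cup-product pairing via this resolution, $\psi^0(M)$ sends $m \in M^{W_k}$ to $\bar n \mapsto \langle m, n \rangle$, well-defined because $W_k$-equivariance gives $\langle m, (1-\varphi)n' \rangle = \langle (1-\varphi^{-1})m, n' \rangle = 0$, and $\psi^1(M)$ is induced by restriction of $\langle m, -\rangle : N \to D$ to $N^{W_k} \subset N$. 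For $q = 0$, the perfect pairing supplies a $W_k$-equivariant iso $M \cong \underline{\Hom}_R(N, D)$; taking $W_k$-invariants and identifying $W_k$-equivariant $R$-linear maps $N \to D$ (with $D$ trivially acted on) with $R$-linear maps $N_{W_k} \to D$ produces $M^{W_k} \cong \underline{\Hom}_R(N_{W_k}, D)$, which coincides with $\psi^0(M)$. By the symmetric argument $\psi^0(N)$ is an iso as well, and no Ext hypothesis is needed here.

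For $q = 1$, I would first prove injectivity of $\psi^1(M)$. Apply $\underline{\Hom}_R(-, D)$ to the two short exact sequences $0 \to (1-\varphi)N \to N \to N_{W_k} \to 0$ and $0 \to N^{W_k} \to N \to (1-\varphi)N \to 0$. The first combined with the hypothesis $\underline{\Ext}^1_R(N_{W_k}, D) = 0$ yields a surjection $\underline{\Hom}_R(N, D) \twoheadrightarrow \underline{\Hom}_R((1-\varphi)N, D)$; the second provides a left-exact sequence exhibiting $\underline{\Hom}_R((1-\varphi)N, D)$ as $\mathrm{ker}(\underline{\Hom}_R(N, D) \to \underline{\Hom}_R(N^{W_k}, D))$. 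Composing, one identifies the image of $(1-\varphi): M \to M$ (under $M \cong \underline{\Hom}_R(N, D)$) with that kernel, so $\psi^1(M)$ is injective with $\im(\psi^1(M))$ equal to the image of the restriction map. The symmetric argument, using $\underline{\Ext}^1_R(M_{W_k}, D) = 0$, yields the analogous injection $\psi^1(N): N_{W_k} \hookrightarrow \underline{\Hom}_R(M^{W_k}, D)$.

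The main obstacle is then the surjectivity of $\psi^1$. Setting $(-)^{\vee} := \underline{\Hom}_R(-, D)$, the Step $q=0$ iso $M^{W_k} \cong (N_{W_k})^{\vee}$ rewrites $\underline{\Hom}_R(M^{W_k}, D)$ as $(N_{W_k})^{\vee\vee}$ and presents the injection $\psi^1(N)$ as the canonical biduality map $N_{W_k} \to (N_{W_k})^{\vee\vee}$; analogously $\psi^1(M)$ is the biduality of $M_{W_k}$. The perfect pairing already forces reflexivity of $M$ and $N$ themselves (since $M \cong N^{\vee}$ and $N \cong M^{\vee}$ imply $N \cong N^{\vee\vee}$). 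The remaining step is to propagate this reflexivity to the coinvariants: dualise the short exact sequence $0 \to (1-\varphi)M \to M \to M_{W_k} \to 0$ twice, use $\underline{\Ext}^1_R(M_{W_k}, D) = 0$ to collapse the resulting long exact sequence to a four-term exact sequence, and match outer terms with the corresponding dualisation of the sequence for $N$ via the perfect pairing; this forces both biduality maps to be isomorphisms and completes the surjectivity of $\psi^1(M)$ and $\psi^1(N)$, establishing perfectness of the cup product for all $q$.
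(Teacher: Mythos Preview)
Your treatment of $q=0$ and of injectivity at $q=1$ is correct and is essentially an unpacking of what the paper does. Where your proposal runs into trouble is the surjectivity of $\psi^1(M)$ and $\psi^1(N)$. You correctly reduce it to reflexivity of $M_{W_k}$ and $N_{W_k}$, but the sketch in your last paragraph does not close the gap: after the first dualisation of $0 \to (1-\varphi)M \to M \to M_{W_k} \to 0$ you do obtain a short exact sequence (this is exactly where $\underline{\Ext}^1_R(M_{W_k},D)=0$ is used), but the second dualisation introduces an obstruction $\underline{\Ext}^1_R\big(((1-\varphi)M)^\vee, D\big)$ which neither hypothesis controls. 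Even after identifying $((1-\varphi)M)^\vee$ with $(1-\varphi)N$ via the perfect pairing, the cokernel of the biduality map on $M_{W_k}$ sits inside $\ker\big(\underline{\Ext}^1_R((1-\varphi)N,D)\to\underline{\Ext}^1_R(N,D)\big)$, and nothing in the hypotheses forces this to vanish. The phrase ``match outer terms \ldots\ forces both biduality maps to be isomorphisms'' is not an argument as it stands.

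The paper's proof sidesteps the biduality detour entirely. It writes the $W_k$-equivariant isomorphism $\psi:N\to\underline{\Hom}_R(M,D)$ as the two middle vertical arrows in a morphism from the tautological exact sequence $0\to N^{W_k}\to N\xrightarrow{1-\varphi} N\to N_{W_k}\to 0$ to the sequence obtained by applying $\underline{\Hom}_R(-,D)$ to $0\to M^{W_k}\to M\xrightarrow{1-\varphi} M\to M_{W_k}\to 0$; the hypothesis $\underline{\Ext}^1_R(M_{W_k},D)=0$ is invoked for exactness of this dualised row. With both rows exact and the middle maps isomorphisms, the outer maps are isomorphisms by a five-lemma argument; swapping $M$ and $N$ then handles the other adjoint. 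This packages injectivity and surjectivity at $q=1$ into one step, so no separate reflexivity argument for the coinvariants is needed.
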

\begin{proof}
We consider the exact sequence of condensed $R$-modules \[
0\rightarrow M^{W_k}\rightarrow M \overset{1-\varphi}{\longrightarrow}M\rightarrow M_{W_k}\rightarrow 0.
\] Since $\underline{\Ext}^1_R(M_{W_k},D)=0$, the $W_k$-invariant isomorphism $\psi:N\rightarrow \underline{\Hom}_R(M,D)$ induces a morphism of exact sequences\[ \begin{tikzcd}
0\arrow[r] & \underline{\Hom}_R(M_{W_k},D)\arrow[r] &\underline{\Hom}_R(M,D)\arrow[r,"1-\varphi"] & \underline{\Hom}_R(M,D) \arrow[r] & \underline{\Hom}_R(M^{W_k},D)\rightarrow 0\\
0\arrow[r] & N^{W_k}\arrow[r]\arrow[u,"\psi^1"] & N\arrow[u,"\psi"]\arrow[r,"1-\varphi"] & N\arrow[u,"\psi"] \arrow[r] & N_{W_k}\arrow[u,"\psi^0"]\rightarrow 0.
\end{tikzcd}
\] One remarks that $\psi^1:\HH^0(B_{W_k},N)\to\underline{\Hom}_R(\HH^1(B_{W_k},M),D)$ and $\psi^0:\HH^1(B_{W_k},N)\to\underline{\Hom}_R(\HH^0(B_{W_k},M),D)$ are induced by \eqref{cupprodwk}. Since $\psi$ is an isomorphism, $\psi^0$ and $\psi^1$ are isomorphisms as well. Replacing $M$ with $N$, we conclude.
\end{proof}
\begin{ex}\label{pontrfinitefield}
Let $R=\Z$, $M$ be a $W_k$-module which is either discrete or compact Hausdorff as a condensed abelian group. Let $N=M^{\vee}$ and $D=\R/\Z$ with the trivial action. The perfect pairing $M\times M^{\vee} \rightarrow \R/\Z$ satisfies the hypotheses of \cref{wtfiniteduality}. Then for all $q$ we have a perfect pairing \[
\HH^q(B_{W_k},M)\otimes \HH^{1-q}(B_{W_k},M^{\vee})\rightarrow \R/\Z.
\]
\end{ex}
\begin{ex}\label{rlineardual}
Let $R=\R$ and $V$ a finite-dimensional real vector space with a continuous $\R$-linear action of $W_k$. The perfect pairing $V\times \underline{\Hom}_{\R}(V,\R) \rightarrow \R$ satisfies the hypotheses of \cref{wtfiniteduality}. For all $q$, we get a perfect pairing of condensed $\R$-modules \[
\HH^q(B_{W_k},V)\otimes_{\R}\HH^{1-q}(B_{W_k},V^*)\rightarrow \R.
\]
\end{ex}
\begin{ex}\label{rlinearlattice}
Let $R=\R$ and $V$ a finite-dimensional real vector space. Let $G$ be an extension of $W_k$ by a finite group $H$. Suppose that $V$ has a continuous $\R$-linear action of $G$. The perfect pairing $V\times \underline{\Hom}_{\R}(V,\R) \rightarrow \R$ induces a $\R$-linear and $W_k$-invariant pairing  $V^H\times (V^*)^H \rightarrow \R$. Since $H$ is finite, this pairing is perfect. 
Moreover, it satisfies the hypotheses of \cref{wtfiniteduality}. Hence for all $q$ we get a perfect cup-product pairing of condensed $\R$-modules \[
\HH^q(B_{W_k},V^H)\otimes_{\R} \HH^{1-q}(B_{W_k},(V^*)^H)\rightarrow \R.
\]
\end{ex}

\subsection{Conservativity of completion}\label{conservativityofcompletion}
\begin{defn}
Let $C$ be a complex of condensed abelian groups, and let $m$ be an integer. We define $C\otimes^L \Z/m$ as the cofiber of the morphism $\cdot m: C\to C$ in $\DDD(\Ab(\CC))$.
\end{defn}
\begin{ntt}
Let $A$ be a condensed abelian group and let $m\in \N$. We set $\left{m}{A}\coloneqq\mathrm{ker}(A\overset{\cdot m}{\rightarrow} A)$ and $A/m\coloneqq \mathrm{coker}(A\overset{\cdot m}{\rightarrow} A)$, where the kernel and the cokernel are computed in $\Ab(\CC)$. Moreover, we set \[ TA\coloneqq \underset{\substack{\leftarrow \\ m}}{\lim} \left{m}{A}, \qquad A^{nc}\coloneqq \underset{\substack{\leftarrow\\ m}}{\lim} \, A/m\] the \emph{Tate module} of $A$ and the \emph{naive completion} of $A$ respectively. Here the limits are computed in $\Ab(\CC)$. 
\end{ntt}
\begin{defn}[Completion of a complex of condensed abelian groups]\label{completionofcomplex}
Let $C$ be a complex of condensed abelian groups. We define its completion $C^{\wedge}$ as \[
C^{\wedge}\coloneqq R\lim_{\substack{\leftarrow \\ m}}\, (C\otimes^L \Z/m),
\] where the derived limit is computed in $\DDD(\Ab(\CC))$.
\end{defn}
We have an \emph{exact} functor \[
(-)^{\wedge}:\DDD(\Ab(\CC))\rightarrow \DDD(\Ab(\CC)),
\] and two functors \[
T:\Ab(\CC)\rightarrow \Ab(\CC),\qquad (-)^{nc}:\Ab(\CC)\rightarrow \Ab(\CC).
\]
In the topos of condensed sets the functor of $\N^{op}$-indexed limits has cohomological dimension $1$ \cite[Propositions 3.1.11,3.2.3]{proetale}. Hence for all $q$ we get a diagram with exact row and column \begin{equation}\label{cd1}
\begin{tikzcd}
&&&0\ar[d]&\\
&&&\H^q(C)^{nc}\ar[d]&\\
0\ar[r]& \underset{\leftarrow\, m}{\lim}^1 \left{m}{\H^q(C)}\ar[r]& \H^q(C^{\wedge})\ar[r]& \underset{\leftarrow\, m}{\lim} \H^q(C\otimes^L \Z/m)\ar[r]\ar[d] & 0.\\
&&&TH^{q+1}(C)\ar[d]&\\
&&&0&
\end{tikzcd}
\end{equation}
\begin{prop}\label{functoriality}
Let $a:A_1\to A_2$ be a morphism of condensed abelian groups. Suppose that we have \[
\underset{\leftarrow\, m}{\lim}^1 \, \left{m}{A_1}=\underset{\leftarrow\, m}{\lim}^1 \, \left{m}{A_2}=\underset{\leftarrow\, m}{\lim}^1 \, \left{m}{\mathrm{ker}(a)}=\underset{\leftarrow\, m}{\lim}^1 \, \left{m}{\mathrm{coker}(a)}=0.
\] Then the following properties hold: \begin{enumerate}[i)]
\item $\mathrm{ker}(Ta)=T\mathrm{ker}(a)$;
\item $\mathrm{coker}(a^{nc})=\mathrm{coker}(a)^{nc}$;
\item there exists a condensed abelian group which is both extension of $\mathrm{ker}(a^{nc})$ by $\mathrm{coker}(Ta)$ and extension of $T\mathrm{coker}(a)$ by $\mathrm{ker}(a)^{nc}$.
\end{enumerate}
\end{prop}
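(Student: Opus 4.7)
The plan is to derive all three parts from the exactness of the completion functor $(-)^{\wedge}$ (see \cref{completionofcomplex}) applied to two well-chosen distinguished triangles attached to $a$. The first step I would take is to establish an auxiliary vanishing that is not among the hypotheses but holds universally: for \emph{any} $X\in\Ab(\CC)$, the projective system $(X/m)_{m}$ indexed by divisibility has surjective transition maps (further quotients $X/n\twoheadrightarrow X/m$ for $m\mid n$), and evaluating at an extremally disconnected $S$ reduces the computation of $\underset{\leftarrow\, m}{\lim}^{1}$ to the classical vanishing for $\N$-indexed inverse systems of abelian groups with surjective transitions. Hence $\underset{\leftarrow\, m}{\lim}^{1}\, X/m=0$ in $\Ab(\CC)$ for every $X$. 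Combined with the four hypotheses, the universal short exact sequence
\[ 0\to\underset{\leftarrow\, m}{\lim}^{1}\, H^{q-1}(\bullet/m)\to H^{q}((\bullet)^{\wedge})\to\underset{\leftarrow\, m}{\lim}\, H^{q}(\bullet/m)\to 0 \]
then implies that $A_{1}^{\wedge}$, $A_{2}^{\wedge}$, $\ker(a)^{\wedge}$ and $\coker(a)^{\wedge}$ are concentrated in cohomological degrees $-1$ and $0$, with $H^{-1}=T(-)$ and $H^{0}=(-)^{nc}$.

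Part (i) is then purely formal: the functor $\left{m}{(-)}$ is itself a kernel, so it preserves kernels, and $\underset{\leftarrow\, m}{\lim}$ is left exact, so $T$ preserves kernels. Applied to the exact sequence $0\to\ker(a)\to A_{1}\to A_{2}$, this directly gives $\ker(Ta)=T\ker(a)$ without any appeal to the hypotheses.

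For (ii) and (iii) I would set $C\coloneqq\mathrm{cofib}(a)\in\DDD^{b}(\Ab(\CC))$, which has $H^{-1}(C)=\ker(a)$ and $H^{0}(C)=\coker(a)$, and apply the exact functor $(-)^{\wedge}$ to the main triangle $A_{1}\to A_{2}\to C$ and to the Postnikov truncation triangle $\ker(a)[1]\to C\to\coker(a)$, obtaining two distinguished triangles in $\DDD(\Ab(\CC))$. The long exact cohomology sequence of the first, combined with the degree concentrations above, yields
\[ 0\to\coker(Ta)\to H^{-1}(C^{\wedge})\to\ker(a^{nc})\to 0 \]
together with $\coker(a^{nc})\cong H^{0}(C^{\wedge})$. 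The long exact cohomology sequence of the second yields
\[ 0\to\ker(a)^{nc}\to H^{-1}(C^{\wedge})\to T\coker(a)\to 0 \]
together with $H^{0}(C^{\wedge})\cong\coker(a)^{nc}$. Comparing the two descriptions of $H^{0}(C^{\wedge})$ proves (ii), and taking $X\coloneqq H^{-1}(C^{\wedge})$ with its two distinct extension structures proves (iii).

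The main obstacle will be establishing the auxiliary vanishing $\underset{\leftarrow\, m}{\lim}^{1}\, X/m=0$ for arbitrary $X\in\Ab(\CC)$ and threading it through the cohomology of the cofibers $\ker(a)^{\wedge}$, $\coker(a)^{\wedge}$ and $C^{\wedge}$; it is precisely this extra input, beyond the stated hypotheses, that forces both long exact sequences to degenerate into the advertised short exact sequences. Once the vanishing is in place, the remainder is routine bookkeeping in the two long exact sequences.
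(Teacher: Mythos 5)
Your proof is correct and takes essentially the same route as the paper: apply the exact completion functor to the two distinguished triangles $A_1\to A_2\to\mathfrak{a}$ and $\ker(a)[1]\to\mathfrak{a}\to\coker(a)[0]$, deduce from the hypotheses and the Milnor sequence (the paper's diagram \eqref{cd1}) that $A_i[0]^{\wedge}$, $\ker(a)[0]^{\wedge}$, $\coker(a)[0]^{\wedge}$ are concentrated in degrees $-1,0$ with $\H^{-1}=T(-)$ and $\H^0=(-)^{nc}$, and read off the three claims from the two long exact cohomology sequences. Your standalone argument for part (i) via left exactness of $T$, and your explicit derivation of the universal vanishing of $\lim^1$ of the quotient system $\{X/m\}_m$, are slight streamlinings of steps the paper has already bundled into \eqref{cd1}, so the substance is the same.
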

\begin{proof}
We set $\mathfrak{a}\coloneqq [A_1\overset{a}{\rightarrow} A_2]$, with $A_1$ in degree $-1$. By exactness of $(-)^{\wedge}$, we have fiber sequences in $\DDD(\Ab(\CC))$ \begin{equation}\label{eqn:cons6}
 A_1[0]^{\wedge}\rightarrow A_2[0]^{\wedge} \rightarrow \mathfrak{a}^{\wedge}
 \end{equation} and
 \begin{equation}\label{eqn:cons7}
 \mathrm{ker}(a)[1]^{\wedge}\rightarrow \mathfrak{a}^{\wedge}\rightarrow \mathrm{coker}(a)[0]^{\wedge}.
 \end{equation}
By the hypothesis and by \eqref{cd1}, $A_1[0]^{\wedge}$ is concentrated in degrees $-1,0$. Moreover, $\H^{-1}(A_1[0]^{\wedge})=TA$ and $\H^0(A_1[0]^{\wedge})=A_1^{nc}$. The same holds for $A_2,\mathrm{ker}(a),\mathrm{coker}(a)$.
The long exact sequences coming from \eqref{eqn:cons6} and \eqref{eqn:cons7} conclude the proof. 
\end{proof}
\begin{prop}[Conservativity of completion]\label{conservativitycompletion}
Let $C,D$ be complexes of condensed abelian groups cohomologically concentrated in degrees $0,1,2$. 
Let $f:C\rightarrow D$ be a map in $\DDD(\Ab(\CC))$ such that $f^{\wedge}:C^{\wedge}\to D^{\wedge}$ is an equivalence. Suppose one of the two (dual) conditions are satisfied \begin{enumerate}[(i)]
\item $\H^0(C),\H^1(C),\H^0(D),\H^1(D)$ are of finite $\Z$-type, and $\H^2(C)$ and $\H^2(D)$ are of finite $\Q_p/\Z_p$-type.
\item $\H^0(C)^{\vee},\H^0(D)^{\vee}$ are of finite $\Q_p/\Z_p$-type, and $\H^1(C)^{\vee}$, $\H^2(C)^{\vee}$, $\H^1(D)^{\vee}$, $\H^2(D)^{\vee}$ are of finite $\Z$-type. 
\end{enumerate}
Then $f$ is an equivalence.
\end{prop}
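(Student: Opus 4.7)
The plan is to set $E \coloneqq \mathrm{fib}(f) \in \DDD(\Ab(\CC))$; since the completion functor $(-)^\wedge$ is exact and $f^\wedge$ is an equivalence by hypothesis, $E^\wedge = 0$, so it suffices to prove that $E = 0$. Using the long exact sequence attached to the fiber sequence $E \to C \overset{f}{\to} D$ together with the stability results of \cref{extsubquo}, I would first determine that $E$ is cohomologically concentrated in degrees $[0,3]$. Under hypothesis (i), $\H^0(E)$ and $\H^1(E)$ are of finite $\Z$-type, $\H^2(E)$ is an extension of a subgroup of $\H^2(C)$ (of finite $\Q_p/\Z_p$-type) by a quotient of $\H^1(D)$ (of finite $\Z$-type), and $\H^3(E)$ is of finite $\Q_p/\Z_p$-type. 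Under hypothesis (ii), the cohomology groups assume the Pontryagin dual shapes: $\H^0(E)$ is of the form $\Z_p^r \oplus F$; $\H^2(E)$ and $\H^3(E)$ are of the form $(\R/\Z)^s \oplus F$; and $\H^1(E)$ is an extension of the latter type by the former.

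Next, I would exploit $\H^q(E^\wedge) = 0$ for every $q$ through diagram \eqref{cd1}. For each group $A$ appearing as (or as a sub/quotient piece of) a $\H^q(E)$, the inverse systems $\{A[m]\}_m$ and $\{A/m\}_m$ are Mittag--Leffler with $\lim^1 = 0$ in $\Ab(\CC)$ (checked case-by-case using the explicit structures of $\Z$, $\Z_p$, $\R/\Z$, $\Q_p/\Z_p$, and finite groups: transitions are eventually surjective, eventually stable, or eventually zero). Combined with the short exact sequence $0 \to \H^q(E)/m \to \H^q(E \otimes^L \Z/m) \to \H^{q+1}(E)[m] \to 0$, this forces $\lim^1_m \H^q(E \otimes^L \Z/m) = 0$, so \eqref{cd1} collapses to the conclusion that
\begin{equation*}
\H^q(E)^{nc} = 0 \qquad \text{and} \qquad T\H^{q+1}(E) = 0 \qquad \text{for every integer } q.
\end{equation*}

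The proof then finishes by descending induction on $q$, using \cref{functoriality} applied to the defining short exact sequence $0 \to A_1 \to \H^q(E) \to A_2 \to 0$ in each degree. Under (i), writing $\H^3(E) = (\Q_p/\Z_p)^{r_3} \oplus F_3$, the vanishings $F_3 = \H^3(E)^{nc} = 0$ and $\Z_p^{r_3} = T\H^3(E) = 0$ in $\Ab(\CC)$ give $\H^3(E) = 0$. For $\H^2(E)$, with $A_1 = \Z^{s_2} \oplus F_1$ and $A_2 = (\Q_p/\Z_p)^{r_2} \oplus F_2$, the six-term sequence of \cref{functoriality}, combined with $T\H^2(E) = \H^2(E)^{nc} = 0$, collapses to an isomorphism $\Z_p^{r_2} \cong \hat{\Z}^{s_2} \oplus F_1$ in $\Ab(\CC)$ together with $F_2 = 0$; comparing $\ell$-adic components of $\hat{\Z} = \prod_\ell \Z_\ell$ for $\ell \neq p$ forces $s_2 = 0$, and then torsion-freeness of $\Z_p^{r_2}$ versus finiteness of $F_1$ forces $r_2 = 0$ and $F_1 = 0$. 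Finally $\H^0(E)$ and $\H^1(E)$ of finite $\Z$-type vanish directly from $\hat{\Z}^s \oplus F = 0$ in $\Ab(\CC)$. Case (ii) is handled by the same mechanism after the computations are swapped: $T(\Z_p^r \oplus F) = 0$ with $(\Z_p^r \oplus F)^{nc} = \Z_p^r \oplus F$, while $T((\R/\Z)^s \oplus F) = \hat{\Z}^s$ and $((\R/\Z)^s \oplus F)^{nc} = F$. The extension in $\H^1(E)$ yields the analogous isomorphism $\hat{\Z}^s \cong \Z_p^r \oplus F'$ in $\Ab(\CC)$, which is ruled out by the same $\ell$-adic comparison.

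The main obstacle will be this final comparison step in $\Ab(\CC)$: ruling out the isomorphism $\hat{\Z}^s \cong \Z_p^r \oplus F$ requires genuinely using the decomposition $\hat{\Z} = \prod_\ell \Z_\ell$ in the condensed category, with each $\Z_\ell$ a non-trivial solid condensed abelian group. Throughout, care is needed to check that all the Mittag--Leffler conditions, $\lim^1$-vanishings, and the six-term exact sequence of \cref{functoriality} are valid in $\Ab(\CC)$ and not merely after passing to underlying abelian groups.
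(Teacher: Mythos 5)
Your proposal is correct and takes essentially the same approach as the paper. You organize the argument around the fiber $E = \mathrm{fib}(f)$ and its cohomology (handling a mixed-type $\H^2(E)$ under hypothesis (i) by further decomposing it via the inclusion $\mathrm{coker}(f^1)\hookrightarrow \H^2(E)$), whereas the paper applies the snake lemma directly to the maps $(f^\wedge)^q$ and thereby works from the start with the pure-type pieces $\ker(f^q)$ and $\mathrm{coker}(f^q)$; but the key tools — diagram \eqref{cd1}, \cref{functoriality}, the Mittag--Leffler vanishing of $\lim^1$, and the final comparison $\hat{\Z}^s \cong \Z_p^r \Rightarrow s=r=0$ — are identical.
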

\begin{rmk}\label{rmk:lim1}
If $A$ is a condensed abelian group, and either $A$ or $A^{\vee}$ is of one of the two types present in the hypotheses (i) and (ii), then $\left{m}{A}$ is finite for all $m$. Hence the systems $\{\left{m}{A}\}$ are Mittag-Leffler and we have \[
\underset{\leftarrow\, m}{\lim}^1 \, \left{m}{A}=0.
\]
\end{rmk}
\begin{proof}
The map $f:C\rightarrow D$ in $\DDD(\Ab(\CC))$ induces $f^q:\H^q(C)\to\H^q(D)$ in $\Ab(\CC)$. We need to prove that $\mathrm{ker}(f^q)=\mathrm{coker}(f^q)=0$ for all $q$. We suppose that hypothesis $(i)$ holds. By \cref{extsubquo}, (\ref{extsubquo:sub}) and (\ref{extsubquo:quo}), if $\H^q(C)$ and $\H^q(D)$ are both of finite $\Z$-type (resp.\ of finite $\Q_p/\Z_p$-type), then so are $\mathrm{ker}(f^q)$ and $\mathrm{coker}(f^q)$. Thus there exist  integers $r_0,r_1,s_0,s_1,\rho,\sigma$ and finite groups $F_0,F_1,G_0,G_1,H,K$ such that we have \[
\begin{split}
\mathrm{ker}(f^q)=\Z^{r_q}\oplus F_q, \qquad \qquad  &\mathrm{coker}(f^q)=\Z^{s_q}\oplus G_q, \qquad q=0,1,\\
\mathrm{ker}(f^2)=(\Q_p/\Z_p)^{\rho}\oplus H, \qquad \qquad & \mathrm{coker}(f^2)=(\Q_p/\Z_p)^{\sigma}\oplus K.
\end{split}
\]
Applying \cref{rmk:lim1} to the diagram \eqref{cd1}, we get morphisms of exact sequences for all $q$ \[
\begin{tikzcd}
0\arrow[r]& \H^q(C)^{nc}\arrow[r]\arrow[d,"(f^q)^{nc}"] & \H^q(C^{\wedge})\arrow[r]\arrow[d,"(f^{\wedge})^q"] & T\H^{q+1}(C)\arrow[r]\arrow[d,"Tf^{q+1}"]& 0\\
0\arrow[r]& \H^q(D)^{nc}\arrow[r]& \H^q(D^{\wedge})\arrow[r]& T\H^{q+1}(D)\arrow[r]& 0.
\end{tikzcd}
\] By hypothesis, $(f^{\wedge})^q$ is an isomorphism. The Snake Lemma and \cref{functoriality} give us \begin{enumerate}[A)]
\item $\mathrm{ker}(f^q)^{nc}=0$ for all $q$;
\item $T\mathrm{coker}(f^q)=0$ for all $q$;
\item $T\mathrm{ker}(f^{q+1})=\mathrm{coker}(f^q)^{nc}$ for all $q$.
\end{enumerate}
A) implies that $r_0=r_1=0$ and $F_0=F_1=H=0$, and B) implies that $\sigma=0$. Consequently, we have
\[
\begin{split}
\mathrm{ker}(f^q)=0, \qquad \qquad  &\mathrm{coker}(f^q)=\Z^{s_q}\oplus G_q, \qquad q=0,1,\\
\mathrm{ker}(f^2)=(\Q_p/\Z_p)^{\rho}, \qquad \qquad & \mathrm{coker}(f^2)=K.
\end{split}
\]
We now apply C), hence we have \[
\begin{split}
T\mathrm{ker}(f^1)=\mathrm{coker}(f^0)^{nc} &\implies 0=\hat{\Z}^{s_0}\oplus G_0 \implies s_0=0,\, G_0=0\\
T\mathrm{ker}(f^2)=\mathrm{coker}(f^1)^{nc} &\implies \Z_p^{\rho}=\hat{\Z}^{s_1}\oplus G_1 \implies s_1=\rho=0, \, G_1=0\\
T\mathrm{ker}(f^3)=\mathrm{coker}(f^2)^{nc} & \implies 0=K.
\end{split}
\] 
Thus $f$ is an equivalence.

The proof with hypothesis (ii) can be done in the same way. \end{proof}

\subsection{The main result}\label{mainresult}
\begin{thm}\label{thm:dualityrealvs}
Let $M\in \DDD^{perf}(\R)$ with a continuous action of $W_F/U$, with $U\subset I$ open normal subgroup. Then one has $R\GGamma(B_{\hat{W}_F},M),R\GGamma(B_{\hat{W}_F},M^D)\in\DDD^b(\FLCA)$. Moreover, the trace map \[
R\GGamma(B_{\hat{W}_F},\R/\Z(1))\rightarrow \R/\Z[-2]
\] induces a perfect cup-product pairing \[ R\GGamma(B_{\hat{W}_F},M)\otimes^L R\GGamma(B_{\hat{W}_F},M^D)\rightarrow \R/\Z[-2] \]
in $\DDD^b(\Ab(\CC))$.
\end{thm}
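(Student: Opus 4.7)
The plan is to reduce the statement to a single finite-dimensional real vector space concentrated in degree~$0$, and then to deduce the $\R/\Z$-valued duality from the $\R$-linear duality for $B_{W_k}$ established in \cref{rlinearlattice}. Since $\DDD^{perf}(\R)$ is the smallest stable $\infty$-subcategory of $\DDD(\R)$ containing $\R$, and since both conclusions of the theorem (belonging of the cohomology complexes to $\DDD^b(\FLCA)$, and perfectness of the cup-product pairing) are preserved under cofiber sequences by two-out-of-three for equivalences in stable $\infty$-categories, I first reduce to the case $M=V$, where $V$ is a finite-dimensional real vector space with a continuous action of $W_F/U$, placed in degree~$0$.

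Next I compute the dual. By \cref{lem:homrealgm} applied to the triangle $\Z(1)\to\R(1)\to\R/\Z(1)$, one has $R\underline{\Hom}(V,\Z(1))=0$, so $V^D\simeq R\underline{\Hom}(V,\R(1))\simeq V^*[-1]$ as in \cref{rmk:dualofv}. Combined with \cref{rmk:structurervs}, this shows that $R\GGamma(B_{\hat{W}_F},V)$ and $R\GGamma(B_{\hat{W}_F},V^D)$ both have cohomology consisting of finite-dimensional real vector spaces concentrated in a bounded range of degrees, hence lie in $\DDD^b(\FLCA)$ by \cref{cor:intersections}. Using the Hochschild-Serre spectral sequence (\cref{prop:proHS}) for $1\to\hat{I}\to\hat{W}_F\to\Z\to 1$ together with the vanishing of higher $\hat{I}$-cohomology of real Banach spaces (\cref{banachcoefficients}), I obtain canonical equivalences
\[
R\GGamma(B_{\hat{W}_F},V)\simeq R\GGamma(B_{W_k},V^{I/U}),\quad R\GGamma(B_{\hat{W}_F},\R(1))\simeq R\GGamma(B_{W_k},\R)[-1],
\]
and similarly for $V^*$. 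Since the evaluation $V\otimes V^*\to\R$ induces a factorization $V\otimes^L V^D\to\R(1)\to\R/\Z(1)$, the cup-product pairing from the trace factors through $R\GGamma(B_{\hat{W}_F},\R(1))$, and it remains to compare this $\R(1)$-valued pairing with the shift of the $\R$-linear pairing on $B_{W_k}$.

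Applying \cref{rlinearlattice} to the extension $1\to I/U\to W_F/U\to W_k\to 1$ acting on~$V$ yields a perfect pairing of condensed $\R$-modules
\[
R\GGamma(B_{W_k},V^{I/U})\otimes^L_{\R}R\GGamma(B_{W_k},(V^*)^{I/U})\longrightarrow \R[-1],
\]
which, after a shift and composition with $\R\to\R/\Z$, is exactly the pairing of the theorem. To transfer perfectness over $\R$ to perfectness over $\R/\Z$, I use that for a bounded complex $C$ of finite-dimensional real vector spaces, the natural morphism $R\underline{\Hom}_{\R}(C,\R)\to R\underline{\Hom}_{\Z}(C,\R/\Z)$ is an equivalence in $\DDD(\Ab(\CC))$; this reduces, by dévissage on the $t$-structure, to the classical identification of the $\R$-linear dual with the Pontryagin dual of a finite-dimensional real vector space, together with the vanishing of higher $\Ext$'s on both sides.

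The main technical point I anticipate is the compatibility of the traces: one must check that the composition $R\GGamma(B_{\hat{W}_F},\R(1))\to R\GGamma(B_{\hat{W}_F},\R/\Z(1))\to \R/\Z[-2]$ coincides, under the identification $R\GGamma(B_{\hat{W}_F},\R(1))\simeq R\GGamma(B_{W_k},\R)[-1]$, with the shift of the ``integration'' map $R\GGamma(B_{W_k},\R)\to \R[-1]$ underlying \cref{cns:cupprodwk}, composed with $\R\to\R/\Z$. The agreement on $\HH^2$ is forced by \cref{cohomologyrz1}, but promoting it to a canonical homotopy at the derived level and ensuring it matches the cup-product at the cochain level is where the substantive verification lies.
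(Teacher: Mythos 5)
Your proposal matches the paper's proof in all essential respects: reduce to a single finite-dimensional real vector space $V$ in degree $0$, identify $V^D\simeq V^*[-1]$ via \cref{rmk:dualofv}, pass through the $\R$-linear pairing using that $R\underline{\Hom}(-,\R)=R\underline{\Hom}(-,\R/\Z)$ on $\DDD^{perf}(\R)$, reduce to $B_{W_k}$ with invariants $V^{I/U}$, and conclude by \cref{rlinearlattice}. Two small remarks. First, the justification of the reduction should be phrased as dévissage on the cohomology objects of $M$ (as the paper does, ``induction on the length of the complex''), not as ``$\DDD^{perf}(\R)$ is generated by $\R$'' --- the latter loses the $W_F/U$-action, which must be carried along. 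Second, the trace-compatibility concern you flag in the final paragraph is precisely what the paper's phrase ``the morphism $\R(1)\to\R/\Z(1)$ induces a morphism of cup-product pairings'' is asserting; it follows from the naturality of the cup product in the coefficient pairing together with the fact that, by \cref{cohomologyrz1} and its proof, the trace on $\R/\Z(1)$ is by construction the image of the trace on $\R(1)$ under $\R\to\R/\Z$ on $\HH^2$, so there is no additional homotopy to supply.
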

\begin{rmk}
A fortiori, the result holds if $M\in \DDD^{perf}(\R)$ has a continuous action of a finite quotient $G$ of $G_F$.
\end{rmk}
\begin{proof}
By induction on the length of the complex, we suppose that $M=V$ is a finite-dimensional real vector space with a continuous action of $W_F/U$. By \cref{rmk:structurervs}, we have $R\GGamma(B_{\hat{W}_F},V),R\GGamma(B_{\hat{W}_F},V^D)\in \DDD^b(\FLCA)$. 

The morphism $\R(1)\to \R/\Z(1)$ induces a morphism of cup-product pairings 
\[
\begin{tikzcd}
R\GGamma(B_{\hat{W}_F},V)\otimes^L R\GGamma(B_{\hat{W}_F},V^*)[-1]\arrow[r]\arrow[d]&R\GGamma(B_{\hat{W}_F},V)\otimes^L R\GGamma(B_{\hat{W}_F},V^D)\arrow[d]\\
\R[-2]\arrow[r]& \R/\Z[-2].
\end{tikzcd}
\] By \cref{rmk:dualofv}, $V^*[-1]\to V^D$ is an equivalence. Since $R\GGamma(B_{\hat{W}_F},V)$ and $R\GGamma(B_{\hat{W}_F},V^*)$ are complexes of $\DDD^{perf}(\R)$ (see \cref{rmk:structurervs}), we have \[ R\underline{\Hom}(R\GGamma(B_{\hat{W}_F},V),\R)=R\underline{\Hom}(R\GGamma(B_{\hat{W}_F},V),\R/\Z)\] and the same for $V^*$. Moreover, we have \[R\underline{\Hom}(R\GGamma(B_{\hat{W}_F},V),\R[-1])=R\underline{\Hom}_{\R}(R\GGamma(B_{\hat{W}_F},V),\R[-1])\] and the same for $V^*$. Consequently, it is enough to prove that the cup-product pairing of condensed $\R$-modules \[
R\GGamma(B_{\hat{W}_F},V)\otimes^L_{\R}R\GGamma(B_{\hat{W}_F},V^*)\rightarrow \R[-1]
\] is perfect. This coincides with the cup-product pairing \[
R\GGamma(B_{W_k},V^{I/U})\otimes_{\R}^L R\GGamma(B_{W_k},(V^*)^{I/U})\rightarrow \R[-1].
\] We conclude as in \cref{rlinearlattice}.
\end{proof}

\begin{prop}[Condensed Weil-Tate Local Duality]\label{condensedweiltate}
Let $M$ be a finite abelian group with a continuous action of $W_F$. Then the trace map \[
R\GGamma(B_{\hat{W}_F},\R/\Z(1))\rightarrow \R/\Z[-2]
\] induces a perfect cup-product pairing \[
R\GGamma(B_{\hat{W}_F},M)\otimes^L R\GGamma(B_{\hat{W}_F},M^D)\longrightarrow \R/\Z.
\]
\end{prop}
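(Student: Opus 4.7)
The plan is to reduce the perfect pairing statement to the classical local Tate duality with finite coefficients. By \cref{dualizingobjectfinite}, the dual $M^D$ is represented by the finite abelian group $\Hom(M,\mu)$, and by \cref{finitecoeffstructure}, both $\HH^q(B_{\hat{W}_F},M)$ and $\HH^q(B_{\hat{W}_F},M^D)$ are finite, concentrated in degrees $0,1,2$. By \cref{rmk:cohomdiscretemod}, they are moreover discrete condensed abelian groups. Since a morphism between discrete condensed abelian groups with finite underlying groups is an isomorphism if and only if it is so after taking $(*)$-sections, it suffices to show that for every $q$ the induced pairing
\[
\H^q(B_{\hat{W}_F},M)(*) \otimes \H^{2-q}(B_{\hat{W}_F},M^D)(*) \longrightarrow \R/\Z
\]
on underlying finite abelian groups is perfect.

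Next I would identify these underlying groups with classical Galois cohomology pairings. By \cref{rmk:cohomdiscretemod}, the underlying abelian group $\HH^q(B_{\hat{W}_F},M)(*)$ is isomorphic to the usual Weil group cohomology $\H^q(B_{W_F}(\Set),M)$, and similarly for $M^D$. Since both $M$ and $M^D$ are torsion with continuous $W_F$-action, the comparison \cite[Proposition 4.1.1]{Karpuk} (already invoked in the proof of \cref{wcohomologyofz}) yields canonical isomorphisms $\H^q(B_{W_F}(\Set),M) \cong \H^q(G_F,M)$ and $\H^q(B_{W_F}(\Set),M^D) \cong \H^q(G_F,M^D)$.

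Finally, since $M$ is finite, its image in $\overline{L}^\times$ lands in $\mu$, so $\Hom(M,\mu)=\Hom(M,\overline{F}^\times)$. The condensed trace map $\HH^2(B_{\hat{W}_F},\R/\Z(1)) = \R/\Z$ corresponds, via the fiber sequence $\R/\Z(1) \to \overline{L}^\times \to \R$ and the fact that $\HH^q(B_{\hat{W}_F},\R)=0$ for $q=2$ while the image of the pairing is automatically torsion, to the classical Tate trace $\H^2(G_F,\overline{F}^\times)=\Q/\Z \hookrightarrow \R/\Z$. The cup-product pairing of \eqref{cupprod} restricts on $(*)$-sections, through the above identifications, to the classical cup product on Galois cohomology. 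Thus the pairing coincides with the Tate pairing \eqref{tatepairing}, which is perfect by Tate's theorem, and we are done.

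The main obstacle I anticipate is not any hard new input but rather the bookkeeping required for the final compatibility check: verifying that the trace map built from the condensed $\R/\Z(1)$ and the cup product computed in $\DDD(\Ab(\CC))$ genuinely recover the classical Tate trace and cup product after the identifications above. This should follow from naturality of the Kummer fiber sequences \eqref{exactseq} and \eqref{twistedtriangle}, together with the Hochschild--Serre sequence \eqref{eqn:degeneratehs}, but writing it out carefully is the most delicate step.
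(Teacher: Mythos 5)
Your argument is correct and follows essentially the same route as the paper: identify $M^D$ with $\Hom(M,\mu)$ via \cref{dualizingobjectfinite}, reduce to classical Weil-group cohomology via \cref{rmk:cohomdiscretemod} and \cref{finitecoeffstructure}, compare to Galois cohomology via \cite[Proposition 4.1.1]{Karpuk}, and invoke finite-coefficient Tate duality. The only cosmetic difference is that you check perfection degree by degree on $(*)$-sections and cite Tate's theorem directly, while the paper stays at the level of complexes in $\DDD^{perf}(\Z)$ and cites \cite[Theorem 2.1]{ADT}; for finite coefficients these are the same input, so the two presentations are equivalent.
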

\begin{proof}
Let $\mu\subset \overline{L}^{\times}$ denote the discrete continuous $W_F$-module of roots of unity. We set $M'\coloneqq \Hom(M,\mu)$. By \cref{dualizingobjectfinite,rmk:cohomdiscretemod}, the cup-product pairing becomes \[
\underline{R\Gamma(B_{W_F}(\Set),M)}\otimes^L \underline{R\Gamma(B_{W_F}(\Set),M')}\rightarrow \R/\Z.
\] By \cref{finitecoeffstructure}, we have $R\underline{\Hom}(\underline{R\Gamma(B_{W_F}(\Set),M)},\R/\Z)=\underline{R\Hom(R\Gamma(B_{W_F}(\Set),M),\Q/\Z)}$ and the same for $M'$. Hence it is enough to prove that the cup-product pairing in $\DDD^{perf}(\Z)$ \[
R\Gamma(B_{W_F}(\Set),M)\otimes^L R\Gamma(B_{W_F}(\Set),M') \longrightarrow \Q/\Z
\] is perfect. This follows from 
\cite[Proposition 4.1.1]{Karpuk} and \cite[Theorem 2.1]{ADT}.
\end{proof}
\begin{prop}\label{thm:dualityfgcoefficients}
Let $M\in \DDD^{perf}(\Z)$ with a continuous action of a finite quotient $G$ of $G_F$. Then one has $R\GGamma(B_{\hat{W}_F},M),R\GGamma(B_{\hat{W}_F},M^D)\in \DDD^b(\FLCA)$. Moreover, the trace map \[
R\GGamma(B_{\hat{W}_F},\R/\Z(1))\rightarrow \R/\Z[-2]
\] induces a perfect cup-product pairing \[
R\GGamma(B_{\hat{W}_F},M)\otimes^L R\GGamma(B_{\hat{W}_F},M^D)\rightarrow \R/\Z[-2]
\] in $\DDD^b(\Ab(\CC))$.
\end{prop}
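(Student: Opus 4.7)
The plan is to reduce the statement to two extremal cases—$M$ finite (handled by \cref{condensedweiltate}) and $M$ a free $\Z[G]$-module of finite rank—and deduce the latter from the former by conservativity of completion, \cref{conservativitycompletion}.

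First, I would use that $\DDD^{perf}(\Z)$ admits a bounded $t$-structure and that both $R\GGamma(B_{\hat{W}_F},-)\in \DDD^b(\FLCA)$ and the perfectness of the cup-product pairing are preserved under fiber sequences in $M$ (thanks to $\DDD^b(\FLCA)\hookrightarrow\DDD^b(\Ab(\CC))$ being a stable exact subcategory by \cref{lem:exff}). Inducting on the cohomological length, I reduce to $M=N[0]$ for a finitely generated $\Z[G]$-module $N$. Applying the same principle to the $G$-equivariant short exact sequence $0\to N_{tor}\to N\to N_{free}\to 0$ then reduces the problem to $N$ finite (already done) or $N$ free of finite $\Z$-type.

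Suppose now that $N$ is free of finite $\Z$-type. By \cref{thm:structurem} and \cref{thm:structuremd}, the cohomology groups of $R\GGamma(B_{\hat{W}_F},N)$ and $R\GGamma(B_{\hat{W}_F},N^D)$ all lie in $\FLCA$, so by \cref{cor:intersections} both complexes lie in $\DDD^b(\FLCA)$. To prove that the map $\psi(N^D):R\GGamma(B_{\hat{W}_F},N)\to R\underline{\Hom}(R\GGamma(B_{\hat{W}_F},N^D),\R/\Z[-2])$ is an equivalence, I observe that the source satisfies hypothesis (i) of \cref{conservativitycompletion} by \cref{thm:structurem}, and the target, whose $q$-th cohomology is $\HH^{2-q}(B_{\hat{W}_F},N^D)^{\vee}$, satisfies (i) as well by \cref{thm:structuremd}. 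Hence \cref{conservativitycompletion} reduces the claim to showing $\psi(N^D)\otimes^L\Z/m$ is an equivalence for every $m$.

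To conclude, I would identify $\psi(N^D)\otimes^L\Z/m$ with the duality map $\psi((N/m)^D)$ for the finite module $N/m$. The key computations are $(N/m)^D\simeq(N^D\otimes^L\Z/m)[-1]$ (deduced from applying $R\underline{\Hom}(-,\R/\Z(1))$ to the fiber sequence $N\xrightarrow{m}N\to N/m$ together with $R\underline{\Hom}(\Z/m,\R/\Z(1))=\mu_m$, which itself follows from \cref{exactseq}) and $\R/\Z[-2]\otimes^L\Z/m\simeq\Z/m[-1]$; together these show that the targets of $\psi(N^D)\otimes^L\Z/m$ and $\psi((N/m)^D)$ agree, while the sources agree tautologically, and the two maps themselves agree by naturality of the evaluation pairing $N\otimes^L N^D\to\R/\Z(1)$ with respect to the mod-$m$ reduction. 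Since $N/m$ is finite, $\psi((N/m)^D)$ is an equivalence by \cref{condensedweiltate}. The main technical obstacle is this last identification, which requires carefully tracking how the cup-product pairing interacts with mod-$m$ reduction on both factors.
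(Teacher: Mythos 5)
Your proof follows essentially the same route as the paper's: dévissage to a free module of finite $\Z$-type, placement of $R\GGamma(B_{\hat{W}_F},M)$ and $R\GGamma(B_{\hat{W}_F},M^D)$ in $\DDD^b(\FLCA)$ via \cref{thm:structurem,thm:structuremd}, and conservativity of completion (\cref{conservativitycompletion}) with the finite case \cref{condensedweiltate} supplying the mod-$m$ equivalences. One small omission: you verify only $\psi(N^D)$, whereas the paper verifies both $\psi(M^D)$ and $\psi(M)$ (using hypotheses (i) and (ii) of \cref{conservativitycompletion} respectively); to close this, either check $\psi(N)$ directly, or note that once both complexes lie in $\DDD^b(\FLCA)$ the map $\psi(N)$ agrees up to shift with the Pontryagin dual of $\psi(N^D)$, so perfectness follows from the single verification.
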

\begin{proof}
By induction on the length of the complex and by \cref{condensedweiltate}, we can suppose that $M$ is a free abelian group of finite $\Z$-type. By \cref{thm:structurem,thm:structuremd}, we have $R\GGamma(B_{\hat{W}_F},M),R\GGamma(B_{\hat{W}_F},M^D)\in \DDD^b(\FLCA)$.

Let $\psi(M^D)$ and $\psi(M)$ be defined as in \eqref{cupprodmap}. Let us consider $\psi(M^D)^{\wedge}=R\lim_m \psi((M\otimes^L \Z/m)^D)$ and $\psi(M)^{\wedge}=R\lim_m \psi(M\otimes^L \Z/m).$ Localising at $EG$, we have $M\otimes^L \Z/m\cong M/m$ in $\DDD^b(B_{\hat{W}_F})$. Hence $M\otimes^L \Z/m$ is a finite abelian group with a continuous action of $G$. By \cref{condensedweiltate}, $\psi((M/m)^D)$ and $\psi(M/m)$ are equivalences. Hence so are $\psi(M^D)^{\wedge}$ and $\psi(M)^{\wedge}$.
By \cref{thm:structurem} (resp.\ \cref{thm:structuremd}), $\psi(M^D)$ (resp.\ $\psi(M)$) satisfies the hypothesis (i) (resp.\ (ii)) of \cref{conservativitycompletion}. Thus $\psi(M^D)$ (resp.\ $\psi(M)$) is an equivalence. This concludes the proof.
\end{proof}
\begin{thm}\label{thm:dualitythm}
Let $M\in\DDD^{perf}_{\Z,\R}$ with a continuous action of a finite quotient $G$ of $G_F$. Then one has $R\GGamma(B_{\hat{W}_F},M),R\GGamma(B_{\hat{W}_F},M^D)\in \DDD^b(\FLCA)$. Moreover, the trace map \[
R\GGamma(B_{\hat{W}_F},\R/\Z(1))\rightarrow \R/\Z[-2]
\] induces a perfect cup-product pairing \[
R\GGamma(B_{\hat{W}_F},M)\otimes^L R\GGamma(B_{\hat{W}_F},M^D)\rightarrow \R/\Z[-2]
\] in $\DDD^b(\Ab(\CC))$. 

If $\H^q(M)$ is a locally compact abelian group for all $q$, the induced cup-product pairing \[
\HH^q(B_{\hat{W}_F},M)\otimes \HH^{2-q}(B_{\hat{W}_F},M^D)\rightarrow \HH^2(B_{\hat{W}_F},\R/\Z(1))=\R/\Z
\] is a perfect pairing of locally compact abelian groups of finite ranks.
\end{thm}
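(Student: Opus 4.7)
The statement generalizes two previously established dualities: \cref{thm:dualityrealvs} for coefficients in $\DDD^{perf}(\R)$ and \cref{thm:dualityfgcoefficients} for coefficients in $\DDD^{perf}(\Z)$. The plan is to interpolate between them using the distinguished triangle \eqref{eqn:trianglefordperf} that defines membership in $\DDD^{perf}_{\Z,\R}$, namely
\[
R\underline{\Hom}(\R/\Z,M)\longrightarrow R\underline{\Hom}(\R,M)\longrightarrow M,
\]
viewed in $\DDD^b(B_{\hat{W}_F})$. Since $\R$ and $\R/\Z$ carry the trivial $G$-action, the $G$-action on $M$ lifts canonically to the left and middle terms, and by definition of $\DDD^{perf}_{\Z,\R}$ they lie in $\DDD^{perf}(\Z)$ and $\DDD^{perf}(\R)$ respectively, equipped with continuous actions of the finite quotient $G$ of $G_F$.

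\textbf{From the triangle to the duality.} Apply the exact functor $R\GGamma(B_{\hat{W}_F},-)$ to the triangle above, and separately to the Pontryagin-dualised triangle
\[
M^D\longrightarrow R\underline{\Hom}(R\underline{\Hom}(\R,M),\R/\Z(1))\longrightarrow R\underline{\Hom}(R\underline{\Hom}(\R/\Z,M),\R/\Z(1))[1].
\]
By \cref{thm:dualityrealvs,thm:dualityfgcoefficients}, the outer terms of both resulting fiber sequences in $\DDD^b(\Ab(\CC))$ lie in $\DDD^b(\FLCA)$; since $\DDD^b(\FLCA)$ is a full stable $\infty$-subcategory by \cref{lem:exff}, the middle terms $R\GGamma(B_{\hat{W}_F},M)$ and $R\GGamma(B_{\hat{W}_F},M^D)$ lie in $\DDD^b(\FLCA)$ as well. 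The cup-product with the trace map is functorial in $M$, so the maps $\psi(M^D)$ and $\psi(M)$ of \eqref{cupprodmap} assemble into morphisms of fiber sequences whose two outer components are equivalences by the same invoked theorems. The two-out-of-three principle for equivalences in a stable $\infty$-category then forces $\psi(M^D)$ and $\psi(M)$ to be equivalences, yielding the perfect derived pairing.

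\textbf{The cohomological pairing.} Now assume in addition that $\H^q(M)$ is locally compact for every $q$. By \cref{stabilitywrtcohomology}, each $\H^q(M)$ lies in $\DDD^{perf}_{\Z,\R}\cap \FLCA$ and inherits a $G$-action, so \cref{prop:degree0cohomology} yields $\HH^p(B_{\hat{W}_F},\H^q(M))\in \FLCA$, concentrated in $p\in\{0,1,2\}$. Running induction on the amplitude of $M$ via the truncation triangles $\tau^{\le q-1}M\to \tau^{\le q}M\to \H^q(M)[-q]$, and using stability of $\FLCA$ under extensions in $\Ab(\CC)$ (\cref{prop:stableextensions}), one concludes that $\HH^n(B_{\hat{W}_F},M)\in \FLCA$ for every $n$. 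The analogous conclusion for $M^D$ is then automatic, since the derived duality already established identifies $R\GGamma(B_{\hat{W}_F},M^D)$ with the Pontryagin dual of $R\GGamma(B_{\hat{W}_F},M)[-2]$ in $\DDD^b(\FLCA)$, and Pontryagin duality is an exact contravariant equivalence of $\FLCA$. The perfect pairing in the derived category thus descends to the claimed perfect Pontryagin pairing between $\HH^q(B_{\hat{W}_F},M)$ and $\HH^{2-q}(B_{\hat{W}_F},M^D)$.

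\textbf{Main obstacle.} The crux is verifying that $\psi$ is sufficiently natural with respect to the defining triangle, so that it genuinely assembles into a morphism of fiber sequences in $\DDD^b(\Ab(\CC))$; this requires care in manipulating the trace map and the internal $R\underline{\Hom}$ at the $\infty$-categorical level but is essentially formal once one notes that $R\underline{\Hom}(-,\R/\Z(1))$ is an exact contravariant functor on the relevant subcategory. A secondary subtlety, for the cohomological statement, is ensuring that the subquotients produced by the truncation argument remain in $\FLCA$ rather than merely in the ambient heart $\LH(\FLCA)$; this will hinge on the finiteness and concentration properties furnished by \cref{prop:degree0cohomology} and on keeping careful track of which extensions and kernels arise from maps of $G$-equivariant $\FLCA$-objects.
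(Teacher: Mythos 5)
Your overall architecture matches the paper's: for the derived pairing you dévissage along the triangle $R\underline{\Hom}(\R/\Z,M)\to R\underline{\Hom}(\R,M)\to M$ and invoke \cref{thm:dualityfgcoefficients,thm:dualityrealvs} together with two-out-of-three in the full stable subcategory $\DDD^b(\FLCA)$, exactly as the paper does. For the cohomological pairing you run a Postnikov induction on $M$, which is the same thing as the paper's hypercohomology spectral sequence $E_2^{i,j}=\HH^i(B_{\hat{W}_F},\H^j(M))$; and the final step identifying $\HH^q(B_{\hat{W}_F},M^D)$ with $\HH^{2-q}(B_{\hat{W}_F},M)^{\vee}$ is likewise the same as the paper's observation that $\underline{\Ext}^i(\HH^q(B_{\hat{W}_F},M),\R/\Z)=0$ for $i\ge1$.

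However, there is a genuine gap at precisely the spot you flag as a ``secondary subtlety'' and then leave unresolved. When you pass from the long exact sequence in $\Ab(\CC)$ coming from $\tau^{\le q-1}M\to\tau^{\le q}M\to\H^q(M)[-q]$ to the claim $\HH^n(B_{\hat{W}_F},M)\in\FLCA$, \cref{prop:stableextensions} only handles extensions; it does not handle the kernels and cokernels of the connecting maps that appear in the long exact sequence. Equivalently, in the spectral sequence language, one must check that the $E_\infty$-subquotients of the $E_2$-page stay in $\FLCA$, which requires the differentials to be strict as morphisms of locally compact groups. The paper resolves this with a concrete observation that you do not make: by \cref{prop:degree0cohomology} the $E_2$-page is concentrated in $i\in\{0,1,2\}$, so the only possibly nonzero differential is $d_2^{0,j+1}\colon E_2^{0,j+1}\to E_2^{2,j}$, and $E_2^{2,j}$ is \emph{discrete} (of finite $\Q_p/\Z_p$-type). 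A continuous homomorphism of locally compact groups with discrete target is automatically strict (its kernel is open, so the coimage is discrete and maps homeomorphically onto the image). This makes every subquotient land in $\FLCA$ and lets \cref{prop:stableextensions} finish. Without isolating the discreteness of the degree-$2$ cohomology and the resulting strictness, the induction step does not close.
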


\begin{proof}
We have a distinguished triangle in $\DDD(B_{\hat{W}_F})$ \[
R\underline{\Hom}(\R/\Z,M)\rightarrow R\underline{\Hom}(\R,M)\rightarrow M.
\] Since $M\in \DDD^{perf}_{\Z,\R}$, we have $R\underline{\Hom}(\R/\Z,M)\in \DDD^{perf}(\Z)$ and $R\underline{\Hom}(\R,M)\in \DDD^{perf}(\R)$. By \cref{thm:dualityfgcoefficients,thm:dualityrealvs}, the fist two statements of the theorem hold for $R\underline{\Hom}(\R/\Z,M)$ and $R\underline{\Hom}(\R,M)$ instead of $M$. 
Hence the same holds for $M$.

Let us suppose that $\H^q(M)$ is locally compact for all $q$. We have the spectral sequence
\[
E_2^{i,j}=\HH^i(B_{\hat{W}_F},\H^j(M))\Longrightarrow \HH^{i+j}(B_{\hat{W}_F},M).
\] By \cref{stabilitywrtcohomology}, we have $\H^j(M)\in \DDD^{perf}_{\R,\Z}\cap \FLCA$ for all $j$. By \cref{prop:degree0cohomology}, $E_2^{i,j}$ is locally compact of finite ranks for all $i,j$, and vanishes for all $i\ge 3$. Moreover, $E_2^{2,j}$ is discrete for all $j$. Thus the morphism $E_2^{0,j+1}\rightarrow E_2^{2,j}$ is strict for all $j$. These are the only morphisms of the second page. Moreover, for all $n\ge 3$ and for all $i,j$, we have $d_n^{i,j}=0$. Consequently, the condensed abelian group $E_{\infty}^{i,j}$ is a locally compact abelian group of finite ranks for all $i,j$. By \cref{prop:stableextensions}, we have $\HH^q(B_{\hat{W}_F},M)\in\FLCA$ for all $q$. Consequently, we have $\underline{\Ext}^i(\HH^q(B_{\hat{W}_F},M),\R/\Z)=0$ for all $i\ge 1$ and for all $q$. Hence the equivalence \[
R\GGamma(B_{\hat{W}_F},M^D)\cong R\underline{\Hom}(R\GGamma(B_{\hat{W}_F},M),\R/\Z[-2])
\] yields \[\HH^q(B_{\hat{W}_F},M^D)\cong \H^q(R\underline{\Hom}(R\GGamma(B_{\hat{W}_F},M),\R/\Z[-2]))\cong \HH^{2-q}(B_{\hat{W}_F},M)^{\vee},\] which is locally compact of finite ranks. The result follows. 
\end{proof}
\begin{rmk}
By \cref{internalrhom}, the corresponding cup-product pairing in $\DDD^b(\FLCA)$ is perfect too.
\end{rmk}

\begin{cor}
Let $M$ be a discrete countable abelian group. Then for all $q$ the condensed abelian group $\HH^q(B_{\hat{W}_F},M)$ is represented by a $\kappa$-small discrete abelian group and $\HH^q(B_{\hat{W}_F},M^D)$ is represented by a $\kappa$-small compact Hausdorff topological abelian group. Moreover, the trace map \[
R\GGamma(B_{\hat{W}_F},\R/\Z(1))\rightarrow \R/\Z[-2]
\] induces a perfect cup-product pairing \[
\HH^q(B_{\hat{W}_F},M)\otimes \HH^{2-q}(B_{\hat{W}_F},M^D)\rightarrow \HH^2(B_{\hat{W}_F},\R/\Z(1))=\R/\Z
\] of $\kappa$-small locally compact abelian groups.
\end{cor}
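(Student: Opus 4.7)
The plan is to reduce to the finitely generated case handled by \cref{thm:dualitythm} via a filtered colimit argument. Assuming $M$ carries a continuous action of a finite quotient $G$ of $G_F$ (so that \cref{thm:dualitythm} is applicable at all), write $M = \lim_{\substack{\rightarrow \\ i \in I}} M_i$ as a filtered colimit over a countable directed set $I$ of its finitely generated $G$-submodules. Because $G$ is finite, each $M_i$ is finitely generated as an abelian group, and hence lies in $\DDD^{perf}(\Z) \subset \DDD^{perf}_{\Z,\R}$ with a continuous $G$-action.

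For the first assertion, \cref{rmk:cohomdiscretemod} identifies $\HH^q(B_{\hat{W}_F}, M)$ with $\underline{\H^q(B_{W_F}(\Set), M)}$, which is automatically discrete. Continuous group cohomology of $W_F$ with discrete coefficients commutes with filtered colimits (via the Hochschild-Serre spectral sequence and the analogous continuity properties for the profinite group $I$ and the abstract group $\Z$), so $\H^q(B_{W_F}(\Set), M) = \lim_{\substack{\rightarrow \\ i}} \H^q(B_{W_F}(\Set), M_i)$. Decomposing each $M_i$ via the short exact sequence $0 \to (M_i)_{\mathrm{tor}} \to M_i \to M_i/(M_i)_{\mathrm{tor}} \to 0$ and combining \cref{thm:structurem} for the torsion-free quotient with \cref{finitecoeffstructure} for the torsion part, each $\H^q(B_{W_F}(\Set), M_i)$ is countable (of finite $\Z$-type or finite $\Q_p/\Z_p$-type, up to finite contributions). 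A filtered colimit over a countable directed set of countable abelian groups is countable, hence $\kappa$-small.

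For the second assertion, apply \cref{thm:dualitythm} to each $M_i$ to obtain isomorphisms $\HH^q(B_{\hat{W}_F}, M_i^D) \cong \HH^{2-q}(B_{\hat{W}_F}, M_i)^{\vee}$; the right-hand side is the Pontryagin dual of a countable discrete abelian group, hence a metrizable compact Hausdorff abelian group. Since $M^D = R\underline{\Hom}(M, \R/\Z(1)) \cong R\lim_{\substack{\leftarrow \\ i}} M_i^D$, one expects
\[
\HH^q(B_{\hat{W}_F}, M^D) \cong \lim_{\substack{\leftarrow \\ i}} \HH^q(B_{\hat{W}_F}, M_i^D) \cong \Bigl( \lim_{\substack{\rightarrow \\ i}} \HH^{2-q}(B_{\hat{W}_F}, M_i) \Bigr)^{\vee} \cong \HH^{2-q}(B_{\hat{W}_F}, M)^{\vee},
\]
using that Pontryagin duality exchanges filtered colimits of discrete groups with cofiltered limits of compact Hausdorff groups. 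The resulting group is $\kappa$-small and compact Hausdorff, and the perfect cup-product pairing of the corollary then arises by passing to the colimit (resp.\ limit) on each side of the pairings provided by \cref{thm:dualitythm} for the $M_i$.

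The main obstacle is justifying that cohomology commutes with $R\lim$ on the dual side without an extra $\lim^1$ term: a priori $\HH^q(B_{\hat{W}_F}, R\lim_i M_i^D)$ sits in a Milnor-type short exact sequence whose left-hand term is $\lim^1 \HH^{q-1}(B_{\hat{W}_F}, M_i^D)$. This $\lim^1$ vanishes here because the inverse system $\{\HH^{q-1}(B_{\hat{W}_F}, M_i^D)\}$ consists of compact Hausdorff abelian groups with continuous transition maps, and cofiltered limits in the category of compact Hausdorff abelian groups are exact (such a system is automatically Mittag-Leffler in the appropriate sense).
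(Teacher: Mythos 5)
Your proof takes essentially the same route as the paper's: exhaust $M$ by $G$-equivariant finitely generated subgroups (with the tacit but necessary hypothesis that $M$ carries an action of a finite quotient $G$ of $G_F$), apply \cref{thm:dualitythm} to each piece, and pass to a colimit on the $M$-side and a derived limit on the $M^D$-side, killing the $\lim^1$ term on the compact dual side. The one soft spot is the $\lim^1$ vanishing: you argue it heuristically ("automatically Mittag-Leffler", "cofiltered limits in $\CHaus$ are exact"), but the relevant $\lim^1$ is taken in $\Ab(\CC)$, and the paper instead cites a concrete result (Proposition 3.2 of [SolidGC]) for the vanishing of $\N^{op}$-indexed $\lim^1$ of compact Hausdorff condensed abelian groups; the paper is also slightly more careful in the last step, observing $\psi(M)=R\lim_n\psi(M_n)$ and $\psi(M^D)=\lim_{\rightarrow}\psi(M_n^D)$ and then passing from the derived equivalence to the degree-wise perfect pairing via vanishing of $\underline{\Ext}^i(\HH^q(B_{\hat{W}_F},M),\R/\Z)$ for $i\ge 1$, which you gesture at but do not spell out.
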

\begin{proof}
Firstly, we observe that we can write \[
M=\underset{\substack{\rightarrow\\ n}}{\lim}\,  M_n,
\] where $M_n$ is a $G$-equivariant finitely generated abelian group. In order to show this, we observe that since $M$ is countable, we can write $M=\{x_i\}_{i\in \N}$. Hence, for all $n\in \N$, we consider $M_n$ as the subgroup of $M$ generated by $\{g\cdot x_i\}_{g\in G, 0\le i\le n}$. Then $M_n$ is $G$-equivariant and finitely generated for all $n$, and if $n_1<n_2$, we have $M_{n_1}\subset M_{n_2}$. 

Consequently, we have \[
\HH^q(B_{\hat{W}_F},M)=\underset{\substack{\rightarrow\\ n}}{\lim} \,\HH^q(B_{\hat{W}_F},M_n),
\] which is $\kappa$-small and discrete for all $q$. Moreover, since we have $M^D=R\underset{\substack{\leftarrow \\ n}}{\lim}\, M_n^D$, there is an exact sequence in $\Ab(\CC)$ \[
0\rightarrow \underset{\substack{\leftarrow\\n}}{\lim}^1 \, \HH^{q-1}(B_{\hat{W}_F},M_n^D)\rightarrow \HH^q(B_{\hat{W}_F},M^D)\rightarrow \underset{\substack{\leftarrow\\n}}{\lim} \, \HH^{q}(B_{\hat{W}_F},M_n^D)\rightarrow 0
\] for all $q$. By \cref{thm:structuremd}, the condensed abelian groups $\HH^{q-1}(B_{\hat{W}_F},M_n^D)$ and $\HH^{q}(B_{\hat{W}_F},M_n^D)$ are compact Hausdorff of finite ranks for all $n$. Hence $\HH^q(B_{\hat{W}_F},M)$ is $\kappa$-small compact Hausdorff for all $q$ by \cite[Proposition 3.2]{SolidGC}. In particular, since $\DDD^b(\LCA_{\kappa})\rightarrow \DDD^b(\Ab(\CC))$ is a stable $\infty$-subcategory, we have $R\GGamma(B_{\hat{W}_F},M),R\GGamma(B_{\hat{W}_F},M^D)\in \DDD^b(\LCA_{\kappa})$. Moreover, the cup-product pairing \[
R\GGamma(B_{\hat{W}_F},M)\otimes^L R\GGamma(B_{\hat{W}_F},M^D)\rightarrow \R/\Z[-2]
\] induces the morphisms \[
\psi(M^D): R\GGamma(B_{\hat{W}_F},M)\rightarrow R\underline{\Hom}(R\GGamma(B_{\hat{W}_F},M^D),\R/\Z[-2])
\] and \[
\psi(M): R\GGamma(B_{\hat{W}_F},M^D)\rightarrow R\underline{\Hom}(R\GGamma(B_{\hat{W}_F},M),\R/\Z[-2]).
\] It is easy to see that we have $\psi(M)=R\underset{\substack{\leftarrow\\n}}{\lim} \, \psi(M_n)$. Moreover, since $R\underline{\Hom}(-,\R/\Z[-2]):\DDD^b(\LCA_{\kappa})^{op}\rightarrow \DDD^b(\LCA_{\kappa})$ is an equivalence, we also have $\psi(M^D)=\underset{\substack{\rightarrow\\n}}{\lim} \,\psi(M_n^D)$. Consequently, both $\psi(M)$ and $\psi(M^D)$ are equivalences by \cref{thm:dualitythm}. Moreover, since $\underline{\Ext}^i(\HH^q(B_{\hat{W}_F},M),\R/\Z)=0$ for all $i\ge 1$ and for all $q$, taking $q$th cohomology on $\psi(M)$ yields \[
\HH^q(B_{\hat{W}_F},M^D)\cong \HH^{2-q}(B_{\hat{W}_F},M)^{\vee}
\] and similarly for $\psi(M^D)$. The result follows.
\end{proof}

\begin{ex}
Let us consider $M=\R/\Z$. Then we have \[ \HH^1(B_{\hat{W}_F},\R/\Z)=\underset{\substack{\rightarrow\\ U\subset I \\ \text{open normal}\\\text{subgr.}}}{\lim} \, \HH^1(B_{W_F/U},\R/\Z)= \underset{\substack{\rightarrow\\ U\subset I \\ \text{open normal}\\ \text{subgr.}}}{\lim} \, ((W_F/U)^{ab})^{\vee}=(W_F^{ab})^{\vee}.\] Moreover, one gets \[\HH^1(B_{\hat{W}_F},M^D)=\HH^1(B_{\hat{W}_F},\Z(1))=F^{\times}.\]
Hence the perfect cup product pairing of \cref{thm:dualitythm} yields the isomorphism of topological abelian groups \[
F^{\times}\overset{\sim}{\rightarrow} W_F^{ab},
\] which comes from Local Class Field Theory ``à la Weil".
\end{ex}

\newpage 
\printbibliography
\end{document}